\newtheorem{definition}{Definition}[section]
\newtheorem{lemma}[definition]{Lemma}
\newtheorem{theorem}[definition]{Theorem}
\newtheorem{corollary}[definition]{Corollary}
\newtheorem*{remark}{Remark}
\newtheorem*{notation}{Notation}
\def\picinput#1{\includegraphics{#1.pdf}}
\def\sc{v}
\def\ssc{u}
\def\scw{w}
\def\wedge{w}
\def\vpi{{\underline{\pi}}}
\def\vsigma{{\underline{\sigma}}}
\def\vlambda{{\underline{\lambda}}}
\def\vtau{{\underline{\tau}}}
\def\vwedges{\underline{w}}
\def\word{W}
\def\letter{A}
\def\lperm{\pi_\ell}
\def\rperm{\pi_r}
\def\Sing{\Sigma}
\def\Quad{\Gamma}
\newcommand*{\xQuad}[1][]{\Gamma_{#1}}
\newcommand*{\lQuad}[1][]{\Gamma_{#1}^\ell}
\newcommand*{\rQuad}[1][]{\Gamma_{#1}^r}
\def\NN{\mathbb{N}}
\def\ZZ{\mathbb{Z}}
\def\RR{\mathbb{R}}
\def\CC{\mathbb{C}}
\def\PP{\mathbb{P}}
\def\TT{\mathbb{T}}
\def\CCC{\mathcal{C}}
\def\HHH{\mathcal{H}}
\def\QQQ{\mathcal{Q}}
\def\GGG{\mathcal{G}}
\def\LLL{\mathcal{L}}
\def\MMM{\mathcal{M}}
\def\AAA{\mathcal{A}}
\def\LLL{\mathcal{L}}
\def\III{\mathcal{I}}
\def\Re{\operatorname{Re}}
\def\Im{\operatorname{Im}}
\def\SL{\operatorname{SL}}
\def\hol{\operatorname{hol}}
\def\dev{\operatorname{dev}}
\def\GL{\operatorname{GL}}
\def\area{\operatorname{Area}}
\def\sys{\operatorname{sys}}
\def\phi{\varphi}
\def\epsilon{\varepsilon}
\title{Diagonal changes for surfaces in hyperelliptic components\\{\normalsize A geometric natural extension of Ferenczi-Zamboni moves}}
\author{Vincent Delecroix and Corinna Ulcigrai}
\date{2013}
\begin{document}

\maketitle

\abstract{We describe geometric algorithms that generalize the classical continued fraction algorithm for the torus to all translation surfaces in hyperelliptic components of translation surfaces. We show that these algorithms produce all saddle connections which are best approximations in a geometric sense, which generalizes the notion of best approximation for the classical continued fraction. In addition, they allow to list all systoles along a Teichmueller geodesic and all bispecial words which appear in the symbolic coding of linear flows. The elementary moves of the described algorithms provide a  geometric invertible extension of the renormalization moves introduced by S.~Ferenczi and L.~Zamboni for the corresponding interval exchange transformations.}

\tableofcontents

\section{Introduction}
We begin this introduction by describing in \S\ref{subsec:torus_induction} a geometric version of the standard (additive) continued fraction algorithm, in terms of changes of bases for lattices. One of the key properties of the continued fraction algorithm is that it generates all rational best approximations of an irrational number. This property has a geometric interpretation: the continued fraction algorithm produces all saddle connections which are geometric best approximations (see Definition~\ref{def:BAtorus}).

In this paper we define diagonal changes algorithms which provide geometric generalizations of the continued fraction algorithm for linear flows on translation surfaces of higher genera (tori are translation surfaces of genus 1). Basic definitions appear in~\S\ref{subsubsec:intro_quadrangulations} and the algorithm is described in~\S\ref{subsubsec:intro_algorithm}.
 
The diagonal changes algorithms have several nice properties which are described in \S\ref{subsec:properties} of this introduction: they produce all geometric best approximations (see~\S\ref{subsubsec:intro_Diophantine}), allow to construct all bispecial words in the symbolic coding of linear flows (see~\S\ref{subsubsec:intro_language}) and detect all systoles along a Teichm\"uller geodesic (see~\S\ref{subsubsec:intro_Teich}).

\subsection{Geometric continued fraction algorithm for the torus} \label{subsec:torus_induction}
Let $\Lambda \subset \CC$ be a lattice. The standard continued fraction algorithm provides a way to construct a sequence of vectors in $\Lambda$ that are good approximation of the vertical direction. Let us present a geometric version of this algorithm. We choose a basis $(\scw_\ell, \scw_r)$ of $\Lambda$ such that:
\begin{itemize}
\item $\Re(\scw_\ell) < 0$ and $\Re(\scw_r) > 0$,
\item $\Im(\scw_\ell) > 0$ and $\Im(\scw_r) > 0$.
\end{itemize}
It is clear that such a basis exists if $\Lambda$ does not contain vertical or horizontal non-zero vectors. The basis $(\scw_\ell, \scw_r)$ forms a \emph{wedge} that contains the vertical direction; in other words, the vertical is contained in the positive cone generated by this basis. The parallelogram $Q = Q(\scw_\ell, \scw_r)$ formed from these two vectors is a fundamental domain for the action of $\Lambda$ on $\CC$. 
We say that the parallelogram $Q$ is \emph{left-slanted} (respectively \emph{right-slanted}) if the vertical half-axis $\{ z;\, \Re(z)=0\ \text{and}\ \Im(z)>0\}$ crosses the left (resp.~right) top side, that is the side parallel to $\scw_r$ (resp.~$\scw_\ell$). An example is shown in figure~\ref{fig:slanted_torii}.
\begin{figure}[!ht]
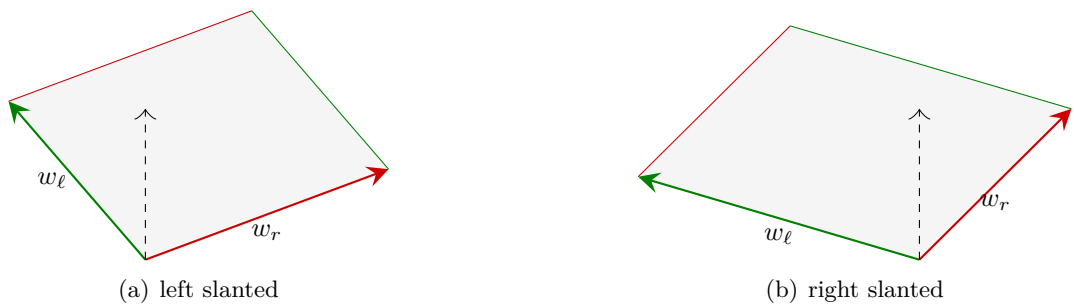

\begin{center}
\subfigure[left slanted\label{fig:lslanted_par}]{\picinput{left_slanted}}  \hspace{3cm}
\subfigure[right slanted\label{fig:rslanted_par}]{\picinput{right_slanted}} 
\end{center}
\caption{examples of left and right slanted parallelograms}
\label{fig:slanted_torii}
\end{figure}

One step of the algorithm is as follows. If the parallelogram $Q$ defined by the basis $(\scw_\ell, \scw_r)$ is left slanted, consider the new basis $\scw'_\ell = \scw_\ell$ and $\scw'_r = \scw_d = \scw_r + \scw_\ell$. Geometrically, the new parallelogram $Q'$ with sides $(\scw'_\ell,\scw'_r)$ is obtained by cutting the old one along a diagonal and pasting the lower triangle as in Figure~\ref{fig:left_cut_paste}. Remark that, after this operation, the vertical axis is contained in the parallelogram $Q'$. We call such move a \emph{left move}. If the parallelogram is right slanted, then we made a \emph{right move} in a symmetric way (see Figure~Figure~\ref{fig:right_cut_paste}).

\begin{figure}[!ht]
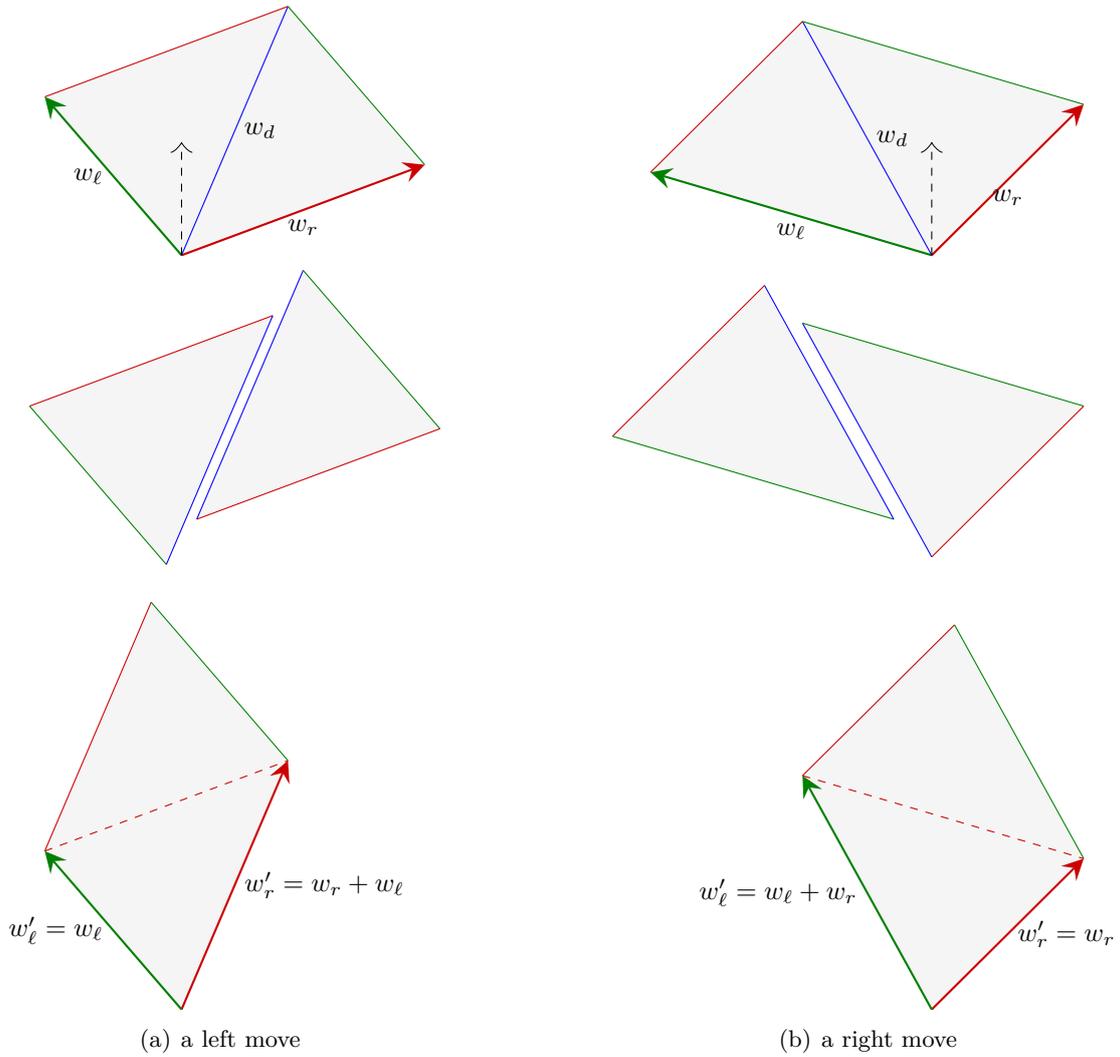

\begin{center}
\subfigure[a left move\label{fig:left_cut_paste}]{\picinput{left_cut_and_paste}} \hspace{2cm} 
\subfigure[a right move\label{fig:right_cut_paste}]{\picinput{right_cut_and_paste}}   
\caption{a left move and a right move for the two examples of Figure~\ref{fig:slanted_torii}\label{fig:torus_cut_and_paste}}
\end{center}
\end{figure}

Let us set $\scw^{(0)}_\ell = \scw_\ell$ and $\scw^{(0)}_r = \scw_r$. Applying successively the above step we get a sequence of bases $(\scw^{(n)}_\ell, \scw^{(n)}_r)$ of $\Lambda$ for which the imaginary parts of both vectors in the base tend to infinity. Notice that the algorithm may stop after a finite number of steps, but this is the case if and only if  the lattice $\Lambda$ contains a vertical vector.
Let us also remark that one can also define a cut and paste operation which is the inverse operation to the diagonal change defined above. Thus, the algorithm can also be defined in backward time. The backward orbit is infinite if and only if $\Lambda$ does not contain horizontal vectors. 
In the sequel, we assume that $\Lambda$ does neither contain vertical nor horizontal vectors.

Let us recall some well known Diophantine approximation properties of this sequence of bases. Let $\Quad$ be the set of primitive vectors of $\Lambda$ with positive imaginary part. One can decompose $\Quad$ as  union of $\Quad_\ell$ and $\Quad_r$ which denote respectively the primitive vectors with positive and negative real part. Remark that, for any $n \in \NN$, $\scw^{(n)}_\ell$ belongs to $\Quad_\ell$ and $\scw^{(n)}_r$ belongs to $\Quad_r$.
\begin{definition}\label{def:BAtorus}
A vector $\sc \in \Quad_r$ is a \emph{(right) geometric best approximation} if 
\[
\forall \ssc \in \Quad_r, \quad \Im(\ssc) < \Im(\sc) \Rightarrow  |\Re(\ssc)| > |\Re(\sc)| .
\]
The definition  of  \emph{left geometric best approximations} is obtained by replacing $\Quad_r$ by $\Quad_\ell$.
\end{definition}

\begin{remark}
In geometric terms, $\sc$ is a right best approximation if and only if the rectangle $R(\sc):=\left[0,\Re(\sc)\right] \times \left[0, \Im(\sc)\right]$ does not contains any vector of $\Lambda$ in its interior.
\end{remark}
The geometric continued fraction algorithm \emph{constructs} all geometric best approximations in the following sense:
\begin{theorem}\label{thm:torus_best_approx}
Let $\Lambda$ be a lattice in $\CC$ that does not contain neither horizontal nor vertical vectors. Then the sequence of bases $(\scw^{(n)}_\ell, \scw^{(n)}_r)$ built from the algorithm is uniquely defined up to a shift in the numbering. Moreover, the vectors $\scw^{(n)}_\ell$ and $\scw^{(n)}_r$ are exactly the geometric best approximations.
\end{theorem}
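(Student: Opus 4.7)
The plan is to prove the theorem via two inclusions: (i) every vector appearing in the algorithm is a geometric best approximation on the appropriate side, and (ii) conversely, every geometric best approximation appears as some $\scw^{(n)}_\ell$ or $\scw^{(n)}_r$. Uniqueness up to shift in the numbering then follows immediately, because the forward and backward elementary moves are deterministic, the set of best approximations is an intrinsic invariant of $\Lambda$, and this set is totally ordered on each side by the imaginary part; hence any starting wedge basis must sit inside the same bi-infinite sequence, up to relabeling.

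For (i), the key tool is the empty-rectangle characterization recorded in the remark after Definition~\ref{def:BAtorus}: it suffices to show that the open rectangle $(0,\Re(\scw^{(n)}_r))\times(0,\Im(\scw^{(n)}_r))$ contains no point of $\Lambda$. Since $(\scw^{(n)}_\ell,\scw^{(n)}_r)$ is a basis, any candidate lattice point can be written uniquely as $v=a\scw^{(n)}_\ell+b\scw^{(n)}_r$ with $a,b\in\ZZ$. A short case analysis on the signs of $a$ and $b$ rules out every possibility: the sign constraints $\Re(\scw^{(n)}_\ell)<0<\Re(\scw^{(n)}_r)$ and $\Im(\scw^{(n)}_\ell),\Im(\scw^{(n)}_r)>0$, combined with the rectangle inequalities $0<\Re(v)<\Re(\scw^{(n)}_r)$ and $0<\Im(v)<\Im(\scw^{(n)}_r)$, each time force a contradiction on one of the two coordinates. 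The argument for $\scw^{(n)}_\ell$ is entirely symmetric.

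For (ii), fix a right best approximation $v\in\Quad_r$. The imaginary parts $\Im(\scw^{(n)}_r)$ are non-decreasing in $n$, tend to $+\infty$ as $n\to+\infty$, and tend to $0$ as $n\to-\infty$ once the backward moves are iterated, so there is a largest index $n$ with $\Im(\scw^{(n)}_r)<\Im(v)$. Applying the definition of best approximation to the pair $(v,\scw^{(n)}_r)$ yields $|\Re(v)|<\Re(\scw^{(n)}_r)$. Expanding $v=a\scw^{(n)}_\ell+b\scw^{(n)}_r$ and combining these inequalities with the wedge sign conditions and with the fact that $v\in\Quad_r$ pins down the coefficients $(a,b)$ to a very short list of possibilities, each of which identifies $v$ with a vector of the form $\scw^{(n)}_r+k\scw^{(n)}_\ell$ arising during the chain of left moves that starts at step $n$; that is, $v=\scw^{(m)}_r$ for some $m>n$. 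The symmetric argument handles left best approximations.

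The main obstacle I anticipate is in step (ii). The algorithm can perform arbitrarily long strings of consecutive left moves (or right moves) between two changes of dominant side, during which many best approximations appear on the same side in rapid succession, and the delicate point is to check that none of them is skipped. Concretely, the integer $k$ at which one switches from left to right moves must be exactly the largest integer for which $\scw^{(n)}_r+k\scw^{(n)}_\ell$ still lies in $\Quad_r$, and each intermediate vector $\scw^{(n)}_r+j\scw^{(n)}_\ell$ for $0\le j\le k$ must be verified to be a best approximation. Controlling this bookkeeping, and making sure step (i) feeds cleanly into step (ii) so that the partial ordering by imaginary parts on each side exactly matches the order of appearance in the algorithm, is where the real work lies.
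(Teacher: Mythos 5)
Your proposal is correct, and it takes a genuinely different route from the paper. The paper does not give a standalone proof of this torus statement: immediately after the theorem it remarks that the result can be deduced from Khinchin's classical theorem on intermediate convergents, and then defers the actual proof to the much more general Theorem~\ref{thm:wedges_are_best_approx} (proved in Section~\ref{sec:BA_and_bisp} via staircases, immersed rectangles and a three-rectangle decomposition). Your argument is instead a direct, purely lattice-theoretic one: both the ``emptiness of the rectangle'' in step (i) and the ``pinning down of $(a,b)$'' in step (ii) rely on writing $v=a\scw^{(n)}_\ell+b\scw^{(n)}_r$ with $a,b\in\ZZ$, which has no analogue in higher genus where displacement vectors of saddle connections do not form a lattice. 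The trade-off is that your argument is shorter and more self-contained for the torus, while the paper's strategy is forced to be more geometric precisely because it must work for all of $\CCC^{hyp}(k)$.

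Two small points worth tightening in a write-up. First, in step (ii), phrasing it as ``there is a largest index $n$ with $\Im(\scw^{(n)}_r)<\Im(v)$'' requires one to have already checked that $\Im(\scw^{(n)}_r)\to 0$ as $n\to-\infty$ (the paper only states that the backward algorithm is defined when $\Lambda$ has no horizontal vectors, and the claim that the imaginary parts shrink needs a line of justification from discreteness). A cleaner formulation avoids this by showing directly that between two consecutive vectors $\scw^{(n)}_r$ and $\scw^{(n+1)}_r$ there is no right best approximation with strictly intermediate imaginary part; your coefficient analysis then yields $(a,b)=(1,1)$ exactly as you sketched, and one also checks that $n\mapsto n+1$ must be a left move for the interval to be nontrivial. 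Second, the uniqueness claim deserves one more sentence: knowing the totally ordered lists of left and right best approximations determines the sequence of vectors, but to recover the sequence of \emph{bases} $(\scw^{(n)}_\ell,\scw^{(n)}_r)$ one should note that the partner of a given right best approximation is forced (for instance, it is the left best approximation of largest imaginary part smaller than that of its partner), which follows from the order-of-appearance statement you establish. Neither of these is a genuine gap; the outline is sound.
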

The above theorem can be interpreted and proved in terms of Diophantine approximation: intermediate convergents of a real number $\alpha$ are exactly the approximation of the first kind (see~\cite[thm~15 p.~22]{Khinchin64}). We will prove this statement in much more generality in Theorem~\ref{thm:wedges_are_best_approx}.

\vspace{0.5cm}

The quotient $\TT_\Lambda = \CC / \Lambda$ is a flat torus on which the origin is marked. On $\TT_\Lambda$ there is a family of linear flows, which are the quotients of the straight line flows $\phi^\theta_t: z \mapsto z + t e^{\sqrt{-1}\, \theta}$ where $\theta$ is a fixed element in the circle $S^1 = \RR / (2 \pi) \ZZ$. A \emph{saddle connection} is a trajectory of a linear flow from the marked point to itself. There is a one to one correspondence between saddle connections and primitive vectors of $\Lambda$. The algorithm hence produces saddle connections which give better and better approximation of the vertical linear flow.

In \S\ref{subsubsec:intro_Teich} we recall the well-known connection of the continued fraction algorithm with the geodesic flow on the modular surface and explain that the geometric continued fraction algorithm also detects \emph{systoles} for the geodesic flow. 

\subsection{Diagonal changes algorithms for translation surfaces} \label{subsec:intro_def}
We start this section by defining translation surfaces, which are generalizations of flat tori. We then introduce the notion of wedges and their associated quadrangulations. Using them, we define algorithms which consist of diagonal changes and provide a generalization for translation surfaces of the continued fraction.
 
\subsubsection{Translation surfaces, wedges and quadrangulations.}\label{subsubsec:intro_quadrangulations}
A translation surface can be defined by gluing polygons in the following way. Let $(P_i)_i$ be a finite collection of polygons in the plane $\CC$, with a pairing of edges such that for each edge $e$ of a polygon $P_i$ there is an edge $\sigma(e)$ of a polygon $P_j$ such that $e$ and $\sigma(e)$ are parallel, of the same length and have opposite outgoing normal vector (with respect to their polygon). Let us identify each edge $e$ with the corresponding edge $\sigma(e)$ by the unique translation that sends $e$ to $\sigma(e)$. The quotient $X$ of $\sqcup P_i$ under those identification is called a \emph{translation surface}. We will always assume that a translation surface is connected. Flat tori (see~\S\ref{subsec:torus_induction}) are examples of translation surfaces built from one parallelogram and see Figure~\ref{fig:g2example} for a translation surface built from 3 quadrilaterals.

Let $\Sing = \Sing(X)$ be the finite subset of points of $X$ which are images of vertices of the polygons $P_i$ in $X$. Such points are called \emph{singularities} of $X$. The surface $X$ carries a flat (Euclidean) metric on $X \backslash \Sing$ induced by the Euclidean metric on the plane, with conical singularities at the points in $\Sing$ with cone angles of the form  $2\pi k$ with $k \in \NN$. A \emph{cone-point} with cone angle $2\pi k$  has a neighborhood which is isometric to a finite $k$-sheeted cover of the plane branched at the origin, which can be parametrized by polar coordinates $(\rho, \theta)$ where $\rho \in \RR^+$ and $\theta \in \RR/(2 \pi k) \ZZ$. 

The surface $X$ also inherits a \emph{translation structure} from $\CC$, which is an atlas on $X \backslash \Sing$ whose transition maps are translations. On $X \backslash \Sigma$ there is a well defined notion of (oriented) directions and hence one can define \emph{linear flows} which correspond to moving along lines in a given direction in $S^1$. The flow  $\phi^\theta_t$ in direction $\theta \in S^1$ is explicitly given in local charts by $\phi^\theta_t: z \mapsto z + t e^{\sqrt{-1}\, \theta}$.  Note that the flow is not well defined at $x \in (X \backslash \Sing)$ if its orbit $\phi^\theta_t(x)$ goes into a singularity.

A translation surface $X$ is in particular a Riemann surface endowed with a non-zero Abelian differential.
The complex structure is obtained from the translation charts and the differential form, generally denoted $\omega$, is obtained by lifting $dz$.
Conversly, a compact Riemann surface with a non-zero Abelian differential $\omega$ determines a translation surface (by finding local coordinates $z$ such that $\omega$ is locally $dz$). If $x \in X$ is a conical singularity of angle $2 \pi k$ then we can write locally $\omega$ around $x$ as $z^{k-1} dz$. For more details on the various definitions of translation surfaces, we refer to~\cite{Masur} and~\cite{Zorich-survey}.

\smallskip

We consider the following notion of isomorphism between translation surfaces. If the surface $S$ is defined from some polygons and identifications of their edges then we allow the two following operations. The \emph{cut operation} consists in cutting a polygon along a segment that joins two of its vertices and, in the new set of polygons, identify the two newly created sides. The \emph{paste operation} consists in gluing two polygons that were identified. 
Two surfaces $X$ and $X'$ defined respectively from $((P_i)_i,\sigma)$ and $((P'_j)_j,\sigma')$ are \emph{isomorphic} if there exists a sequence of cut and paste operations that goes from $((P_i)_i,\sigma)$ to $((P'_j)_k,\sigma')$ (where we consider that two polygonal representation are equal if we can pass from one to the other by translating the polygons).
The \emph{stratum} $\HHH(k_1-1,\ldots,k_n-1)$ of translation surfaces is the set of isomorphisms classes of translation surfaces with conical singularities with angles $2\pi k_1$, \ldots, $2\pi k_n$, or, equivalently, of non-zero Abelian differentials with zeros of order $k_1-1$, \dots, $k_n-1$. If there are $m_i$ singularities with total angle $\pi k_i$ we use the notation $\HHH((k_1-1)^{m_1},\ldots,(k_n-1)^{m_n} )$. 

An \emph{affine diffeomorphism} $\Psi: X \to X'$ between two translation surfaces, is an homeomorphism which maps $\Sing(X)$ to $\Sing(X')$ and is affine in the coordinate charts. Because of connectedness of $X \backslash \Sing(X)$, the linear part of the affine diffeomorphism is constant and may be identified to a matrix in $\GL(2,\RR)$. We call this matrix the \emph{derivative} of $\Psi$. Two translation surfaces $X$ and $Y$ are \emph{translation equivalent} if there exists an affine diffeomorphism $\Psi: X \to Y$ whose derivative is the identity matrix. It is easy to see that two translation surfaces $X$ and $Y$ are translation equivalent if and only if $Y$ is obtained from $X$ by a sequence of cut and paste operations.

\paragraph{Bundles of saddle connections.} 
Let $X$ be a translation surface with singularities $\Sigma$ and let  $\phi_t^\theta$, $\theta \in S^1$, be the family of linear flows on $X$. 
A \emph{saddle connection} in $X$ is the orbit of some linear flow that joins two singularities of $X$. Note that if $X$ is built from a union of polygons, any side $v$ of a polygon gives a saddle connection on $X$.

If in direction $\theta$ there is no saddle connection, then the flow $\phi_t^\theta$ is minimal (meaning that any infinite trajectory is dense in $X$). This result was first proven by M.~Keane in the context of interval exchange transformations~\cite{Keane75} and the corresponding condition for interval exchange transformations (orbits of discontinuity points are infinite and  distinct) is often called \emph{Keane's condition}. On the flat torus $\CC / (\ZZ \oplus \ZZ \sqrt{-1})$ the directions of saddle connections are exactly the rational ones (ie the angles $\theta \in S^1$ for which the slope $\tan(\theta)$ is a rational number). For a general translation surface the set of directions for which there exists a saddle connection is countable but has no particular algebraic structure.

The \emph{displacement vector} (sometimes called \emph{holonomy vector}) associated to an oriented saddle connection is the vector in $\CC$ which gives the displacement between the initial and final point seen as an element of $\CC$. More precisely, a saddle connection is a set of points $(\phi^\theta_t(x))_{t \in I}$ for some point $x \in X \backslash \Sigma$ and some interval $I = [a,b] \subset \RR$, its displacement vector is $(b-a) e^{\sqrt{-1}\theta}$. Given a side of a polygon $P_i$ that defines the surface, its sides are saddle connections and their displacement are simply the sides seen as complex vectors. The displacement can also be seen as the integral of the Abelian form $\omega$ along the saddle connection. It is well known that for any translation surface $X$ the set of displacement vectors of saddle connections on $X$ is a discrete subset of $\CC$, see for example~\cite{Vorobets} or~\cite{Masur}. 

We call \emph{natural orientation} of a saddle connection $\gamma$ the unique orientation of $\gamma$ such that its displacement vector has non-negative imaginary part. We say that a saddle connection \emph{starts} (respectively \emph{ends}) at a singularity if that singularity is the first endpoint (respectively last endpoint) of the saddle connection according to its natural orientation. A saddle connection is \emph{left slanted} (respectively \emph{right slanted}) if with its natural orientation its real part is negative (resp.~positive), as shown in  Figure~\ref{fig:lslanted_sc} (resp.~Figure~\ref{fig:rslanted_sc}).
 
\begin{figure}[!ht]
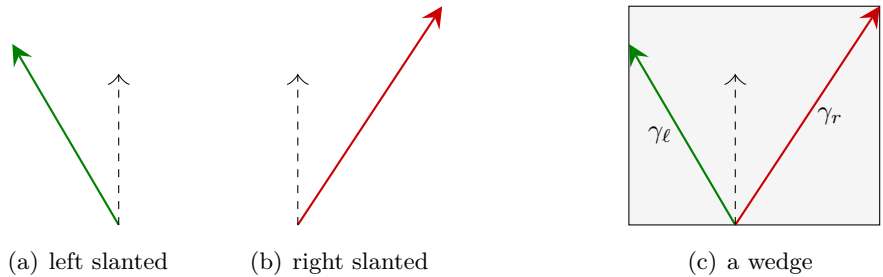

\begin{center}
\subfigure[left slanted \label{fig:lslanted_sc}]{\picinput{lslanted}}  \hspace{.5cm} 
\subfigure[right slanted  \label{fig:rslanted_sc} ]{\picinput{rslanted}}  \hspace{2cm} 
\subfigure[a wedge  \label{fig:wedge}]{\picinput{wedge}}  
\caption{left and right slanted saddle connections and a wedge}
\label{fig:wedges}
\end{center}
\end{figure}

Let $\xQuad = \xQuad(X)$ denote the set of all saddle connections on a given translation surface $X$ and let $\lQuad$ (respectively  $\rQuad$) the subset of all left-slanted (respectively right-slanted) saddle connections.
Saddle connections in $\xQuad$ can be subdivided as follows into subsets, which (following the notation introduced by L.~Marchese in \cite{Marchese}) we will call \emph{bundles of saddle connections}.
Assume that the singularity set $\Sigma$ consist of $n$ singularities of cone-angles  $2 \pi k_1, \dots 2\pi k_n$. Remark that, if the conical angle at $p_i \in \Sigma$ is $2\pi k_i$, from $p_i$ there are $k_i$ outgoing trajectories of the vertical linear flow and  $k_i$ outgoing trajectories of the horizontal linear flow (since $p_i$ has a neighborhood isomorphic to $k_i$ planes).  
For each $p_i \in \Sigma$, choose a reference horizontal ray $v_i$ starting from $p_i$.
For any two linear trajectories $\gamma, \gamma '$ starting at $p_i$ we denote by $\angle(\gamma,\gamma') \in [0, 2\pi k_i)$ the angle between them. Each saddle connection $\gamma$ starting at $p_i$ belongs to one of the $k_i$ \emph{outgoing half planes}, that is 
the angle $\angle(\gamma, v_i)$ with respect to the chosen horizontal $v_i$ from $p_i$ satisfies 
\begin{equation*}
  2 \pi j \leq \angle(\gamma, v_i) <  2 \pi j + \pi, \quad \text{for a unique}\ 0 \leq j < k_i.
\end{equation*}
Two saddle connections belong to the same \emph{bundle} if and only if they start from the same singularity $p_i$ and \emph{belong to the same half-plane}.
Remark that there are $k$ bundles of saddle connections on $X$, where $k = k_1 + \dots + k_n$ is the total angle. We will label them with the integers $1$, \dots, $k$ and denote them by $\Quad_1$, \dots, $\Quad_k$.

\paragraph{Wedges.} 
In the case of the torus, the diagonal changes algorithm produces a sequence of bases of saddle connections which form a wedge and provide better and better approximations of the vertical. On a translation surface, the algorithms we consider will produce a sequence of collections of $k$ \emph{wedges} (defined below), one for each of the $k$ vertical rays in $X$ emanating from the singularities.

\begin{definition}[wedge]
A \emph{wedge} $\scw$ on a translation surface $X$ is a pair of saddle connections $\scw=(\scw_\ell,\scw_r)$ such that:
\begin{itemize}
\item[(i)] $\scw_\ell$ and $\scw_r$ start from the same conical singularity of $X$,
\item[(ii)] $\scw_\ell$ is left-slanted and $\scw_r$ is right-slanted,
\item[(iii)] $(\scw_\ell,\scw_r)$ consist of two edges of an embedded triangle in $S$.
\end{itemize}
\end{definition}
A picture of a wedge is shown in Figure~\ref{fig:wedge}. Remark that $(i)$ and $(iii)$ are equivalent to asking that the saddle connections $\scw_\ell$ and $\scw_r$ forming the wedge belong to the same bundle.  
Remark also that a wedge has the property that it contains a unique vertical trajectory, that is there is exactly one trajectory of the vertical flow which starts from the conical singularity shared by $\scw_\ell$ and $\scw_r$ and intersects the interior of the triangle with edges  $\scw_\ell$ and $\scw_r$. 

\paragraph{Quadrangulations.}
Let us now define special decompositions of $X$ into polygons that are quadrilaterals. A \emph{quadrilateral} $q$ in a flat surface $X$ is the image of an isometrically embedded quadrilateral in $\CC$ so that the vertices of $q$ are singularities of $X$ and there is no other singularities of $X$ in $q$.
 
\begin{definition}[admissible quadrilateral] \label{def:admissible}
A quadrilateral $q$ in $X$ is \emph{admissible} if there is exactly one trajectory of the vertical linear flow of $X$ starting from one of its vertices and exactly one ending in a vertex. Equivalently, it is admissible if left-slanted and right-slanted saddle connections alternate while we turn around the quadrilaterals.
\end{definition}
Examples of admissible and non-admissible quadrilaterals are given in Figure~\ref{fig:admissibles}.

\begin{figure}[!ht]
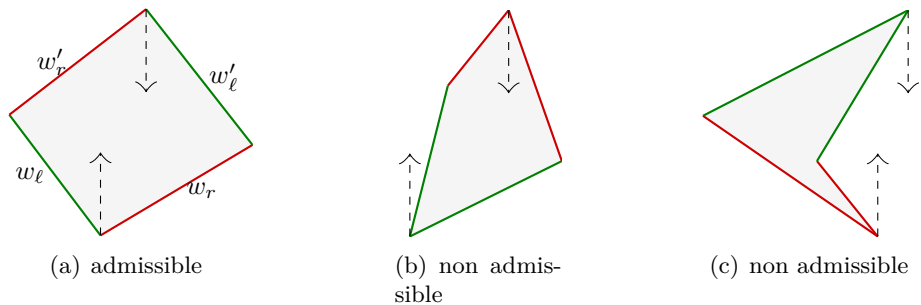

\begin{center}
\subfigure[admissible \label{fig:admissible} ]
{\picinput{admissible}}  \hspace{16mm}
\subfigure[non admissible \label{fig:nonadmissible1} ]
{\picinput{nonadmissible1}}  \hspace{16mm}
\subfigure[non admissible \label{fig:nonadmissible2}]
{\picinput{nonadmissible2}} 
\caption{examples of admissible and non-admissible quadrilaterals}
\label{fig:admissibles}
\end{center}
\end{figure} 

Let $q$ be an adimssible quadrilateral. We will refer to the saddle connections starting from the same singularity as the \emph{bottom sides} of the quadrilateral $q$ and to the ones ending in the same singularity as the \emph{top sides} of $q$. Furthermore, we will call~\emph{bottom right side} (resp.~\emph{bottom left side}) the right-slanted (resp.~left-slanted) bottom side of $q$ and~\emph{top right side} (resp.~\emph{top left side}) the left-slanted (resp.~right-slanted) top side of $q$. Remark that from the definition it follows that the \emph{bottom} sides of an admissible quadrilateral $q$ form a wedge. We will refer to it as \emph{the wedge of the quadrilateral} $q$.

\begin{definition}[quadrangulation]
 A \emph{quadrangulation} $Q$ 
 of $X$ is a decomposition of $X$ into a union of {admissible} quadrilaterals. 
\end{definition}
Given a quadrangulation $Q$, we write  $q \in Q$ if $q$ is a quadrilateral in the decomposition  and we call \emph{wedges of the quadrangulation} $Q$  the collection of wedges of all quadrilaterals in $Q$. An example of a quadrangulation is given in Figure~\ref{fig:g2example}: the quadrilaterals $q_1, q_2, q_3$ give a quadrangulation of a surface in genus $2$ with one $6\pi$ conical singularity.
 \begin{figure}[!ht]
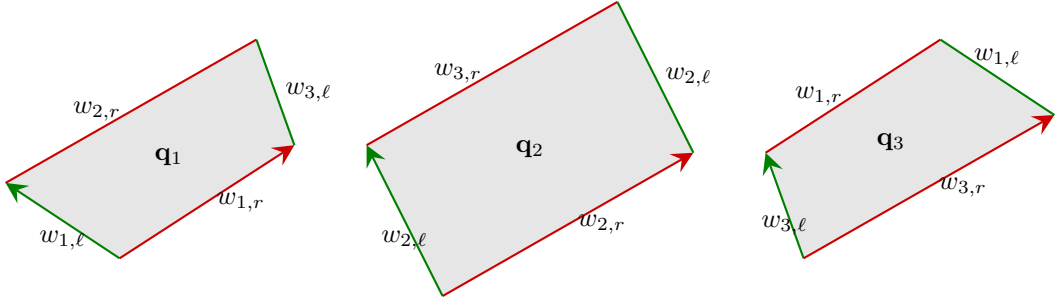

\begin{center}
\picinput{exampleg2}   
\caption{a quadrangulation of a surface in $\mathcal{H}(2)=\CCC^{hyp}(3)$}
\label{fig:g2example}
\end{center}
\end{figure}
  Let us stress that quadrilaterals in a quadrangulation are by definition admissible. 
 As each quadrilateral is glued to some other, each top side of a quadrilateral is also the bottom side of another quadrilateral, thus it belongs to a wedge. Hence, the  wedges of $Q$ on the surface $X$  completely determine the quadrangulation. In~\S\ref{sec:staircasedata} we will introduce a combinatorial datum given by a pair of permutations that describes how quadrilaterals are glued to each other. 

\subsubsection{Diagonal changes via staircase moves} \label{subsubsec:intro_algorithm}
Let $Q$ be a quadrangulation of a translation surface. A \emph{diagonal change} consists in replacing the left or right part of the wedge of a quadrilateral $q \in Q$ by the diagonal of the quadrilateral $q$. We consider elementary moves on the set of wedges (the \emph{staircase moves}) which, by performing simultaneous diagonal changes, produce a new set of wedges which correspond to a new quadrangulation $Q'$ of $X$. The moves of the geometric continued fraction algorithm in \S\ref{subsec:torus_induction} are a special case of staircase moves.

\paragraph{Staircases and staircase moves.}
 Let $Q$ be a quadrangulation of a translation surface $X$ and let $\scw = (\scw_{\ell},\scw_{r})$ be the wedge of a  quadrilateral $q \in Q$. We denote by $\scw_{d}$ the \emph{diagonal} saddle connection of $q$ which starts at the singularity of $\scw$ and ends at the top singularity of $q$.   

As in the case of the torus, we say that a quadrilateral $q$ is \emph{left-slanted} if the vertical issued from the bottom singularity crosses the top left side of $q$ and \emph{right-slanted} if it crosses the top right side (see Figure~\ref{fig:slanted_torii} for an illustration). Remark that the diagonal $\scw_{d}$ of $q$ form a wedge with $\scw_{\ell}$ (respectively with $\scw_{r}$) if and only if $q$ is left-slanted (respectively right-slanted) (see Figure \ref{fig:slanted_torii}). Therefore, for each quadrilateral we have the following alternatives:
\begin{itemize}
  \item if the quadrilateral $q$ is \emph{left-slanted},  either we keep the wedge $(\scw_{\ell},\scw_{r})$ or we do a \emph{left-diagonal change}, that is we replace it by $(\scw_{\ell},\scw_{d})$ (which in this case is again a wedge);
  \item if the quadrilateral $q$ is \emph{right-slanted}, either we keep the wedge $(\scw_{\ell},\scw_{r})$ or we do a \emph{right-diagonal change}, that is we replace it by the $(\scw_{d},\scw_{r})$ (which in this case also is a wedge); 
\end{itemize}

The key geometrical object which allow to perform diagonal changes consistently and hence define elementary moves are
\emph{staircases}:    
\begin{definition}[staircase] \label{def:cstaircase} 
Given a quadrangulation $Q $ of $X$, a \emph{left staircase} $S$ \emph{for}  $Q$ (respectively a \emph{right staircase} $S$ \emph{for} $Q$) is a subset $S \subset X$ which is the union of quadrilaterals  
$q_1, \dots, q_n$ of $Q$ that are cyclically glued so that the top left (resp.~top right) side of $q_i$ is identified with the bottom right (resp.~bottom left) side of $q_{i+1}$ for $1\leq i < k$ and of $q_1$ for $i=n$. 

A left (respectively right) staircase $S$ is \emph{well slanted} if all its quadrilaterals are left (resp.~right) slanted.
\end{definition}
An example of a right-staircase (which explain the choice of the name \emph{staircase}) is given in Figure~\ref{fig:rslanted_staircase1}: remark that the two sides labeled by $\scw_{1,\ell}$ are identified, so that the staircase is the union of $3$ quadrilaterals. An example of a well slanted staircase is the right staircase in Figure~\ref{fig:rslanted_staircase1} (all three quadrilaterals all right slanted), while the staircase in Figure~\ref{fig:rslanted_staircase2} is not well slanted (it is a right-staircase in which $q_1$ and $q_3$ are right slanted but $q_2$ is left slanted).

We remark that a left staircase (respectively right staircase) $S$ in $X$ is a topological cylinder whose boundary consists of a union of saddle connections which are all left slanted (resp.~all right slanted). 
Remark also that a staircase $S$ for $Q$ has a natural decomposition as union of admissible quadrilaterals induced by the quadrangulation $Q$ of $S$. 

\begin{figure}[!ht]
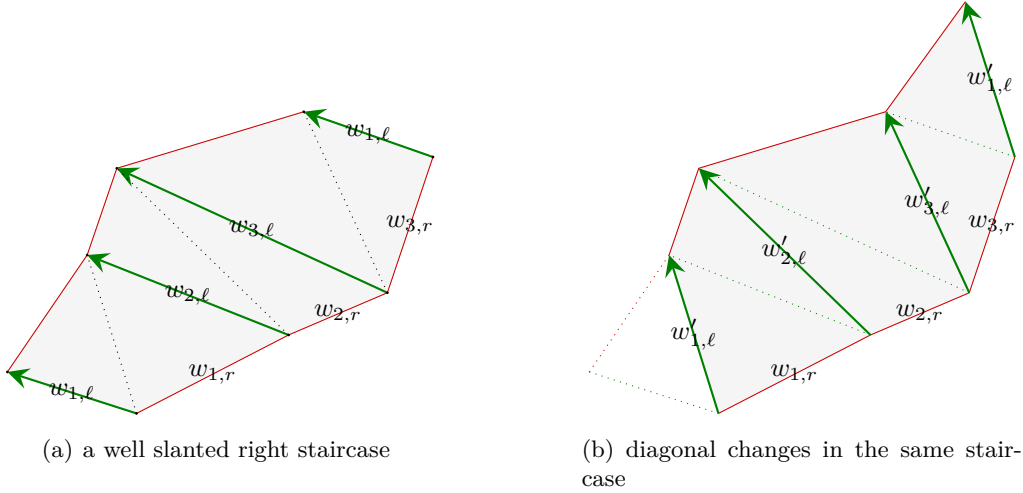

\begin{center}
\subfigure[a well slanted right staircase \label{fig:rslanted_staircase1}]{\picinput{staircaseright1}}  \hspace{1.6cm} 
\subfigure[diagonal changes in the same staircase \label{fig:rslanted_staircase2}]{\picinput{staircaseright2}}    
\label{fig:rslanted_staircase}
\caption{diagonal changes in a right staircase}
\end{center}
\end{figure}

\begin{definition}[staircase move]\label{def:staircasemove}
  Given a quadrangulation $Q$ and a well slanted left-staircase $S$ (respectively a well slanted right staircase $S$), the \emph{staircase move} in $S$ is the operation which consists in doing simultaneously left (resp.~right) diagonal changes in all the quadrilaterals of $S$.
\end{definition}
Remark that given a quadrangulation there may be none or several well slanted staircases. In the first case no staircase  move is possible while in the latter there is a choice of staircase moves.

The importance of staircases lies in the following elementary result (see Lemma \ref{lem:staircase_move_data}): 
if $Q$ is a quadrangulation of a surface $X$ and $S$ be a well slanted staircase in $Q$, the staircase move in $S$ produces a new quadrangulation $Q'$ of $X$.  Furthermore, one can show that staircase moves are the minimal possible ways to combine individual diagonal changes consistently in order to keep a quadrangulation (see Lemma~\ref{lem:stable}).

\paragraph{Diagonal changes algorithms for surfaces in hyperelliptic strata.}
We prove the existence of quadrangulations and diagonal changes given by staircase moves for a class of translation surfaces which belong to the so called \emph{hyperelliptic components of strata}. Here below we provide an introduction to hyperelliptic components, but we refer to~\S\ref{sec:hyperellipticdef} for more details. 

An affine automorphism $s: X \to X$ of a translation surface $X$ is an \emph{hyperelliptic involution} if it is an involution, that is $s^2$ is the identity, and the quotient of $X \backslash \Sigma(X)$ by  $s$ is a (punctured) sphere. An example of a surface which admits an hyperelliptic involution is given in Figure~\ref{fig:g2example}. The surface is obtained from three quadrilaterals, one which is fixed by the involution (the quadrilateral $q_2$) and the  other two which are  exchanged ($q_1$ and $q_3$). On the picture, the hyperelliptic involution can be seen as a rotation by $180$ degrees.
One can show that if a translation surface admits an hyperelliptic involution, then this involution is unique.

Strata of translation surfaces are generally not connected and their connected components were classified by M.~Kontsevich and A.~Zorich~\cite{KontsevichZorich03}. Hyperelliptic components are the connected components of strata with the property that each surface in them admits an hyperelliptic involution. From the Kontsevich-Zorich classification, it follows that in each stratum $\HHH(k_1,\ldots,k_n)$  there are either one, two or three connected components, some of which are hyperelliptic. For each integer $k \geq 1$ there is exactly one hyperelliptic component which contains surfaces with total conical angle $2\pi k$. We denote this component by $\CCC^{hyp}(k)$. If $k$ is odd, then $\CCC^{hyp}(k) \subset \HHH(k-1)$ while if $k$ is even $\CCC^{hyp}(k) \subset \HHH(k/2-1, k/2-1)$ (see also Theorem~\ref{thm:KZhyperelliptic})
. For $1 \leq k \leq 4$ (that correspond to genus $1$ or $2$), the strata are connected and we have the following equalities: $\HHH(0) = \CCC^{hyp}(1)$ (this is the torus case), $\HHH(0,0) = \CCC^{hyp}(2)$, $\HHH(2) = \CCC^{hyp}(3)$ and $\HHH(1,1) = \CCC^{hyp}(4)$.

As in the genus $2$ example in Figure~\ref{fig:g2example} above, if $X$ belongs to a hyperelliptic component $\CCC^{hyp}(k)$ it turns out that all quadrilaterals in the quadrangulation are either parallelograms $q$, in which case $s(q) = q$, or come into pairs $q_i, q_j$  such that $q_i \neq q_j$ and $s(q_i)=q_j$ in which case $q_i$ and $q_j$ have parallel diagonals. This will be proved in Lemma~\ref{lem:hyperelliptic_staircase}.
 
\smallskip

Our main results for translation surfaces in hyperelliptic components are the following two theorems.
\begin{theorem} \label{thm:existence_of_quadrangulation}
Let $X$ be a surface in a hyperelliptic component $\CCC^{hyp}(k)$ that admits no horizontal and no vertical saddle connections.
Then $X$ admits a quadrangulation.
\end{theorem}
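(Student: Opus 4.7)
The strategy is to produce a wedge in each of the $k$ bundles, then use the hyperelliptic involution $s$ to glue the corresponding wedge triangles into admissible quadrilaterals covering $X$. The hyperelliptic symmetry is decisive because $s$ is an affine involution with derivative $-\operatorname{Id}$ (since the quotient is a sphere): it exchanges left- and right-slanted saddle connections and sends outgoing bundles to incoming bundles, which is precisely what is needed to match top sides of one quadrilateral to bottom sides of another.

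For the first step, I would construct a wedge in every bundle $\Quad_j$ rooted at a singularity $p$. Since $X$ has no vertical saddle connection, the outgoing vertical ray $\ell_j$ from $p$ avoids $\Sing$, and since $X$ has no horizontal saddle connection, every element of $\Quad_j$ is strictly left- or right-slanted. Using the discreteness of the set of displacement vectors of saddle connections in $\CC$, I would pick $\scw_r \in \Quad_j^r$ of smallest positive real part and $\scw_\ell \in \Quad_j^\ell$ of smallest absolute value of real part, so that the geodesic triangle spanned by them at $p$ contains the initial segment of $\ell_j$. A pruning argument --- if an interior singularity is present, replace the offending side by a strictly shorter saddle connection from $p$ towards that singularity --- terminates by discreteness and yields an embedded triangle $T_j$, i.e.\ a wedge in the sense of the definition.

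To complete the quadrangulation I would then ``cap'' each $T_j$ by its $s$-image. The map $s$ sends the wedge at $p$ based on $(\scw_\ell,\scw_r)$ to a pair of saddle connections \emph{ending} at $s(p)$; in the language of admissible quadrilaterals, this provides the two top sides of the quadrilateral $q_j$ whose bottom is the wedge triangle $T_j$. The collection $\{q_j\}_{j=1}^{k}$ splits, as in Lemma~\ref{lem:hyperelliptic_staircase}, into $s$-fixed parallelograms and $s$-paired quadrilaterals with parallel diagonals. Disjointness of interiors across the $q_j$ follows from the embeddedness of each $T_j$ together with the $s$-equivariance of the construction, while the equality $\sum_j \operatorname{area}(T_j) = \tfrac12 \operatorname{area}(X)$ combined with $s$-doubling shows that the union is all of $X$.

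The main obstacle will be the first step: producing wedges in different bundles in a globally consistent way. Extracting a wedge within a single bundle from discreteness alone is easy, but making $k$ such choices simultaneously so that the $s$-completed quadrilaterals tile $X$ without overlap requires exploiting the hyperelliptic symmetry as a global constraint, and this is precisely where the hypothesis $X \in \CCC^{hyp}(k)$ enters essentially --- in a non-hyperelliptic component, the analogous pairing simply need not exist.
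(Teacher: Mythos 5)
There is a genuine gap, and it sits exactly where you flag it yourself: the global consistency of the $k$ choices. Your ``capping'' step is not correct as stated. The hyperelliptic involution $s$ has derivative $-\operatorname{Id}$, so it sends the bottom sides (the wedge) of a quadrilateral $q_j$ to the \emph{top} sides of its image quadrilateral $q_{\iota(j)}$, not of $q_j$ itself. Declaring that the top sides of $q_j$ are $s(\scw_{j,\ell})$ and $s(\scw_{j,r})$ forces every $q_j$ to be a parallelogram, which is impossible in general: by Lemma~\ref{lem:hyperelliptic_staircase} only the $s$-fixed quadrilaterals are parallelograms, and already in the genus~$2$ example of Figure~\ref{fig:g2example} two of the three quadrilaterals are not. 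Likewise, choosing a wedge independently in each bundle by a minimality criterion gives no reason why the resulting triangles are pairwise disjoint, why their top boundaries match up as bottom sides of other quadrilaterals (the train-track compatibility that defines a quadrangulation), or why the areas sum to $\tfrac12\area(X)$; the area identity is asserted, not proved, and disjointness does not follow from ``embeddedness plus $s$-equivariance'' since the $T_j$ in different bundles can overlap. A smaller but real problem: a saddle connection of ``smallest positive real part'' in a bundle does not exist, because by Lemma~\ref{lem:existence_of_BA} each bundle contains best approximations with real part tending to $0$ (and imaginary part tending to $\infty$); any selection must simultaneously control the imaginary part.

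For comparison, the paper avoids the global-matching problem entirely by an induction on $k$: it first builds a \emph{single} admissible quadrilateral all of whose sides are geometric best approximations (Lemma~\ref{lem:existence_of_BA} gives a best approximation, and Lemma~\ref{lem:existence_of_quadrilateral} completes it, via its immersed rectangle, to a unique admissible quadrilateral), and then uses the hyperelliptic structure through Lemma~\ref{lem:cut_hyp}: cutting $X$ along each side of this quadrilateral together with its $s$-image splits off surfaces lying in hyperelliptic components of strictly smaller total angle, on which the construction is repeated. The compatibility of the pieces is automatic because the quadrilaterals are produced one at a time and the cut surfaces are re-glued at the end. If you want to salvage your approach, you would need a mechanism replacing your capping and area count that actually proves the chosen wedges are the wedges of a common quadrangulation; the cut-and-paste induction of Lemma~\ref{lem:cut_hyp} is precisely such a mechanism, and without something of that kind the proposal does not yet prove the theorem.
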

 
\begin{theorem} \label{thm:existence_of_staircase_move}
Let $Q$ be a quadrangulation of a surface $X$ in $\CCC^{hyp}(k)$ and assume that no quadrilateral in $Q$ has a vertical diagonal. Then, there exists at least one well slanted staircase in $Q$.
\end{theorem}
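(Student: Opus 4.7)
The natural approach is to encode left and right staircases as cycles of two permutations on $Q$, then produce a monochromatic cycle. Because each right-slanted saddle connection arises as the top-left side of a unique $q\in Q$ and as the bottom-right side of another, the assignment $q\mapsto\sigma_L(q)$, where $\sigma_L(q)$ is the quadrilateral whose bottom-right side is identified with the top-left side of $q$, defines a permutation of $Q$ whose cycles are precisely the left staircases of $Q$; define $\sigma_R$ symmetrically via top-right / bottom-left identifications. Because no diagonal is vertical, the vertical ray from the bottom vertex of each $q$ exits through exactly one top side, partitioning $Q=Q_L\sqcup Q_R$ into left-slanted and right-slanted quadrilaterals. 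A well-slanted left (resp.\ right) staircase is exactly a $\sigma_L$-cycle contained in $Q_L$ (resp.\ a $\sigma_R$-cycle contained in $Q_R$). If either $Q_L$ or $Q_R$ is empty the conclusion is immediate, so I assume both are non-empty and argue by contradiction: suppose every $\sigma_L$-cycle meets $Q_R$ and every $\sigma_R$-cycle meets $Q_L$.

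The crucial tool is the hyperelliptic involution $s$ (see Lemma~\ref{lem:hyperelliptic_staircase}). Its derivative is $-I$, so $s$ preserves naturally-oriented displacement vectors of saddle connections; in particular $s$ preserves slantedness, giving $s(Q_L)=Q_L$ and $s(Q_R)=Q_R$. Moreover, $s$ maps a quadrilateral $q$ to $s(q)$ by a half-turn, so it exchanges the top-left side of $q$ with the bottom-right side of $s(q)$ (and the top-right with the bottom-left). A direct check then yields the conjugation relation $s\circ\sigma_L=\sigma_L^{-1}\circ s$ (and analogously for $\sigma_R$), so $s$ permutes $\sigma_L$-cycles among themselves with reversed cyclic orientation.

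To reach the contradiction, I would select an extremal quadrilateral $q^*\in Q_L$---for concreteness, one whose diagonal has the most extreme right-slanted direction among all diagonals of quadrilaterals in $Q_L$---and trace its $\sigma_L$-orbit. The identification of the top-left side of $q^*$ with the bottom-right side of $\sigma_L(q^*)$, combined with the hyperelliptic constraint that $s(\sigma_L(q^*))$ has a diagonal parallel to that of $\sigma_L(q^*)$ (Lemma~\ref{lem:hyperelliptic_staircase}), should force the diagonal of $\sigma_L(q^*)$ to be again right-slanted and no less extreme than that of $q^*$, hence $\sigma_L(q^*)\in Q_L$. Iterating, the entire $\sigma_L$-orbit of $q^*$ remains in $Q_L$, contradicting our standing assumption.

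The main obstacle is exactly this last geometric step: in non-hyperelliptic strata one can construct configurations where $\sigma_L$ moves a quadrilateral out of $Q_L$, so the argument must genuinely use the parallel-diagonal property of $s$-paired quadrilaterals. Making the extremality argument precise---quantifying how the diagonal direction of $\sigma_L(q)$ is constrained by that of $q$ together with the $s$-symmetry across the shared right-slanted side---is the core of the proof and where all the hyperelliptic hypotheses are really consumed.
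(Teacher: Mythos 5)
Your proposal is not a proof: you say yourself that the extremality step ``is the core of the proof and where all the hyperelliptic hypotheses are really consumed,'' and you only describe what that step \emph{should} do rather than carry it out. Everything before it --- identifying left/right staircases with $\pi_\ell$-, $\pi_r$-cycles, the partition $Q = Q_L \sqcup Q_R$, the fact that $s$ preserves slantedness and satisfies $s\circ\pi_\ell = \pi_\ell^{-1}\circ s$ (this is $\pi_\ell^{-1} = \iota\,\pi_\ell\,\iota$, Lemma~\ref{lem:perm_triangulation}) --- is correct but is essentially bookkeeping. The actual content is missing.

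Moreover, I do not believe the extremality mechanism as described can be made to work. Lemma~\ref{lem:hyperelliptic_staircase} tells you that $q$ and $s(q)$ have parallel diagonals and lie in the same left staircase, but $s(q)$ is in general \emph{not} the $\pi_\ell$-successor of $q$, so parallelism of $s$-paired diagonals gives no direct comparison between $\Re\,\scw_{i,d}$ and $\Re\,\scw_{\pi_\ell(i),d}$. The train-track relation $\scw_{i,d} = \scw_{i,\ell} + \scw_{\pi_\ell(i),r}$ relates the diagonal of $q_i$ to a \emph{side} of $q_{\pi_\ell(i)}$, not to its diagonal, and no monotone quantity along the $\pi_\ell$-orbit presents itself. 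You would need a genuinely new inequality here, and you don't supply one. This is a real gap, not a routine omission.

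The paper's proof of Theorem~\ref{thm:existence_of_staircase_move} takes an entirely different route: induction on $k$, with the cut-and-paste Lemma~\ref{lem:cut_hyp} as the engine. Assuming no well slanted staircase exists, one finds a right staircase containing consecutive $q_i$ (left slanted) and $q_{\pi_r(i)}$ (right slanted); the opposite slantedness forces $\iota(i)\neq\pi_r(i)$, so the shared side $\scw_{\pi_r(i),\ell}$ carries no Weierstrass point. Cutting $X$ along that side and its $s$-image produces two surfaces in hyperelliptic components of strictly smaller total angle, and a staircase move obtained on the piece containing $q_i$ by the inductive hypothesis avoids the cut edge and lifts back to $X$. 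Nothing like an extremal-direction argument appears; the hyperelliptic hypothesis enters through the topological splitting property of Lemma~\ref{lem:cut_hyp} (closed curves on the quotient sphere separate), not through comparing diagonal slopes. If you want to salvage your outline, you should look for a different invariant than diagonal direction --- or better, follow the paper's reduction-by-cutting, which is where the hyperelliptic structure is actually used.
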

These two results allow us to define diagonal changes algorithms given by staircase moves in hyperelliptic components. 
Start from a quadrangulation $Q$ of $X \in \CCC^{hyp}(k)$, which exists by Theorem~\ref{thm:existence_of_quadrangulation}. 
 Theorem~\ref{thm:existence_of_staircase_move} implies that  there exists a staircase move for $Q$. Remark that there can be more than one well slanted staircase and hence several possible moves. Diagonal changes algorithms correspond to a systematic way of choosing which staircase moves to perform. In the torus case, where quadrangulations consist of only one quadrilateral (a parallelogram), there is no choice. In~\S\ref{subsec:algorithms} we give some examples of various diagonal changes algorithms. Nevertheless, we will show that the actual choice of an algorithm in some sense does not matter, since the sequence of wedges and well slanted staircases produced by \emph{any} sequence of staircase moves is the same (see Theorem~\ref{thm:wedges_are_best_approx} below).

In various works S.~Ferenczi and L.~Zamboni (see for example~\cite{FerencziZamboni-struct, FerencziZamboni-eig}) defined and studied an induction algorithm for interval exchange transformations with symmetric permutations, namely the permutations in $S_n$ defined by $i \mapsto n-i+1$ for $1 \leq i \leq n$. These interval exchange transformations may be obtained as first return maps of linear flows on sufaces in $\CCC^{hyp}(n-1)$. We call their induction the \emph{Ferenczi-Zamboni induction} (see also~\S\ref{subsec:comparisons}). Staircase moves provide a geometric invertible extension of the elementary moves in the Ferenczi-Zamboni induction, in a sense that is made precise in Section~\ref{sec:suspensions}. We note that Theorem~\ref{thm:existence_of_staircase_move} is originally proved in~\cite{FerencziZamboni-struct} in the context of interval exchange transformations. 

In view of these two results, a natural question would be to investigate other components of strata of translation surfaces.  We do not know if in general any translation surface admit a quadrangulation. Nevertheless, in~\S\ref{subsec:nonhyp_quadrangulation} we provide examples of quadrangulations of translation surface in which no staircase move is possible.
 
\subsection{Applications of diagonal changes algorithms}\label{subsec:properties}
In this section we summarize properties of diagonal changes algorithms and highlight some of its applications: they detect geometric best approximations (see~\S\ref{subsubsec:intro_Diophantine}), allow to produce bispecial factors for symbolic codings of linear flows (see~\S\ref{subsubsec:intro_language}) and may be used to construct the sequence of systoles along a Teichm\"uller geodesic (see~\S\ref{subsubsec:intro_Teich}).

\subsubsection{Geometric best approximations}\label{subsubsec:intro_Diophantine}
The notion of \emph{geometric best approximation} is a generalization for saddle connections on translation surfaces of the one for the torus (see Definition~\ref{def:BAtorus}). To define geometric best approximations for higher genera surfaces it is natural to compare only saddle connections which belong to the same bundle. Recall that if $X$ has conical singularities with cone angles $2\pi k_1, \dots, 2\pi k_n$, there are $k = k_1 + \dots + k_n$ bundles of saddle connections (see the beginning of~\S\ref{subsubsec:intro_quadrangulations} for the definition).  Let us label them and denote them by $\xQuad[1], \ldots, \xQuad[k]$.
  Each $\xQuad[i]$ can be decomposed as $\lQuad[i] \cup \rQuad[i]$ where $\lQuad[i]$ (respectively $\rQuad[i]$) consists of left-slanted (respectively right-slanted) saddle connections in $\xQuad[i]$.

We will adopt the following convention. Remark that 
given a saddle connection on $X$ we can associate to it a pair $(i,\sc)$ where $\sc \in \CC$ is its displacement (or holonomy) vector and $0\leq i < k $ is such that the saddle connection belongs to the bundle $\Quad_i$. Conversely, knowing the bundle to which the saddle connection belong and its displacement vector $\sc \in \CC$ completely determines the saddle connection. 
Thus, 
we can abuse the notation by identifying saddle connections with their displacement vector as long as the bundle is clear from the context.
\begin{notation}\label{complex_sc}
For a saddle connection $\sc$ on a translation surface, let $\Re(\sc)$, $\Im(\sc)$ and $|\sc|$ denote respectively the real part, the imaginary part and the absolute value of the displacement vector of $\sc$.

Given a bundle $\Quad_i$ of saddle connections,  we will denote by the complex number $\sc \in \CC$ the saddle connection in $\Quad_i$ that has $\sc$ as its displacement vector and we will hence write $\sc \in \Quad_i$. 
\end{notation}

\begin{definition}\label{def:BA}
A saddle connection $\sc \in \rQuad[i]$  is a \emph{right (geometric) best approximation} if 
\[
\forall \ssc \in \rQuad[i], \quad \Im  \ssc < \Im \sc  \Rightarrow |\Re \ssc | > |\Re \sc|.
\]
A similar definition for \emph{left (geometric) best approximation} is obtained by replacing $\rQuad[i]$ by $\lQuad[i]$.
\end{definition}
As for the torus, we can rephrase the definition in terms of singularity-free rectangles. 
Let us call an \emph{immersed rectangle} $R \subset X$ a subset without singularities in its interior which is obtained  by isometrically immersing in $X$ an Euclidean rectangle with horizontal and vertical sides in $\CC$ (recall that immersed means \emph{locally} injective opposed to embedded which means \emph{globally} injective). We remark that an immersed rectangle does not have to be embedded in $X$ and can have self-intersections. The following equivalent geometric characterization is proved at the beginning of  section \ref{sec:BAsystoles}. 
\begin{lemma}\label{lem:equivalentBA}
A saddle connection $\sc$ on $X$ is a geometric best approximation if and only if there exists an immersed rectangle $R(\sc)$ in $X$ which has $\sc$ as its diagonal.
\end{lemma}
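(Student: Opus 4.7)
The plan is to prove the two directions separately, using a developing-map argument that mirrors the torus remark just above the lemma. Throughout, I will work with a right best approximation $\sc \in \rQuad[i]$ (the left case being entirely symmetric); the candidate rectangle is $R = [0, \Re \sc] \times [0, \Im \sc] \subset \CC$, and I want to produce (or rule out) an isometric immersion $f \colon R \to X$ sending $(0,0)$ to the starting singularity $p$ of $\sc$, sending $(\Re \sc, \Im \sc)$ to the endpoint of $\sc$, and with initial outgoing direction into the half-plane that labels the bundle $\Quad_i$.

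For the forward direction, I assume $\sc$ is a right best approximation and construct $f$ by developing $R$ into $X$ starting at $p$ in the half-plane of $\Quad_i$. Since $R$ is simply connected and the flat structure on $X \setminus \Sing$ is locally Euclidean, $f$ extends as a local isometry as far as no singularity of $X$ is reached. If a singularity $q$ were to appear in the interior of $R$ at some point $(x_0, y_0)$ with $0 < x_0 < \Re \sc$ and $0 < y_0 < \Im \sc$, then the image of the segment from $(0,0)$ to $(x_0, y_0)$ would be a saddle connection $\ssc$ from $p$ to $q$, lying in the same bundle as $\sc$ (it leaves $p$ in the same half-plane), with $\Re \ssc = x_0$ and $\Im \ssc = y_0$. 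Thus $\ssc \in \rQuad[i]$ and $\Im \ssc < \Im \sc$ while $|\Re \ssc| < |\Re \sc|$, directly contradicting Definition~\ref{def:BA}. Hence $f$ is well-defined on all of $R$ with no interior singularity, and $R(\sc) := f(R)$ is the required immersed rectangle.

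For the backward direction, I assume $R(\sc)$ exists and argue by contrapositive. Suppose $\sc$ were not a best approximation: then there is $\ssc \in \rQuad[i]$ with $\Im \ssc < \Im \sc$ and $|\Re \ssc| \leq |\Re \sc|$, and since $\ssc$ is right slanted we have $0 < \Re \ssc \leq \Re \sc$. Because $\ssc$ and $\sc$ share their starting singularity $p$ and outgoing half-plane, pulling $\ssc$ back through the developing map used to build $R(\sc)$ produces a straight segment from $(0,0)$ to the point $(\Re \ssc, \Im \ssc)$, whose image is a singularity of $X$. If $\Re \ssc < \Re \sc$, this singularity sits in the open interior of $R(\sc)$, contradicting the definition of an immersed rectangle.

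The main obstacle, and the place where care is required, is the boundary case $\Re \ssc = \Re \sc$: the developed endpoint then lies on the right edge of $R$ strictly below the endpoint of $\sc$, hence on the topological boundary and not formally in the interior. Under the generic assumption that $X$ has no two saddle connections in the same bundle with exactly the same real part (a Keane-type hypothesis consistent with the standing assumption of the paper, compare the exclusion of horizontal/vertical saddle connections), this case does not arise and the contradiction is complete. Once both directions are in place for right best approximations, the statement for left best approximations follows by reflecting $\RR$-coordinates, i.e., applying the same argument to the bundle half-plane on the other side of the vertical.
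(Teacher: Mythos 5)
Your proof is correct and follows essentially the same developing-map argument as the paper's: in one direction you develop the rectangle from $p$ into the surface, in the other you pull saddle connections back through the immersion, and in each case a forbidden holonomy vector is read off as a singularity in $R(\sc)$. The boundary case you flag on the right vertical edge is a genuine subtlety which the paper does not address explicitly (its opening reformulation of Definition~\ref{def:BA} in terms of the \emph{open} interior of $\widetilde{R}(\sc)$ quietly drops exactly this case), but the cleanest fix is not the ad hoc genericity hypothesis you introduce: a singularity in the relative interior of the right vertical side of $R(\sc)$ would immediately produce a vertical saddle connection on $X$ (the vertical segment joining it to the top-right corner of $R(\sc)$), so the paper's standing assumption that $X$ has no vertical saddle connections, in force wherever the lemma is applied, already rules this case out.
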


One of the important properties of diagonal changes is that any sequence of staircase moves produces all geometric best approximations (see Theorem~\ref{thm:torus_best_approx} for the torus case). Let us recall that if $X$ is a surface in hyperelliptic component with neither horizontal nor vertical saddle connections, then by Theorem~\ref{thm:existence_of_quadrangulation} it admits quadrangulations and for each of them, in virtue of  Theorem \ref{thm:existence_of_staircase_move}, there is at least one staircase move. Furthermore, we will see that, starting from any quadrangulation $Q^{(0)}$, by self-duality of the algorithm one can define \emph{backwards moves} (see~\S\ref{sec:Markov}) and hence produce  a bi-infinite sequence $(Q^{(n)})_{ n \in \ZZ}$ of quadrangulations of $X$ obtained by a sequence of staircase moves. In Theorem~\ref{thm:wedges_are_best_approx2} we state and prove a more precise version of the following result.
\begin{theorem} \label{thm:wedges_are_best_approx}
Let $X$ be a surface in $\CCC^{hyp}(k)$ that has neither horizontal nor vertical saddle connections.
Let $( Q^{(n)})_{ n \in \ZZ}$ be \emph{any} sequence of quadrangulations of $X$ where $Q^{(n+1)}$ is obtained from $Q^{(n)}$ by a staircase move. Then the saddle connections belonging to the wedges of the quadrangulations $Q^{(n)}, n \in \ZZ$,  are exactly all geometric best approximations of $X$.
\end{theorem}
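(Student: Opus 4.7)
The plan is to split the biconditional via the geometric characterization from Lemma~\ref{lem:equivalentBA}: $\sc$ is a geometric best approximation if and only if $\sc$ is the diagonal of an immersed singularity-free rectangle $R(\sc) \subset X$ with horizontal and vertical sides. I work bundle by bundle: fix $i$ and let $(\scw_\ell^{(n)}, \scw_r^{(n)})$ be the wedge of $Q^{(n)}$ whose sides lie in $\xQuad[i]$.

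For the inclusion \emph{``wedges produce best approximations''}, I would construct for $\scw_r^{(n)}$ (symmetrically for $\scw_\ell^{(n)}$) an immersed rectangle with $\scw_r^{(n)}$ as diagonal. The vertical trajectory from the base singularity enters the admissible quadrilateral $q$ containing the wedge and successively crosses tops of further admissible quadrilaterals of $Q^{(n)}$. Singularity-freeness of quadrilateral interiors together with admissibility (exactly one vertical going in and one going out per quadrilateral) allow one to immerse into $X$ the family of horizontal segments of length $\Re \scw_r^{(n)}$ at heights in $[0,\Im \scw_r^{(n)}]$ without meeting any singularity, which yields $R(\scw_r^{(n)})$.

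For the reverse inclusion, let $\sc \in \rQuad[i]$ be a right best approximation. I would first prove the monotonicity $\Im \scw_r^{(n)} \to +\infty$ as $n \to +\infty$ and $|\Re \scw_r^{(n)}| \to +\infty$ as $n \to -\infty$, which follows from the absence of horizontal and vertical saddle connections together with Theorem~\ref{thm:existence_of_staircase_move} applied to forward and backward staircase moves (the latter are introduced in~\S\ref{sec:Markov}). Then I locate the unique $m \in \ZZ$ with $\Im \scw_r^{(m)} \le \Im \sc < \Im \scw_r^{(m+1)}$ and identify $\sc$ with the diagonal $\scw_d^{(m)} = \scw_r^{(m+1)}$ introduced by the staircase move from $Q^{(m)}$ to $Q^{(m+1)}$ by exploiting $R(\sc)$: within this imaginary-part range, elements of $\rQuad[i]$ possessing a singularity-free immersed rectangle are strongly constrained, and only $\scw_d^{(m)}$ fits.

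The main obstacle is this last identification. A staircase move changes several wedges simultaneously (all quadrilaterals of the well-slanted staircase), so I cannot isolate the bundle $\xQuad[i]$ from the rest of the surface, and yet the best-approximation condition on $\sc$ lives entirely inside that bundle. I expect to handle this by a local-to-global argument: if $\sc \ne \scw_d^{(m)}$ were a right best approximation with $\Im \scw_r^{(m)} \le \Im \sc < \Im \scw_r^{(m+1)}$, then $R(\sc)$ would overlap the interior of either the wedge triangle of $(\scw_\ell^{(m)}, \scw_r^{(m)})$ or of the rectangle $R(\scw_d^{(m)})$ constructed in the forward inclusion, producing a contradiction with singularity-freeness or with the best-approximation property of $\sc$. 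The hyperelliptic symmetry of~\S\ref{sec:hyperellipticdef} and the pairing of quadrilaterals it forces (parallelograms fixed by the involution or pairs with parallel diagonals) are what guarantee that this local picture extends consistently across the whole well-slanted staircase being moved.
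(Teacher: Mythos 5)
Your overall skeleton (forward inclusion via singularity-free immersed rectangles, reverse inclusion by showing nothing lies strictly between consecutive wedge sides in a bundle) matches the paper, but in both halves the key geometric mechanism is missing, and in the reverse half the mechanism you propose would not work.

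For the forward inclusion, admissibility of the quadrilaterals crossed by the vertical separatrix is not enough. The rectangle having a right wedge side $\scw_{i,r}^{(n)}$ as diagonal is not contained in those quadrilaterals: the part of it lying below the diagonal sits on the other side of $\scw_{i,r}^{(n)}$, inside $q_{(\pi_\ell^{(n)})^{-1}(i)}$ and possibly further quadrilaterals that the vertical from the bottom singularity never meets, so ``follow the vertical and use admissibility'' does not exclude singularities there. What makes the statement true is the staircase structure: $\scw_{i,r}^{(n)}$ is an interior edge of the left staircase through $q_i^{(n)}$, a cylinder with no interior singularities whose boundary saddle connections are all left slanted, so the rectangle lifts into the universal cover of that staircase and is therefore singularity-free. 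This is the paper's argument and is the ingredient your sketch needs to make explicit (it is fixable, but it is not a consequence of single-quadrilateral admissibility).

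The decisive gap is in the reverse inclusion, exactly at the step you flag as the main obstacle. Suppose $\sc\in\rQuad[i]$ is a best approximation with $\Im\scw_{i,r}^{(m)}<\Im\sc<\Im\scw_{i,r}^{(m+1)}$ (note that with your inequalities the goal is to force $\sc=\scw_{i,r}^{(m)}$, not $\sc=\scw_{i,d}^{(m)}$). Using the best-approximation property of $\sc$ and of the two wedge sides, the holonomy of $\sc$ is forced into the rectangle $R_3=[\Re\,\scw_{i,d}^{(m)},\Re\,\scw_{i,r}^{(m)}]\times[\Im\,\scw_{i,r}^{(m)},\Im\,\scw_{i,d}^{(m)}]$. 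At this point, observing that $R(\sc)$ overlaps the wedge triangle of $(\scw_{i,\ell}^{(m)},\scw_{i,r}^{(m)})$ or the rectangle $R(\scw_{i,d}^{(m)})$ yields no contradiction: two singularity-free immersed regions can overlap freely, and no violation of the best-approximation property of $\sc$ follows from such an overlap. The contradiction in the paper comes from a third saddle connection that your argument never introduces: the top right side $\scw_{j,\ell}^{(m)}$ of $q_i^{(m)}$, with $j=\pi_r^{(m)}(i)$, which is precisely the diagonal of $R_3$ and is itself a wedge side of $Q^{(m)}$ in the bundle $\Gamma_j$, hence a best approximation by the first part; a saddle connection of $\rQuad[i]$ with holonomy in the interior of $R_3$ would end at a singularity inside the immersed rectangle of $\scw_{j,\ell}^{(m)}$, which is impossible. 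Decomposing $[0,\Re\,\scw_{i,r}^{(m)}]\times[0,\Im\,\scw_{i,r}^{(m+1)}]$ into the three rectangles whose diagonals are $\scw_{i,r}^{(m)}$, $\scw_{i,d}^{(m)}$ and $\scw_{j,\ell}^{(m)}$ is the missing idea. Finally, the hyperelliptic involution is irrelevant at this stage: it is needed to prove existence of quadrangulations and of well slanted staircases, but the ``nothing strictly in between'' step is purely local to the bundles $i$ and $\pi_r^{(m)}(i)$, and the pairing of quadrilaterals you invoke does not substitute for the argument above.
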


\subsubsection{Bispecial words in the language of cutting sequences}\label{subsubsec:intro_language}
Let $X$ be a translation surface such that the vertical flow on $X$ is minimal (for example without vertical saddle connections) and let $Q $ be be a quadrangulation of $X$. Let us denote by $q_1, \dots, q_k$ its quadrilaterals and let us label the saddle connections in $Q$ as follows. To the saddle connections $\scw_{i,\ell}$ and $\scw_{i,r}$ which form the wedge $\scw_i$ of the quadrilateral $q_i \in Q$ let us associate respectively the labels $(i,\ell)$ and $(i,r)$. Given an infinite orbit of the vertical flow in $X$, its \emph{cutting sequence} with respect to $Q$ is the infinite word on the alphabet $\AAA = \{1,\ldots,d\} \times \{\ell,r\}$ that corresponds to the sequence of names of saddle connections of $Q$ crossed by that orbit.    

It follows from minimality of the vertical flow on $X$ that each cutting sequence of an infinite orbit is made of the same pieces, in the sense that the set of finite words in $\AAA^*$ that appear in a cutting sequence does not depend on the cutting sequence but only on $X$. The set of finite words that appear in a cutting sequence (or all cutting sequences) is the \emph{language of $Q$} and is denoted $\LLL_Q$. Note that $\LLL_Q$ can also be defined in terms of symbolic coding of bipartite interval exchanges (see~\S\ref{sec:suspensions}). In the torus case, or equivalently interval exchanges of two intervals which are rotations of the circle, the coding is on a two letter alphabet $\{\ell,r\}$. The sequences that are obtain for the torus are called \emph{Sturmian words} and have several characterization (for example in terms of balance or complexity, see \cite{PF}). For higher genera cases, there is a characterization of such sequences in~\cite{BelovChernyatev} and~\cite{FerencziZamboni-language} based on bifurcations.

A word in $\LLL_Q$ is called \emph{left special} (resp.~\emph{right special}) if it may be extended in two ways on the left (resp.~on the right). It is \emph{bispecial} if it is left and right special. An important questions in symbolic dynamics is to describe the set of bispecial words in a language. The diagonal changes algorithm provides a full answer to this question.
\begin{theorem}\label{thm:best_approx_and_bispecials}
Let $X$ be a surface in $\CCC^{hyp}(k)$ without vertical saddle connections and let $Q$ be a quadrangulation of $X$.
Let $(Q^{(n)})_{n \in \NN}$ be \emph{any} sequence of quadrangulations obtained by a sequence of staircase moves starting from $Q$. Then, the set of bispecial words of the language $\LLL_Q$ is exactly the set of cutting sequences of diagonals of all quadrangulations in $(Q^{(n)})_{ n \in \NN}$.
\end{theorem}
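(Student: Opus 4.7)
The plan is to establish a geometric dictionary between bispecial words in $\LLL_Q$ and saddle connections that arise as diagonals of immersed rectangles with singularities at both endpoints of the diagonal, and then to identify these saddle connections with the diagonals of the quadrangulations $Q^{(n)}$ via Theorem~\ref{thm:wedges_are_best_approx}. First I would interpret the cylinder $[w]=\{x\in X\setminus\Sing : \text{the vertical orbit of }x\text{ has cutting sequence starting with }w\}$ geometrically: it is a single open immersed parallelogram in $X$, bounded by two vertical trajectories on the left and right and by sub-arcs of the first and last crossed saddle connections of $Q$ on the bottom and top. Extending $w$ by a letter on the right corresponds to subdividing the top boundary of $[w]$ according to which saddle connection of $Q$ the orbit next crosses, so $w$ is right-special if and only if the top boundary of $[w]$ contains a singularity; symmetrically, $w$ is left-special if and only if the bottom boundary contains a singularity. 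Hence $w$ is bispecial exactly when there exist singularities $p_-, p_+\in \Sing$ on the bottom and top boundaries of $[w]$, and the straight segment from $p_-$ to $p_+$ inside $[w]$ is then a saddle connection $\sigma_w$ which is a diagonal of the parallelogram.

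To complete the forward implication, I would verify that $\sigma_w$ is a geometric best approximation. Since $[w]$ contains no singularity in its interior, a small isotopy of the slanted top and bottom boundaries (sweeping along vertical trajectories and keeping the two vertical sides through $p_\pm$ fixed) produces an immersed rectangle in $X$ having $\sigma_w$ as a diagonal, so Lemma~\ref{lem:equivalentBA} yields that $\sigma_w$ is a best approximation. Applying Theorem~\ref{thm:wedges_are_best_approx} to the bi-infinite extension $(Q^{(n)})_{n\in\ZZ}$ of the given sequence (via the backward moves of \S\ref{sec:Markov}), $\sigma_w$ must appear as a wedge side of some $Q^{(n)}$; a short bookkeeping step, after handling directly the degenerate case where $w$ is a single letter coming from an original wedge of $Q^{(0)}$, shows that $\sigma_w$ appears in some $Q^{(n)}$ with $n\geq 1$ and is therefore the diagonal of a quadrilateral of $Q^{(n-1)}$. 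One then checks that the cutting sequence of $\sigma_w$---the list of saddle connections of $Q$ it crosses, equivalently the cutting sequence of any vertical trajectory during its passage through the rectangle $R(\sigma_w)$---is exactly the word $w$.

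The converse direction is more direct. Given a diagonal $\sigma$ of a quadrilateral of some $Q^{(n)}$, $\sigma$ becomes a wedge side of $Q^{(n+1)}$, so by Theorem~\ref{thm:wedges_are_best_approx} it is a geometric best approximation and by Lemma~\ref{lem:equivalentBA} it admits an immersed rectangle $R(\sigma)$ having $\sigma$ as a diagonal. The cutting sequence $w_\sigma$ of $\sigma$ is left-special because vertical trajectories in $R(\sigma)$ passing on either side of the bottom endpoint of $\sigma$ have different past cutting sequences (they enter $R(\sigma)$ across distinct saddle connections of $Q$), and right-special by the symmetric argument at the top endpoint of $\sigma$, hence bispecial.

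The main obstacle I expect is justifying the deformation of the parallelogram $[w]$ into a rectangle, together with verifying that the two singularities identified by left- and right-speciality are truly \emph{opposite} corners of $[w]$, so that they are joined by a diagonal rather than by a single side. Both points depend on the admissibility of the quadrilaterals of $Q$ (Definition~\ref{def:admissible}), which controls the alternation of slanted saddle connections around each quadrilateral, and on the absence of vertical saddle connections in $X$, which rules out the degenerate configurations where the diagonal $\sigma_w$ would itself be vertical.
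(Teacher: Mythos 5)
Your overall strategy is the one the paper itself follows: you geometrize words via the cylinders $[w]$ (this is exactly Lemma~\ref{lem:beam} on beams), characterize left/right speciality by the presence of a singularity on the bottom/top boundary, identify nonempty bispecial words with cutting sequences of geometric best approximations by building an immersed rectangle around the segment joining the two boundary singularities (this is Lemma~\ref{lem:bispecial}, whose proof performs precisely the vertical extension of the beam that you describe, not a ``small isotopy'': top and bottom boundary points are flowed vertically to the horizontals through $p_+$ and $p_-$), and then invoke the best-approximation theorem to match these saddle connections with the diagonals of the $Q^{(n)}$. The converse direction (diagonals have bispecial cutting sequences) is also argued in the paper exactly as you do, via the immersed rectangle $R(\sigma)$ and horizontal translation of its vertical sides.

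The one step that does not go through as written is your appeal to Theorem~\ref{thm:wedges_are_best_approx} applied to a bi-infinite extension $(Q^{(n)})_{n\in\ZZ}$ obtained by backward moves. That theorem (and the existence of an infinite backward orbit of staircase moves) requires $X$ to have \emph{no horizontal} saddle connections as well, whereas the statement you are proving assumes only the absence of vertical saddle connections; if $X$ has a horizontal saddle connection the backward extension can terminate, and the bi-infinite statement is simply not available. The paper avoids this by using the forward-only statement (Corollary~\ref{cor:wedges_are_best_approx2}, a consequence of Theorem~\ref{thm:wedges_are_best_approx2} together with Lemma~\ref{lem:Keane}): for a forward sequence of staircase moves the diagonals are exactly the best approximations $\sc\in\Quad_i$ with $\Im \sc$ larger than that of the initial wedge sides. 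This is also where your ``short bookkeeping step'' should live: one must check that the best approximation $\sigma_w$ attached to a nonempty bispecial word satisfies this imaginary-part lower bound (in the paper, $\Im\sigma_w\geq\Im\scw_{i,d}$), so that it cannot be one of the sides of $Q^{(0)}$ or an earlier wedge; note in this respect that the sides of $Q^{(0)}$ have \emph{empty} cutting sequence (they cross no side of $Q$), so your ``degenerate case of a single letter coming from an original wedge'' is misidentified --- the genuine degenerate case is the empty word, which is bispecial and is the cutting sequence of the diagonals of $Q^{(0)}$ itself. With the forward-only corollary in place of the bi-infinite theorem, and that bookkeeping carried out, your argument coincides with the paper's proof.
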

Furthermore, cutting sequences of diagonals can be constructed recursively from moves of the algorithm in terms of substitutions, as explained in~\S\ref{sec:language} (see Theorem~\ref{thm:seqsubstitutions}). Thus, diagonal changes algorithms can be used to construct a list of bispecial words. We derive Theorem~\ref{thm:best_approx_and_bispecials} from Theorem~\ref{thm:wedges_are_best_approx}, since we show in~\S\ref{sec:induction} that in our context bispecial words correspond to geometric best approximations. A combinatorial proof of Theorem~\ref{thm:best_approx_and_bispecials} was first given  in~\cite{FerencziZamboni-struct} in the context of interval exchange transformations.

\subsubsection{Applications to Teichm\"uller dynamics}\label{subsubsec:intro_Teich}
In this section we mention other applications of diagonal changes algorithms in Teichm\"uller dynamics. Let us first recall the well-known connection between classical continued fractions and the geodesic flow on the modular surface (see for example~\cite{Series} and also~\cite{Arnoux94} for a more geometric approach in the same spirit as~\S\ref{subsec:torus_induction}).

\paragraph{Tori and the modular surface.}
The \emph{modular surface} is the quotient $\MMM_1= \mathbb{H}/\SL(2,\mathbb{Z})$ of the upper half plane $\mathbb{H} = \{ z;\ \Im z > 0\}$ by the action of $\SL(2,\ZZ)$ by Moebius transformations. Its unit tangent bundle $T^1 \mathcal{M}_1$ is isomorphic to $\SL(2,\RR)/\SL(2,\ZZ)$ (see for example \cite{BekkaMayer}). It is well-known that the \emph{space of unimodular lattices} is isomorphic to $\MMM_1$ and the space $\HHH^1(0)$ of tori of unit area is isomorphic to $\SL(2,\mathbb{R})/\SL(2,\ZZ)$. The correspondence is obtained by mapping the lattice $\Lambda \subset \mathbb{C}$, or equivalently the flat torus $\mathbb{T}^2_\Lambda$, to the point $\scw_2 / \scw_1 \in \mathbb{H}$, where $\scw_1$ and $\scw_r$ form a direct base of the lattice $\Lambda$ and are respectively the shortest and the second shortest saddle connections on $\mathbb{T}^2_\Lambda$.

The \emph{geodesic flow} $g_t$ on the unit tangent bundle of the modular surface $T^1\mathcal{M}_1 \cong \SL(2,\RR)/\SL(2,\ZZ)$ is given by the action of the $1$-parameter group of diagonal matrices
\begin{equation}\label{eq:gt}
\left\{ g_t= \left( \begin{array}{cc} e^t & 0 \\ 0& e^{-t} \end{array}\right);\ t \in \mathbb{R}\right \}
\end{equation}
by left multiplication on $\SL(2,\RR)$. 
Orbits of $g_t$ project to geodesics on $\MMM_1$ with respect to the hyperbolic metric. The continued fraction algorithm can be used to describe the \emph{Poincar{\'e} first return map} of the geodesic flow on a suitably chosen section of $T^1 \MMM_1 $ (this classical connection, known since Hedlund and Morse, was nicely pinpointed by Series in \cite{Series}).

Furthermore, the geometric continued fraction algorithm can be used to describe the set of vectors in $\Lambda$, or equivalently the set of saddle connections on $\TT^2_\Lambda$, which become short along a geodesic in the following sense. The \emph{systole function} is
$\sys (\Lambda) = \{ \min \vert v \vert;\ v \in \Lambda \backslash \{0\} \}$. Recall that compact sets in $\mathcal{M}_1$ can be characterized as sets on which the systole function is bounded (by Mahler's compactness criterion). 
Given a flat torus $\mathbb{T}^2_\Lambda \in T^1\mathcal{M}_1$, consider the systole function evaluated along the 
\emph{geodesic} passing though it, that is the map $t \mapsto \sys (g_t \Lambda)$.
 We say that a  vector $v \in {\Lambda}$ (or equivalently the corresponding saddle connection on $\mathbb{T}^2_\Lambda$)  \emph{realizes the systole at time $t$} if $|g_t v| = \sys(g_t \Lambda)$.  
Then the  vectors in $v \in {\Lambda}$ that realizes systoles for some $t\in \RR$ are exactly the vectors $\scw^{(n)}_\ell$ and $\scw^{(n)}_r$ in the sequence of bases $\left((\scw^{(n)}_\ell, \scw^{(n)}_r)\right)_{n \in \ZZ}$ built from the geometric continued fraction algorithm.

\paragraph{Systoles along Teichm\"uller geodesics.}
Diagonal changes algorithms play a role in describing short saddle connections along \emph{Teichm\"uller geodesics} analogous to the role played by the standard continued fraction for the torus.

Let $\HHH(k_1-1,\ldots,k_n-1)$ be a stratum of translation surfaces (as defined in~\S\ref{subsubsec:intro_quadrangulations}) and let $\HHH^1(k_1-1,\ldots,k_n-1) \subset \HHH(k_1-1,\ldots,k_n-1)$ consist of translation surfaces of area one. Seen as a topological space, $\HHH^1(k_1-1,\ldots, k_n-1)$ is never compact. Nevertheless, as in the case of tori, compact sets can be defined using the systole function
$$
\sys(X) = \min \{|\sc|; \ \sc \in \Gamma(X)\}
$$ 
where $X$ is a translation surface and as before $\Gamma(X)$ is the set of saddle connections on $X$.

The linear action of $\SL(2,\RR)$ on $\CC$ identified to $\RR^2$ induces an action of $\SL(2,\RR)$ on translation surfaces: given a translation surface $X$ obtained gluing polygons $P_i \subset \CC$ and $A \in \SL(2,\RR)$, the surface $A \cdot X$ is obtained gluing the polygons $A P_i$ using the same identifications. This is well defined since the linear action  preserves pairs of parallel congruent sides. The restriction of the $\SL(2,\RR)$-action on $\HHH^1(k_1-1,\ldots,k_n-1)$ to the diagonal subgroup $g_t$ in \eqref{eq:gt} is known as the \emph{Teichm\"uller geodesic flow}. 

Let $X$ be a translation surface and, as in the case of the torus, consider the systole function $t \mapsto \sys(g_t X)$. We say that a saddle connection $\sc$ on $X$ \emph{realizes the systole at time $t$} if $\sys(g_t X) = | g_t \sc |$. 
\begin{theorem}\label{thm:flat_systoles}
  Let $X$ be a surface in a hyperelliptic component of a stratum $\CCC^{hyp}(k)$ with no horizontal nor vertical saddle connections. Let $(Q^{(n)})_{n \in \ZZ}$ be a sequence of quadrangulations of the surface $X$ where $Q^{(n+1)}$ is obtained from $Q^{(n)}$ by a staircase move. Then, the set of saddle connections on $X$ which realize the systoles along the Teichm\"uller geodesic passing through $X$ is a subset of the sides the quadrangulations $Q^{(n)}_n$, $n \in \ZZ$. 
\end{theorem}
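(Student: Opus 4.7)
The plan is to reduce Theorem~\ref{thm:flat_systoles} to Theorem~\ref{thm:wedges_are_best_approx}. That theorem already identifies the saddle connections appearing in the wedges of the quadrangulations $Q^{(n)}$, $n \in \ZZ$, with the set of all geometric best approximations of $X$, and the sides of any $Q^{(n)}$ are precisely these wedge saddle connections (each top side of a quadrilateral is the bottom side, hence in the wedge, of an adjacent one). So it is enough to establish the following claim: any saddle connection $\sc$ on $X$ which realizes $\sys(g_{t_0} X)$ for some $t_0 \in \RR$ is a geometric best approximation in its bundle.

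To prove the claim I would use only the explicit diagonal action of $g_t$. For any naturally oriented saddle connection $\sc$,
\[
| g_t \sc |^2 \;=\; e^{2t}\,(\Re \sc)^2 \;+\; e^{-2t}\,(\Im \sc)^2 .
\]
Under the hypotheses of the theorem, $\Re \sc \neq 0$ and $\Im \sc > 0$, and $\sc$ belongs to a unique bundle $\xQuad[i]$. Up to symmetry I may assume $\sc \in \rQuad[i]$ and argue by contradiction: if $\sc$ were not a right geometric best approximation, then by Definition~\ref{def:BA} there would exist $\ssc \in \rQuad[i]$ with $\Im \ssc < \Im \sc$ and $|\Re \ssc| \leq |\Re \sc|$. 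Combining these with the formula above,
\[
|g_{t_0} \ssc|^2 \;=\; e^{2t_0}(\Re \ssc)^2 + e^{-2t_0}(\Im \ssc)^2 \;<\; e^{2t_0}(\Re \sc)^2 + e^{-2t_0}(\Im \sc)^2 \;=\; |g_{t_0} \sc|^2,
\]
where the strict inequality uses $0 < \Im \ssc < \Im \sc$. Since $\ssc$ is a saddle connection of $g_{t_0} X$, this contradicts the assumption that $\sc$ realizes the systole at $t_0$.

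I do not expect a serious obstacle here: the argument is a one-line comparison of $|g_t \cdot|^2$, together with the already-established Theorem~\ref{thm:wedges_are_best_approx}. The only points to note are that the theorem states only an inclusion, not an equality (a geometric best approximation lying along a very ``flat'' rectangle need never be the shortest vector under the flow $g_t$), and that the bundle plays no role in producing the contradiction: any shorter saddle connection, whether in the bundle of $\sc$ or not, would suffice to defeat the systole, so it is enough to exhibit one inside $\rQuad[i]$ using the negation of the best approximation property.
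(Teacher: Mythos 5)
Your proposal is correct and follows the same overall reduction as the paper: both appeal to Theorem~\ref{thm:wedges_are_best_approx} and isolate the key claim that a saddle connection realizing the systole at some time $t_0$ must be a geometric best approximation (this is Lemma~\ref{lem:systoles_are_ba} in the paper). The only difference is how that claim is proved: the paper first normalizes to $t_0=0$ using the $g_t$-invariance of the best-approximation property, observes that the length lower bound forces the disk of radius $|\sc|$ to be free of holonomy vectors in $\hol(\Quad_i)$, and then passes to the inscribed rectangle via the geometric characterization of Lemma~\ref{lem:equivalentBA}; you instead work directly with Definition~\ref{def:BA} and the explicit formula $|g_t\sc|^2 = e^{2t}(\Re\sc)^2 + e^{-2t}(\Im\sc)^2$ to derive a contradiction at time $t_0$ itself, with no normalization needed. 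Both are essentially one-line arguments; yours is slightly more direct, while the paper's phrasing lines up with the rectangle picture used throughout Section~\ref{sec:BAsystoles}. Your remarks that the theorem is only an inclusion and that the bundle plays no essential role in producing the contradiction are also correct.
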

Theorem~\ref{thm:flat_systoles} is proved as a consequence of Theorem~\ref{thm:wedges_are_best_approx} in~\S\ref{sec:systolesproof}.

\paragraph{Pseudo-Anosov diffeomorphisms.}
We mention another application of diagonal changes algorithms which we prove in~\cite{DU:II}. An important problem in dynamics is to study the set of closed orbits of a flow. We show in~\cite{DU:II} that diagonal changes algorithms can be used to effectively produce a list, ordered by length, of all closed orbits of the Teichm\"uller flow in each hyperelliptic component  $\CCC^{hyp}(k)$. Since closed Teichm\"uller geodesics are in one to one correspondence with conjugacy classes of   \emph{pseudo-Anosov diffeomorphisms}, one can equivalently list pseudo-Anosov conjugacy classes, ordered by dilation. Furthermore, diagonal changes are much better suited for this problem than other algorithms such as Rauzy-Veech induction or train-track splittings, as explained in~\S\ref{subsec:comparisons}. 

\paragraph{Lagrange spectra.}
Recently, Lagrange spectra for translation surfaces, which are a generalization of the classical Lagrange spectrum in Diophantine approximation, were defined and studied by P.~Hubert, L.~Marchese and C.~Ulcigrai in~\cite{HubertMarcheseUlcigrai}.    
If $\CCC$ is a connected component of a stratum of translation surfaces, its Lagrange spectrum $\LLL(\CCC)$ is the set of values $\LLL(\CCC):=\{1/{a(X)};\ X \in \CCC \} \subset \RR \cup \{+\infty\}$, where 
\begin{equation}\label{def:aX}
a(X):=\liminf_{|\Im(v)|\to\infty}
\frac{\{ |\Im(v)||\Re(v)|; \ \text{v saddle connection on $X$} \}}{\area(X)},
\end{equation}
where $\area(X)$ is the area of the surface $X$ with respect to its flat metric. Equivalently, one has that $a(X)=s^{2}(X)/2$, where  $s(X):=\liminf_{t \to \infty} \sys(g_t X)/\area(X)$, see~\cite{Vorobets} and~\cite{HubertMarcheseUlcigrai}.

If $X$ belongs to a hyperelliptic component $\CCC^{hyp}(k)$, we show in Theorem~\ref{thm:lagrange} that $a(X)$ can be computed using diagonal changes algorithms. Furthermore, in~\cite{HubertMarcheseUlcigrai} it is shown that $\LLL(\CCC)$ is the closure of the values $1/a(X)$, $X\in \CCC$ for which the Teichm\"uller geodesic through $X$ is closed.  Thus, diagonal changes algorithms can be used to get finer and finer approximations of the Lagrange spectrum $\LLL(\CCC^{hyp}(k))$, by first listing closed Teichm\"uller geodesics in $\CCC^{hyp}(k)$ and then computing the corresponding Lagrange values. 


\subsection{Comparison with other algorithms in the literature} \label{subsec:comparisons}
In this section we compare diagonal changes algorithms with other induction algorithms in the literature: Ferenczi-Zamboni induction, Rauzy-Veech induction, da Rocha induction and train-track splittings. From a Diophantine point of view, we mention the analogy between Y.~Cheung's $Z$-convergents and our best approximations. From a combinatorial point of view we mention a link between the combinatorics of diagonal changes and cluster algebras combinatorics.

The Ferenczi-Zamboni induction (FZ induction for short), which is called by the authors
\emph{self-dual induction}, is an induction algorithm for interval exchange transformations (IETs), first introduced for IETs of $3$ intervals  in a series of papers jointly with Holton \cite{3IET1, 3IET2, 3IET3}, then in  \cite{FerencziZamboni-struct}
for all symmetric IETs (namely those with combinatorics given by the permutation $\pi(k) = n-k+1$, $1\leq k \leq n$). Very recently Ferenczi in~\cite{Ferenczi-all} developed a new induction for any IETs. The algorithm was defined and developed with the main aim of giving a combinatorial description of the IETs language and in particular to produce the list of bispecial words, see \cite{3IET2, FerencziZamboni-language}. The FZ algorithm was also used by Ferenczi and Zamboni to produce examples of IETs with special ergodic and spectral properties (see \cite{3IET3, 3IET4, FerencziZamboni-eig}).

The diagonal changes that we describe are a geometric version for translation surfaces in hyperelliptic components of FZ induction for symmetric IETs.
While the proofs of the existence of FZ-moves and that FZ induction sees all bispecial words given in~\cite{FerencziZamboni-struct} are purely  combinatorial, the proofs of the analogous results for staircase moves  in this paper are  very geometric. 
Some of the definitions and proofs for FZ induction are combinatorially quite heavy and we
believe that one of the advantages of our geometric approach is to
make the induction easier to understand and  proofs simpler and more
transparent. Recently Ferenczi extended the FZ induction for any IET~\cite{Ferenczi-all}. Following our paper, he also gave in \cite{Ferenczi-diagonalchanges} a geometric counterpart in the language of diagonal changes. Many of the geometric properties of staircase moves seem to
extend also for these general algorithms, which we stress are not
given by quadrangulations and staircase moves.

Another very well known induction algorithm for translation surfaces and IETs
is \emph{Rauzy-Veech induction}. The  Rauzy-Veech
induction for translation surfaces is a geometric invertible extension of the Rauzy induction for interval
exchanges in the same way the staircase moves for translation surfaces are an extension of FZ-moves for IETs. Rauzy-Veech induction has been a key tool to prove conjectures on the ergodic properties of IETs and linear flows on translation surfaces. The dynamics of the induction itself has been studied in detail (see for example~\cite{Veech} or most recently~\cite{AvilaBufetov} and~\cite{AvilaGouezelYoccoz}).

Despite the many applications of the Rauzy-Veech induction to ergodic problems, diagonal changes algorithms are a much better suited tool to attack some dynamical questions, in particular to list geometric best approximations in each bundle and to enumerate conjugacy classes of pseudo-Anosovs or equivalently Teichm\"uller closed geodesics. The heuristic explanation for this is that the Rauzy-Veech algorithm involves a choice of a conical singularity and of a separatrix which gives a transveral for the IET. Therefore, the domain on which the induction is defined,  namely the space of zippered rectangles introduced by Veech in~\cite{Veech}, is a \emph{finite-to-one} cover of connected components of strata of translation surfaces. In \cite{DU:II}, on the other hand, we explain that the space of quadrangulations, 
which is the analogous for staircase moves of the  space of zippered rectangles, yield  a faithful representation of hyperelliptic components.

The idea of an induction algorithm which, contrary to Rauzy-Veech induction, did not require the choice of a separatrix was long advocated, in particular by P.~Arnoux. In the setting of IETs a similar idea is also at the base of the induction invented by L. da Rocha (see~\cite{daRocha}) and of the induction  described by Cruz and da Rocha in~\cite{CdR} for rotational IETs. We remark that the latter, similarly to FZ induction for symmetric IETs, also uses  a bipartite IET structure. Recently Inoue and Nakada in~\cite{InoueNakada} defined a geometric extension of the Cruz-da Rocha induction by using zippered rectangles of a bipartite form  and showed that this extension is dual to Rauzy-Veech induction on zippered rectangles. 

Rauzy-Veech induction and diagonal changes algorithms may be seen as train-tracks algorithms. Train-tracks are combinatorial objects embedded in surfaces, that allow to describe measured foliations (such as the vertical foliation in a translation surface). Train-tracks splittings have been used in particular to provide a way to describe and enumerate conjugacy classes of pseudo-Anosov diffeomorphisms, see for examples~\cite{PapadopoulosPenner90}, \cite{Takarajima} and~\cite{LanneauThieffaut}. In the context of translation surfaces and Teichm\"uller dynamics, several results which exploit train-tracks splittings and a related symbolic coding of the Teichm\"uller flow were obtained by U.~Hamenst\"adt, see for example~\cite{Hamenstaedt}. Our diagonal changes algorithms use train-tracks of a very special form (which correspond to the bipartite nature of the IETs arising from quadrangulations, see~\S\ref{sec:correspondence_Q_IETs}). The train-tracks splittings allowed in our induction are the one which preserve this bipartite structure. Train-tracks algorithms often have the drawback that there is a large choice of possible moves and the graphs which describe combinatorial data are very large. In the case of our algorithms, the combinatorial graph associated to the moves (defined in~\S\ref{sec:Markov}) has a much more manageable size (see the comparison table in~\cite{DU:II}). Furthermore, as explained in~\cite{DU:II}, one can produce pseudo-Anosov diffeomorphisms from certain paths in the graph without having to check a rather subtle irreducibility condition which is needed when considering loops in the graph of train-tracks (see for example~\cite[Proposition 3.7]{Takarajima}).

In~\cite{SU1}, J.~Smillie and C.~Ulcigrai characterized the language of cutting sequences for linear trajectories on translation surfaces obtained from regular $2n$-gons (this characterization could also be proven for double regular $n$-gons with $n$ odd, see D.~Davis~\cite{Diana}). The characterization is based on an induction algorithm which uses affine diffeomorphisms in the Veech group, see also~\cite{SU2}. One can show that this algorithm turns out to be a diagonal changes algorithm. Ferenczi in~\cite{Ferenczi2ngon} considered the interval exchanges which arise as Poincar{\'e} maps of linear flows in regular $2n$-gons and described the FZ-moves which arise when performing FZ-induction starting from them. One can also see that the diagonal changes algorithm by Smillie and Ulcigrai is an acceleration of the geometric extension of the moves in~\cite{Ferenczi2ngon}. 

\smallskip

Let us now mention the connections with  $Z$-convergents and then cluster algebras. The notion of geometric best approximation for translation surfaces that we define in this paper is very close to the notion of $Z$-convergents for translation surfaces introduced by Y.~Cheung (see his joint paper~\cite{CheungHubertMasur} with P.~Hubert and H.~Masur for the definition). The definition is parallel to the notion of best approximation in the space of higher dimensional lattices that was used by Y. Cheung in~\cite{CheungHD}. The $Z$-convergents were further used by P.~Hubert and T.~Schmidt~\cite{HubertSchmidt} to provide transcendence criterion in the context of translation surfaces. In all these works on translation surfaces, the sequence of $Z$-convergents are considered from a theoretical point of view: no actual description of these sets were given. Diagonal changes algorithms provide an explicit construction of best approximations.

Finally, we remark that it turns out that the combinatorics which appear in diagonal changes (in particular the graph $\GGG$) is related to cluster algebras. Recently R.~Marsh and S.~Schroll in~\cite{MarshSchroll} explained this connection. In the case of FZ induction, they explain how one can put in one-to-one correspondence the trees of relations introduced in~\cite{FerencziZamboni-struct} with triangulations on the sphere and diagonal changes for these triangulations with the FZ-moves on the trees of relations defined by~\cite{CassaigneFerencziZamboni}. The combinatorics of these moves are exactly our staircase moves seen on the sphere (recall that in hyperelliptic components, each surface is a double cover of the sphere). 

\subsection{Structure of the paper}
In~\S\ref{sec:suspensions} we give a formal definition of staircase moves on the space of parameters which describe quadrangulations. We also explain the link between quadrangulations and bipartite interval exchanges and hence between staircase moves and FZ moves. Finally, we prove that staircase moves  display a form of self-duality and Markov structure. 

In~\S\ref{sec:induction} we first give the definition of hyperelliptic components. We then prove that translation surfaces in hyperelliptic components always admit a quadrangulation (Theorem~\ref{thm:existence_of_quadrangulation}) and that each of these quadrangulations has a well slanted staircase (Theorem~\ref{thm:existence_of_staircase_move}).

The applications of diagonal changes algorithms given by staircase moves mentioned above are considered in~\S\ref{sec:BA_and_bisp}. We first prove that staircase moves produce exactly all geometric best approximations (Theorem~\ref{thm:wedges_are_best_approx}). We then show how this result can be used to study the systole function along Teichm\"uller geodesics. Finally, we prove that bispecial words are exactly cutting sequences of best-approximations (Theorem~\ref{thm:seqsubstitutions}).

\subsection{Acknowledgements}
We would like to thank S.~Ferenczi, E.~Lanneau and S.~Schroll for useful discussions. 
We are grateful to P.~Hubert, who invited the second author to Marseille for a scientific visit during which diagonal changes were initially conceived  and who also immediately pointed out the connection with FZ induction. V.~Delecroix is supported by the ERC Starting Grant ``Quasiperiodics'' and C.~Ulcigrai is partially supported by the EPSRC Grant EP/I019030/1, which we thankfully acknowledge for making the authors collaboration possible. 

\section{Diagonal changes on the space of quadrangulations}\label{sec:suspensions}
We begin this section by describing in~\S\ref{sec:staircasedata} the combinatorial and length data which define a quadrangulation.
We then describe the link between quadrangulations and bipartite interval exchange maps (see~\S\ref{sec:correspondence_Q_IETs}). The induction developed by Ferenczi and Zamboni operates on bipartite interval exchanges.
In~\S\ref{subsec:staircase_moves} and~\S\ref{subsec:algorithms}  we give a more formal definition of staircase moves and the associated diagonal changes algorithms and explain the relation with FZ induction.   
Finally, in~\S\ref{sec:Markov} we show that the staircase moves are invertible and provide a Markov structure to the parameter space of quadrangulations. In particular, we show that our staircase moves provide a geometric realization of the natural extension of elementary FZ moves.
We also show that the inverse of a staircase move is again a staircase move.
In this sense these types of inductions are sometimes described as self-dual inductions.

\subsection{Parameters on quadrangulations}\label{sec:staircasedata}
Let $Q$ be a quadrangulation of a translation surface $X$. We saw in the introduction that $Q$ is determined by the collection  of wedges of quadrilaterals in $Q$. 
In addition to wedges, the quadrangulation  $Q$ 
 also determines a pair of permutations   which  describe how the quadrilaterals of the quadrangulation $Q $ are glued to each other as follows (refer to Figure \ref{fig:permutation_geometrically}).
 
\begin{figure}[!ht]
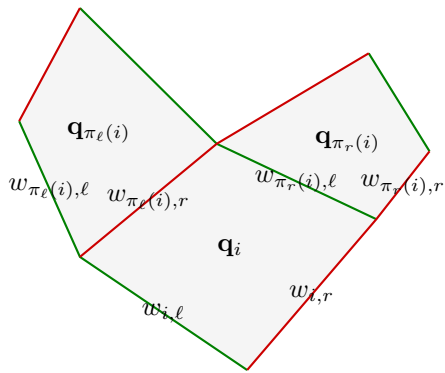

\begin{center}
\picinput{permutation_geometrically}
\end{center}
\caption{a quadrilateral $q_i$ glued with $q_{\pi_r(i)}$ and $q_{\pi_\ell(i)}$\label{fig:permutation_geometrically}}
\end{figure} 

\begin{definition} Let $Q$ be a quadrangulation with $k$ quadrilaterals and let us label the quadrilaterals by the integers $\{1,\ldots,k\}$.
Let $q_i$ denote the quadrilateral labelled $i$.
The \emph{combinatorial datum} $\vpi = \vpi_Q$ of the labelled quadrangulation $Q$ is a pair $(\pi_\ell, \pi_r)$ of permutations of $\{1,\ldots,k \}$ such that
\begin{itemize}
\item[(i)] for each $1\leq i \leq k$, the top left side of $q_i$ is glued to the bottom right side of $q_{\lperm(i)}$.
\item[(ii)] For each $1\leq i \leq k$, the top right side of $q_i$ is glued to the bottom left side of $q_{\rperm(i)}$.
\end{itemize} 
\end{definition}
Thus $\lperm(i), \rperm(i)$  describe to which wedges the top sides of the quadrilateral $q_i$ belong, as illustrated by Figure \ref{fig:permutation_geometrically}. 
 
\begin{figure}[!ht]
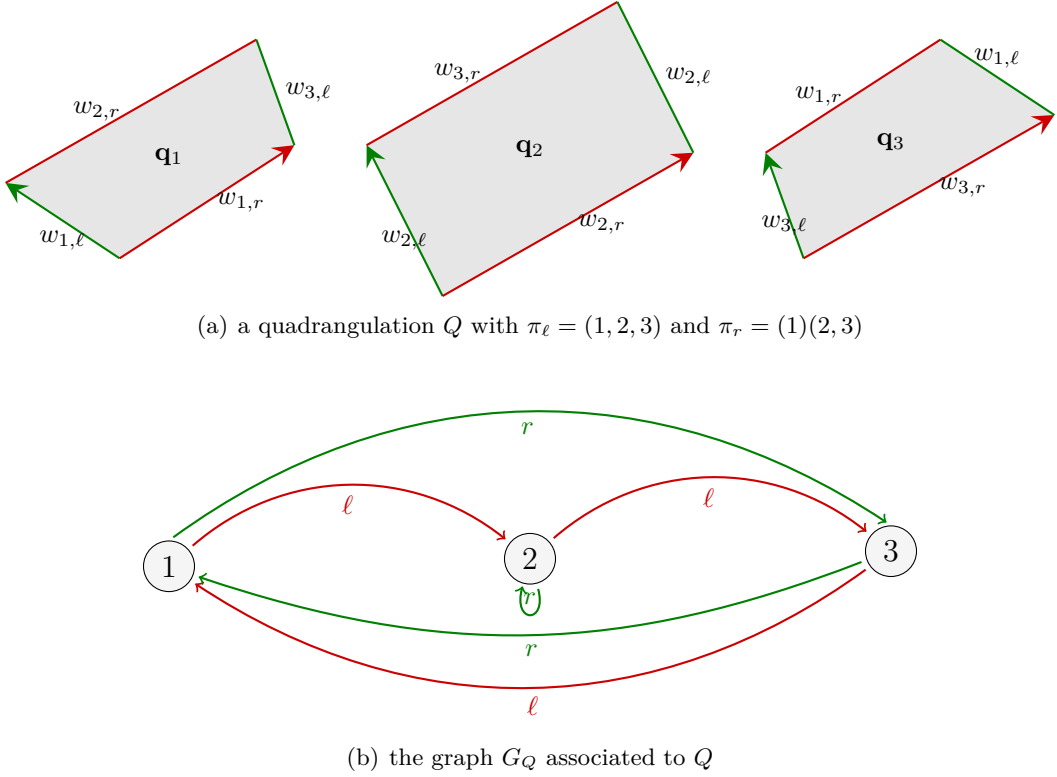

\begin{center}
  \subfigure[a quadrangulation $Q$ with $\pi_\ell=(1,2,3)$ and $\pi_r=(1)(2,3)$]{\picinput{exampleg2}} 
  \subfigure[the graph $G_Q$ associated to $Q$]{\picinput{graph_exg2}}
\end{center}
\caption{the graph $G_Q$ associated to a quadrangulation $Q$ of a surface in $\mathcal{H}(2)=\CCC^{hyp}(3)$}
\label{fig:graphExg2}
\end{figure} 

We mention that the combinatorial datum $\vpi_Q=(\lperm, \rperm)$ of a labelled quadrangulation $Q$ can also be described by a graph $G_Q$, whose vertices are in one-to-one correspondence with the quadrilaterals  $q_1, \dots, q_k$ and will be denoted by the corresponding index $1 \leq i \leq k$. The edges of $G_Q$ are labelled by $r$ or $l$ and are such that for each $1 \leq i \leq k$ there is an $\ell$-edge from $i$ to $\pi_\ell(i)$ and $r$-edge from $i$ to $\pi_r(i)$. An example is given in Figure~\ref{fig:graphExg2}. These graphs are used by Ferenczi and Zamboni in~\cite{FerencziZamboni-eig}. 

Let $Q$ be a labelled quadrangulation and let $\scw_1, \dots, \scw_k$ be the wedges corresponding to $q_1, \dots, q_k$.
Remark that quadrilaterals  in a quadrangulation (or equivalently, wedges) are in one to one correspondence with bundles of saddle connections. Thus, labelling the quadrilaterals in $Q$ by  $q_1, \dots, q_k$  automatically induces also a labelling of bundles by $\Quad_1, \dots, \Quad_k$ so that each $\scw_{i,\ell}$ (resp. $\scw_{i,r}$) belong to the bundle $\Quad_{i,\ell}$ (resp. $\Quad_{i,r}$). 

Since for each $\scw_{i,\ell}$ and $\scw_{i,r}$ the bundle to which they belong (resp. $\Quad_{i,\ell}$ or $\Quad_{i,r}$) is clear from the context, we will  without confusion  identify the  saddle connections in the wedges with the complex numbers which give their displacement vectors.
Using this notation and remarking that by construction $\scw_{i,\ell}$ and $\scw_{\lperm(i),r}$ are the left sides of the quadrilateral $q_i$ while $\scw_{i,r} $ and $\scw_{\rperm(i),\ell}$ are its right sides, we have
\begin{equation} \label{eq:train_track_wedges}
\scw_{i,\ell} + \scw_{\pi_\ell(i),r} = \scw_{i,r} + \scw_{\pi_r(i),\ell},\qquad 1\leq i \leq k.
\end{equation}
The equations in \eqref{eq:train_track_wedges} are called \emph{train-track relations}.

\smallskip

Conversely, we can construct a surface with a quadrangulation by starting from a combinatorial datum $\pi = (\pi_\ell,\pi_r)$ in $S_k \times S_k$ and a length datum 
$$\vwedges = ( (\scw_{1,\ell},\scw_{1,r}), \ldots, (\scw_{k,\ell},\scw_{k,r})) \in \left((\RR_- \times \RR_+) \times (\RR_+ \times \RR_+)\right)^k,$$ 
where $\RR_+ = \{ t \in \RR; \ t > 0\}  $ and $\RR_- = \{ t \in \RR; \  t < 0\}$. If $\vwedges$ satisfies the train-track relations~\eqref{eq:train_track_wedges} we can build a labelled quadrangulation $Q$ that we denote $(\vpi,\vwedges)$.
When we write $Q =(\vpi, \vwedges)$ we assume implicitely that $\vwedges$ satisfies the train-track relations.

\subsection{Bi-partite interval exchanges and quadrangulations}\label{sec:correspondence_Q_IETs}
Let us define bipartite interval exchange transformations and show that they arise as Poincar\'e first return maps of the vertical linear flow in a quadrangulation. Given $Q =(\vpi, \vwedges)$, the union of the wedges of $Q$ provide a convenient section for the vertical flow on the associated surface. The first return map on this section has a bipartite structure: for each $1\leq i \leq k $ the points on the wedge $\scw_i$ are divided in two sets depending on their future (the left part go to $q_{\pi_\ell(i)}$ and the right part to $q_{\pi_r(i)}$) and there is another partition with respect to their past (the left part comes from $q_{\pi_r^{-1}(i)}$ and the right part comes from $q_{\pi_\ell^{-1}(i)}$).

\begin{figure}[!ht]
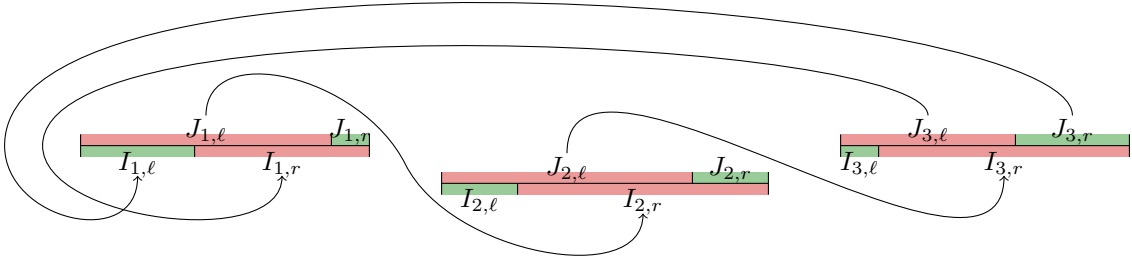
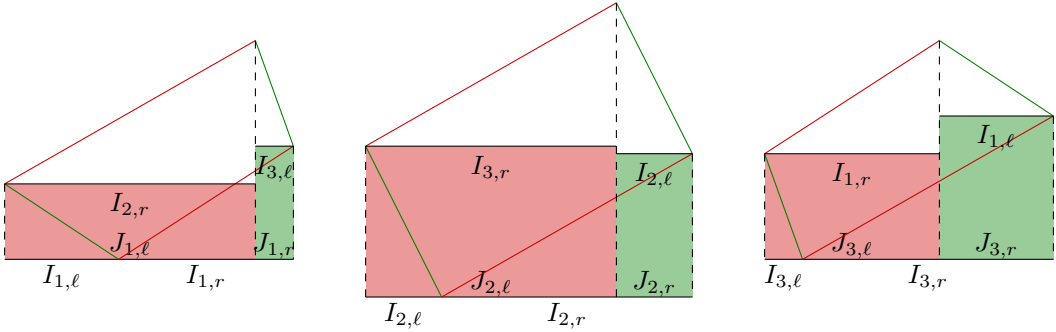

  \begin{center}
	\subfigure[A bipartite IET\label{subfig:bipartite_IET}]{\picinput{iet_H2}}
	\\
	\subfigure[The suspension of a the bipartite IET in Figure~\ref{subfig:bipartite_IET}\label{subfig:bipartite_IET_suspension}]{\picinput{iet_H2_suspension}}
  \end{center}
  \caption{a bipartite interval exchange transformations with 3 intervals and one of its suspension. The resulting translation surface belongs to $\HHH(2) = \CCC^{hyp}(3)$}
\end{figure}

A \emph{bipartite interval exchange map} is a piecewise isometry $T: I \to I$ defined on the \emph{disjoint union} $I=\bigsqcup_{i=1}^k I_i$ of $k$ open bounded intervals $I_1,\ldots,I_k$. Each interval $I_i$ is  partitioned in two different ways as union of two intervals and $T$ maps isometrically the intervals in the first partition to the intervals in the second partition, so that the image of a right interval (resp.~a left interval) is a left (resp.~right) interval (see Figure~\ref{subfig:bipartite_IET}).

More formally, let $\vpi = (\pi_\ell, \pi_r)$ where $\pi_\ell$ and $\pi_r$ are two permutations of $\{1,\ldots,k\}$.
Let $\vlambda = ((\lambda_{1,\ell}, \lambda_{1,r}), \dots, (\lambda_{k,\ell}, \lambda_{k,r})) \in (\RR_- \times \RR_+)^k$ be such that 
\begin{equation}\label{eq:train_track_lengths}
  \lambda_{i,\ell} + \lambda_{\pi_\ell(i),r} = \lambda_{i,r} + \lambda_{\pi_r(i),\ell}, \qquad \forall 1\leq i \leq k.
\end{equation}
The relations given by the second formula in~\eqref{eq:train_track_lengths} are the \emph{train-track relations} for the lengths,  analogous to the ones for the wedges~\eqref{eq:train_track_wedges}.

For $i \leq i \leq k$, set $I_i = \left(\lambda_{i,\ell},\lambda_{i,r}\right) \subset \RR$ and let 
\[\begin{array}{cc}
I_{i,\ell}  = \left(\lambda_{i,\ell},0\right), & I_{i,r} = \left(0,\lambda_{i,r}\right), \\
J_{i,\ell} = \left(\lambda_{i,\ell}, \lambda_{i,\ell} + \lambda_{\pi_\ell(i),r}\right), & J_{i,r} = \left(\lambda_{i,r} + \lambda_{\pi_r(i),\ell}, \lambda_{i,r}\right).
\end{array}
\]
Remark that $\{I_{i,\ell}, I_{i,r}\}$ is obviously a partition of $I_i \backslash \{0\}$ and the train track relations~\eqref{eq:train_track_lengths} imply that $\{J_{i,\ell}, J_{i,r}\}$ is a partition of $I_i \backslash \{\lambda_{i,d}\}$ where $\lambda_{i,d} = \lambda_{i,\ell} + \lambda_{\pi_\ell(i),r} = \lambda_{i,r} + \lambda_{\pi_r(i),\ell}$.
 
\begin{definition} \label{def:bipartite_IET}
The \emph{bipartite interval exchange map} with data $(\vpi, \vlambda)$ is the map from $I = I_1 \sqcup \ldots \sqcup I_k$ that maps by translation $J_{i,l}$ to $I_{\pi_\ell(i),r}$ and $J_{i,r}$ to $I_{\pi_r(i),\ell}$. The map is not defined at the points $\lambda_{i,d} \in I_i$, $1\leq i \leq k$.
\end{definition}

We introduced bipartite IETs so that the following holds. Let us call \emph{interior} of a wedge $\scw= (\scw_\ell, \scw_r)$ the union of the interiors of the saddle connections $\scw_\ell$ and $\scw_r$ together with their common singularity point.
\begin{lemma}[cross sections of quadrangulations] \label{lem:bipartite_IET_section}
Given a quadrangulation $Q = (\vpi, \vwedges )$, the Poincar{\'e} first return map $F$ of the vertical flow on the union of the interiors of the  wedges of $Q$ is conjugate to the bipartite IET $T = (\vpi,\vlambda)$, where the vector $\vlambda$ is given by the real parts of the wedges. More precisely, if $p$ is the projection $p$ that maps a point $z$ of the wedge $\scw_i$ to the point $\Re(z) \in I_i$, we have $p F = T p$. 
\end{lemma}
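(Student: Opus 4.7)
The plan is to trace the vertical flow through each quadrilateral $q_i$ in local coordinates and match the real-part projection $p \circ F$ with the bipartite IET $T$ defined in Definition~\ref{def:bipartite_IET}.

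First, I would fix an isometric embedding of $q_i$ into $\CC$ sending its bottom singularity to $0$. The four vertices then become $0$, $\scw_{i,\ell}$, $\scw_{i,r}$, and a top vertex $\scw_{i,d}$. Using that the top-left side of $q_i$ is glued by translation to the bottom-right side $\scw_{\lperm(i),r}$ of $q_{\lperm(i)}$, and symmetrically the top-right side of $q_i$ to $\scw_{\rperm(i),\ell}$, one reads off $\scw_{i,d} = \scw_{i,\ell} + \scw_{\lperm(i),r} = \scw_{i,r} + \scw_{\rperm(i),\ell}$, which is precisely the train-track relation~\eqref{eq:train_track_wedges}; its real part is the common value $\lambda_{i,d}$ introduced after~\eqref{eq:train_track_lengths}. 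Admissibility of $q_i$, together with the sign conventions on left/right slanted sides, gives $\lambda_{i,\ell} < \lambda_{i,d} < \lambda_{i,r}$.

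Next, I would pick $z$ in the interior of $\scw_i$ and set $x = \Re(z) = p(z) \in I_i \setminus \{0\}$. Flowing upward vertically, the trajectory enters the open quadrilateral $q_i$. Since by definition of a quadrangulation every wedge on $X$ sits on the bottom boundary of some quadrilateral and the interior of $q_i$ is singularity-free, the trajectory cannot meet any wedge until it exits $q_i$; by admissibility of $q_i$ it must exit through one of the two top sides. Because the top vertex has real part $\lambda_{i,d}$, the exit side is the top-left one if $x \in (\lambda_{i,\ell},\lambda_{i,d}) = J_{i,\ell}$ and the top-right one if $x \in (\lambda_{i,d},\lambda_{i,r}) = J_{i,r}$; the exceptional value $x = \lambda_{i,d}$ is exactly the one for which both $F$ and $T$ fail to be defined, since the vertical trajectory then hits the top singularity.

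Finally, I would transport the exit point back to a real coordinate on the receiving wedge. Assume first $x \in J_{i,\ell}$: the trajectory exits through the top-left side of $q_i$, identified with $\scw_{\lperm(i),r}$ by the translation that sends the starting endpoint of this saddle connection (at real part $\lambda_{i,\ell}$ in $q_i$'s chart) to its starting endpoint (at real part $0$ in $q_{\lperm(i)}$'s chart). Hence $p(F(z)) = x - \lambda_{i,\ell}$, which is precisely $T(x)$, since by Definition~\ref{def:bipartite_IET} the map $T$ translates $J_{i,\ell}$ onto $I_{\lperm(i),r} = (0,\lambda_{\lperm(i),r})$ by $-\lambda_{i,\ell}$. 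The case $x \in J_{i,r}$ is completely symmetric with shift $-\lambda_{i,r}$. The only even mildly delicate point is the bookkeeping of the real-part shift when passing from a top side of $q_i$ to the corresponding bottom side of the adjacent quadrilateral; in each case it reduces to identifying the two endpoints of the glued saddle connection in the two local charts.
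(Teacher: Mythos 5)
Your proposal is correct. There is in fact nothing in the paper to compare it with: Lemma~\ref{lem:bipartite_IET_section} is stated without proof (only the remark about the excluded point $\lambda_{i,d}$ follows it), so your argument supplies exactly the verification the authors leave implicit. The structure is the natural one: in a chart sending the bottom singularity of $q_i$ to $0$, admissibility makes the boundary of $q_i$ a bottom and a top piecewise-linear graph over $[\lambda_{i,\ell},\lambda_{i,r}]$ meeting the top vertex at real part $\lambda_{i,d}$, so the vertical trajectory from a point of the wedge with $\Re(z)=x$ exits through the top-left side when $x\in J_{i,\ell}$, through the top-right side when $x\in J_{i,r}$, and hits the top singularity exactly when $x=\lambda_{i,d}$; the gluing translations (starting point to starting point, since translations preserve the natural orientation) then give precisely the shifts $-\lambda_{i,\ell}$ and $-\lambda_{i,r}$ of Definition~\ref{def:bipartite_IET}. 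Two minor bookkeeping remarks: the reason the trajectory meets no wedge strictly inside $q_i$ is that all wedges are sides of the quadrangulation, hence disjoint from the open quadrilateral --- singularity-freeness alone is not quite the point; and since the paper's notion of ``interior of a wedge'' includes the common singularity, your tacit restriction to $x\neq 0$ is the right reading: the identity $pF=Tp$ is claimed wherever $F$ is defined, and the only other excluded value is $x=\lambda_{i,d}$, as you note.
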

Remark that for each $1\leq i \leq k$ the IET $T$ is defined at all points of $I_i$ except at the point $\lambda_{i,d} \in I_i$, which   corresponds to the unique point of the wedge $w_i$ whose trajectory hits  an endpoint of a wedge (and hence $F$ is not defined there). 
Clearly the Lebesgue measure on $I$ is invariant under $T$. The pull back of the Lebesgue measure  $p$ is the absolutely continuous \emph{transverse measure} invariant under the Poincar{\'e} map.  

\smallskip

Conversely, starting from a given bipartite IET $T$ we can construct as follows a family of quadrangulations on a surface $X$ for which $T$ is the Poincar{\'e} first return map on the union of the wedges, see Figure~\ref{subfig:bipartite_IET_suspension}.
\begin{definition}
A \emph{suspension data} $\vtau$ for the bipartite IET $(\vpi, \vlambda)$ is a vector $\vtau = \left((\tau_{1,\ell},\tau_{1,r}), \dots, (\tau_{k,\ell},\tau_{k,r})\right)$ in $(\RR_+ \times \RR_+)^k$ that satisfies the train-track relations
\[
\tau_{i,\ell} + \tau_{i,r} = \tau_{\pi_r(i),\ell} + \tau_{\pi_\ell(i),r}, \quad \text{for $i=1,\ldots,k$.}
\]
\end{definition}
To the interval exchange data $(\vpi, \vlambda)$ and the suspension datum $\vtau$ we associate a quadrangulation $Q = (\vpi , \vlambda, \vtau) = (\vpi, \vwedges)$ where the wedges of $Q$ are $\scw_{i,\ell} = \lambda_{i,\ell} + \sqrt{-1}\, \tau_{i,\ell}$ and $\scw_{i,r} = \lambda_{i,r} + \sqrt{-1}\, \tau_{i,r}$. 
The following result can be seen as a converse of Lemma~\ref{lem:bipartite_IET_section}. 
\begin{lemma}[suspensions of bipartite IETs] \label{lem:bipartite_IET_suspension}
Given a bipartite IET $T = (\vpi, \vlambda)$ and a suspension datum $\vtau$ for $T$, let $Q = (\vpi,\vlambda,\vtau)$ be the associated quadrangulation. Then the Poincar{\'e} map of the vertical flow on the associated surface  on the union of the interior of the wedges of $Q$ is conjugated to $T$. 
\end{lemma}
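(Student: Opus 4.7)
The strategy is to observe that this lemma is essentially the converse construction to Lemma~\ref{lem:bipartite_IET_section}, so after verifying that $Q=(\vpi,\vlambda,\vtau)$ is a genuine quadrangulation the statement will follow by a direct application of that lemma. Thus, the plan has two parts: (i) check that the proposed wedges $\scw_{i,\ell}=\lambda_{i,\ell}+\sqrt{-1}\,\tau_{i,\ell}$ and $\scw_{i,r}=\lambda_{i,r}+\sqrt{-1}\,\tau_{i,r}$ define a quadrangulation with combinatorial datum $\vpi$; (ii) compare the IET produced by Lemma~\ref{lem:bipartite_IET_section} applied to this quadrangulation with the original $T=(\vpi,\vlambda)$.

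For (i), I need to verify the two conditions built into the definition of a quadrangulation encoded by $(\vpi,\vwedges)$: the slanting/sign conditions and the train-track relations \eqref{eq:train_track_wedges} on the wedges. The sign conditions are immediate from the hypotheses: $\lambda_{i,\ell}<0<\lambda_{i,r}$ gives $\Re(\scw_{i,\ell})<0$ and $\Re(\scw_{i,r})>0$, while $\tau_{i,\ell},\tau_{i,r}>0$ gives $\Im(\scw_{i,\ell})>0$ and $\Im(\scw_{i,r})>0$, as required so that each $\scw_i$ is a wedge. For the train-track relations, I split into real and imaginary parts: the real-part equation
\[
\lambda_{i,\ell}+\lambda_{\pi_\ell(i),r}=\lambda_{i,r}+\lambda_{\pi_r(i),\ell}
\]
is the assumed IET train-track relation \eqref{eq:train_track_lengths}, while the imaginary-part equation
\[
\tau_{i,\ell}+\tau_{\pi_\ell(i),r}=\tau_{i,r}+\tau_{\pi_r(i),\ell}
\]
is exactly the suspension train-track relation in the definition of $\vtau$ (rewritten in the equivalent form $\tau_{i,\ell}+\tau_{i,r}=\tau_{\pi_r(i),\ell}+\tau_{\pi_\ell(i),r}$ after moving two terms across). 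Together these give \eqref{eq:train_track_wedges}, so $Q=(\vpi,\vwedges)$ is a labelled quadrangulation of some translation surface $X$.

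For (ii), I apply Lemma~\ref{lem:bipartite_IET_section} directly to $Q$: the Poincar{\'e} first return map $F$ of the vertical flow on the union of the interiors of the wedges of $Q$ is conjugate, via the projection $p(z)=\Re(z)$, to the bipartite IET with combinatorial datum $\vpi$ and length datum $\vlambda'$ given by the real parts of the wedges. By construction $\Re(\scw_{i,\ell})=\lambda_{i,\ell}$ and $\Re(\scw_{i,r})=\lambda_{i,r}$, hence $\vlambda'=\vlambda$ and the resulting IET is precisely $T=(\vpi,\vlambda)$, which is the claim.

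I expect no genuine obstacle in this proof: once the decoupling of the wedge train-track relations into the length and suspension train-track relations is spelled out, everything reduces to invoking Lemma~\ref{lem:bipartite_IET_section}. The only small point to be careful about is that the slanting hypotheses on $\vlambda$ and the positivity of $\vtau$ are exactly what is needed to ensure that the pairs $(\scw_{i,\ell},\scw_{i,r})$ qualify as wedges (rather than arbitrary pairs of vectors), so that Lemma~\ref{lem:bipartite_IET_section} is applicable.
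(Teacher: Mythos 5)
The paper gives no explicit proof of this lemma — it only remarks that the result ``can be seen as a converse of Lemma~\ref{lem:bipartite_IET_section}'' — and your two-step strategy (verify that $(\vpi,\vwedges)$ with $\scw_{i,\ell}=\lambda_{i,\ell}+\sqrt{-1}\,\tau_{i,\ell}$, $\scw_{i,r}=\lambda_{i,r}+\sqrt{-1}\,\tau_{i,r}$ is a genuine quadrangulation datum, then apply Lemma~\ref{lem:bipartite_IET_section}) is precisely what is intended.

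There is, however, a genuine error in your treatment of the imaginary-part equation. You claim that
\[
\tau_{i,\ell}+\tau_{\pi_\ell(i),r}=\tau_{i,r}+\tau_{\pi_r(i),\ell}
\quad\text{and}\quad
\tau_{i,\ell}+\tau_{i,r}=\tau_{\pi_r(i),\ell}+\tau_{\pi_\ell(i),r}
\]
are equivalent ``after moving two terms across.'' They are not: subtracting one from the other yields $\tau_{\pi_\ell(i),r}=\tau_{i,r}$, which is a nontrivial extra constraint whenever $\pi_\ell(i)\neq i$. For a concrete instance, take the paper's own Figure~\ref{fig:graphExg2} data $\pi_\ell=(1,2,3)$, $\pi_r=(1)(2,3)$: at $i=2$ the first relation reads $\tau_{2,\ell}-\tau_{3,\ell}=\tau_{2,r}-\tau_{3,r}$, while the second reads $\tau_{2,\ell}-\tau_{3,\ell}=-(\tau_{2,r}-\tau_{3,r})$, which differ unless $\tau_{2,r}=\tau_{3,r}$. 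What is actually going on is a typo in the paper's Definition of suspension data: the correct relation is the first one, $\tau_{i,\ell}+\tau_{\pi_\ell(i),r}=\tau_{i,r}+\tau_{\pi_r(i),\ell}$ — this is the imaginary part of~\eqref{eq:train_track_wedges}, and it is what the paper itself uses later in the definition of $\Theta_{\vpi}$ in~\S\ref{sec:Markov}. You should have noted the discrepancy between the stated definition and~$\Theta_{\vpi}$ rather than asserting a false algebraic equivalence; once the train-track relation for $\vtau$ is taken in the form matching $\Theta_{\vpi}$, your argument goes through exactly as written, and the rest (reading off $\vlambda$ as the vector of real parts and quoting Lemma~\ref{lem:bipartite_IET_section}) is correct.
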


We remark that the vertical flow on the translation surface given by $Q = (\vpi,\vwedges)$ can also be represented as a special flow over the corresponding bipartite IET $T = (\vpi, \vlambda)$. The components of the vector $\vtau$ give the heights of the corresponding Rohlin towers. One can embedd geometrically these towers into the surface as shown in Figure~\ref{subfig:bipartite_IET_suspension}. Note that with this representation by Rohlin towers, the section is naturally given by horizontal intervals in the surface.

\subsection{Staircase moves and Ferenczi-Zamboni moves}\label{subsec:staircase_moves}
In the introduction we already gave a geometric definition of a staircase move (Definition~\ref{def:staircasemove}).
Let us now describe the corresponding operation on quadrangulation data.

Given a quadrangulation $Q = (\vpi, \vwedges)$,   
let us recall that the top right  side of the quadrilateral $q_i$ is glued to the quadrilateral $q_{\pi_{r(i)}}$. 
Thus, if $\{i, \pi_r (i), \dots \pi_r^n(i)\}$ is a \emph{cycle} of $\pi_r$, that is $\pi_r^j(i)\neq i$ for $1 \leq j \leq n$ but $\pi_r^{n+1}(i)=i$, the corresponding quadrilaterals  $\{q_i, q_{\pi_r (i)}, \dots q_{\pi_r^n(i)}\}$  are glued to each other through top right/bottom left  sides. Similarly, since the top left  side of $q_i$ is glued to $q_{\pi_\ell(i)}$, the  quadrilaters $\{q_i, q_{\pi_\ell (i)}, \dots q_{\pi_\ell^n(i)}\}$ indexed by a cycle of $\pi_\ell$ are glued to each other through top left/bottom right sides.  
\begin{notation} \label{not:cstaircase} 
Given a cycle $c \in \pi_\ell$ (respectively a cycle $c \in \pi_r$) we denote by $S_c$ the left (respectively right) staircase for  $Q$ which is obtained as union of the quadrilaterals in $Q$ indexed by the cycle $c$. Abusing the notation, we will denote  by $S=S_c$ both the collection of quadrilaterals and  their union as a subset of $X$, so we will both write $S \subset X$ and $q \in S$ where $q $ is one of the quadrilaterals of $Q$ contained in $S$.
\end{notation}

Let  $Q =(\vpi, \vwedges)$ be a quadrangulation.
For each wedge $\scw_i = (\scw_{i,\ell},\scw_{i,r})$ of a quadrilateral $q_i \in Q$, we denote by $\scw_{i,d}$ (or by  $\scw_{i,d^+}$) the (forward)  diagonal of the quadrilateral, which is given by
\begin{equation}\label{eq:def_diagonal}
\scw_{i,d}= \scw_{i,d^+}: = \scw_{i,\ell} + \scw_{\pi_\ell(i),r} = \scw_{i,r} + \scw_{\pi_r(i),\ell},
\end{equation} 
where the above equality holds by the train-track relations~\eqref{eq:train_track_wedges} for $\vwedges$. 
Remark that a right (resp.~left) staircase $S_c$ associated to a cycle $c $ of $\pi_r$ (resp.~$\pi_\ell$) is well slanted (see Definition~\ref{def:cstaircase}) if and only if $\Re(\scw_{i,d}) < 0$ ($\Re(\scw_{i,d}) > 0$) for all $i \in c$.

Let $c$ be a cycle of $\pi_r$ and  assume  that the corresponding staircase $S_c$ is  well slanted. Let us show that the staircase move in $S_c$ produces a new quadrangulation and describe its data (refer to Figure~\ref{fig:staircase_move_data} and see also Lemma~\ref{lem:staircase_move_data} below). 
Since in a  diagonal change, we replace a side of a wedge with its diagonal it is clear that after the staircase move we obtained a new length data  $\vwedges'$ given by 
\begin{equation}\label{eq:def_w1}
\scw_i' = \left\{ \begin{array}{ll}
  (\scw_{i,d}, \scw_{i,r} ) & \text{if $i \in c$,} \\
  \scw_i & \text{otherwise.}
\end{array} \right.
\end{equation}
From the well slantedness of the staircase $S_c$, it follows that also $\scw_i'$, $1\leq i \leq k$ are wedges, that is $w'_{i,\ell} \in \RR_- \times \RR_+$ and $w'_{i,r} \in \RR_+ \times \RR_+$. 
Furthermore, the wedges  $\vwedges'$ determine a new quadrangulation $Q'$ since, as shown in  Figure~\ref{fig:staircase_move_data},  $\scw_i'$ for $i \in c$ is the wedge of the quadrilateral $q_i'$ which has $\scw_{\pi_r(i), d}$  as right top edge and $\scw_{\pi_l \pi_r(i),\ell}$  as left top edge . This also shows that the quadrilateral glued to the top right  side of $q_i'$ is $q_{\pi_r(i)}'$ while the quadrilateral glued to the top left  side of $q_i'$ is $q_{\pi_{\ell}(\pi_r(i))}$, as shown in Figure~\ref{fig:staircase_move_data}. 

Thus, the combinatorics $\vpi' = (\pi_\ell', \pi_r')$ of the new quadrangulation $Q'$  is given by
\begin{equation} \label{eq:def_c_pi1}
\pi'_\ell(i) = 
\left\{ \begin{array}{ll}
  \pi_\ell \circ \pi_r(i) & \text{if $i \in c$,} \\
  \pi_\ell(i) & \text{otherwise.}
\end{array}\right.
\qquad \text{and} \qquad
\pi_r'=\pi_r.
\end{equation}
We will denote by $c \cdot \vpi$ the new combinatorial datum $\vpi'$ given by the above formulas.  
It follows from the formula for $\vpi'$ that the train-track relations for $\vpi'=c\cdot \vpi$ are satisfied by $\vwedges'$.

\begin{figure}[!ht]
\begin{center}
\picinput{staircaseright1_with_data} \vspace{1cm} \picinput{staircaseright2_with_data}
\end{center}
\caption{right staircase move on the parameters $(\vpi,\vwedges)$ of a quadrangulation}
\label{fig:staircase_move_data}
\end{figure}

Similary, if $c$ is a cycle of $\pi_\ell$ and $S_c$ is well slanted, the staircase move in $S_c$ produces a new quadrangulation $Q'=(c\cdot \vpi, \vwedges')$ where  $\vwedges'$ and $c \cdot \vpi  = (\pi_\ell', \pi'_r)$ is given by
\begin{equation} \label{eq:def_w2}
\scw'_i =
\left\{ \begin{array}{ll}
  (\scw_{i,\ell}, \scw_{i,d} ) & \text{if $i \in c$,} \\
  \scw_i & \text{otherwise,}
\end{array} \right.
\end{equation}
\begin{equation} \label{eq:def_c_pi2}
\pi_\ell'=\pi_\ell
\qquad \text{and} \qquad
\pi'_r(i) = 
\left\{ \begin{array}{ll}
  \pi_r \circ \pi_\ell(i) & \text{if $i \in c$,} \\
  \pi_r(i) & \text{otherwise.}
\end{array}\right.
\end{equation}

\smallskip
We remark that the operation on the permutation $\vpi$ does not depend on the length datum and the operation on the wedges $\vwedges$ is linear. Thus, to describe the new length datum $\vwedges'$, we introduce the $2k \times 2k$ matrix $A_{\vpi,c}$ as follows.
We index the rows and columns of $A_{\vpi,c}$ by the $2k$ indices $(1,\ell), (1,r)$, $(2,\ell), (2,r)$, \dots, $(k,\ell), (k,r)$. 
Let $I_{2k}$ the be $2k \times 2k$ identity matrix and for $1\leq i, j\leq k$ and  $\epsilon, \nu \in \{l,r\}$ let $E_{(i,\epsilon), (j,\nu)  }$ be the $2k \times 2k$ matrix whose entry in row $(i,\epsilon)$ and column $(j,\nu)$ is $1$ and all the other entries are zero.
We set
\begin{equation} \label{eq:def_A_pi_c}
A_{\vpi,c} =
\left\{ \begin{array}{ll}
  I_{2k} + \sum_{i \in c} E_{ (i,\ell),(\pi_\ell(i),r)} & \quad \text{if $c$ is a cycle of $\pi_r$}, \\
  I_{2k} + \sum_{i \in c} E_{ (i,r),(\pi_r(i),\ell)} & \quad \text{if $c$ is a cycle of $\pi_\ell$}.
\end{array} \right.
\end{equation}
Thus, with the convention that $\vwedges$ and $\vwedges'$ denote column vectors, one can verify from  equations~\eqref{eq:def_diagonal}, \eqref{eq:def_w1} and \eqref{eq:def_w2} that we can write  $\vwedges' = A_{\vpi,c}\ \vwedges$.  
Thus, we proved the following: 
\begin{lemma}[staircase move on data]\label{lem:staircase_move_data}
Given a labelled quadrangulation  $Q = (\vpi, \vwedges)$ and a cycle $c $ of $\vpi$, if the staircase $S_c$ is well slanted, when performing on $Q$ the staircase move in $S_c$ one obtains a new labelled quadrangulation $Q' = (\vpi',\vwedges')$ with
\[
\vpi' = c\cdot \vpi,  \qquad \vwedges' = A_{\vpi,c}\ \vwedges,
\]
where $c \cdot \vpi$ and $A_{\vpi,c}$ are given by formulas~\eqref{eq:def_c_pi1},~\eqref{eq:def_c_pi2}~and~\eqref{eq:def_A_pi_c} above. 
\end{lemma}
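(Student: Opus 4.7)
The proof plan is to treat the case where $c$ is a cycle of $\pi_r$ (the case $c \in \pi_\ell$ being symmetric) and to establish, in order, that the pairs $\scw'_i$ defined by~\eqref{eq:def_w1} are genuine wedges, that they arise as the bottom wedges of a quadrangulation $Q'$ of $X$ whose combinatorial datum is $c\cdot\vpi$, and finally that the length-data transformation is encoded by the matrix $A_{\vpi,c}$. For the first point, only $i \in c$ is nontrivial: the new left component is the diagonal $\scw_{i,d}=\scw_{i,\ell}+\scw_{\pi_\ell(i),r}$, which has positive imaginary part as a sum of two such vectors, and negative real part precisely because $S_c$ is well slanted. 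The wedge axioms then follow at once, since the triangle bounded by $\scw_{i,d}$, $\scw_{i,r}$ and the top-right side of $q_i$ is embedded in $X$ as a subtriangle of $q_i$.

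The core step is constructing $Q'$ and identifying its combinatorial datum, which I would do in two complementary parts. First, verify algebraically that $\vwedges'$ satisfies the train-track relations~\eqref{eq:train_track_wedges} with respect to $c\cdot\vpi$: a short case-by-case computation using the identity $\scw_{i,d}=\scw_{i,r}+\scw_{\pi_r(i),\ell}$ and distinguishing whether the indices $i$ and $\pi_r(i)$ lie in $c$ or not. Second, construct $Q'$ geometrically. Outside $S_c$ the quadrilaterals of $Q$ are left intact; inside $S_c$, each $q_i$ with $i\in c$ is sliced along its diagonal $\scw_{i,d}$ into two triangles, and the lower-right triangle of $q_i$ is glued across the common edge $\scw_{i,r}=\scw_{\pi_r(i),\ell}$ to the upper-left triangle of $q_{\pi_r(i)}$ to produce the new quadrilateral $q'_i$ with bottom wedge $(\scw_{i,d},\scw_{i,r})$. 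Reading off the top sides of $q'_i$ from this construction, combined with the train-track verification, yields $\pi'_r=\pi_r$ together with $\pi'_\ell(i)=\pi_\ell\pi_r(i)$ for $i\in c$, matching~\eqref{eq:def_c_pi1}. The main point requiring care, and the principal obstacle of the argument, is to check that this cut-and-paste is globally consistent around the whole cyclic staircase $S_c$; however Figure~\ref{fig:staircase_move_data} supplies the local model and well-slantedness of $S_c$ ensures that at each $q_i$ the lower-right triangle is non-degenerate and the construction propagates consistently around the cycle.

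Finally, the linear formula $\vwedges' = A_{\vpi,c}\,\vwedges$ follows by direct comparison of~\eqref{eq:def_w1} with the definition~\eqref{eq:def_A_pi_c}: the identity $I_{2k}$ accounts for all unchanged entries, while the rank-one terms $E_{(i,\ell),(\pi_\ell(i),r)}$ for $i\in c$ record precisely the replacement of $\scw_{i,\ell}$ by $\scw_{i,d}=\scw_{i,\ell}+\scw_{\pi_\ell(i),r}$. Once the geometric bookkeeping in the previous step is in place, both the combinatorial statement $\vpi'=c\cdot\vpi$ and the matrix form of the length change follow by routine verification.
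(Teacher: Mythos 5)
Your proposal follows essentially the same route as the paper: perform the diagonal changes, read off the new combinatorics and length data from the geometric regluing pictured in Figure~\ref{fig:staircase_move_data}, and observe that the linear operation on $\vwedges$ is recorded by $A_{\vpi,c}$. The only slip is the asserted identification $\scw_{i,r}=\scw_{\pi_r(i),\ell}$, which is false -- these are the bottom-right side of $q_i$ and the bottom-left side of $q_{\pi_r(i)}$, two distinct saddle connections; the edge shared by the lower-right triangle of $q_i$ and the left triangle of $q_{\pi_r(i)}$ is the \emph{top}-right side of $q_i$, which is identified in the quadrangulation with $\scw_{\pi_r(i),\ell}$, not with $\scw_{i,r}$. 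Once that label is corrected the construction matches the paper's, and your extra case-by-case check of the train-track relations (which the paper merely asserts) is a welcome addition.
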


One can moreover show that staircases are the smallest unions of quadrilaterals in which one can simultaneously perform diagonal changes to obtain a new quadrangulation, in the following sense.
\begin{lemma}\label{lem:stable}
Let $Q = (\vpi,\vwedges)$ be a quadrangulation and let $\mathcal{I}_\ell, \mathcal{I}_r  \subset \{1,\ldots,d\}$ be such that the quadrilaterals $q_i$ with  $i \in \mathcal{I}_\ell$ are left slanted  and the quadrilaterals $q_i$ with $i \in \mathcal{I}_r$ are right slanted. 
 
The new set of wedges obtain after individual diagonal changes in the quadrilaterals $Q_i$ for $i \in \mathcal{I}= \mathcal{I}_\ell \cup \mathcal{I}_r$ is associated to a quadrangulation if and only if the set of indices $\III_\ell$ (respectively $\III_r$) is a union of cycles of $\pi_\ell$ (resp. $\pi_r$).
\end{lemma}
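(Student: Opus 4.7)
The plan is to analyze how individual wedge sides of $Q=(\vpi,\vwedges)$ are affected by the simultaneous diagonal changes. For $i\in\III_\ell$ the left-diagonal change on $q_i$ replaces the right bottom side $\scw_{i,r}$ by the diagonal $\scw_{i,d}$, and for $i\in\III_r$ the right-diagonal change replaces $\scw_{i,\ell}$ by $\scw_{i,d}$. Two basic geometric facts will play a key role: (a) wedge sides of a quadrangulation carrying different labels $(j,\epsilon)\in\{1,\dots,k\}\times\{\ell,r\}$ are distinct saddle connections of $X$, since they differ either by the bundle they belong to or by their slant; (b) each new diagonal $\scw_{j,d}$ with $j\in\III$ lies in the open interior of the original quadrilateral $q_j$, and therefore cannot coincide with any saddle connection that is a common boundary between two distinct quadrilaterals of $Q$.

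For the $(\Leftarrow)$ direction, assume $\III_\ell$ is a union of cycles of $\pi_\ell$ and $\III_r$ a union of cycles of $\pi_r$. I decompose them into their cycles; each such cycle corresponds to a well slanted staircase since, by hypothesis, $\III_\ell$ consists of left-slanted quadrilaterals and $\III_r$ of right-slanted ones. Applying Lemma~\ref{lem:staircase_move_data} one step at a time, I perform first all right staircase moves and then all left staircase moves. Formulas~\eqref{eq:def_c_pi1}--\eqref{eq:def_c_pi2} show that a right staircase move preserves $\pi_r$ and leaves wedges outside $\III_r$ unchanged, so after performing all right moves the set $\III_\ell$ (which is disjoint from $\III_r$) is still a union of cycles of the updated $\pi_\ell'$ and its associated wedges are unchanged, hence the left staircases remain well slanted and the subsequent left moves are all legitimate. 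The final collection of wedges coincides with the one produced by the simultaneous individual diagonal changes on $\III$, which therefore forms a quadrangulation.

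For the $(\Rightarrow)$ direction, suppose the simultaneous diagonal changes yield a quadrangulation $Q'$ and fix $i\in\III_\ell$. The new quadrilateral $q_i'$ has bottom wedge $(\scw_{i,\ell},\scw_{i,d})$. The region bounded by these two sides near the common singularity is precisely the triangle cut off inside $q_i$ by the diagonal $\scw_{i,d}$ on the side of $\scw_{i,\ell}$, whose three sides are $\scw_{i,\ell}$, $\scw_{i,d}$ and $\scw_{\pi_\ell(i),r}$. Since $q_i'$ has four sides it must extend strictly beyond this triangle, and the only way to do so is across $\scw_{\pi_\ell(i),r}$, which forces $\scw_{\pi_\ell(i),r}$ not to be a wedge side of $Q'$. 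Using facts (a) and (b), this can happen only if the label $(\pi_\ell(i),r)$ has been modified by the move, and by construction this happens precisely when $\pi_\ell(i)\in\III_\ell$. Hence $\III_\ell$ is $\pi_\ell$-invariant, and by finiteness of $\{1,\dots,k\}$ this means $\III_\ell$ is a union of cycles of $\pi_\ell$. The symmetric argument applied to $\scw_{\pi_r(i),\ell}$ for $i\in\III_r$ shows that $\III_r$ is a union of cycles of $\pi_r$.

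The main obstacle is the geometric fact (b) used in the $(\Rightarrow)$ direction, which rules out that the removed saddle connection $\scw_{\pi_\ell(i),r}$ re-enters the wedge list of $Q'$ disguised as a new diagonal. I would establish it by observing that $\scw_{\pi_\ell(i),r}$ is a common boundary between the two distinct quadrilaterals $q_{\pi_\ell(i)}$ and $q_i$ of $Q$, while every new diagonal $\scw_{j,d}$ lies in the open interior of the quadrilateral $q_j$; since distinct quadrilaterals of a quadrangulation have disjoint interiors, these two families of saddle connections are disjoint in $X$.
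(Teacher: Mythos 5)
The paper leaves the proof of this lemma to the reader, so there is no proof of record to compare against; your argument must therefore be judged on its own merits, and it is correct. For $(\Leftarrow)$, the reduction to Lemma~\ref{lem:staircase_move_data} is sound, but you gloss over two small verifications that are needed: (1) that the sequence of moves produces the \emph{same} wedges as the simultaneous changes — this works because a right staircase move never alters the $r$-coordinates of any wedge, so the diagonals computed in the intermediate quadrangulations agree with the original ones, and for $i\in\III_\ell$ both $w_{i,\ell}$ and $w_{\pi_\ell(i),r}$ survive all right moves unchanged since $\pi_\ell(i)\in\III_\ell$; and (2) that $\III_\ell$ remains a union of cycles for the updated $\pi_\ell'$ — which holds since $\pi_\ell'(i)=\pi_\ell(i)$ for $i\notin\III_r\supset\III_\ell^c$... more precisely for $i\in\III_\ell$ one has $i\notin\III_r$, hence $\pi_\ell'(i)=\pi_\ell(i)\in\III_\ell$. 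Worth stating these explicitly. For $(\Rightarrow)$, the core observation is exactly right: after a left change at $i$, the saddle connection $\scw_{\pi_\ell(i),r}$ becomes the \emph{backward} diagonal of $q_i'$ (its holonomy is $\scw_{i,d}-\scw_{i,\ell}$), hence lies in the open interior of $q_i'$ and cannot be a side of the decomposition $Q'$; your facts (a) and (b) then correctly force $\pi_\ell(i)\in\III_\ell$. One minor wording quibble: in (b) you say $\scw_{\pi_\ell(i),r}$ is a boundary between ``two distinct quadrilaterals,'' which is not literally true when $\pi_\ell(i)=i$; the argument still goes through (a side versus an interior diagonal are still disjoint), and in that degenerate case the desired conclusion $\pi_\ell(i)\in\III_\ell$ is trivial anyway, but you may want to rephrase so that the cited fact matches what you use.
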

\noindent We leave the proof to the reader.

\smallskip
One can verify that staircase moves provide a geometric extension of the elementary moves on bipartite IETs which appear in the FZ induction~\cite{FerencziZamboni-struct}, in the following sense.
\begin{remark} 
Let $Q=(\vpi,\vlambda,\vtau)$ be a quadrangulation of a surface in $\CCC^{hyp}(k)$ and $T=(\vpi,\vlambda)$ be the corresponding bipartite IET.  Let $Q'= (\vpi', \vlambda', \vtau')$ be the quadrangulation obtained from $Q$ by performing a staircase move in $c$ and let $T'$ be corresponding bipartite IET. Then $T'$ is the bipartite IET obtained from $T$ by one elementary step of a FZ move.
\end{remark}

\smallskip 
An alternative description of the geometric extension can be given in terms of Rohlin towers.  The action of a staircase move at the level of Rohlin towers associated to a quadrangulation is the stacking operation shown in Figure~\ref{fig:diagonal_change_suspension}. 

\begin{figure}[!ht]
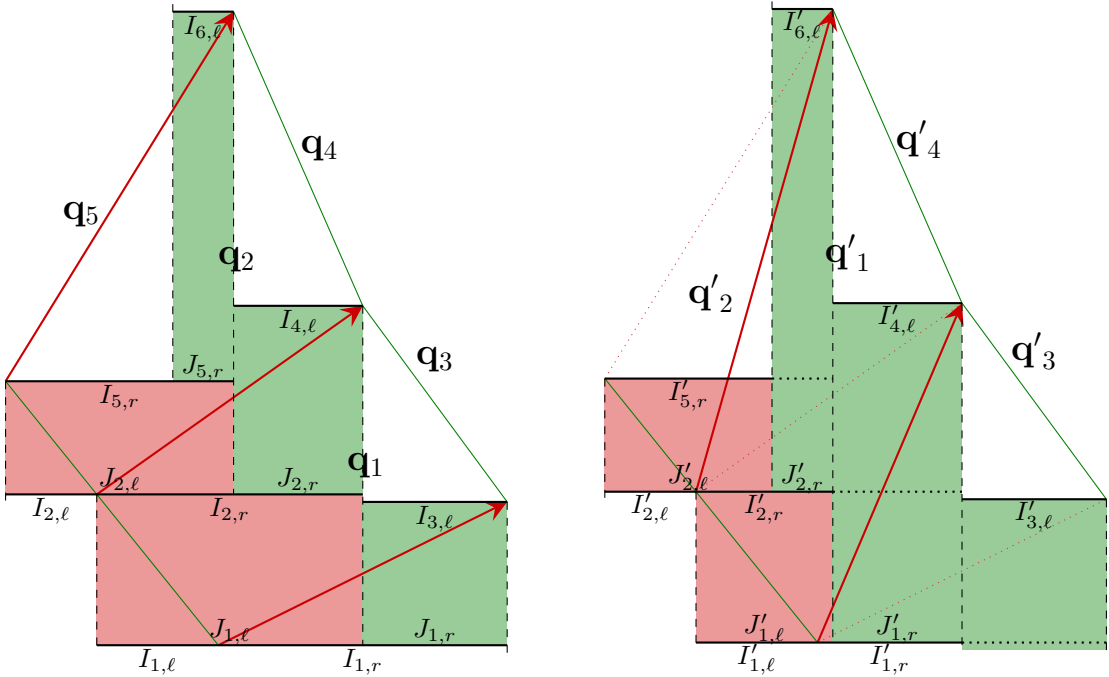

\begin{center}
\picinput{diagonal_change_on_suspension1}
\hspace{1cm}
\picinput{diagonal_change_on_suspension2}
\end{center}
\caption{diagonal changes seen on suspension}
\label{fig:diagonal_change_suspension}
\end{figure}

\subsection{Diagonal changes algorithms given by staircase moves.} \label{subsec:algorithms}
Let $Q = Q^{(0)}$  be a given starting quadrangulation. An \emph{algorithm} produces a sequence of quadrangulations $Q^{(1)}$, $Q^{(2)}$, \ldots in such way that $Q^{(n+1)}$ is obtained from $Q^{(n)}$ by a sequence of staircase moves. 
As we already mentioned there might be several possible staircase  moves. 
Remark that if $S_1$ and $S_2$ are (disjoint) well slanted staircases in $Q$, the staircase moves in $S_1$ and $S_2$ commute, so that the order in which they are performed does not matter and the two moves can be performed simultaneously. If $Q'$ is obtained from $Q$ by performing staircase moves in a subset of the well slanted staircases of $Q$, we will say that  $Q'$ is obtained from $Q$ by  \emph{simultaneous staircase moves}. 

Let us first define the greedy algorithm, which corresponds to the algorithm introduced in~\cite{FerencziZamboni-struct} for bipartite IETs.
\begin{definition}[greedy algorithm]
The \emph{greedy diagonal changes algorithm} starting from $Q= Q^{(0)}$ produces the sequence $(Q^{(n)})_{ n \in \mathbb{N}}$ of quadrangulations where  $Q^{(n+1)}$ is obtained  from $Q^{(n)}$ by performing simultaneous staircase moves in all well slanted staircases for $Q^{(n)}$. 
\end{definition}

Let us remark that a left (resp.~right) staircase move does not modify $\pi_\ell$ (resp.~$\pi_r$). Thus, even if the  quadrilaterals in a staircase change,  the left (resp.~right) staircases (each seen as union of the corresponding quadrilaterals) do not change during a   left (resp.~right) staircase move. 
 Thus, it makes sense to define the 
 \emph{multiplicity} of a left (resp.~right) staircase $S_c$ as  the maximum $n$ such that we can perform $n$ consecutive left (right)  staircase moves in $S_c$. The following algorithm may be thought as a generalization of the multiplicative continued fraction algorithm (associated to the Gauss map) that is an acceleration of the additive one (associated to the Farey map).
\begin{definition}[left/right algorithm]
The \emph{left/right algorithm} starting at $Q^{(0)}=Q$ produces a sequence $(Q^{(n)})_{ n \in \mathbb{N}}$ where, if $n$ is even, $Q^{(n+1)}$ is obtained from $Q^{(n)}$  by performing in each left slanted staircase as many left  staricase moves as the multiplicity of the staircase, while if $n$ is odd $Q^{(n+1)}$ is obtained by  doing the same for all right slanted staicases for $Q^{(n)}$. 
\end{definition}
Remark that in the left/right algorithm $Q^{(n+1)}$ is in general obtained from $Q^{(n)}$ by several staircase moves that are not simultaneous.
A version of this algorithm was already used in~\cite{Delecroix} for the stratum $\HHH(2) = \CCC^{hyp}(3)$.

In~\cite{DU:II} we describe a third diagonal change algorithm, which we call \emph{geodesic algorithm}, which is determined by the return map to a Poincar\'e section of the Teichm\"uller flow. 
Another different version of a diagonal change algorithm at the level of interval exchanges was used in~\cite{Ferenczi2ngon} to describe interval exchanges that comes from flat surfaces built from $2n$-gons. One can check that their algorithm is actually the ``additive'' version at the level of IETs of the algorithm described by Smillie and Ulcigrai in~\cite{SU1,SU2}.

Let us say that a diagonal changes algorithm given by staircase moves is a \emph{slow algorithm} if each of its moves is given by simultaneous staircase moves. The greedy algorithm is an example of a slow algorithm, while the left/right algorithm, the geodesic algorithm and the one described by  Smillie and Ulcigrai in~\cite{SU1,SU2} are not. Theorem~\ref{thm:same_objects} in~\S\ref{sec:sameobjects} shows that \emph{any} slow algorithm actually produce the same geometric objects and therefore the choice of an actual algorithm is not so important.

Further information on the relation between different (not necessarily slow) algorithms can be deduced from~\cite{DU:II}, where we give a detailed description of the structure of the set of quadrangulations on a given surface $X$. In particular, we show that given a surface $X$ in a hyperelliptic component, for any two quadrangulations $Q_1$ and $Q_2$ of $X$ there exists a sequence of backward and forward staircase moves from $Q_1$ to $Q_2$.

\subsection{Invertibility, self-duality and Markov structure on parameter space} \label{sec:Markov}  \label{sec:move_parameters}
In this section we introduce the space of (labelled) quadrangulations of surfaces in a component $\CCC^{hyp}(k)$. We prove that staircase moves are invertible and self-dual on this set of quadrangulations (see Theorem~\ref{thm:Markov_self_dual}).

Let us fix $k$ and build the space of all labelled quadrangulations of surfaces in $\CCC^{hyp}(k)$.
Start from a fixed combinatorial datum $\vpi$ of such a surface and consider the oriented graph $\GGG = \GGG(\vpi)$ defined as follows.
The vertices are the set of combinatorial data that may be obtained from $\pi$ by a sequence of staircase moves.
There is an edge from $\pi$ to $\pi'$ labelled by $c$ if and only if $c \cdot \pi = \pi'$. In Figure~\ref{fig:graph_of_graphs} we show the graph associated to $\pi_\ell = (1,3)$ and $\pi_r=(1,2)$. The notation for cycles used in the figure, which makes clear whether a cycle belong to $\pi_\ell$ or $\pi_r$, is the following: if $c$ is a cycle of $\pi_\ell$ then we write it as a word of length $k$ on the alphabet $\{\cdot,\ell\}$, where the $i^{th}$ letter of the word is $\ell$ if and only if $i \in c$. For example, the cycle $c=\{1,3\}$ is denoted $\ell \cdot \ell$. Cycles of $\pi_r$ are denoted in the same way using words on the alphabet  $\{\cdot,r\}$.

\begin{figure}[!ht]
\begin{center} \picinput{graph_of_graphs} \end{center}
\caption{the graph $\GGG$ of combinatorial data for quadrangulations in $\CCC^{hyp}(3) \simeq \HHH(2)$}
\label{fig:graph_of_graphs}
\end{figure}

As we will see in~\S\ref{subsec:hyperelliptic}, if $\vpi = (\pi_\ell,\pi_r)$ is a combinatorial datum of a quadrangulation of a surface in $\CCC^{hyp}(k)$, there exists an involution $\iota$ of $\{1,\ldots,k\}$, that corresponds to the action of the hyperelliptic involution on the quadrilaterals. Moreover $\pi_\ell\, \pi_r\, \iota$ is a $k$-cycle and is invariant under the operation $c \cdot \pi$ associated to a staircase move, i.e. the $k$-cycles associated to the vertices of $\GGG$ are the same. It is proven in~\cite{CassaigneFerencziZamboni} that this invariant is complete, i.e. that two pairs $\vpi$ and $\vpi'$ belongs to the same graph if and only if $\pi_\ell\, \pi_r\, \iota = \pi'_\ell\, \pi'_r\, \iota'$. The same result is proved in~\cite{DU:II} using the ergodicity of the Teichmueller flow on~$\CCC^{hyp}(k)$.
In particular, starting from different combinatorial data $\vpi$ and $\vpi'$ that correspond to two quadrangulations of surfaces in the same component $\CCC^{hyp}(k)$, then the graph $\GGG(\vpi)$ and $\GGG(\vpi')$ are isomorphic.  More precisely, there exists a permutation $\sigma$ in $S_k$ such that  the isomorphism is given by $(\pi_\ell,\pi_r) \mapsto (\sigma \pi_\ell \sigma^{-1}, \sigma \pi_r \sigma^{-1})$.

For each combinatorial datum $\vpi = (\pi_\ell,\pi_r)$ in $\GGG$, let us introduce the cones $\Delta_{\vpi} \subset (\RR^{2})^k$ and $\Theta_\vpi \subset (\RR^{2})^k$ that parametrize all possible lengths and heights of wedges with combinatorial datum $\vpi$, that is the lengths and heights which satisfy the train-track relations given by $\vpi$. Formally
\begin{align*}
\Delta_{\vpi} = \{& \left( (\lambda_{1,\ell},\lambda_{1,r}), \dots, (\lambda_{k,\ell},\lambda_{k,r}) \right) \in (\RR_- \times \RR_+)^k; \\
   &\lambda_{i,\ell} + \lambda_{\pi_\ell(i),r} = \lambda_{i,r} + \lambda_{\pi_r(i),\ell}\quad \text{for $1\leq i \leq k$} \} \nonumber \\ \nonumber
\Theta_{\vpi} = \{&\left( (\tau_{1,\ell},\tau_{1,r}), \dots, (\tau_{k,\ell},\tau_{k,r}) \right) \in (\RR_+ \times \RR_+)^k; \\
 & \tau_{i,\ell} + \tau_{\pi_\ell(i),r} = \tau_{i,r} + \tau_{\pi_r(i),\ell}\quad \text{for $1\leq i \leq k$} \}. 
\end{align*}
Then the \emph{space of labelled quadrangulations} of surfaces in $\CCC^{hyp}(k)$ is
\[
\QQQ_k = \{(\vpi,\vlambda,\vtau); \  \vpi \in \GGG,\ \vlambda \in \Delta_\vpi, \ \vtau \in \Theta_\vpi\}.
\]
In~\cite{DU:II} we show that each hyperelliptic component $\CCC^{hyp}(k)$ is essentially the same as $\QQQ_k / \sim$ where $\sim$ is the equivalence relation generated by staircase moves.

\smallskip

Given $(\vpi,\vlambda,\vtau) \in \QQQ_k$ and a cycle $c$ of $\pi_r$ or $\pi_\ell$,  remark that the heights $\vtau  \in \Theta_\vpi$ play no role in  determining whether $S_c$ is well-slanted. Thus, let $\Delta_{\vpi,c} \subset \Delta_\vpi$ be the subset of lengths data for which the staircase $S_c$ is well slanted.   
Recall that the (forward) diagonal $\scw_{i,d} = w_{i,\ell} + w_{\pi_\ell(i),r} = w_{i,r} + w_{\pi_r(i),\ell}$ of $q_i$ is left (resp. right) slanted if and only if its real part $\lambda_{i,d} = \Re \scw_{i,d}$ is greater than $0$ (resp. less than $0$).
Thus, formally, we have
\begin{equation}\label{eq:deltac_def}
\Delta_{\vpi,c} := \left\{ \begin{array}{ll}
  \{\vlambda \in \Delta_{\vpi} \ | \quad \lambda_{i,d} < 0 \ \forall i \in c\}, & \text{if $c$ is a cycle of $\pi_r$,} \\
  \{\vlambda \in \Delta_{\vpi} \ |\quad \lambda_{i,d} > 0 \ \forall i \in c\},  & \text{if $c$ is a cycle of $\pi_\ell$.} \\
\end{array}
  \right.
\end{equation}

Then one can perform a staircase move in $S_c$ if and only if $\vlambda \in \Delta_c$. Using the Definitions~\eqref{eq:def_c_pi1} and~\eqref{eq:def_c_pi2} of $c \cdot \vpi$ and the definition~\eqref{eq:def_A_pi_c} of $A_{\vpi,c}$ and remarking that $A_{\vpi,c}$ acts linearly both on the real and imaginary part of each saddle connection in $\vwedges$, we can formally define a staircase move on the parameter space as follows:
\begin{definition}\label{def:mchat}
Let $\vpi =(\pi_\ell,\pi_r)\in \GGG$ and let $c$  be a cycle of $\pi_r$ or $\pi_\ell$.  The staircase move $\widehat{m}_{\vpi,c}$ on $\QQQ_k$ is map defined on $\{ \vpi \} \times \Delta_{\vpi,c} \times \Theta_\vpi \subset \QQQ_k$ which sends $(\vpi, \vlambda, \vtau)$ to $\widehat{m}_{\vpi,c} (\vpi,\vlambda, \vtau) = (c \cdot \vpi,\, A_{\vpi,c}\ \vlambda, A_{\vpi,c}\ \vtau )$. 
\end{definition}
 
\smallskip
 
Geometrically, the inverse of a staircase move in $X$ is simply a staircase move in the surface obtained from $X$ by counterclockwise rotation by 90 degrees. To formalize the action by rotation, we introduce the operator $R$ on the parameter space of quadrangulations $\QQQ_k$. Remark that if $q \subset \CC$ is an admissible quadrilateral, multiplying by the imaginary unit $\sqrt{-1}$ we get the rotated quadrilateral $\sqrt{-1} q$ which is  still admissible. Thus, if $Q$ is a labelled quadrangulation for $X$, then the  collection of quadrilaterals $q' = \sqrt{-1}q$  also determine a quadrangulation of $X$, which we denote by $\sqrt{-1}Q$. We denote by  $Q'$ the quadrangulation $\sqrt{-1} Q$  labelled so that the wedge $\sc_i'$ of the quadrilateral $q_i'$ contains the same vertical ray which was contained in the wedge  $\sc_i$ of $q_i$, as shown in Figure~\ref{fig:rotation}). 
 As we prove below, this convention for the labelling (but not for example the more naive convention of calling $q_i'$ the quadrilateral $\sqrt{-1} q$) guarantees that the operator $R$ that sends  $Q$ to $Q'$ is a well defined operation on the space $\QQQ_k$ of labelled quadrangulations.  
The explicit formulas for the wedges and combinatorial datum of $q'\in Q'$ can be easily obtained from $Q=(\vpi, \vwedges)$ by looking at Figure~\ref{fig:rotation} and lead to the following formal definition:

\begin{figure}[!ht]
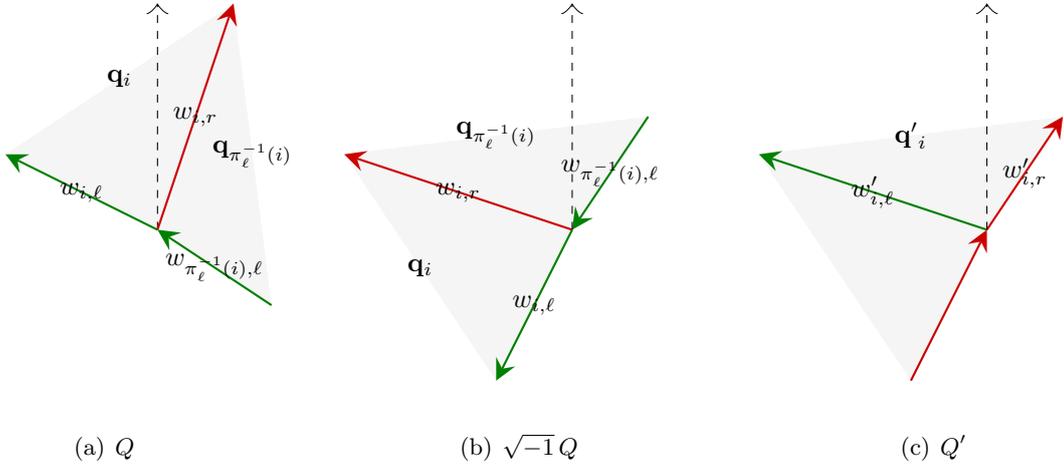

  \begin{center}
\subfigure[$Q$]{\picinput{rotation1}} \hspace{.2cm}
\subfigure[$\sqrt{-1}\, Q$]{\picinput{rotation2}} \hspace{.2cm}
\subfigure[$Q'$]{\picinput{rotation3}}
 \end{center}
  \caption{a quadrangulation seen from the vertical labelled $i$, its rotation by $\pi/2$ and its new labels}
  \label{fig:rotation}
\end{figure}

\begin{definition}\label{def:R}
The \emph{rotation operator} $R$ sends $Q=(\vpi, \vwedges) \in \QQQ_k$ to  $RQ=(\vpi', \vwedges')$ given by the following formulas:
\[
\pi'_\ell = \pi_\ell\, \pi_r\, \pi_\ell^{-1}, 
\quad
\pi'_r = \pi_\ell^{-1}
\]
and
\[
q'_i = \sqrt{-1}\, q_{\pi_\ell^{-1}(i)}
\quad
w'_{i,\ell} = \sqrt{-1}\, w_{i,r}
\quad
w'_{i,r} = -\sqrt{-1}\, w_{\pi_\ell^{-1}(i),\ell}.
\]
\end{definition}
Let us show that is a well defined operator from $\QQQ_k$ to $\QQQ_k$. It is clear from the geometric description and admissibility of quadrilaterals that $\vwedges'$ is also a vector of wedges and that they satisfy the train-track relations for $\vpi'$.  Hence, if  $\vwedges'= \vlambda' + \sqrt{-1}\vtau$, we have that $\vlambda' \in \Delta_{\vpi'}$ and  $\vtau' \in \Theta_{\vpi'}$. Thus, since $\QQQ_k = \GGG \times \Delta_{\vpi'} \times \Theta_{\vpi'}$,   one only needs to verify that $\vpi'= (\pi_\ell\, \pi_r\, \pi_\ell^{-1}, \pi_\ell^{-1})$ belong to the same graph $\GGG=\GGG(\pi)$. This is proved in~\S~\ref{subsubsec:triangulations} (see Corollary~\ref{cor:same_graph}) and can be shown either from the combinatorial description in~\cite{CassaigneFerencziZamboni} or from the connectedness of $\CCC^{hyp}(k)$ proved in~\cite{DU:II}. 

\smallskip

The operator $R$ is invertible and one can check that the inverse rotation $R^{-1}: \QQQ_k \to \QQQ_k$ is given by $(\vpi',\vwedges') = R^{-1}(\vpi,\vwedges)$ where
\begin{equation} \label{eq:def_Rinverse}
\pi'_\ell = \pi_r^{-1}, \quad \pi'_r = \pi_r \, \pi_\ell \, \pi_r^{-1}
\quad \text{and} \quad
\scw'_{i,l} = \sqrt{-1}\, \scw_{\pi_r^{-1}(i),r}, \quad \scw'_{i,r} =- \sqrt{-1}\, \scw_{i,\ell}.
\end{equation}

Let us remark that $R$ exchanges the role of $\vlambda$ and $\vtau$, more precisely if $(\vpi',\scw') = R (\vpi,\scw)$ then
\begin{equation} \label{eq:rotations_exchange}
  \scw'_{i,\ell} = - \tau_{i,r} + \sqrt{-1}\, \lambda_{i,r}
\quad \text{and} \quad
\scw'_{i,r} = \tau_{\pi_{\ell}^{-1}(i),\ell} - \sqrt{-1}\, \lambda_{\pi_{\ell}^{-1}(i),\ell}.
\end{equation}

So far, for a given admissible quadrilateral $q_i$ in a quadrangulation $Q = (\pi,\vwedges)$ we only considered the forward diagonal $\scw_{i,d} = \scw_{i,d^+} = \scw_{i,l} + \scw_{\pi_\ell(i),r}$ connecting the bottom vertex to the top one.

\begin{definition}
  Let $q_i$ be a quadrilateral in a quadrangulation $Q=(\vpi,\vwedges)$. The \emph{backward diagonal} $\scw_{i,d^-}$ of $q$ is the diagonal joining the left vertex to the right vertex of $q_i$.
\end{definition}

The definition is given so that the forward diagonal $\scw_{i,d^+}' $ of  the quadrilateral $q'_i$ in $Q'=RQ$ is obtained by rotating the backward diagonal of $q_{\pi_{\ell}^{-1}(i)}$, that is
\begin{equation} \label{diagonal:eq}
  \scw'_{i,d^+} = \sqrt{-1}\, \scw_{\pi_{\ell}^{-1}(i),d^-} = \sqrt{-1}\, (\scw_{\pi_{\ell}^{-1}(i),r} - \scw_{\pi_{\ell}^{-1}(i),\ell}). 
\end{equation}

It is clear geometrically that left (right) staircases becomes right (left) staircases after rotation. More precisely, if $c$ is a left cycle of $\vpi = (\pi_\ell,\pi_r)$, then it is also a  right cycle of $ \vpi' = (\pi_\ell \, \pi_r \, \pi_\ell^{-1}, \pi_\ell^{-1} )$. On the other hand, if $c = \{ i_1, \dots, i_n \} $ is a right cycle of $\vpi = (\pi_\ell,\pi_r)$,  then $ \pi_\ell \,c := \{ \pi_\ell (i_1), \dots, \pi_\ell(i_n) \}$ is a  left cycle of $ \vpi' = (\pi_\ell \, \pi_r\, \pi_\ell^{-1}, \pi_\ell^{-1})$. Thus, let us define 
\begin{equation}\label{def:cprime}
c' := \left\{ \begin{array}{ll}
 c & \text{if $c$ is a cycle of $\pi_\ell$,}  \\
  \pi_\ell \, c   & \text{if $c$ is a cycle of $\pi_r$.}  
\end{array}\right.
\end{equation}
Then, if $S_c$ is a 
 right (resp.~left) staircase  for $Q$, it corresponds to the left (resp.~right) staircase $S_c'$ for $Q'=RQ$ under the action of $R$,  that is, 
$S_{c'}$ is the union of the rotated quadrilaterals $\sqrt{-1} q$, $q \in Q$. 


Recall that $\Delta_{\vpi,c}$ is defined  so that we can perform a staircase move in $S_c$ exactly when $\vlambda \in \Delta_{\vpi,c}$, i.e. $S_c$ is well slanted (see~\eqref{eq:deltac_def}). 
Similarly, we define the set of parameters such that the rotated staircase $ S_{c'}$ for $Q'=R Q$ (where $c'$ is given by~\eqref{def:cprime}) is well slanted   so that we can perform a move in $Q'$. 
if $c' $  is a cycle in $\vpi'$, 
 It is clear that this set depends only on $\vtau$ since  the forward diagonal $\scw'_{i,d^+}$ of the quadrilateral $q'_i$  is obtained by rotating the backward diagonal of the  quadrilateral $q_{\pi_\ell^{-1}(i)}$ of $Q$ and this exchanges the role of lengths and suspension datas (see Equations~\eqref{diagonal:eq} and \eqref{eq:rotations_exchange}). 
 Thus this set of parameters is $\{\vpi\} \times \Delta_\vpi \times \Theta_{\vpi,c}$ where
\[
\Theta_{\vpi,c} = \left\{
\begin{array}{ll}
  \{\vtau \in \Theta_\vpi;\ \tau_{i,d^-} = \tau_{i,r} - \tau_{i,\ell} < 0, \quad i \in c \}  & \text{if $c$ is a left cycle,} \\
  \{\vtau \in \Theta_\vpi;\ \tau_{i,d^-} = \tau_{i,r} - \tau_{i,\ell} > 0, \quad i \in c \} & \text{if $c$ is right cycle.} \\
\end{array}
  \right. 
\]

From the definitions and the exchange in the role of lengths and suspension datas (see Equation \eqref{eq:rotations_exchange}), we also get the following result. 
\begin{lemma} \label{lem:R_bijection}
  Let $\vpi = (\pi_\ell,\pi_r)$ be a combinatorial datum of a quadrangulation $Q \in \QQQ_k$ and let  $ \vpi' = (\pi_\ell \, \pi_r \, \pi_\ell^{-1}, \pi_\ell^{-1} )$  be the combinatorial datum of $Q'=R Q$.
  Let $c$ be a cycle of $\vpi$ and let $c'$ be the corresponding cycle in $\vpi'$ given by~\eqref{def:cprime}. Then
\begin{itemize}
\item[(i)] $R$ maps $\{\vpi\} \times \Delta_\vpi \times \Theta_{\vpi,c}$ bijectively onto $\{\vpi'\} \times \Delta_{\vpi',c'} \times \Theta_{\vpi'}$,
\item[(ii)] $R$ maps $\{\vpi\} \times \Delta_{\vpi,c} \times \Theta_\vpi$ bijectively onto $\{\vpi'\} \times \Delta_{\vpi'} \times \Theta_{\vpi',c'}$.
\end{itemize}
\end{lemma}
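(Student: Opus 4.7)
The plan is to reduce everything to a direct computation using the explicit formulas given in Definition~\ref{def:R} and equation~\eqref{eq:def_Rinverse}. Since $R$ is already an invertible map $\QQQ_k \to \QQQ_k$ (with $R^{-1}$ given by~\eqref{eq:def_Rinverse}), and since bijections preserve and reflect inclusions, it suffices to check, for each of (i) and (ii), that $R$ sends the source cone into the target cone; the opposite inclusion follows by applying the analogous statement with $\vpi'$ in place of $\vpi$ and $R^{-1}$ in place of $R$.

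First I would verify how the staircase indexing transforms. A cycle $c$ of $\pi_\ell$ is trivially a cycle of $\pi'_r = \pi_\ell^{-1}$, so in this case $c' = c$ is a right cycle of $\vpi'$. A cycle $c$ of $\pi_r$ satisfies $\pi'_\ell(\pi_\ell c) = \pi_\ell \pi_r \pi_\ell^{-1}(\pi_\ell c) = \pi_\ell \pi_r (c) = \pi_\ell(c)$, so $c' = \pi_\ell c$ is a cycle of $\pi'_\ell$, i.e.\ a left cycle of $\vpi'$. In both cases $c \mapsto c'$ is a bijection and the left/right type of the cycle is swapped by $R$. This is exactly consistent with the geometric picture that a right staircase rotated by $\pi/2$ becomes a left staircase.

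Next comes the key computational identity. Equation~\eqref{diagonal:eq} gives
\[
\scw'_{i,d^+} \;=\; \sqrt{-1}\,\scw_{\pi_\ell^{-1}(i),d^-},
\]
and taking real parts yields
\[
\lambda'_{i,d^+} \;=\; -\,\tau_{\pi_\ell^{-1}(i),d^-}.
\]
This single identity implements the whole bijection on cones. For part (ii) with $c$ a right cycle of $\vpi$, write $i = \pi_\ell(j) \in c'$ with $j \in c$; the identity gives $\lambda'_{i,d^+} = -\tau_{j,d^-}$, so the sign constraints on $\lambda_{j,d^+}$ ($j \in c$) defining $\Delta_{\vpi,c}$ are equivalent to the sign constraints on $\tau'_{i,d^-}$ ($i \in c'$) defining $\Theta_{\vpi',c'}$. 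The left cycle case of (ii) is treated identically with $c' = c$ and $j = \pi_\ell^{-1}(i) \in c$. Part (i) is proved by the dual identity obtained by applying $R^{-1}$, namely $\scw_{i,d^+} = -\sqrt{-1}\,\scw'_{\pi_\ell(i),d^-}$ (which follows by applying~\eqref{diagonal:eq} to $R^{-1}Q$ and simplifying), giving $\lambda_{i,d^+} = \tau'_{\pi_\ell(i),d^-}$; this translates constraints on $\vtau \in \Theta_{\vpi,c}$ (defining the source cone via well-slantedness of $S_{c'}$ in $R Q$, by design) into constraints on $\vlambda' \in \Delta_{\vpi',c'}$.

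The only real obstacle is bookkeeping: tracking how the index shifts $i \leftrightarrow \pi_\ell^{-1}(i)$ appearing in Definition~\ref{def:R} interact with the different forms of $c'$ in the left and right cycle cases, and verifying that the slanting signs on the two sides of the key identity line up with the definitions of $\Delta_{\vpi,c}$ and $\Theta_{\vpi,c}$ (which themselves were set up so that the bijection holds, since $\Theta_{\vpi,c}$ is by construction the locus in which $S_{c'}$ is well slanted in $R Q$). Once the sign accounting is carried out for one case, the others are immediate by the left/right symmetry of the setup and the explicit inverse $R^{-1}$.
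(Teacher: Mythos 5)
Your overall strategy—reduce everything to a sign computation from the explicit formulas for $R$, use the invertibility of $R$ to get one inclusion for free, and begin by checking the cycle transformation $c \mapsto c'$—is exactly what the paper intends; the paper itself gives no argument beyond saying the lemma follows ``from the definitions and the exchange in the role of lengths and suspension datas''. Your verification that a cycle of $\pi_\ell$ stays a cycle of $\pi'_r = \pi_\ell^{-1}$ while a cycle $c$ of $\pi_r$ becomes the cycle $\pi_\ell c$ of $\pi'_\ell$ is correct. However, the bookkeeping that follows has two genuine errors.

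First, you have attached each identity to the wrong part of the lemma. Equation~\eqref{diagonal:eq} gives, after taking real parts, $\lambda'_{i,d^+} = -\tau_{\pi_\ell^{-1}(i),d^-}$; this relates the quantity whose sign defines $\Theta_{\vpi,c}$ (a constraint on $\vtau$) to the quantity whose sign defines $\Delta_{\vpi',c'}$ (a constraint on $\vlambda'$). That is precisely what item (i) requires, not item (ii): in (i) the source cone is constrained through $\Theta_{\vpi,c}$ and the target through $\Delta_{\vpi',c'}$. Conversely, item (ii) matches $\Delta_{\vpi,c}$ (constraints on $\vlambda$) against $\Theta_{\vpi',c'}$ (constraints on $\vtau'$), and so needs an identity between $\lambda_{j,d^+}$ and $\tau'_{i,d^-}$. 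As written, the sentence for ``part (ii)'' cites the identity for $\lambda'_{i,d^+}$ and $\tau_{j,d^-}$ and then asserts an equivalence between the unrelated quantities $\lambda_{j,d^+}$ and $\tau'_{i,d^-}$; this is a non sequitur.

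Second, the ``dual identity'' you quote, $\scw_{i,d^+} = -\sqrt{-1}\,\scw'_{\pi_\ell(i),d^-}$, has the wrong sign and is not obtained by the stated derivation. From Definition~\ref{def:R} and the train-track relation one computes
\[
\scw'_{i,d^-} = \scw'_{i,r} - \scw'_{i,\ell}
= -\sqrt{-1}\bigl(\scw_{\pi_\ell^{-1}(i),\ell} + \scw_{i,r}\bigr)
= -\sqrt{-1}\,\scw_{\pi_\ell^{-1}(i),d^+},
\]
which rearranges to $\scw_{j,d^+} = \sqrt{-1}\,\scw'_{\pi_\ell(j),d^-}$, hence $\lambda_{j,d^+} = -\tau'_{\pi_\ell(j),d^-}$ (with a minus sign, unlike your $\lambda_{i,d^+} = +\tau'_{\pi_\ell(i),d^-}$). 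Moreover, ``applying~\eqref{diagonal:eq} to $R^{-1}Q$'' produces a relation between $\scw$ and $\scw'' = R^{-1}\scw$, not between $\scw$ and $\scw' = R\scw$, so that route does not give the needed identity at all. Finally, the step where you pass the sign check by appealing to ``$\Theta_{\vpi,c}$ was set up so that the bijection holds'' hides the fact that, with the corrected identities, the sign inequalities in the paper's displayed formula for $\Theta_{\vpi,c}$ must be swapped between the left-cycle and right-cycle cases for the lemma to hold. That discrepancy is exactly what a careful sign accounting would surface; glossing over it means the one non-trivial part of the argument is never actually verified.
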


\begin{theorem}[self-duality] \label{thm:Markov_self_dual}
Let $\pi$ be a permutation, let $c$ be a cycle of $\pi$. Then
\begin{equation}\label{eq:bijection}
\widehat{m}_{\vpi,c}: \{\vpi\} \times \Delta_{\vpi,c} \times \Theta_\vpi \rightarrow \{c \cdot \vpi\} \times \Delta_{c \cdot \vpi} \times \Theta_{c \cdot \vpi,c}
\end{equation}
is a bijection. Moreover, if $c$ is a cycle of $\pi_\ell$ the inverse is given by
\begin{equation} \label{eq:self_duality}
\widehat{m}_{\vpi,c}^{-1} = R^{-1} \circ \widehat{m}_{\vpi',c'} \circ R,
\end{equation}
where $\vpi' = R \cdot \vpi$ and $c'$ is given by~\eqref{def:cprime}.
\end{theorem}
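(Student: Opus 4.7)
The plan is to split the argument into two parts: first verify that $\widehat{m}_{\vpi,c}$ is well-defined as a map into the stated codomain, and then construct a two-sided inverse via rotation (which automatically yields the bijectivity claim).

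For well-definedness of the map, I would check that $A_{\vpi,c}\vlambda \in \Delta_{c\cdot\vpi}$ and $A_{\vpi,c}\vtau \in \Theta_{c\cdot\vpi,c}$ given $(\vpi,\vlambda,\vtau)\in\{\vpi\}\times\Delta_{\vpi,c}\times\Theta_\vpi$. That $A_{\vpi,c}\vlambda$ and $A_{\vpi,c}\vtau$ satisfy the train-track relations for the new combinatorial datum $c\cdot\vpi$ is a direct substitution using~\eqref{eq:def_A_pi_c} and~\eqref{eq:def_c_pi1}--\eqref{eq:def_c_pi2}; this is essentially recorded in Lemma~\ref{lem:staircase_move_data}. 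The substantive check is the sign condition placing $A_{\vpi,c}\vtau$ in $\Theta_{c\cdot\vpi,c}$. Geometrically: when $c$ is a cycle of $\pi_\ell$ and we perform a forward left move in the staircase $S_c$, the new backward diagonal of the new quadrilateral $q'_i$ is, up to sign, the old right side $\scw_{i,r}$; since $\Re\scw_{i,r}>0$ and $\Im\scw_{i,r}>0$ (as $\scw_{i,r}$ is the right side of a wedge), one obtains $\tau'_{i,r}-\tau'_{i,\ell}<0$ for all $i\in c$, which is exactly the condition defining $\Theta_{c\cdot\vpi,c}$. The case of a cycle of $\pi_r$ is symmetric.

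For the inverse, the geometric content is that the inverse of a forward diagonal change is a backward diagonal change, which is transformed into a forward change by a quarter-turn rotation of the surface, via the swap between length and suspension data captured in~\eqref{eq:rotations_exchange}. To formalize this, I would invoke Lemma~\ref{lem:R_bijection}(i): the rotation $R$ sends the codomain of $\widehat{m}_{\vpi,c}$ bijectively onto a domain where a forward staircase move in the cycle $c'$ given by~\eqref{def:cprime} is admissible. Composing that forward move with $R^{-1}$ (a bijection by Lemma~\ref{lem:R_bijection}(ii) in the reverse direction) produces a candidate inverse valued back in $\{\vpi\}\times\Delta_{\vpi,c}\times\Theta_\vpi$. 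Checking that this composition is the identity decomposes into: (i) a permutation identity on the combinatorial datum obtained by combining the conjugation formula for $R$ with the cycle convention~\eqref{def:cprime} and the definitions~\eqref{eq:def_c_pi1}--\eqref{eq:def_c_pi2} (concretely, one verifies $c\cdot R(c\cdot\vpi)=R\vpi$, so that the output of the composed move has the right combinatorics for $R^{-1}$ to return to $\vpi$); and (ii) a matrix identity on the length and suspension data reflecting the fact that a forward and a backward diagonal change on a single quadrilateral undo each other, extended linearly to the staircase via $A_{\vpi,c}$.

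The main obstacle I anticipate is the combinatorial bookkeeping in this last verification. Tracking the interplay between the conjugation defining $R$, the cycle operation $c\cdot$, the labelling convention~\eqref{def:cprime}, and the swap of $\vlambda$ with $\vtau$ encoded by~\eqref{eq:rotations_exchange} is notationally intricate, even though the underlying geometric picture---a forward left cut-and-paste undone by rotation, forward right cut-and-paste, and inverse rotation---is transparent. I would rely on this picture as a visual check at each step to catch sign and index errors, since each intermediate configuration can be drawn on the rotated surface and compared to the original.
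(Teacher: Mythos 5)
Your plan---verify well-definedness of $\widehat{m}_{\vpi,c}$ into the stated codomain, then realize the inverse as $R^{-1}\circ\widehat{m}_{\vpi',c'}\circ R$ and check it directly---matches the structure of the paper's proof, which likewise invokes~\eqref{eq:rotations_exchange} and Lemma~\ref{lem:R_bijection} for the domain/codomain bookkeeping and then carries out the composition explicitly on the combinatorial data and the wedges (the paper takes $c$ a cycle of $\pi_r$ and dismisses the $\pi_\ell$ case as analogous, while you work with $\pi_\ell$). Where you go beyond the paper is that you spell out the sign check placing $A_{\vpi,c}\vtau$ in $\Theta_{c\cdot\vpi,c}$, which the paper only asserts.

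There is, however, a bookkeeping error in that sign check. For a left staircase move ($c$ a cycle of $\pi_\ell$) and $i\in c$, the new wedge is $\scw'_i=(\scw_{i,\ell},\scw_{i,d})$ with $\scw_{i,d}=\scw_{i,\ell}+\scw_{\pi_\ell(i),r}$, so the new backward diagonal is
\[
\scw'_{i,d^-}=\scw'_{i,r}-\scw'_{i,\ell}=\scw_{i,d}-\scw_{i,\ell}=\scw_{\pi_\ell(i),r},
\]
i.e.\ it equals, exactly and with no sign, the old \emph{top-left} side of $q_i$---not $\pm\scw_{i,r}$. In particular $\tau'_{i,r}-\tau'_{i,\ell}=\tau_{\pi_\ell(i),r}>0$, not $<0$. (The torus case $k=1$, $\pi_\ell=\pi_r=\mathrm{id}$ is a quick sanity check: after a left move $\scw'_r=\scw_\ell+\scw_r$, so $\tau'_r-\tau'_\ell=\tau_r>0$.) You wrote $<0$ and claimed this matches the displayed definition of $\Theta_{c\cdot\vpi,c}$; that printed inequality does read $<0$ for a left cycle, but rederiving $\Theta_{\vpi,c}$ from its geometric characterization---the set of $\vtau$ for which the rotated staircase $S_{c'}$ in $RQ$ is well slanted, via~\eqref{diagonal:eq}---gives the two cases with the inequalities swapped. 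So derive the sign from the geometry rather than pattern-matching the printed formula; with that correction the substance of your argument is sound and parallels the paper's.
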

The proof of the Theorem, which follows from the definitions and the Lemma, is given here below. 
Equation~\eqref{eq:self_duality} is a formulation of the \emph{self-duality property} of staircase moves
(we refer for example to Schwheiger~\cite{Schweiger} for the definition of duality).  
Geometrically it simply means that the inverse of a left (respectively right) staircase move is given by a right (respectively left) staircase move in the rotated staircase. 

Let us explain in which sense the bijection in~\eqref{eq:bijection} shows that there is a \emph{loss of memory} phenomenon (or Markov property). The space of quadrangulations $\QQQ_k$ projects on  the corresponding space of bipartite IETs, which is given by $\{(\vpi,\vlambda); \, \vpi \in \GGG,\ \vlambda \in \Delta_\vpi\}$.  Let $m_{\vpi,c}$ be the projection of $\widehat{m}_{\vpi,c}$ on the bipartite IETs space. In other words,  $m_{\vpi,c}$ is the map defined on $\{ \vpi \} \times \Delta_{\vpi, c}$ which sends $(\vpi, \vlambda)$ to $m_{\vpi,c} (\vpi, \vlambda) = (c \cdot \vpi, A_{\vpi,c}\, \vlambda)$.
 
\begin{corollary}[Markov property]
  The map $m_{\vpi,c}: \{\vpi\} \times \Delta_{\vpi,c} \to \{c \cdot \vpi'\} \times \Delta_{c \cdot \vpi'}$ is a bijection.
\end{corollary}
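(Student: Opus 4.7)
My plan is to deduce the corollary directly from Theorem~\ref{thm:Markov_self_dual} by observing that the map $\widehat{m}_{\vpi,c}$ on the parameter space of quadrangulations has a product structure with respect to the lengths $\vlambda$ and suspension data $\vtau$, and that $m_{\vpi,c}$ is precisely the factor acting on lengths.

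The key observation is that, by Definition~\ref{def:mchat},
\[
\widehat{m}_{\vpi,c}(\vpi,\vlambda,\vtau) = (c\cdot\vpi,\, A_{\vpi,c}\,\vlambda,\, A_{\vpi,c}\,\vtau),
\]
so the action on the $\vlambda$-coordinate depends only on $\vlambda$ and the action on the $\vtau$-coordinate depends only on $\vtau$. Hence, once we fix the first coordinate, $\widehat{m}_{\vpi,c}$ is the Cartesian product of two maps:
\[
f_\Delta : \Delta_{\vpi,c} \longrightarrow \Delta_{c\cdot\vpi},\quad \vlambda \mapsto A_{\vpi,c}\vlambda,
\quad\text{and}\quad
f_\Theta : \Theta_\vpi \longrightarrow \Theta_{c\cdot\vpi,c},\quad \vtau \mapsto A_{\vpi,c}\vtau.
\]
Notice that $m_{\vpi,c}$ is, up to the identification of the first coordinate, exactly $f_\Delta$.

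Now, Theorem~\ref{thm:Markov_self_dual} states that the product map $f_\Delta \times f_\Theta$ is a bijection from $\Delta_{\vpi,c}\times \Theta_\vpi$ onto $\Delta_{c\cdot\vpi}\times \Theta_{c\cdot\vpi,c}$. For a product map of nonempty sets to be a bijection, each factor must itself be a bijection. Thus, provided I check that both $\Theta_\vpi$ and $\Theta_{c\cdot\vpi,c}$ are nonempty, I may immediately conclude that $f_\Delta$ is a bijection, which is the Markov property. Nonemptiness of these sets is clear: they are defined by a collection of strict linear inequalities together with linear train-track relations on an open cone in a Euclidean space and, since they arise as projections of actual quadrangulations (which exist by Theorem~\ref{thm:existence_of_quadrangulation} together with Theorem~\ref{thm:existence_of_staircase_move} to guarantee that some well slanted staircase exists), they contain at least one point.

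The only subtle point I foresee is ensuring that the above abstract product-map argument is formally correct, i.e.\ that both factors are truly nonempty so that bijectivity of the product forces bijectivity of each factor. Once this is settled, the proof is essentially one line: $m_{\vpi,c}$ is the ``length factor'' of the bijection $\widehat{m}_{\vpi,c}$, and as such is a bijection between $\{\vpi\}\times \Delta_{\vpi,c}$ and $\{c\cdot\vpi\}\times\Delta_{c\cdot\vpi}$. Alternatively, if one prefers an explicit inverse, one can write $m_{\vpi,c}^{-1}(c\cdot\vpi,\vlambda') = (\vpi, A_{\vpi,c}^{-1}\vlambda')$, noting that $A_{\vpi,c}-I_{2k}$ is a sum of elementary matrices whose pairwise products vanish, so $A_{\vpi,c}^{-1}=2I_{2k}-A_{\vpi,c}$; the fact that this inverse lands in $\Delta_{\vpi,c}$ (and not merely in some larger linear subspace) is exactly the nontrivial content furnished by Theorem~\ref{thm:Markov_self_dual}.
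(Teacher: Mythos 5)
Your proposal is correct and takes essentially the same route as the paper, which states the corollary as an immediate consequence of Theorem~\ref{thm:Markov_self_dual}: since $\widehat{m}_{\vpi,c}$ acts as the product map $(\vlambda,\vtau)\mapsto (A_{\vpi,c}\,\vlambda, A_{\vpi,c}\,\vtau)$, its $\vlambda$-factor $m_{\vpi,c}$ inherits bijectivity. The nonemptiness you rightly flag ($\Theta_\vpi$ and $\Theta_{c\cdot\vpi,c}$) holds for elementary reasons — take $\tau_{i,\ell}$ constant in $i$ and $\tau_{i,r}$ constant in $i$ with the appropriate strict inequality between the two constants, so the train-track relations are automatic — hence there is no need to invoke Theorems~\ref{thm:existence_of_quadrangulation} and~\ref{thm:existence_of_staircase_move}, whose use here would anyway be slightly circular since geometric realizability of every vertex of $\GGG$ rests on exactly this kind of parameter-space argument.
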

The corollary shows that given any (oriented) path in the graph $\GGG$, which corresponds to a sequence of staircase moves, there exists a quadrangulation $Q = (\vpi,\vwedges)$ from which we can apply this sequence of moves. In this sense, staircase moves have a Markov structure. For the greedy algorithm, one can use the sets $\Delta_{\vpi,c}$ to define  a natural Markov partition on $\QQQ_k$ that is a finite partition $\mathcal{P}$ of $\QQQ_k$ so that the image of each atom of $\mathcal{P}$ is  union of atoms.
As shown in \cite{Delecroix}, this is not the case for the left/right algorithm for which we should keep in memory one step of the history.

\begin{proof}[Proof of Theorem~\ref{thm:Markov_self_dual}]
By~\eqref{eq:rotations_exchange} and by definition of $\Delta_{\pi,c}$ and $\Theta_{c \cdot \pi,c}$ it is clear that the image of the map $\widehat{m}_{\vpi,c}$ is $\{c \cdot \vpi\} \times \Delta_{c \cdot \vpi} \times \Theta_{c \cdot \vpi,c}$. Using also Lemma~\ref{lem:R_bijection}, it follows that all compositions in the statement make sense.

Let $c$ be a cycle of $\pi_r$ and  $c'$ be the cycle associated to $c$ by~\eqref{def:cprime}. Let us denote by $ (\vpi', \vwedges')= R (\vpi,\vwedges)$,  $(\pi'',\vwedges'') = \widehat{m}_{ \vpi', c'} \, (\vpi',\vwedges')$ and $ (\vpi''', \vwedges''')= R^{-1} (\vpi'',\vwedges'')$.  We first compute $\pi', \pi'' $ and $\pi'''$ to get the action of the composition $R^{-1} \widehat{m}_{\pi',c'} R$ on combinatorial data.  By formulas \eqref{eq:def_Rinverse} for $R$, we have that 
\[
\pi'_\ell = \pi_\ell \pi_r \pi_\ell^{-1} \quad \text{and} \quad \pi'_r = \pi_\ell^{-1}.
\]  
Now recall that $c'$ is associated to a left slanted staircase in the rotated quadrangulation $Q'=R Q$, so $\widehat{m}_{ \vpi', c'}$ is a left staircase move. Thus, by definition of a left staircase move we get that $\pi'' = c' \cdot \pi'$ is given by
\[
\pi''_\ell = \pi'_\ell = \pi_\ell\, \pi_r\,  \pi_\ell^{-1}
\quad \text{and} \quad
\pi''_r(i) = \left\{ \begin{array}{ll}
  \pi'_r \, \pi'_\ell(i) & \text{if $i \in c'$,} \\
  \pi'_r(i) & \text{otherwise}
\end{array} \right.
= \left\{ \begin{array}{ll}
  \pi_r \, \pi_\ell^{-1} & \text{if $i \in c'$,} \\
  \pi_\ell^{-1} & \text{otherwise.}
\end{array} \right.
\]
Finally, by formulas~\eqref{eq:def_Rinverse} for $R^{-1}$, we have that 
\[
\pi'''_\ell = (\pi''_r)^{-1} = \left\{ \begin{array}{ll}
  \pi_\ell \pi_r^{-1}(i) & \text{if $\pi_\ell \, \pi_r^{-1}(i) \in c'$,} \\
  \pi_\ell(i) & \text{otherwise}
\end{array} \right.
\quad \text{and} \quad
\pi'''_r = \pi''_r \pi''_\ell (\pi''_r)^{-1}.
\]
By the definition of $c'$, te condition $\pi_\ell\, \pi_r^{-1}(i) \in c'$ is equivalent to $\pi_r^{-1}(i)  \in c$ and since $c $ is a right cycle, it is also equivalent to $i \in c$.
Now, to compute the expression of $\pi'''_r$, let us consider separately the cases  $i \in c$ and $i \notin c$. As shown above, if  $i \in c$ we also have  $\pi_\ell\, \pi_r^{-1}(i) \in c'$ and thus $(\pi''_r)^{-1}(i) = \pi_\ell \pi_r^{-1}(i)$. Hence 
$\pi''_\ell (\pi''_r)^{-1}(i) = \pi_\ell (i)$. Since, when $c$ is a right cycle, $c'= \pi_\ell \, c$ we then have that $\pi_\ell (i) \in c'$ and hence, by the above expression for $\pi_r''$ we get
\[
\pi''_r \, \pi''_\ell \, (\pi''_r)^{-1}(i) = \pi_r(i).
\]
Now consider the case $i \notin c$.  We get
$\pi''_\ell (\pi''_r)^{-1}(i) = \pi_\ell \pi_r(i)$.  Now $c$ and its complement are stable under $\pi_r$ and hence, $\pi_\ell \pi_r(i) \not\in c'$. Hence,  we  obtain
\[
\pi''_r \pi''_\ell (\pi''_r)^{-1}(i) = \pi_r(i).
\]
Thus, in both cases  $\pi'''_r = \pi_r$. One can verify from the formulas for the combinatorial datum of a right staricase move that  $c \cdot \pi''' = \pi$. This show that  $\pi'''$ is the combinatorial datum of the inverse staircase move in $S_c$. 

Let us now compute the wedges $\scw'$, $\scw''$ and $\scw'''$. 
From the formulas for $R$ and a left staircase move in $S_{c'}$ we get 
$$
\begin{array}{ll}
\scw'_{i,\ell} = \sqrt{-1}\, \scw_{i,r} &\scw'_{i,r} = -\sqrt{-1}\, \scw_{\pi_\ell^{-1}(i),\ell} \\
\scw''_{i,\ell} = \scw'_{i,\ell} = \sqrt{-1}\, \scw_{i,r} &\scw''_{i,r} = \left\{
\begin{array}{ll}
  \scw'_{i,\ell} + \scw'_{\pi_\ell'(i),r} 
  & \text{if $i \in c'$,} \\
  \scw'_{i,r} & \text{otherwise.}
\end{array} \right.
\end{array}
$$
Thus, since $\pi_\ell'= \pi_\ell \, \pi_r \, \pi_\ell^{-1}$, combining the above expressions we get that
$$
\scw''_{i,r} = \left\{ \begin{array}{ll} \sqrt{-1}\,   \scw_{i,r} - \sqrt{-1}\,  \scw_{\pi_r\, \pi_\ell^{-1}(i),\ell}  
  & \text{if $i \in c'$,} \\
   -\sqrt{-1}\, \scw_{\pi_\ell^{-1}(i),\ell}  & \text{otherwise.}
\end{array} \right.
$$
From the formula for $R^{-1}$ we  then get 
$$ \scw'''_{i,\ell} = \sqrt{-1} \scw''_{(\pi_r'')^{-1}(i),r} , \qquad \scw'''_{i,r} = -\sqrt{-1} \scw''_{i, \ell}  =  -\sqrt{-1} \,(\sqrt{-1}\, \scw_{i,r} ) = \scw_{i,r} . $$
To compute $\scw'''_{i,\ell}$, let us use the expression computed above for $(\pi_r'')^{-1}$ and consider separately two cases. If $i \in c$, then $(\pi_r'')^{-1}(i) = \pi_\ell \, \pi_r^{-1}(i)$ which belongs to $c'$ (since $c$ is invariant under $\pi_r$ and by definition of $c'$). Thus,  for $i \in c$ we get that 
\[
\scw'''_{i,\ell} =\sqrt{-1}\, \scw''_{ \pi_\ell \, \pi_r^{-1}(i),r} =  
 -   \scw_{ \pi_\ell \, \pi_r^{-1}(i),r} + \scw_{ \pi_r \, \pi_\ell^{-1}   \pi_\ell \, \pi_r^{-1} (i), \ell } = \scw_{i, \ell}  -   \scw_{ \pi_\ell \, \pi_r^{-1}(i),r}.
\]
On the other hand, if $i \notin c$,  $(\pi_r'')^{-1}(i)= \pi_\ell(i)$, which is not in $c'$, thus
\[
\scw'''_{i,\ell} = \sqrt{-1} \scw''_{\pi_\ell(i),r} =  \scw_{i,r} . 
\]
One can check that this is indeed the expression for the wedges of the inverse of the staircase move in $S_c$. 
The case when $c$ is a cycle of $\pi_\ell$ is analogous.

\end{proof}

\section{Existence of quadrangulations and staircase moves}\label{sec:induction}
In this section we prove the existence of quadrangulations for any surface that belongs to an hyperelliptic component of a stratum  (Theorem~\ref{thm:existence_of_quadrangulation}) and the existence of well slanted staircases for any of these quadrangulations (Theorem~\ref{thm:existence_of_staircase_move}). We first start with a precise definition of  hyperelliptic components of strata in terms of double cover of quadratic differentials.

\subsection{Quadrangulations in hyperelliptic components} \label{subsec:hyperelliptic}
We have already seen in~\S\ref{subsubsec:intro_quadrangulations} that translation surfaces can be constructed by gluing polygons or equivalently by assigning a non-zero Abelian differential on a Riemann surface. We first describe a more general construction which produces Riemann surfaces with quadratic differentials. We then define orientation covers of quadratic differentials, that are a particular case of translation surfaces. Then we define hyperelliptic components as the set of orientation covers of quadratic differentials that belong to some fixed stratum.

\subsubsection{Hyperelliptic components of strata of translation surfaces}\label{sec:hyperellipticdef}
While a translation surface is obtained by gluing polygons by translations, a quadratic differential can be obtained 
by gluing polygons by translations and rotation by $180$ degrees.
 
Let $P_i \subset \CC$ be a collection of polygons whose edges are identified into pairs such that:
\begin{enumerate}
  \item either the two edges in the pair are parallel with opposite normal vector (with respect to their polygons) and we identify the two edges by the unique translation that sends one to the other,
  \item or the two edges are parallel but have the same normal vector and we identify them under the unique rotation by 180 degrees (ie a map of the form $z \mapsto -z+c$) that maps one edge to the other.
\end{enumerate}
The quotient of $\cup P_i$ by the  identifications of the edges is a surface $X$ which carries the structure of a Riemman surface with a quadratic differential $q$ (which is induced from the form $dz^2$ on the polygons). If in this construction all pairs are of the first form then the construction reduces to the one described in~\S\ref{subsubsec:intro_quadrangulations} and $X$ is a translation surface, or, equivalently, a Riemann surface  $X$ which carries an Abelian differential $\omega$. Let $\Sigma \subset X$ denote as before the singularity set corresponding to the images of the vertices of the polygons.  A quadratic differentials has  conical singularities with angles of the form $k \pi$ with $k$ integer (instead of $2\pi k$ as in the case of Abelian differentials). Moreover, while an Abelian differential determines on $X\backslash \Sigma$ a  well defined notion of lines in direction $\theta \in S^1$, a quadratic differential only determines a notion of (non-oriented) lines in direction  $\theta \in \PP^1 \RR$. 

We define two quadratic differentials $(X,q)$ and $(X',q')$ to be isomorphic if there exists an homeomorphism $X \rightarrow X'$ such that $q = f^* q'$. We can also define this notion of isomorphism as cut and paste operations on polygons, similarly to the definition given in~\S\ref{subsubsec:intro_quadrangulations} for translation surfaces. 
We denote by $\QQQ(k_1-2,\ldots,k_n-2)$ the equivalence class of quadratic differentials  with conical singularities of angles $\pi k_1 , \ldots, \pi k_n$. The number $k_i-2$ correspond to the degree of the quadratic differential as $q$ can be written locally as $z^{k_i-2} dz^2$ around a singularity with conical angle $\pi k_i$. Note that we have the topological restriction that $\sum_{i=1}^n k_i  = 4g-4+2n$ where $g$ is the genus of the surface. If there are $m_i$ singularities with total angle $\pi k_i$ we use the notation $\QQQ((k_1-2)^{m_1},\ldots,(k_n-2)^{m_n} )$. 

Let $(X,q)$ be a quadratic differential. We associate to $q$ its canonical \emph{orientation cover}: it is the Abelian differential $(\tilde{X},\omega)$, unique up to isomorphism, such that there exists a degree $2$ map $\pi: \tilde{X} \rightarrow X$ and such that $\pi^* q = \omega^2$. The stratum in which $\tilde{X}$ belongs is easily computed as follows: each singularity of angle $\pi k_i$ with $k_i$ even is not ramified and gives two singularities on $\tilde{X}$ of angle $\pi k_i$; each singularity of angle $k_i$ with $k_i$ odd is ramified and gives a singularity on $\tilde{X}$ of angle $2 \pi k_i$. As an example, the orientation covers of surfaces in $\QQQ(2,3^2)$ belong to $\HHH(1^2,4)$.
Because a degree two cover is always normal, an orientation cover always comes with an involution whose quotient is the corresponding quadratic differential.

When a quadratic differential varies in its stratum, its orientation cover varies in a connected component of the corresponding stratum of Abelian differentials. When the  stratum of quadratic differentials is a sphere (i.e. a stratum of the form $\QQQ(k_1-2,\ldots,k_n-2)$ with $k_1 + \ldots + k_n = 2n-4$) such locus is called a \emph{hyperelliptic locus}. In this case the involution is an \emph{hyperelliptic involution}.
The points of an hyperelliptic surface which are fixed by the hyperelliptic involution are called \emph{Weierstrass points}. They might be conical singularities or regular points. In the latter case, they projects down to conical singularities of angle $\pi$ on the sphere that are called \emph{poles} (because they correspond to singularities of the form $z^{-1} dz^2$ for the quadratic differential). Because of Hurwitz formula, a hyperelliptic surface of genus $g$ has $2g+2$ Weirstrass points.

In most cases, \emph{hyperelliptic loci} have positive codimension in the corresponding stratum of Abelian differentials, but an infinite family of hyperelliptic loci have full dimension and form connected components.
\begin{theorem}[\cite{KontsevichZorich03}, section~2.1 p.5--7] \label{thm:KZhyperelliptic}
In each stratum $\HHH(2g-2)$ (respectively $\HHH(g-1,g-1)$) the hyperelliptic locus built as the orientation cover of quadratic differentials in $\QQQ(k-2,-1^{k+2})$ for $k=2g-1$ (resp. $k=2g$) forms a connected component. These are the only hypelliptic loci that form connected components of stratum.
\end{theorem}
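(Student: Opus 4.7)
My plan would be to reduce the statement to a dimension computation combined with a connectedness argument for strata of quadratic differentials on the sphere, and then to verify that no other hyperelliptic locus can be open.

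First, I would compute dimensions of the strata on both sides of the orientation cover. Recall that $\QQQ(k-2, -1^{k+2})$ on the sphere has complex dimension equal to the number of singularities minus $2$, namely $k+1$. I would then verify that the orientation cover construction yields the correct target stratum: when $k = 2g-1$ is odd, the unique zero of order $k-2 = 2g-3$ is ramified (odd order) and lifts to a single zero of order $2(2g-3) + 2 = 4g-4$, so the cover lives in $\HHH(2g-2)$, of dimension $2g$; when $k=2g$ is even, the zero is unramified and splits into two zeros of order $g-1$, yielding a surface in $\HHH(g-1, g-1)$, of dimension $2g+1$. In both cases the dimensions match ($k+1 = 2g$ or $2g+1$), so the hyperelliptic locus is a full-dimensional subset.

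Second, I would promote the dimension match to openness. Working in period coordinates on $\HHH(m_1, \ldots, m_n)$, the image of $\QQQ(k-2, -1^{k+2})$ under the orientation cover map can be described as the fixed locus of the pull-back by the hyperelliptic involution acting with sign $-1$ on relative cohomology, and the relative period map of the quadratic stratum is known to be a local diffeomorphism onto this fixed locus. Together with the dimension equality, this gives that the hyperelliptic locus is open in its ambient stratum. For connectedness of the locus, I would use the fact that $\QQQ(k-2, -1^{k+2})$ is connected: its points are essentially determined by the position of the distinguished zero and the $k+2$ poles on $\PP^1$, modulo $\operatorname{PGL}(2,\CC)$, and this configuration space is a connected algebraic variety. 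An open connected subset of a stratum is a union of connected components, which (being connected) is a single component.

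Third, and this is where the main obstacle lies, I would show that no other hyperelliptic locus is open. If $X \in \HHH(m_1, \ldots, m_n)$ admits a hyperelliptic involution $\iota$ whose quotient is $(\PP^1, q)$, each singularity of $X$ is either fixed by $\iota$ (hence projects to an odd-order singularity of $q$) or swapped with another of equal type (projecting to an even-order singularity with ramification index $1$). A case analysis on partitions of $2g-2$ according to parity, together with the dimension formula above, shows that the dimension of such a hyperelliptic locus equals $(\text{number of } q\text{-singularities}) - 2$, and this equals the ambient stratum dimension only for the two families described: odd-type requires one zero fixed and none swapped (forcing $\HHH(2g-2)$), and even-type requires one pair swapped and no fixed zeros (forcing $\HHH(g-1,g-1)$). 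The hard part is organising the case analysis cleanly and excluding, via a careful dimension count, all mixed configurations; I would proceed by writing the dimension deficit $\operatorname{codim}$ as an explicit nonnegative integer that vanishes only in the two listed cases, which combinatorially reduces the problem to checking that any other $\iota$-invariant partition creates strictly more constraints than degrees of freedom.
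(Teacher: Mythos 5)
The paper does not prove this theorem; it states it with a direct citation to Kontsevich--Zorich \cite{KontsevichZorich03} and uses it as a known input. So there is no ``paper's proof'' to compare against, and your attempt must be judged on its own. Your general strategy (dimension count $+$ local isomorphism in period coordinates $+$ connectedness of the sphere stratum, then a combinatorial exclusion argument) is indeed the one that Kontsevich and Zorich use, and the core dimension computation is correct: $\dim_\CC \QQQ(k-2,-1^{k+2})=k+1$, matching $\dim_\CC\HHH(2g-2)=2g$ for $k=2g-1$ and $\dim_\CC\HHH(g-1,g-1)=2g+1$ for $k=2g$. (Small arithmetic slip: a ramified zero of $q$ of odd order $d$ lifts to a zero of $\omega$ of order $d+1=2g-2$, not $2d+2$; your ``$4g-4$'' is the order of the zero of $\omega^2=\pi^*q$, but you draw the right conclusion $\HHH(2g-2)$.)

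There is, however, a genuine logical gap in the step from openness to being a connected component. You write ``an open connected subset of a stratum is a union of connected components,'' which is false: $(0,1)$ is open and connected in $\RR$ but is not a component. What is needed, and what you omit, is an argument that the hyperelliptic locus is also \emph{closed} in the stratum. This is where the substance lies: one must show, for instance, that the orientation-cover map $\QQQ(k-2,-1^{k+2})\to\HHH(\cdot)$ is proper (so its image is closed), or equivalently that a limit in the stratum of abelian differentials admitting a hyperelliptic involution of the given topological type again admits one. Without closedness, full dimension and openness only give that the locus contains an open neighbourhood of each of its points, not that it exhausts a component. Your third part --- that no other $\iota$-invariant configuration can saturate the ambient dimension --- is the right idea but is left as a plan rather than a proof; since this is the half of the statement that rules out all other possibilities, it would need to be carried through case by case before the argument is complete.
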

Recall from the Introduction that we denote by $\CCC^{hyp}(k)$ the hyperelliptic component of $\HHH(k-1)$ if $k$ is odd or of $\HHH(k/2-1,k/2-1)$ if $k$ is even. Surfaces in  $\CCC^{hyp}(k)$ have  total conical angle $2k\pi$ and hence any quadrangulation on them is made by $k$ quadrilaterals.


\subsubsection{Two geometric results in hyperelliptic components}
Using the description of surfaces in an hyperelliptic component $\CCC^{hyp}(k)$ as double covers of quadratic differentials in the  stratum $\QQQ(k-2,-1^{k+2})$ of quadratic differentials, we prove two important results. The first one shows that a quadrangulation of a surface in a hyperelliptic component of a stratum is always preserved by the hyperelliptic involution. The second one is a cut and paste construction that will be used in some of the following proofs. 
\begin{lemma} \label{lem:hyperelliptic_staircase}
Let $Q$ be a quadrangulation of a surface $X$ in a hyperelliptic component $\CCC^{hyp}(k)$.
\begin{enumerate}
\item Each staircase for $Q$ is fixed (as a set) by the hyperelliptic involution of $X$.
\item If $q \in Q$ is a quadrilateral  then its image under the hyperelliptic involution is another quadrilateral that belongs to the same left and right staircases for $Q$ to which $q$ belongs.
  \end{enumerate}
\end{lemma}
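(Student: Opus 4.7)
The plan is to prove part~(2) first; part~(1) will follow immediately, since the second assertion of~(2) says that $s(q)$ lies in the same left and right staircases as $q$, so a staircase (a union of the quadrilaterals indexed by a single cycle of $\pi_\ell$ or $\pi_r$) must be mapped into itself. As a preliminary I note that $s$ is affine with derivative $-\mathrm{Id}$, so on any admissible quadrilateral $q$ it acts as a local rotation by $\pi$, sending $q$ to an admissible quadrilateral $s(q)$ with top and bottom vertices interchanged. A direct computation with natural orientations shows that $s$ preserves the slant of every saddle connection, and consequently exchanges the top left side of $q$ with the bottom right side of $s(q)$, the top right with the bottom left, and so on.

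The core step is to show $s(q) \in Q$ for every $q \in Q$. For this I use the description of surfaces in $\CCC^{hyp}(k)$ as orientation covers of quadratic differentials on the sphere (Theorem~\ref{thm:KZhyperelliptic}). Letting $\pi\colon X \to Y = X/\langle s \rangle$ be the projection, each $q \in Q$ projects to a polygon on $Y$: when $s(q) \neq q$, the restriction of $\pi$ to $q$ is injective and $\pi(q) = \pi(s(q))$; when $s(q) = q$, the $s$-symmetry forces $q$ to be a parallelogram centred at a Weierstrass point, and $\pi(q)$ is the quotient of this parallelogram by the $\pi$-rotation. In either case the $\pi$-preimage of each projected polygon is a set of the form $\{q, s(q)\}$, and requiring that $Q$ tile $X$ forces $s(Q) = Q$. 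Once this is known, the involution $\iota$ on $\{1,\ldots,k\}$ defined by $s(q_i) = q_{\iota(i)}$ is well defined, and applying the slant-preserving rules above to the gluing relations defining $\pi_\ell$ and $\pi_r$ yields $\iota\, \pi_\ell\, \iota^{-1} = \pi_\ell^{-1}$ and $\iota\, \pi_r\, \iota^{-1} = \pi_r^{-1}$.

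To finish~(2) I must show that the cycle of $\pi_\ell$ containing $i$ coincides with the cycle containing $\iota(i)$, and similarly for $\pi_r$. The identities above give $\iota(\pi_\ell^n(i)) = \pi_\ell^{-n}(\iota(i))$ for every $n$, so $\iota$ maps the $\pi_\ell$-orbit of $i$ to the $\pi_\ell$-orbit of $\iota(i)$; to see that these coincide I would invoke the hyperelliptic fact that $s_*$ acts as $-\mathrm{Id}$ on $H_1(X;\mathbb{Q})$, so that the core curves of a staircase $S_c$ and of $s(S_c)$ have opposite homology classes. If $S_c$ and $s(S_c)$ were distinct disjoint cylinders, they would descend to a single cylinder on $Y$, contradicting the admissibility of the projected decomposition on the sphere.

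The main obstacle is the step $s(Q) = Q$: the projection argument must be handled carefully around the Weierstrass points where $\pi$ is ramified, in particular to justify that $\{\pi(q) : q \in Q\}$ is a bona fide tiling of $Y$ and to rule out overlapping projections coming from non-$s$-invariant quadrilaterals. An alternative, more combinatorial route is to construct $\iota$ directly from $(\vpi, \vwedges)$ using the $k$-cycle structure of $\pi_\ell \pi_r \iota$ noted earlier in the paper, and then identify the resulting combinatorial involution with the analytic $s$ by the uniqueness of the hyperelliptic involution.
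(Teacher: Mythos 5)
The crucial step of your argument---that $s$ maps each quadrilateral of $Q$ to another quadrilateral of $Q$---is exactly the first half of part~(2), and your projection argument does not close it. The images $\pi(q)$, $q\in Q$, do not form a tiling of $Y$: a generic point of $Y$ is covered twice, by the projections of the two (a priori unrelated) quadrilaterals containing its two preimages, and nothing you say prevents $s(q)$ from straddling several quadrilaterals of $Q$, in which case $\pi(q)$ overlaps several $\pi(q')$ only partially. Knowing that the preimage of $\pi(q)$ is $q\cup s(q)$ gives no control on how $s(q)$ sits relative to the decomposition; you acknowledge this, but it is the heart of the lemma, since a general quadrangulation carries no metric rigidity forcing an isometry to respect it. The paper supplies precisely this missing mechanism: it deforms the length data linearly, keeping $\vpi$ fixed, to the quadrangulation by congruent squares (all wedges equal to $(-1+\sqrt{-1},\,1+\sqrt{-1})$), whose sides are characterized metrically as the saddle connections of length $\sqrt{2}$, so the isometry $s$ must permute them there; the conclusion is then transported back along the path $X^{(t)}$ by a continuity argument made precise via the universal curve. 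Your suggested ``alternative combinatorial route'' through the invariant $k$-cycle $\pi_\ell\,\pi_r\,\iota$ is circular: in the paper that fact (Lemma~\ref{lem:perm_triangulation}, Theorem~\ref{thm:graph_of_graphs}) is derived from the quotient triangulation, whose very existence presupposes the present lemma.

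The second gap is the cycle-preservation step. Even granting $s(Q)=Q$ and $\iota\,\pi_\ell\,\iota=\pi_\ell^{-1}$, the involution $\iota$ only permutes the set of cycles of $\pi_\ell$ and need not fix each one (for instance $\pi_\ell=(1\,2)(3\,4)$ and $\iota=(1\,3)(2\,4)$ satisfy the relation while swapping the cycles), so genuine geometric input is required, and what you invoke does not suffice. That $s_*=-\mathrm{Id}$ on $H_1$ is perfectly compatible with two disjoint topological cylinders being exchanged, and the asserted contradiction with ``admissibility of the projected decomposition'' is not an argument: if $S_c$ and $s(S_c)$ were disjoint, $\pi(S_c)$ would simply be an embedded annulus on the sphere, which by itself contradicts nothing. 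The separation-plus-pole-counting argument works for \emph{metric} cylinders---the circumference is a closed geodesic, so the disk on the pole side contains exactly two poles, forcing the lifted cylinder to be $s$-invariant with two Weierstrass points on its core---whereas staircases of an arbitrary quadrangulation are only topological cylinders. This is exactly why the paper first passes to the square quadrangulation, where staircases \emph{are} metric cylinders, proves invariance there, and again transfers the (purely combinatorial) conclusion to $Q$ by continuity along the deformation.
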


\begin{proof}
Let us first show that a quadrangulation can be continuously deformed in such way that staircases become metric cylinders. We then prove the result when all staircases are metric cylinders. Finally, we show that the property for the latter is preserved under deformation in the component $\CCC^{hyp}(k)$ and hence holds for all surfaces in that component.

Let $X$ be a surface in $\CCC^{hyp}(k)$ and $Q = Q^{(0)}$ be a quadrangulation of $X$. Let us label its quadrilaterals and denote them by $q_1, \dots, q_k$.   
Let $\vpi = (\pi_\ell,\pi_r)$ be its combinatorial datum and $\vwedges = \vwedges^{(0)}$ its length datum, so that $Q = (\vpi, \vwedges)$.  
Let us introduce the length datum $\scw^{(1)}_{i,\ell} = -1 + \sqrt{-1}$ and $\scw^{(1)}_{i,r} =  1 + \sqrt{-1}$ for all $i = 1,\ldots,k$. Remark that the quadrangulation $Q^{(1)} = (\vpi, \vwedges^{(1)} )$ is a quadrangulation made by squares whose sides have length $\sqrt{2}$ and hence  staircases are metric cylinders. 
Consider the straight line in the parameter space of length data that goes from $\vwedges^{(0)}$ to $\vwedges^{(1)}$ given by $\vwedges^{(t)} = (1-t) \vwedges^{(0)} + t \vwedges^{(1)}$. 
Since both the train-track relations and the positivity conditions ($\lambda_{i,\ell} < 0 < \lambda_{i,r}$ and $\tau_{i,\ell}, \tau_{i,r} > 0$) are convex, $\vwedges^{(t)}$ is a valid length datum for $\vpi$ for all $0 \leq t \leq 1$. We hence get a path of quadrangulations $Q^{(t)} = (\vpi, \vwedges^{(t)})$ and a continuous path $X^{(t)}$ of translation surfaces.

We first claim that the hyperelliptic involution of $X^{(1)}$ maps each quadrilateral $q^{(1)}_i$ in $Q^{(1)}$ to another quadrilateral of $Q^{(1)}$ reversing the orientation. Indeed, since  $X^{(1)}$  is made by squares with side length $\sqrt{2}$,   the saddle connections of length $\sqrt{2}$ on $X^{(1)}$ are exactly the sides of $Q^{(1)}$. Since the hyperelliptic involution preserves the flat metric of $X^{(1)}$, it must preserve this set. Thus,  each quadrilateral $q^{(1)}_i$ of $Q^{(1)}$ is sent,  reversing the orientation,  to another quadrilateral $q^{(1)}_{\iota (i)}$ where $\iota$ is an involution of $\{1,\ldots,k\}$.
  
Now we claim that the map $\iota$ actually preserves staircases, that is $i$ and $\iota(i)$ belongs to the same cycles of both $\pi_\ell$ and $\pi_r$. 
Let consider a surface $X$ in $\QQQ(k-2,-1^{k+2})$ and a maximal cylinder $C$ in it. Because it is a sphere, each closed curve separates the surface into two connected components. Now the circumference of a cylinder is a closed curve so the zero of degree $k$ in $Y$ belongs to only one side of the cylinder. The other side contains only poles and hence it has to contain two poles. If we lift such cylinder to the corresponding hyperelliptic component it consists of one cylinder which contains two Weirstrass points in its middle. This proves that any cylinder in any surface that belongs to the hyperelliptic component is fixed (as a set) by the hyperelliptic involution. Hence the conclusion of the Lemma holds for the quadrangulation $Q^{(1)}$ of $X^{(1)}$.

Now it remains to deduce that the hyperelliptic involution on $X^{(t)}$ for $t \in [0,1]$ also sends the quadrilateral $i$ to the quadrilateral $q_{\iota(i)}$ reversing the orientation. Heuristically, this is because the quadrangulations $Q^{(t)}$ of $X^{(t)}$, $t \in [0,1]$, are obtained by a continuous deformation and  the hyperelliptic involution is continuous  on $\CCC^{hyp}(k)$. We warn the reader that it makes no sense to speak of continuity of the hyperelliptic involution on $\CCC^{hyp}(k)$. We need to consider the so called universal curve on $\CCC^{hyp}(k)$, that is the set of equivalence class $(X,x)$ where $X \in \CCC^{hyp}(k)$ and $x \in X$. This universal curve is also a connected component of a stratum (with a point with conical angle $2\pi$). The hyperelliptic involution acts on the universal curve by action on the second coordinate and is continuous on it. As it is an isometry, the hyperelliptic involution sends saddle connections to saddle connections. We would like to argue that the hyperelliptic involution is continuous on the set of saddle connection, but the problem is that the map $X \mapsto \xQuad(X)$ which to a surface associate its set of saddle connections (seen as a discrete subset of $(\RR \times \RR_+)^k$) is not continuous, as saddle connections may appear or disappear.
Nevertheless, if $X^{(t)}$ is a continuous path of surfaces and $\gamma^{(t)}: [0,1] \rightarrow X^{(t)}$ and $\eta^{(t)}: [0,1] \rightarrow X^{(t)}$ are such that
\begin{itemize}
\item the maps $(s,t) \mapsto \gamma^{(t)}(s)$ and $(s,t) \mapsto \eta^{(t)}(s)$ are continuous from $[0,1] \times [0,1]$ to $X$,
\item for each $t$, $\gamma^{(t)}$ and $\eta^{(t)}$ are saddle connections parametrized with constant speed,
\item at time $t=0$, the saddle connections coincide, i.e. we have $\gamma^{(0)}(s) = \iota \circ \eta^{(0)}(s)$,
\end{itemize}
then for all time $t \in [0,1]$, we have $\gamma^{(t)}(s) = \iota \circ \eta^{(t)}(s)$. This simply follows from a continuity argument.
We may apply this to our saddle connections that form the sides of our quadrangulations, namely $\gamma^{(t)}(s) = s \sc$ where $\sc$ is thought as an element of $\CC$.
\end{proof}

\begin{lemma} \label{lem:cut_hyp}
Let $X$ be a surface in a hyperelliptic component $\CCC^{hyp}(k)$ and let $s: X \rightarrow X$ be the hyperelliptic involution. 
Let $\gamma$ be a saddle connection in $X$ that is not fixed by the hyperelliptic involution.
Then $X \backslash (\Sigma \cup \gamma \cup s \gamma)$ has two connected components both of them having $\gamma$ and $s \gamma$ on their boundary.
Let $X_1$ and $X_2$ be obtained from these two connected components by identifying $\gamma$ and $s \gamma$ by translation.
Then $X_1$ and $X_2$ are (non empty) translation surfaces in hyperelliptic components. Furthermore, if $k_1\geq 1$ and $k_2\geq 1$ are such that  $X_1 \in \CCC^{hyp}(k_1)$  and  $X_2\in \CCC^{hyp}(k_2)$,  we have $k = k_1 + k_2$.
\end{lemma}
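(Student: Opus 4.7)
The strategy is to pass to the underlying quadratic differential $Y := X/s$ on the sphere, so that the loop structure on $Y$ controls the decomposition of $X$. The surface $Y$ lies in $\QQQ(k-2, -1^{k+2})$; all singularities of $X$ project to the unique central zero $z_0 \in Y$ of order $k-2$, while the $k+2$ poles of $Y$ correspond to the non-central Weierstrass points of $X$. Since the endpoints of $\gamma$ are singularities of $X$, the image $\bar\gamma := \pi(\gamma) = \pi(s\gamma)$ is a closed path based at $z_0$. Because $s$ acts on tangent vectors by $-I$, the saddle connections $\gamma$ and $s\gamma$ are leaves of the same unoriented directional foliation on $X$; distinct leaves cannot cross, so the interiors of $\gamma$ and $s\gamma$ are disjoint. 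This implies $\pi$ is injective on the interior of $\gamma$ (if $\pi(x)=\pi(y)$ with $x$ interior, then $y=s(x)$ would lie in the interior of $s\gamma$, a contradiction). Hence $\bar\gamma$ has no self-intersection and is a simple closed loop on the sphere $Y$.

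The simple loop $\bar\gamma$ separates $Y$ into two open disks $D_1$ and $D_2$. A Gauss--Bonnet computation on $\overline{D_i}$---a flat disk with geodesic boundary, a single corner at $z_0$ of interior angle $\alpha_i$, and $m_i$ interior cone points coming from the poles in $D_i$---yields $m_i\pi + (\pi - \alpha_i) = 2\pi$, hence $\alpha_i = \pi(m_i - 1)$. Positivity $\alpha_i > 0$ forces $m_i \geq 2$, so both disks contain branch points of the double cover $\pi$. Consequently each preimage $\pi^{-1}(D_i \setminus \Sigma(Y))$ is a connected ramified double cover (a loop around any branch point has non-trivial monodromy), and so
\[
X \setminus (\Sigma \cup \gamma \cup s\gamma) \;=\; \pi^{-1}(D_1 \setminus \Sigma(Y)) \,\sqcup\, \pi^{-1}(D_2 \setminus \Sigma(Y))
\]
has exactly two connected components $C_1, C_2$, each having $\gamma$ and $s\gamma$ in its boundary. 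This establishes the first assertion.

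To form $X_i$, one takes the closure $\overline{C_i}$ in $X$ and identifies $\gamma$ with $s\gamma$ via the unique translation $T$ matching these anti-parallel, oppositely-oriented arcs. A direct local computation using $s(z) = -z + d$ and $T(z) = z + c$ shows $s \circ T = T^{-1} \circ s$ on $\gamma$, so the gluing is $s$-equivariant and the involution $s$ descends to an involution $s_i$ on $X_i$. The quotient $X_i / s_i$ is obtained from $\overline{D_i}$ by folding its boundary $\bar\gamma$ under the induced involution (which has two fixed points on the loop), yielding a topological sphere; hence each $X_i$ is hyperelliptic, i.e. $X_i \in \CCC^{hyp}(k_i)$ for some $k_i \geq 1$. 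The equality $k_1 + k_2 = k$ follows because cut-and-paste preserves the total conical angle: $2\pi k_1 + 2\pi k_2 = 2\pi k$. The two main subtleties are the Gauss--Bonnet inequality $m_i \geq 2$, which guarantees the two-component structure, and the verification that the translation gluing is $s$-equivariant so that the hyperelliptic involution descends to each $X_i$.
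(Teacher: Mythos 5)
Your argument follows the paper's route: pass to the quotient quadratic differential on $\CC\PP^1$ and use the fact that the simple loop $\bar\gamma = \pi(\gamma) = \pi(s\gamma)$ separates the sphere into two disks. You correctly supply several steps the paper leaves terse — the observation that $\gamma$ and $s\gamma$ are distinct parallel leaves and hence have disjoint interiors (so $\bar\gamma$ is simple), the Gauss--Bonnet count $m_i \geq 2$ guaranteeing each piece is a non-empty surface with $k_i\geq 1$, the monodromy argument for connectedness of the two preimages, and the $s$-equivariance of the translation gluing — and you build each $X_i$ upstairs from a half of $X$ and check that $s$ descends, whereas the paper works downstairs, forming $Y_i$ from $\overline{D_i}$ by adding a pole and then taking the orientation double cover; these are the same construction phrased in opposite directions.
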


\begin{proof}
  Let $Y$ be the quotient of $X$ under the hyperelliptic involution. The image of $\gamma$ (which is also the image of $s \gamma$) in $Y$ is a segment that does not contain a pole in its interior (this is because a saddle connection in $X$ is preserved under the hyperelliptic involution if and only if it contains a Weirstrass point in its interior). We obtain a closed curve on the sphere which is a loop (both ends are the zero of the quadratic differential) and hence separates the sphere into two components whose boundaries each consists of a copy of the segment image of $\gamma$. Let us now add a pole in the middle point of each segment, hence defining a new quadratic differential on each surface. Taking the double covers of these new quadratic differentials we obtain two surfaces $X_1$ and $X_2$ as in the statement. The relation  $k = k_1 + k_2$ follows from computing total conical angles. 
\end{proof}

\subsubsection{Triangulations on the sphere and Ferenczi-Zamboni trees of relations} \label{subsubsec:triangulations}
From Lemma~\ref{lem:hyperelliptic_staircase}, we know that a quadrangulation of a surface that belongs to a hyperelliptic component of a stratum is necessarily fixed by the hyperelliptic involution of the surface. In particular, it makes sense to consider the quotient of the quadrangulation on the sphere. We see in this section that this quotient is naturally a triangulation that it is intimately related to the so called \emph{trees of relations} that appear in work by Ferenczi and Zamboni, see \cite{FerencziZamboni-struct}.

\smallskip
Let $q$ be a quadratic differential on the sphere $\CC \PP^1$ which belongs to $\QQQ(k-2,-1^{k+2})$ and let $z_0$ denotes the point of $\CC \PP^1$ at which $q$ has the zero of degree $k-2$.
We call a \emph{triangle} on $(\CC \PP^1, q)$ an open embedded triangle in $(\CC \PP^1, q)$ whose boundary consists of saddle connections between $z_0$ and itself that may pass through one pole. Notice that, since the conical angle at a pole is $\pi$, an edge which passes through a pole actually consists of two copies of the same segment. A \emph{triangulation} of $(\CC \PP^1, q)$ is a set of triangles on $q$ such that their interiors have empty intersection and their union is the whole $\CC \PP^1$. An example of a triangulation is shown in Figure~\ref{subfig:triangulation_Q_3p7}.

\begin{figure}[!ht]
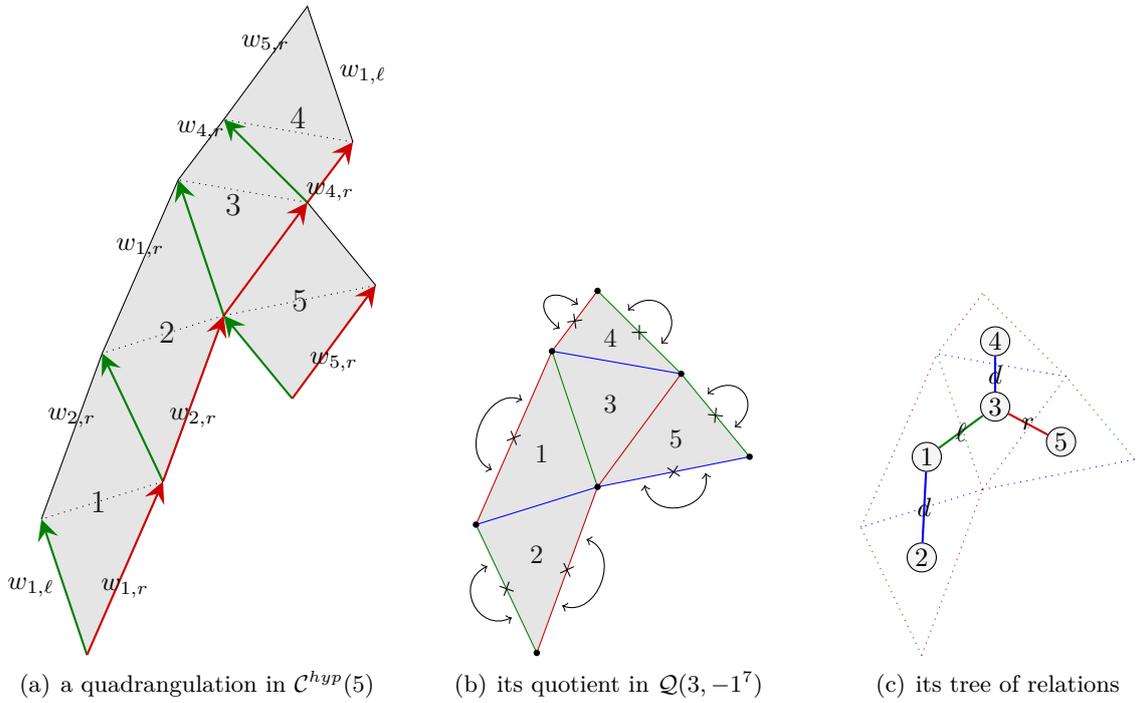

\begin{center}
\subfigure[a quadrangulation in $\CCC^{hyp}(5)$\label{subfig:quadrangulation_H_4}]{\picinput{quad_H_4}} \hspace{.5cm}
\subfigure[its quotient in $\QQQ(3,-1^7)$\label{subfig:triangulation_Q_3p7}]{\picinput{tria_Q_3p7}} \hspace{1cm}
\subfigure[its tree of relations\label{subfig:tree_of_relations}]{\picinput{tree_of_rel}}
\end{center}
\caption{from a quadrangulation of a surface in $\CCC^{hyp}(5)$ to the tree of relations}
\label{fig:wedge-quotient}
\end{figure}

Given a triangulation $T$ on the sphere, we canonically associate its \emph{dual graph} $G_T$. The vertices $v_t$ are the triangles $t \in T$ and we join two vertices $v_t$ and $v_{t'}$ by an edge if the corresponding triangles $t$ and $t'$ share an edge which has no pole on it. An example of such graph is given in Figure~\ref{subfig:tree_of_relations}.

\begin{lemma} \label{lem:T_is_a_tree}
  Let $G_T$ be the dual graph associated to the triangulation $T$ of a quadratic differential $(\CC \PP^1, q)$ in a stratum $\QQQ(k-2,-1^{k+2})$. Then $G_T$ is a tree. 
 \end{lemma}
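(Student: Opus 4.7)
My plan is to show $G_T$ is a tree by establishing that it is connected with $|E(G_T)| = |V(G_T)|-1$.

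First I would count. Each triangle has all three vertices at $z_0$, so its three (Euclidean) interior angles sum to $\pi$ and contribute entirely to the cone angle at $z_0$. Equating the total $\pi\, |V(G_T)|$ with the cone angle $\pi k$ at $z_0$ gives $|V(G_T)|=k$. Moreover, since the triangles cover $\CC\PP^1$ and are open and embedded, each of the $k+2$ poles must lie on the $1$-skeleton: it is not in any open triangle (no singularities in the interior) and is not a triangle vertex (vertices are all $z_0$). So each pole lies in the interior of a unique edge, giving a bijection between poles and polar edges; hence there are exactly $k+2$ polar edges.

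The key local observation is that both sides of a polar edge lie inside the same triangle. At a pole the total cone angle is $\pi$, and any geodesic hitting the pole must retrace itself (which is precisely why a polar edge is ``two copies of the same segment''). The adjacent triangle whose boundary follows this retracing therefore occupies the full $\pi$ cone angle at the pole on one side of the polar segment, leaving no angular room for a second triangle on the other side. Equivalently, one may think of the Euclidean triangle-edge corresponding to a polar edge as being folded $2$-to-$1$ onto the surface segment with midpoint at the pole; the interior of the Euclidean triangle is adjacent to both halves of the folded edge, so both sides of the polar segment are interior to the same surface triangle. Non-polar edges, by contrast, are shared between two distinct triangles. Counting triangle-sides two ways yields $3k = 2|E(G_T)| + (k+2)\cdot 1$, hence $|E(G_T)| = k-1$.

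For connectivity, given triangles $A,B \in T$, I would choose a path $\gamma:[0,1]\to\CC\PP^1$ from the interior of $A$ to the interior of $B$ avoiding the $0$-skeleton ($z_0$ and the poles) and transverse to the $1$-skeleton. Each transversal crossing of a non-polar edge changes the ambient triangle and contributes an edge of $G_T$, while each crossing of a polar edge returns us to the same triangle by the self-adjacency observation. The resulting sequence of triangles visited by $\gamma$ is a walk in $G_T$ from $A$ to $B$.

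Combining, $G_T$ is a connected graph with $k$ vertices and $k-1$ edges, hence a tree. The main obstacle is pinning down the self-adjacency of polar edges; once this local geometric picture is in hand, the counting and the connectivity argument are both routine.
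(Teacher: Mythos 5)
Your proof is correct and takes essentially the same route as the paper: establish connectedness of $G_T$ and then show $|V(G_T)|=k$ and $|E(G_T)|=(3k-(k+2))/2=k-1$. The paper's version is more terse; the angular count at $z_0$ giving $|V(G_T)|=k$ and the self-adjacency of polar edges are used implicitly in the paper's edge count, so your additional justifications are welcome detail rather than a different argument.
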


\begin{proof}
  The connectedness of $G_T$ comes from the connectedness of $\CC \PP^1$.
  Hence, to prove that it is a tree it is enough to show that the number of edges of $G_T$ is its number of vertices minus one.    By definition, the vertices are the triangles of $T$ so there are $k$ of them.
  Now, because it is a triangulation and there are $k+2$ poles the number of edges is $(3k - (k+2)) / 2 = k-1$.
\end{proof}

Recall that quadrangulations of surfaces in $\CCC^{hyp}(k)$ have by definition the additional property that the quadrilaterals are admissible (recall Definition~\ref{def:admissible}). In the quotient, we can see this property as a compatibility condition on the triangles.
More precisely, in any triangle there is exactly one vertex such that the vertical segment emanating from that vertex is contained in the triangle (as illustrated in Figure~\ref{subfig:label_of_triangles}, see also Figure~\ref{subfig:triangulation_Q_3p7} for an example). We can then assign labels to each side of a triangle as follows. Let us consider the unique vertical from a vertex of $t$ which is contained in $t$ and orient it so that it starts from the vertex. We label $d$ the side opposite to the vertex from which the vertical starts, which is the unique side crossed by the considered vertical. We then label $\ell$ and $r$ the other two sides of $t$ (which form a wedge which contains the considered vertical), so that rotating counterclockwise around the vertex one sees first the side labelled $r$, then the vertical, then the side labelled $\ell$, as shown in Figure~\ref{subfig:label_of_triangles}.

Let $Q$ be a quadrangulation of a surface in $\CCC^{hyp}(k)$ and $T$ the triangulation obtained taking its quotient by the hyperelliptic involution. Then the admissibility of the quadrilaterals in $Q$ implies that that pairs of sides of triangles of $T$ which are identified carry the same label, see Figure~\ref{subfig:coherent_triangle} and~\ref{subfig:noncoherent_triangle}. Thus, we can assign labels in $\{\ell,r,d\}$ to the edges of the dual tree $G_T$ associated to the triangulation $T$, by assigning to each edge of $G_T$ the common label of the dual pair of identified triangle edges (see the example in Figure~\ref{subfig:tree_of_relations}). This labelled tree is called the \emph{tree of relations} in~\cite{FerencziZamboni-struct} (we warn the reader that $\ell$, $r$ and $d$ are respectively replaced in~\cite{FerencziZamboni-struct} by $\hat{+}$, $\hat{-}$ and $\hat{=}$). As shown in~\cite{FerencziZamboni-struct}, the tree encodes indeed the train-track relations for the the length datum $\vwedges$ of $Q$ as follows. If the edge of the tree connecting the vertices $i$ to $j $ carries the label $r$ (resp.~$l$), the wedges of the quadrilaterals $q_i$ and $q_j$ obtained by double covers of the triangles dual to the vertices $i$ and $j$ are such that $\scw_{i,r} = \scw_{j,r}$ (resp.~$\scw_{i,\ell} = \scw_{j,\ell}$). If the edge connecting $i$ to $j$ carries the label $d$, the quadrilaterals $q_i$ and $q_j$ have parellel isometric diagonals, that is $\scw_{i,d} = \scw_{j,d}$. One can show that this set of equations is equivalent to the set of train-track relations $\scw_{i,\ell} + \scw_{\pi_\ell(i),r} = \scw_{i,r} + \scw_{\pi_r(i),\ell}$ for $1\leq i \leq k$ (a sketch is given in~\cite{FerencziZamboni-struct}).

\begin{figure}[!ht]
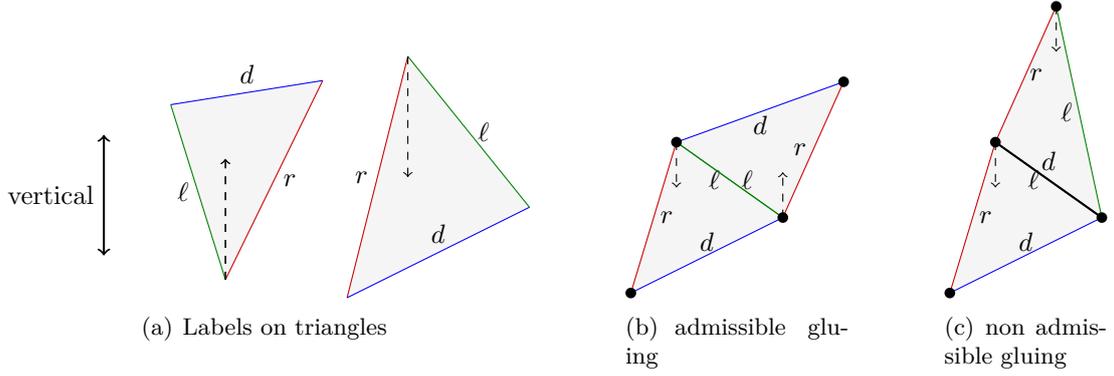

\begin{center}
  \subfigure[Labels on triangles\label{subfig:label_of_triangles}]{\picinput{label_of_triangles}} \hspace{1cm}
  \subfigure[admissible gluing\label{subfig:coherent_triangle}]{\picinput{coherent}} \hspace{1cm}
  \subfigure[non admissible gluing\label{subfig:noncoherent_triangle}]{\picinput{non_coherent}}
\end{center}
\caption{labels on triangles and admissibility of configurations}
\label{fig:triangles}
\end{figure}


\smallskip

If $Q$ is a labelled quadrangulation, also the triangles of the induced triangulation $T$ inherit labels $1\leq i \leq k$.
More precisely, each quadrilateral is cut in two triangles by its backward diagonal.
Bottom triangles and top triangles are exchanged by the hyperelliptic involution.
Let us consider a triangle $t$ on the sphere. Its preimage in $Q$ is a union of a top and a bottom triangle.
The bottom one belongs to some $q_i$ and we set $i$ as the label for $t$.

 We can then equivalently describe the tree of relations with three involutions $\sigma_\ell$, $\sigma_r$ and $\sigma_d$ of $\{1,\ldots,k\}$. Define $\sigma_\ell$  so that if the triangles $i$ and $j$ share an edge labelled $\ell$ then  $\sigma_\ell(i) = j$ and $\sigma_\ell(j) = i$. We define similarly $\sigma_r$ and $\sigma_d$. We use the notation $\vsigma$ for the triple $(\sigma_\ell,\sigma_r,\sigma_d)$ and call it the \emph{combinatorial datum} of the triangulation $T$.  
The following Lemma relates the combinatorial datum $\vpi$ of a quadrangulation $Q$ to the combinatorial datum $\vsigma$ of the quotient triangulation $T$. Equivalently, it links the tree of relations $G_T$ and the graph $G_Q$.
\begin{lemma}\label{lem:perm_triangulation}
If $T$ is a labelled triangulation with combinatorial datum $\vsigma = (\sigma_\ell,\sigma_r,\sigma_d)$ induced by a labelled quadrangulation $Q$ of a surface in $\CCC^{hyp}(k)$ with combinatorial datum $\vpi = (\pi_\ell,\pi_r)$ then $\sigma_d = \iota$ is the action of the hyperelliptic involution on the quadrilaterals of $Q$ and
\[
\pi_\ell = \sigma_r \circ \sigma_d \qquad \text{and} \qquad \pi_r = \sigma_\ell \circ \sigma_d.
\]
In particular $\pi_\ell^{-1} = \iota \pi_\ell \iota$ and $\pi_r^{-1} = \iota \pi_r \iota$.
\end{lemma}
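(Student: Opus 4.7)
My plan is to compute $\sigma_\ell$, $\sigma_r$ and $\sigma_d$ explicitly in terms of $\pi_\ell$, $\pi_r$ and $\iota$ by tracking, for each labelled edge of the quotient triangle $t_i$, which neighbouring triangle lies on its other side. The first step is to record how the labels $\ell$, $r$, $d$ on $t_i$ correspond to the edges of the bottom triangle of $q_i$: since the vertical ray issued from the bottom singularity of $q_i$ exits through the backward diagonal $\scw_{i,d^-}$, that diagonal must carry the label $d$, and going counterclockwise from the bottom singularity one first meets the right-slanted side $\scw_{i,r}$, so $\scw_{i,r}$ gets the label $r$ and $\scw_{i,\ell}$ the label $\ell$.

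Next I would identify $\sigma_d$ directly: the backward diagonal $\scw_{i,d^-}$ separates the bottom triangle of $q_i$ from its top triangle, and by definition of $\iota$ the top triangle of $q_i$ is identified in the quotient with the bottom triangle of $q_{\iota(i)}$, so $\sigma_d(i)=\iota(i)$. For $\sigma_r$, the edge $\scw_{i,r}$ is the bottom right of $q_i$ and also the top left of $q_{\pi_\ell^{-1}(i)}$ (because the definition of $\pi_\ell$ gives that the top left of $q_j$ equals $\scw_{\pi_\ell(j),r}$, and setting $\pi_\ell(j)=i$ forces $j=\pi_\ell^{-1}(i)$). The top triangle of $q_{\pi_\ell^{-1}(i)}$ descends to $t_{\iota\pi_\ell^{-1}(i)}$ in the quotient, so $\sigma_r(i)=\iota\pi_\ell^{-1}(i)$. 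A symmetric computation using the gluing of $\scw_{i,\ell}$ with the top right of $q_{\pi_r^{-1}(i)}$ yields $\sigma_\ell(i)=\iota\pi_r^{-1}(i)$.

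Since $\sigma_r$ and $\sigma_\ell$ are involutions by construction, squaring the two formulas obtained above produces the commutation relations $\iota\pi_\ell\iota=\pi_\ell^{-1}$ and $\iota\pi_r\iota=\pi_r^{-1}$, which is precisely the ``in particular'' clause of the lemma. Using these identities, one computes $\sigma_r\sigma_d(i)=\iota\pi_\ell^{-1}\iota(i)=\pi_\ell(i)$ and, in the same way, $\sigma_\ell\sigma_d(i)=\pi_r(i)$, establishing the two equalities in the statement.

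The only delicate point I anticipate is the label matching in the first step: one must verify carefully that the vertical direction and the counterclockwise orientation used on the sphere to assign $\ell$, $r$, $d$ are exactly those inherited from the bottom triangle of $q_i$, i.e.\ that the quotient by the hyperelliptic involution introduces no twist. Once this is clear, everything reduces to bookkeeping. If one prefers to avoid invoking the involution property of $\sigma_r$, the commutation relations can be obtained equally well by applying $s$ directly to the gluing identity that the top right of $q_i$ coincides with the bottom left of $q_{\pi_r(i)}$ and comparing with the same identity for $q_{\iota(i)}$; both routes give the same conclusion.
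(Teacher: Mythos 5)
Your proof is correct, and it takes a route that is related to but noticeably different from the paper's. The paper computes the composition $\sigma_r\sigma_d$ directly on the bottom triangle of $q_i$: first $\sigma_d$ crosses the backward diagonal to the top triangle of $q_i$ (label $\iota(i)$), then $\sigma_r$ crosses the top left side $\scw_{\pi_\ell(i),r}$ to land on the bottom triangle of $q_{\pi_\ell(i)}$, giving $\sigma_r\sigma_d=\pi_\ell$ in one pass; the ``in particular'' clause is then a trivial algebraic consequence of $\sigma_r=\pi_\ell\iota$ being an involution. You instead compute each of the three involutions explicitly — $\sigma_d=\iota$, $\sigma_r=\iota\pi_\ell^{-1}$, $\sigma_\ell=\iota\pi_r^{-1}$ — by tracking the other side of the $r$- and $\ell$-labelled edge of the bottom triangle of $q_i$, then deduce the commutation relations $\iota\pi_\ell\iota=\pi_\ell^{-1}$ and $\iota\pi_r\iota=\pi_r^{-1}$ from $\sigma_r^2=\sigma_\ell^2=\mathrm{id}$, and finally substitute back to get the product formulas. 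What your route buys is the extra information of the explicit closed forms for all three involutions; what it costs is the detour through the commutation relations, which the paper never needs to invoke in deriving the main formulas. Both arguments rest on the same geometric inputs (the label matching on the bottom triangle and the identification of top triangles with bottom triangles of hyperelliptic images), and your caveat about verifying that the quotient introduces no twist is exactly the admissibility-to-coherent-labels point the paper states just before the lemma.
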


\begin{proof}
By construction, the labels on the sphere are built in such way that $\sigma_d$ corresponds to the action of the hyperelliptic involution. 
Recall that quadrilaterals in $Q$ are cut in triangles by the backward diagonals.
Now $\pi_\ell$ can be seen on the bottom triangles as first crossing the diagonal (hence applying $\sigma_d$) and then crossing the top left side which is right slanted (hence applying $\sigma_r$). So $\pi_\ell = \sigma_r \sigma_d$. Reasoning in the same way for $\pi_r$ we get the other formula.
\end{proof}

Let us remark that one can show that $\sigma_\ell \sigma_r \sigma_d$ is a $k$-cycle, since it corresponds geometrically to turning around the singularity of angle $\pi k$ on the sphere. In~\cite{CassaigneFerencziZamboni}, it is shown that this $k$-cycle is a complete invariant that classifies pair of permutations in the same graph $\GGG$. Their main result can be rephrased as follows:

\begin{theorem}[\cite{CassaigneFerencziZamboni}] \label{thm:graph_of_graphs}
Let $Q$ be a quadrangulation of a surface in $\CCC^{hyp}(k)$ and $T_Q$ be the quotient triangulation.
Let $\vpi=(\pi_\ell,\pi_r)$ and $\vsigma=(\sigma_\ell,\sigma_r,\sigma_d)$ be respectively the combinatorial datum of $Q$ and $T_Q$.
Then, the permutation $\sigma_\ell \sigma_r \sigma_d = \pi_r \sigma_d \pi_\ell$ is a $k$-cycle which is invariant under the operation of staircase moves $\vpi \mapsto c \cdot \vpi$. Moreover, two combinatorial data $\vpi$ and $\vpi'$ that correspond to quadrangulations of surfaces in $\CCC^{hyp}(k)$ can be joined by a sequence of staircase moves and hence belong to the same graph $\GGG = \GGG(\vpi)$ if and only if $\pi_r \pi_\ell \sigma_d = \pi'_r \pi'_\ell \sigma_d'$.
\end{theorem}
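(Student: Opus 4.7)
The theorem contains two statements: that $\sigma_\ell \sigma_r \sigma_d$ is a $k$-cycle invariant under staircase moves, and that it is a complete invariant distinguishing connected components of the combinatorial graph $\GGG$. I would attack these in turn, working on the quotient quadratic differential $(\CC \PP^1, q) \in \QQQ(k-2,-1^{k+2})$, where the $k$ triangles of the triangulation $T$ dual to the quadrangulation $Q$ all meet at the unique zero $z_0$, whose total conical angle is exactly $\pi k$.

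The $k$-cycle structure then has a direct geometric meaning: starting from a corner of a triangle $t_i$ at $z_0$ and rotating counterclockwise by $\pi$ around $z_0$, one crosses in succession a $d$-labelled edge (producing the involution $\sigma_d$), then an $r$-labelled edge (producing $\sigma_r$), then an $\ell$-labelled edge (producing $\sigma_\ell$). Thus rotating by $\pi$ sends $t_i$ to $\sigma_\ell \sigma_r \sigma_d(t_i)$, and since the total angle at $z_0$ is $\pi k$, after $k$ such rotations one returns to $t_i$ for the first time, visiting all $k$ triangles exactly once. Hence $\sigma_\ell \sigma_r \sigma_d$ is a $k$-cycle. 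The identity $\sigma_\ell \sigma_r \sigma_d = \pi_r \sigma_d \pi_\ell$ is an immediate substitution from Lemma~\ref{lem:perm_triangulation} using $\sigma_d^2 = \mathrm{id}$.

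Invariance under a staircase move is then also most transparent on the sphere. A staircase move along a cycle $c$ amounts to performing simultaneously a family of edge flips in the triangulation $T$: each diagonal change of a quadrilateral $q_i$ with $i \in c$ replaces a wedge side of $q_i$ with the diagonal $\scw_{i,d}$, which lifts on the sphere to a local flip of one edge of $T$. Such flips modify the incidence structure only locally and do not alter the cyclic order in which one encounters triangles when rotating around $z_0$. Consequently, if $\vsigma'$ denotes the combinatorial datum of the new triangulation, the product $\sigma_\ell' \sigma_r' \sigma_d'$ defines the same cyclic permutation of $\{1,\ldots,k\}$ as $\sigma_\ell \sigma_r \sigma_d$. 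Using Lemma~\ref{lem:hyperelliptic_staircase} (which ensures that the hyperelliptic involution fixes staircases, so that the cycle $c$ is stable under $\sigma_d = \iota$) together with the explicit formulas~\eqref{eq:def_c_pi1}--\eqref{eq:def_c_pi2} for $c \cdot \vpi$, I would translate this geometric invariance into the algebraic identity $\pi'_r \sigma'_d \pi'_\ell = \pi_r \sigma_d \pi_\ell$ by a direct (but tedious) case analysis depending on whether the indices involved lie in $c$ or not.

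For the completeness statement, the ``only if'' direction is the invariance just proved. The converse---that any two combinatorial data sharing the same invariant $k$-cycle can be joined by a sequence of staircase moves in $\GGG$---is the main combinatorial result of~\cite{CassaigneFerencziZamboni}, which I would quote directly; alternatively, the companion paper~\cite{DU:II} will give a geometric proof by identifying the quotient of $\QQQ_k$ modulo staircase moves with the connected component $\CCC^{hyp}(k)$ and invoking the known connectedness of the latter, proved via ergodicity of the Teichm\"uller flow. The main obstacle is the invariance step: the sphere picture makes invariance of the cyclic order around $z_0$ geometrically evident, but converting this into a clean algebraic verification requires careful bookkeeping of how the three involutions $\sigma_\ell, \sigma_r, \sigma_d$ are locally reshuffled during the simultaneous sequence of flips indexed by $c$, and this is where I would expect computational slip-ups to arise.
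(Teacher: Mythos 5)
Your proposal follows essentially the same route as the paper: the paper does not prove this theorem internally, but imports the completeness claim from~\cite{CassaigneFerencziZamboni} and supports the $k$-cycle assertion only with the one-sentence remark that $\sigma_\ell\sigma_r\sigma_d$ ``corresponds geometrically to turning around the singularity of angle $\pi k$ on the sphere.'' You expand that remark with the rotation-by-$\pi$ picture, correctly check $\sigma_\ell\sigma_r\sigma_d=\pi_r\sigma_d\pi_\ell$ via Lemma~\ref{lem:perm_triangulation}, and defer both the invariance and the converse to~\cite{CassaigneFerencziZamboni} (or to the ergodicity-based argument in~\cite{DU:II}), which is exactly what the paper does.

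One caution: the sub-claim that rotating counterclockwise by $\pi$ crosses edges in the order $d$, $r$, $\ell$ is stated as if obvious, but it needs a small check. The three corners of a given triangle $t_i$ at $z_0$ are generically not adjacent in the cyclic order of the $3k$ sectors around $z_0$, so ``rotating by $\pi$ starting from a corner of $t_i$'' does not a priori sweep only the three angle-sectors of $t_i$; what one actually needs is a careful reading of the labelling convention of Figure~\ref{subfig:label_of_triangles} and the admissibility condition (Figure~\ref{subfig:coherent_triangle}) to confirm the order in which labelled edges are encountered. Likewise, the phrase ``does not alter the cyclic order in which one encounters triangles'' glosses over the fact that a staircase move changes the triangles themselves (the $d$-edge of the new $t_i$ is a former $\ell$- or $r$-edge, not the old $d$-edge), so what is preserved is the cyclic order of angular sectors of the fixed cone metric, not a combinatorial quantity that survives the flip automatically. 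You rightly flag both of these as places where bookkeeping is needed; since the paper itself delegates this bookkeeping to~\cite{CassaigneFerencziZamboni}, your proof is at the same level of rigor as the source.
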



The following corollary of this result is used to show that the rotation operator $R: \QQQ_k \to \QQQ_k$ defined in \S~\ref{sec:move_parameters} is well defined. 
\begin{corollary}\label{cor:same_graph}
Let $\vpi= (\pi_\ell, \pi_r)$ be a combinatorial datum of a labelled quadrangulation  $Q$ in $\QQQ_k$ and let $\vpi'= (\pi_\ell\, \pi_r\, \pi_\ell^{-1}, \pi_\ell^{-1})$. Then $\vpi'$ belongs to $\GGG (\pi)$. 
\end{corollary}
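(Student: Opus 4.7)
The plan is to verify the criterion of Theorem~\ref{thm:graph_of_graphs}, which asserts that two elements $\vpi,\vpi'$ of $\QQQ_k$ lie in the same graph $\GGG$ precisely when the $k$-cycle invariants $\pi_r\pi_\ell\sigma_d$ and $\pi'_r\pi'_\ell\sigma'_d$ agree. Two complementary routes accomplish this.

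The geometric route is immediate. Multiplication by $\sqrt{-1}$ corresponds to the element $\bigl(\begin{smallmatrix}0 & -1\\1 & 0\end{smallmatrix}\bigr)\in\SL(2,\RR)$, and hyperelliptic components are invariant under the $\SL(2,\RR)$-action on strata. Hence if $Q$ is a quadrangulation of $X\in\CCC^{hyp}(k)$, then $RQ$ is a quadrangulation of $\sqrt{-1}\,X\in\CCC^{hyp}(k)$, so $\vpi'$ arises as the combinatorial datum of a quadrangulation of a surface in the same hyperelliptic component as $\vpi$. Invoking Theorem~\ref{thm:graph_of_graphs} (based on~\cite{CassaigneFerencziZamboni}), or alternatively the connectedness of the space of quadrangulations of $\CCC^{hyp}(k)$ under forward and backward staircase moves established in~\cite{DU:II}, we conclude $\vpi'\in\GGG(\vpi)$.

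For an algebraic verification of the invariant, one uses that the hyperelliptic involution $s$, being affine with linear part $-\mathrm{Id}$, commutes with multiplication by $\sqrt{-1}$. Combined with the relabelling $q'_j = \sqrt{-1}\,q_{\pi_\ell^{-1}(j)}$ prescribed by Definition~\ref{def:R}, this forces the hyperelliptic involution on the labels of $RQ$ to be $\iota' = \pi_\ell\,\iota\,\pi_\ell^{-1}$. Using the conjugation relation $\pi_\ell^{-1} = \iota\,\pi_\ell\,\iota$ from Lemma~\ref{lem:perm_triangulation}, equivalently $\iota\,\pi_\ell^{-1} = \pi_\ell\,\iota$, one then computes
\[
\pi'_r\pi'_\ell\iota' \;=\; \pi_\ell^{-1}\cdot\pi_\ell\pi_r\pi_\ell^{-1}\cdot\pi_\ell\iota\pi_\ell^{-1} \;=\; \pi_r\iota\pi_\ell^{-1} \;=\; \pi_r\pi_\ell\iota,
\]
which coincides with the invariant of $\vpi$, and Theorem~\ref{thm:graph_of_graphs} again yields $\vpi'\in\GGG(\vpi)$.

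The only subtle point is establishing $\iota' = \pi_\ell\iota\pi_\ell^{-1}$. This relies on the uniqueness of the hyperelliptic involution on the flat surface and the compatibility of its action with the $\pi/2$-rotation; once tracked through the relabelling $j\mapsto\pi_\ell^{-1}(j)$ defining $RQ$, the remaining algebra collapses to the short display above.
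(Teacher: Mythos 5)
Your algebraic verification is essentially identical to the paper's proof: both compute $\iota' = \pi_\ell\,\iota\,\pi_\ell^{-1}$ from the relabelling $q'_j = \sqrt{-1}\,q_{\pi_\ell^{-1}(j)}$ in Definition~\ref{def:R}, both then apply $\iota\,\pi_\ell^{-1} = \pi_\ell\,\iota$ from Lemma~\ref{lem:perm_triangulation}, and both invoke the invariant criterion of Theorem~\ref{thm:graph_of_graphs}. (Your ordering $\pi'_r\pi'_\ell\iota'$ actually matches the Theorem's stated criterion $\pi_r\pi_\ell\sigma_d$ more literally than the paper's displayed $\pi'_\ell\pi'_r\iota'$, but the calculation is the same.) This part is correct and is the paper's argument.

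Your ``geometric route'' as stated, however, is not sound and cannot replace the computation. Knowing that $\sqrt{-1}\,X$ also lies in $\CCC^{hyp}(k)$ only tells you that $\vpi'$ is the combinatorial datum of \emph{some} quadrangulation of \emph{some} surface in $\CCC^{hyp}(k)$; the paper explicitly warns (just before defining $\Delta_\vpi$ and $\Theta_\vpi$) that two such data $\vpi$ and $\vpi'$ yield graphs $\GGG(\vpi)$ and $\GGG(\vpi')$ that are merely \emph{isomorphic} (conjugate by some $\sigma \in S_k$), not necessarily \emph{equal}. Theorem~\ref{thm:graph_of_graphs} is an if-and-only-if criterion in terms of the invariant $\pi_r\pi_\ell\sigma_d$, so invoking it still requires you to check that this invariant agrees, which is exactly the algebraic computation. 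Similarly, the connectedness result from [DU:II] concerns quadrangulations of a \emph{single} fixed surface $X$, whereas $RQ$ is a quadrangulation of the different surface $\sqrt{-1}\,X$, so it also does not apply directly. The geometric framing is a helpful intuition for why one should expect the statement, but the conclusion ``we conclude $\vpi'\in\GGG(\vpi)$'' at the end of that paragraph does not follow from what precedes it; the real content is in the invariant check you carry out afterwards.
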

\begin{proof}
Consider the quadrangulation $Q'=R Q = (\vwedges', \vpi')$  (recall Definition~\ref{def:R}). It follows from the definition of $R$ that if $\iota$ denotes the action of the hyperelliptic involution on the labels of $Q$, the action $\iota'$ of   of the hyperelliptic involution on the labels of $Q'$ is given by $\iota' = \pi_\ell\, \iota\, \pi_\ell^{-1}$.
 Then, from the definition of $R$ and the equality  $\iota \pi_\ell^{-1} = \pi_\ell \iota$  which  follows from from Lemma~\ref{lem:perm_triangulation}, one has
\[
\pi'_\ell\, \pi'_r\, \iota' =
(\pi_\ell\, \pi_r\, \pi_\ell^{-1})\, \pi_\ell^{-1}\, (\pi_\ell \iota\, \pi_\ell^{-1}) =
\pi_\ell \pi_r \pi_\ell^{-1} ( \iota \pi_\ell^{-1}) = \pi_\ell \pi_r \pi_\ell^{-1} \pi_\ell \iota 
= \pi_\ell \pi_r \iota.
\]
This shows that $\vpi' \in \GGG (\vpi)$ by Theorem~\ref{thm:graph_of_graphs}, remarking that, with the notation in the Theorem, we have $\iota = \sigma_d$ and $\iota'= \sigma_d'$ by Lemma~\ref{lem:perm_triangulation}.  
\end{proof}

We remark finally that it is possible to define an operation on trees of relations (see~\cite{CassaigneFerencziZamboni} or~\cite{MarshSchroll}) that corresponds to a combinatorial staircase move, that is to the map which sends $\vpi \mapsto c \cdot \vpi$ (see the definitions in~\eqref{eq:def_c_pi1} and~\eqref{eq:def_c_pi2}). R.~Marsh and S.~Schroll in~\cite{MarshSchroll} generalize these operations to trees with $k$ labels on edges (here we have $k=3$ labels, namely $\ell$, $r$ and $d$) and show a link with cluster algebra combinatorics. They intepret moves on trees as changes of diagonals in $k$-angulations of polygons. For $k=3$, their triangulations are a combinatorial version of the metric triangulations of the sphere that we described above. 

\subsection{Existence of quadrangulations, proof of Theorem~\ref{thm:existence_of_quadrangulation}}
We now prove that for any surface $X \in \CCC^{hyp}(k)$  there exists quadrangulations   (Theorem~\ref{thm:existence_of_quadrangulation}). Before proceeding to the proof, we state and prove two lemmas that are valid for any translation surface, not necessarily in an hyperelliptic component. The first one is about existence of wedges and the second one about existence of admissible quadrilaterals.

\begin{lemma} \label{lem:existence_of_BA}
Let $X$ be a translation surface  which has no horizontal and no vertical saddle connections.
Then in any bundle of $X$ there are infinitely many left and right best approximations.

Moreover, for any bundle $\Gamma_i$ of $X$ we have
\[
\min\ \{\Im(\sc);\ \text{$\sc \in \Gamma_i$ is a best approximation and $|\Re(\sc)| < r$}\} < \frac{ \area(X)}{r}.
\]
\end{lemma}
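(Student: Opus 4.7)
The plan is to establish the quantitative area bound first (which in particular yields existence of a best approximation in each bundle), and then iterate using the absence of vertical saddle connections to obtain infinitely many. The main tool is a Minkowski-type area argument performed inside the half-plane at the bundle's base singularity.

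To prove the bound, I fix $r>0$ and a right bundle $\Gamma_i^r$ based at the singularity $p_i$, and set $h:=\area(X)/r$. The plan is to develop the open rectangle $R=(0,r)\times(0,h)$ as an immersion $\phi\colon R\to X$ via the translation structure, placing its lower-left corner at $p_i$ and orienting $R$ inside the half-plane associated to the bundle. Absence of horizontal saddle connections guarantees that the horizontal separatrix of length $r$ issued from $p_i$ is singularity-free, so $\phi$ is well-defined near the base of $R$. Either $\phi$ cannot be extended to all of $R$ because the straight line from $p_i$ toward some interior point meets a singularity $p_j$, in which case that segment is a saddle connection in $\Gamma_i^r$ with $|\Re|<r$ and $\Im<h$; or $\phi$ extends to the whole rectangle, and since $\phi$ is a local isometry, $R$ has area $rh=\area(X)$. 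In the second case $\phi$ cannot be injective (the hypotheses preclude the torus-with-rectangular-lattice degeneracy), and a coincidence $\phi(z_1)=\phi(z_2)$ with $z_1\ne z_2$ produces a closed curve in $X\setminus\Sigma$ with holonomy in the rectangle $(-r,r)\times(-h,h)\setminus\{0\}$. Such a closed curve lies in a cylinder bounded by saddle connections parallel to it, and a careful bookkeeping combined with the no-horizontal/no-vertical hypotheses provides a saddle connection in $\Gamma_i^r$ obeying the bound.

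Once this step guarantees that the set $S_r:=\{\sc\in\Gamma_i^r:|\Re(\sc)|<r,\ \Im(\sc)<\area(X)/r\}$ is non-empty, discreteness of holonomy vectors of saddle connections (see \cite{Vorobets}, \cite{Masur}) allows me to pick $\sc^\ast\in S_r$ of minimal imaginary part. I claim $\sc^\ast$ is a right geometric best approximation: any $\sc'\in\Gamma_i^r$ with $\Im(\sc')<\Im(\sc^\ast)$ and $|\Re(\sc')|\le|\Re(\sc^\ast)|<r$ would itself belong to $S_r$ and contradict minimality. This gives the quantitative estimate, and the left case is symmetric. To produce infinitely many best approximations, I start from any $r_0>0$ and the best approximation $\sc_0^\ast$ just constructed, then recursively set $r_n:=|\Re(\sc_{n-1}^\ast)|$, which is strictly positive thanks to the absence of vertical saddle connections. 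Applying the previous step with radius $r_n$ yields a best approximation $\sc_n^\ast$ with $|\Re(\sc_n^\ast)|<r_n=|\Re(\sc_{n-1}^\ast)|$, so the sequence consists of pairwise distinct best approximations; the analogous construction in $\Gamma_i^\ell$ gives infinitely many left best approximations.

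The technical crux lies entirely in the non-injective branch of the first step: converting a mere coincidence $\phi(z_1)=\phi(z_2)$ into a saddle connection that actually lies in the specified bundle $\Gamma_i$ (rather than somewhere else on $X$) requires exploiting the half-plane geometry at $p_i$ carefully, and both the no-horizontal and the no-vertical hypotheses are used to rule out degenerate cylinder directions. A cleaner alternative that I would try in parallel is to replace the rectangle development by the vertical flow of the horizontal separatrix segment of length $r$ from $p_i$: the area swept in time $h$ equals $rh=\area(X)$, and either the flow encounters a singularity (directly yielding a saddle connection in $\Gamma_i$ with the required bounds) or it produces a self-intersection that, combined with the absence of horizontal saddle connections (which forces the trajectories issued from distinct points of the separatrix to be genuinely distinct) and of vertical saddle connections, contradicts area-preservation.
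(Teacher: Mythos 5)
You correctly identify the reduction: if one can always produce a saddle connection in $\Gamma_i$ with $|\Re|<r$ and $\Im<\area(X)/r$, then discreteness of holonomy and minimality of imaginary part yield a best approximation satisfying the stated bound, and iterating with $r_{n}:=|\Re(\sc_{n-1}^\ast)|>0$ (positivity from no vertical saddle connections) produces infinitely many. That scaffolding matches the paper's. The gap is in the one step you yourself flag as the crux: converting a self-intersection (non-injectivity of the developing map) into a saddle connection in the \emph{specified} bundle with the \emph{required} bounds.

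Your alternative sketch — flow a horizontal separatrix segment of length $r$ vertically for time $\area(X)/r$, and argue that a self-intersection without a singularity hit ``contradicts area-preservation'' — is in the right spirit (the paper flows a vertical segment of length $\area(X)/r$ horizontally, obtaining both left and right best approximations in one pass, but this is just a $90^\circ$ rotation), yet the stated contradiction is false. Self-intersection of $\bigcup_{0\leq t\leq h}\phi_t(I)$ does not by itself contradict anything; it merely says there is a first return time $s\leq\area(X)/|I|$. The paper's proof handles this by a genuine first-return iteration, close in spirit to Keane's minimality argument: if no singularity is swept in time $(0,s)$, then $\phi_s$ is continuous on $I$; one excludes $\phi_s(I)=I$ via the no-vertical-saddle-connection hypothesis, deduces that the non-singular endpoint $y$ of $I$ lies in the interior of $\phi_s(I)$ (this already gives a bounded backward hitting time $t_2=-s$), and then repeats the argument on the subinterval $I'\subset I$ of the same length as the overlap, using the disjointness of first returns to force the singular endpoint $p$ into the interior of some later image. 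None of this is captured by invoking area-preservation. So the proposal as written does not close the key implication; you need the explicit return-time bookkeeping (or something equivalent) to actually produce the singularity hit within the advertised time bound, and that is precisely the nontrivial content of the lemma.

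One more minor caveat: in your primary branch, when the development of the rectangle cannot be extended, the obstruction should be phrased via radial development from $p_i$ (so the domain is star-shaped and the first obstruction along each ray is a singularity, giving a saddle connection in the correct half-plane and hence the correct bundle); ``a straight line toward some interior point meets a singularity'' is only justified once that set-up is in place.
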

Given a best approximation $\sc$, the  quantity $|\Re(\sc)| \Im(\sc)$, also called \emph{area of the best approximation} $\sc$, corresponds to the area of the immersed rectangle $R(\sc)$ given by Lemma \ref{lem:equivalentBA}. The above statement shows that this quantity is uniformely bounded from above. The optimal constant on a given surface is related to the Minkowski constant in the context of Cheung's Z-convergents, see~\cite{HubertSchmidt}. The lower bound of areas of best approximations is related to the Lagrange spectrum, see~\cite{HubertMarcheseUlcigrai} and~\S\ref{subsubsec:intro_Teich}. Finally, let us mention that there is a better bound for the systole (the length of the shortest saddle connection) following from J.~Smillie and B. Weiss' argument in~\cite{SmillieWeiss} (see the Appendix A in ~\cite{HubertMarcheseUlcigrai}), namely
\[
\sys(X) \leq 2\ \sqrt{\frac{\area(X)}{\pi (2g-2+n)}},
\]
where $g$ is the genus and $n$ the number of singularities of $X$.
We remark though that the proof of the above bound cannot be adapted to get bounds on the length of shortest saddle connection in a  given bundle.

The first part of the proof of Lemma~\ref{lem:existence_of_BA} is very similar to arguments used to prove minimality of the vertical flow 
under Keane's condition. 
The second statement in the Lemma is an adaptation  of the proof of an upper bound on the systole by Vorobets~\cite{Vorobets} to each bundle.
\begin{proof}[Proof of Lemma~\ref{lem:existence_of_BA}]
Let $X$ be a translation surface with no horizontal and no vertical saddle connections. 
Let $I$ be an horizontal segment in $X$ and assume that one of its endpoints, say $p$, is a singularity of $X$ and that $I$ does not contain any other singularity in its interior. 
We claim that there exists $t_1 > 0$ and $-\area(X) / |I| \leq t_2 < \area(X) / |I|$ such that for $t=t_1$ and $t=t_2$, $\phi_t(I)$ contains a point of $\Sigma$ in its interior (that is there exists $x$ in the interior of $I$ such that $\phi_t(x) \in \Sigma$).

Since the area of $X$ is finite, the set $\cup_{t\geq 0}\varphi_t(I)$ has to self-intersect. Let $s$ be the minimum first return time, that is the minimum $t>0$ such that there exists $x \in I$ for which $\phi_t(x) \in I$. Clearly $s \leq \area(X) / |I|$.  If there exist a singularity inside $\cup_{0< t < s} \varphi_t(I)$, that is there exists $0<t_0<s$ and $x_0$ such that $\varphi_{t_0}(x_0)\in \Sing$, we are done as we can take $t_1=t_2=t_0$. If there is none, it follows that $\phi_s$ is continuous on $I$. If $p \in \phi_s(I)$, we are done. We cannot have $\phi_s(I)= I$, otherwise there would be a vertical saddle connection. Thus, we can assume that the other endpoint of $I$, that we will denote by $y$, belongs to the interior of $\phi_s(I)$.
In this case, there is a point $z \in I$ such that $\phi_{-s}(z) = p$ and we can take $t_2 = -s$.
Let $x \in I$ be such that $\phi_s(x) =y$ (note that the distance between $p$ and $z$ is the same as the distance between $x$ and $y$).
Consider now the interval $I'\subset I$ which has $x$ and $y$ as endpoints. Reasoning as before, $\bigcup_{t > 0}\phi_t(I')$ has to self intersect. Let $s'>0$ be the minimum first return time of $I'$ in $I$.  If there exist a singularity inside $\cup_{0< t < s'} \varphi_t(I')$, then we are done. Otherwise, $\phi_{s'}(I')$ is an interval that intersects $I$ and which is disjoint from $\phi_s(I) \cap I$ by definition of first return time. Hence it has to contain $p$ in its interior and we can set $t_1 = s'$.

We now apply the claim to bundles of saddle connections.
Let us fix a positive real number $r > 0$ and  
let $\psi_t$ be the horizontal flow in $X$.  
For any given bundle $\Quad_i$ starting at a singularity $p \in \Sigma$, pick the vertical segment $I_r$ issued from $p$ that belongs to the bundle and whose length is ${\area(X)}/ r$.
Applying the claim to  $\psi_t$ from $I_r$ (remark the the property of having no horizontal and no vertical saddle connections is preserved by rotation of $\pi/2$),  we get the existence of a minimum $t_1 > 0$ such that $\psi_{t_1}(I_r)$ contains a singularity.
By construction, this gives a right geometric best approximation.
Similarly we obtain the existence of a minimum $t'_1 < 0$ such that $\psi_{t'_1}(I_r)$ contains a singularity. This gives us a left geometric best approximation.
We know from the claim that either $\min(|t_1|,|t'_1|) < r$.
We hence obtain a left and a right geometric best approximation whose imaginary part is less than $\area(X)/r$ and for one of them, the real part is less than $r$.
This proves the quantitative estimate of the statement.
By considering decreasing values of $r$, this construction provides saddle connections whose real part tends to $0$ (and imaginary part tends to $\infty$).
\end{proof}
We remark that the conclusion of  Lemma~\ref{lem:existence_of_BA} can still be proved under a weaker assumption, that is that the surface $X$ has no horizontal \emph{or} no vertical saddle connections. More precisely, if in a bundle $\Gamma_i$ there is a vertical saddle connection $\scw$ but no horizontal saddle connection then there is no best approximation $\scw'$ with $\Im(\scw') > \Im(\scw)$ but there are still infinitely many with arbitrarily small imaginary part.

\begin{lemma}[diagonal determine quadrilateral]\label{lem:d_to_q} \label{lem:existence_of_quadrilateral}
Let $X$ be a translation surface without vertical saddle connections and let $\sc$ be a saddle connection which is a geometric best approximation. Then there exists a unique admissible (in particular embedded) quadrilateral $q$ whose sides are all geometric best approximations and that has $\sc$ as foward diagonal.  

If moreover $\sc$ is left slanted (respectively right slanted), then there exists a unique right (resp.~left) slanted  admissible quadrilateral in $X$ whose sides are best approximations and so that $\sc$ is its bottom left side (resp.~right side).
\end{lemma}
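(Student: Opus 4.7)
The plan is to construct the admissible quadrilateral explicitly by identifying its two unknown vertices via a shrinking-triangle procedure based on the best-approximation structure, and then to verify uniqueness and admissibility. I treat the diagonal case in detail; the bottom-side case is analogous.

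\emph{Setup.} Applying the reflection $z\mapsto -\bar z$ if needed, I may assume $\sc$ is right-slanted. Write $p$ and $p'=p+\sc$ for the endpoints of $\sc$ and let $\Gamma_i$ denote the bundle of $\sc$ at $p$. By Lemma~\ref{lem:equivalentBA}, the immersed rectangle $R(\sc)$ has no interior singularities; combined with the best-approximation property of $\sc$, this forces any right-slanted saddle connection $\scw\in\Gamma_i$ from $p$ with $\Im\scw<\Im\sc$ to satisfy $\Re\scw>\Re\sc$, so that $p+\scw$ lies strictly to the right of $R(\sc)$.

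\emph{Identifying the right vertex.} For any right-slanted $\scw\in\Gamma_i$ with $\Im\scw<\Im\sc$, consider the immersed triangle $T(\scw)$ with vertices $p$, $p+\scw$, $p'$ (read in the bundle chart at $p$). Because saddle-connection endpoints form a discrete subset of $\CC$, $T(\scw)$ contains only finitely many singularities in its interior. Start from any such $\scw$ (one exists by Lemma~\ref{lem:existence_of_BA}); if $T(\scw)$ contains an interior singularity $x$, choose one minimising the distance from $p$ in the bundle chart. Then the segment from $p$ to $x$ meets no other singularity, so $\scw' := x - p$ is a right-slanted saddle connection in $\Gamma_i$, and $T(\scw')$ is a \emph{proper} sub-triangle of $T(\scw)$, with strictly fewer interior singularities. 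Iterating this replacement terminates at some $\scw_r$ for which $T(\scw_r)$ contains no interior singularity. A symmetric construction at $p$ on the left side produces $\scw_\ell$.

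\emph{Assembly and verification.} Emptiness of $T(\scw_r)$ forces the segment from $p_r := p+\scw_r$ to $p'$ to be a (left-slanted) saddle connection; equivalently, $\sc-\scw_r$ is a saddle connection starting at $p_r$. The analogous statement holds for $\scw_\ell$. The pair $(\scw_\ell,\scw_r)$ forms a wedge at $p$ in $\Gamma_i$ whose interior contains the vertical direction: otherwise one of $\scw_\ell,\scw_r$ would lie on the wrong side of the vertical, and the construction would place a forbidden singularity in $R(\sc)$ or violate the emptiness of one of the terminal triangles. The four saddle connections $\scw_\ell$, $\scw_r$, $\sc-\scw_\ell$, $\sc-\scw_r$ therefore bound an embedded admissible quadrilateral having $\sc$ as forward diagonal, and embeddedness is immediate because $\sc$ divides it into the two empty triangles $T(\scw_r)$ and $T(\scw_\ell)$. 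Finally, uniqueness follows because $\scw_r$ and $\scw_\ell$ are uniquely characterised as the right- and left-slanted saddle connections from $p$ in $\Gamma_i$ whose associated triangle with third vertex $p'$ has no interior singularity; any competing quadrilateral with diagonal $\sc$ would exhibit such a pair and hence coincide.

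\emph{Second part.} When $\sc$ is left-slanted and must serve as the bottom-left side of a right-slanted quadrilateral, the unknown vertices are the bottom-right vertex (a singularity lying in the bundle at $p$ on the right of the vertical) and the top vertex (a singularity above $\sc$). Each is located by running the same empty-triangle maximality procedure — at $p$ in the bundle $\Gamma_i$ for the bottom-right side, and at $p'$ in the appropriate bundle for the top side — and the verification proceeds as in the first part. The main obstacle throughout is the admissibility check: ensuring that the wedge $(\scw_\ell,\scw_r)$ genuinely brackets the vertical ray from $p$, which requires coupling the terminal empty-triangle property with the best-approximation property of $\sc$ to rule out pathological configurations in which the selected wedge would sit entirely on one side of the vertical.
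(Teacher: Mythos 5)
The paper's proof constructs $q$ by flowing the two vertical sides of the immersed rectangle $R(\sc)$ horizontally outward until they first meet singularities $p_\ell, p_r$; the regions swept out are precisely the immersed rectangles certifying each side of $q$ as a best approximation, and a ``first hit'' of a horizontal flow is deterministic, which is what gives uniqueness. Your triangle-shrinking procedure is not equivalent to this, and the resulting gap is at the heart of both the existence and the uniqueness claims. Having $T(\scw_r)$ empty of interior singularities is strictly weaker than what is needed: the rectangle $R(\scw_r)=[0,\Re\scw_r]\times[0,\Im\scw_r]$ is \emph{not} contained in $T(\scw_r)$, so emptiness of the triangle does not make $\scw_r$ a best approximation, and you never verify that the four sides of the assembled quadrilateral are best approximations at all, though this is part of what the lemma asserts. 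Worse, your ``uniquely characterised'' claim is false: there may be several right-slanted saddle connections $\scw\in\Gamma_i$ with $\Im\scw<\Im\sc$ whose triangle $T(\scw)$ is empty. For a concrete picture, if $\sc$ develops to $(1,2)$ and the bundle has singularities developing to $(2,1.6)$ and $(3,1)$, then both triangles with vertices $p$, $p'$ and third vertex $p+(2,1.6)$ (resp.\ $p+(3,1)$) can be free of interior singularities; only $\scw_r=(2,1.6)$ yields a quadrilateral whose top-left side is a best approximation, since $\sc-(3,1)=(-2,1)$ is beaten in both coordinates by $(2,1.6)-(3,1)=(-1,0.6)$. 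Starting your recursion at $\scw=(3,1)$ stops immediately at the wrong vertex, so the output also depends on the seed $\scw$ and on the choice of ``closest'' singularity, neither of which you address.

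A secondary point: Lemma~\ref{lem:existence_of_BA} assumes the absence of horizontal saddle connections (which the present lemma does not) and, as proved, produces best approximations with \emph{small real part} and large imaginary part, so it does not directly furnish a seed $\scw$ with $\Im\scw<\Im\sc$. The paper sidesteps all of this: the two vertical sides of $R(\sc)$ are compact non-degenerate segments, so the horizontal flow from them must meet a singularity by an area argument, and the resulting swept rectangles are exactly the witnesses one needs to conclude that every side of the quadrilateral is a best approximation.
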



We remark also that the second part of the Lemma does not give any information  on left slanted (resp. right slanted) admissible quadrilaterals in $X$ whose sides are geometric best approximations and have $\sc$ as bottom left (resp.~right) side. There might indeed be either none or several  such quadrilaterals, as it is clear from the last part of proof below.

In the proof of Lemma~\ref{lem:d_to_q}, we will use the following Lemma.
\begin{lemma}\label{lem:quadrilateral_embedded}
Let $X$ be a translation surface and let $P \subset X$ be an isometrically immersed convex polygon that contains no singularities in its interior or in the interior of its sides and whose vertices belong to $\Sigma$. Then the interior of $P$ is embedded in $X$.
\end{lemma}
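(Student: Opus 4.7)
I will argue by contradiction: suppose distinct points $x_1, x_2 \in \operatorname{int}(P)$ satisfy $f(x_1) = f(x_2)$, where $f \colon P \to X$ denotes the given isometric immersion, and set $v := x_2 - x_1 \neq 0$.

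The first step is to propagate the translational identity $f(y) = f(y + v)$ throughout the convex (possibly degenerate) set $Q := \overline{P} \cap (\overline{P} - v)$, which contains $x_1$ in its interior. The set of $y \in Q$ where this identity holds is nonempty, closed by continuity of $f$, and open: since $f$ is a local translation in the flat charts of $X$, at any such $y$ one has, in a chart around $f(y) = f(y+v)$, the relation $f(y') - f(y) = y' - y = f(y'+v) - f(y+v)$ for $y'$ near $y$, so the identity persists on a neighbourhood. Since $Q$ is convex and hence connected, the identity holds on all of $Q$. Combined with the hypothesis that $\Sigma \cap \overline{P}$ coincides with the vertex set of $P$, this yields a structural constraint: if $p$ is a vertex of $P$ lying in $Q$, then $f(p) \in \Sigma$ forces $f(p+v) = f(p) \in \Sigma$, so $p+v$ must itself be a vertex of $P$. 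Iterating, any arithmetic chain $p, p+v, p+2v, \dots$ contained in $\overline{P}$ consists of collinear vertices of $P$; convexity of $P$ forbids three collinear vertices, so each such chain has length at most $2$.

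To produce the contradiction, I will examine the line $\ell$ through $x_1$ in direction $v$ and the chord $[a,b] := \ell \cap \overline{P}$, with $a, b \in \partial P$ and $|b-a| \geq |v|$. The propagation step gives $f(a) = f(a+v) = \cdots = f(a + Kv)$ for $K := \lfloor |b-a|/|v|\rfloor$, while each intermediate point $a+jv$ with $1 \leq j \leq K-1$ lies in $\operatorname{int}(P)$. Consequently, if $a$ is a vertex of $P$ and $K \geq 2$, every intermediate point would be forced into $\Sigma$, contradicting the hypothesis. To force this situation in general, I slide $\ell$ perpendicularly to $v$ through the open one-parameter family of lines meeting $\overline{P}$ in a chord of length at least $|v|$ (nonempty since $Q$ has nonempty interior); by convexity of $P$ the endpoints of the chord trace continuous arcs on $\partial P$ over a closed parameter interval. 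At an extreme parameter, either $\ell$ coincides with an edge of $P$ parallel to $v$ (so $x_1$ would then lie on $\partial P$, a contradiction), or the chord length drops to $|v|$. A case analysis combining these extremes with the chain-length bound reveals in every surviving configuration a vertex $p$ of $P$ whose $v$-translate lies strictly inside $P$, yielding the required contradiction via $f(p) \in \Sigma$ but $f(p+v) \notin \Sigma$.

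The main obstacle is this last sliding analysis, where convexity of $P$ must simultaneously exclude three collinear vertices and prevent all endpoint configurations that would otherwise be compatible with the translational identity $f(y) = f(y+v)$. The remaining ingredients are routine consequences of $f$ being a local isometry and of the hypothesis that the only points of $\overline{P}$ mapped to $\Sigma$ are its vertices.
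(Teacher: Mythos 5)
Your opening step is sound and is essentially the paper's parallel-transport argument in different clothing: propagating $f(y)=f(y+v)$ over $Q=\overline{P}\cap(\overline{P}-v)$ by an open--closed argument works (argue on the interior of $Q$, which avoids the vertices and hence all singular values, then pass to the closure by continuity). The genuine gap is in the last step, where you must actually exhibit a vertex $p$ of $P$ with $p+v$ (or $p-v$) in the interior of $P$ or in the interior of a side. Your sliding argument inspects the wrong extremes of the family of chords parallel to $v$: at a parameter where the chord length has dropped to $|v|$, both endpoints of the critical chord can perfectly well lie in the interiors of two edges, so no vertex appears there. For instance, for the triangle with vertices $(0,0)$, $(4,3)$, $(-4,3)$ and $v=(1,0)$, the shortest admissible horizontal chord (length $1$, at height $3/8$) meets only edge interiors; the vertex you need (here $(4,3)$, whose $-v$-translate lies in the interior of the top side) is detected by the \emph{longest} chords, not by these extremes. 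The promised ``case analysis combining these extremes with the chain-length bound'' is never carried out, and the one deduction you do offer in the other extreme case is a non sequitur: if the extremal line of the family contains an edge of $P$ parallel to $v$, that says nothing about $x_1$, which lies on a different, interior line of the family; that case is instead handled by noting that a vertex endpoint of such an edge of length $>|v|$ has its translate in the interior of that edge (and if the edge has length exactly $|v|$ you are back to the unresolved situation). The bound on chains of collinear vertices is likewise invoked but never actually used.

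The repair is to maximize rather than minimize the chord length, which is exactly what the paper does. The length $L(t)$ of the chord of $\overline{P}$ in direction $v$ is concave and piecewise affine in the transverse offset $t$, affine as long as both endpoints stay in the interiors of fixed edges; the chord through $x_1$ has length $>|v|$ because $x_1$ and $x_1+v$ are interior. Hence the longest chords have length $>|v|$, and at least one of them has an endpoint at a vertex $p$ of $P$: either the maximizing interval reaches the boundary of the domain of $L$, in which case the chord is an edge of $P$ parallel to $v$ with vertex endpoints, or its endpoint is a breakpoint of $L$, which occurs only when a chord endpoint crosses a vertex. Since this chord has length $>|v|$, the point $p+v$ (oriented into the chord) lies strictly inside it and satisfies $p\in Q$, so your propagated identity gives $f(p+v)=f(p)\in\Sigma$ while $p+v$ lies in the interior of $P$ or in the interior of a side --- the desired contradiction. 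This is precisely the paper's argument, phrased there as: the image of the segment $[x_1,x_2]$ is a closed geodesic, which can be parallel-transported so as to start at a vertex lying on a longest parallel segment.
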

\begin{proof}
Let $P_0 \subset \CC$ be convex polygon and let $f: P_0 \to X$ be an isometric immersion so that the image $P = f(P_0)$ is the given immersed polygon.
We assume that $P$ contains no singularities in its interior or in the interior of its sides and that its vertices belong to $\Sigma$.
We need to prove that $f$ is globally injective. 
Assume by contradiction that there exists two distinct points $p_1,p_2$ in the interior of $P_0$ such that $f(p_1)=f(p_2)$ and consider the segment $\gamma$ connecting $p_1$ to $p_2$. 
Then $f(\gamma)$ is an isometrically immersed closed curve on $X$ and hence a closed geodesic with respect to the flat metric. Thus, there exists a cylinder $C$ foliated by closed flat geodesics which contain $f(\gamma)$. Since $P = f(P_0)$ does not contain singularities, if $\gamma'$ is another segment inside $P$ which is obtained from $\gamma$ by parallel transport (that is $\gamma'=\gamma+c$ for some $c \in \CC$), $f(\gamma')$ is also obtained by parallel transport of $f(\gamma)$ inside $X$ and hence is still a closed flat geodesic.
Now consider the longest segments inside $P_0$ which are parallel to $\gamma$. Because of convexity, one of them necessarily starts at at a vertex of $P_0$. Now, we can find $c \in \CC$ such that $\gamma'=\gamma+c$ is contained in $P_0$ and starts from that singularity. By construction, the other endpoint of $\gamma'$ is either inside $P_0$ or in the interior of its sides. Because the two endpoints of $\gamma'$ are identified by $f$ this contradicts the fact that the interior of $P$ and the interior of its sides are free of singularities.
\end{proof}

\begin{proof}[Proof of Lemma~\ref{lem:d_to_q}]
By definition of best approximation, $\sc$ is the diagonal of an immersed rectangle $R(\sc)\subset X$. Let $v_\ell$ and $v_r$ respectively be the left and right vertical sides of $R$, see Figure~\ref{subfig:hit_singularities}.
Flow $v_\ell$ (respectively $v_r$) horizontally to the right (respectively to the left) until the first time it hits a singularity, that we call $p_\ell$ (respectively $p_r$) as shown in Figure~\ref{subfig:hit_singularities}. Both singularities hit are unique since otherwise $X$ would have a vertical saddle connection.  Consider the immersed convex quadrilateral  which has as vertices $v_r, v_\ell$ and the endpoints of $\sc$ (see  Figure~\ref{subfig:immersed_rectangle_around_quad}).  Since by construction  it does not contain conical singularities in its interior,   by Lemma~\ref{lem:quadrilateral_embedded} it is  embedded. Thus, we constructed  an admissible quadrilateral which has $\sc$ as forward diagonal. Furthermore, each of the sides of $q$ is a geometric best approximation since by construction each is the diagonal of an immersed rectangle without singularities in its interior (see Figure~\ref{subfig:immersed_rectangle_around_quad}).

\begin{figure}[!ht]
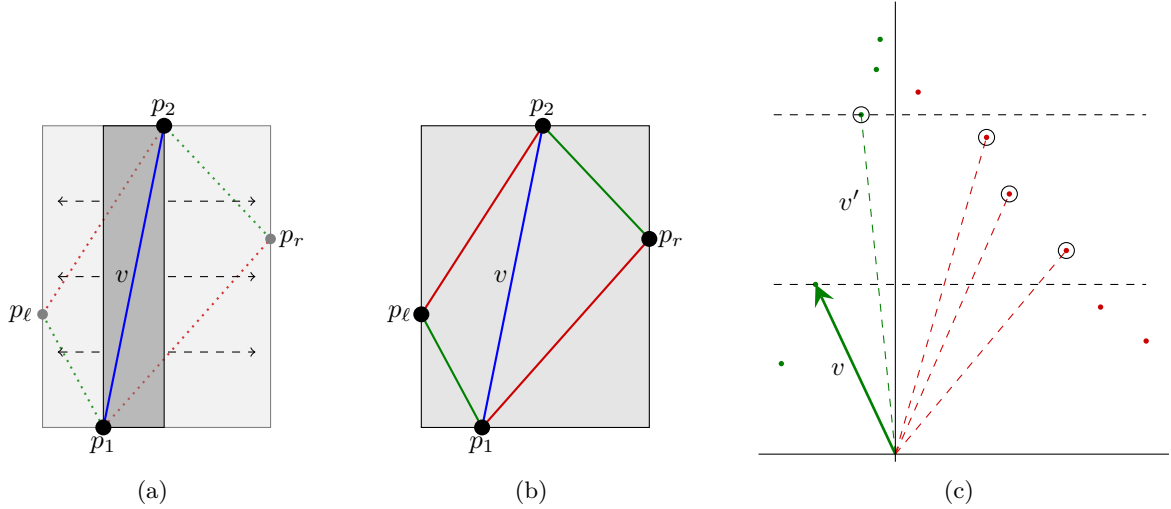

\begin{center}
  \subfigure[\label{subfig:hit_singularities}]{\picinput{q_hit_singularities}} \hspace{.6cm} 
  \subfigure[\label{subfig:immersed_rectangle_around_quad}]{\picinput{q_immersed_rectangle}} \hspace{.6cm} 
  \subfigure[\label{subfig:q_from_bundle}]{\picinput{q_from_bundle}}
\end{center}
\caption{building a quadrilateral from a diagonal or a side (proof of Lemma~\ref{lem:d_to_q})}
\label{fig:lemma_d_to_q}
\end{figure}

The uniqueness comes from the construction: given an admissible quadrilateral $q$ whose sides are best approximations, its forward diagonal $\sc$ is a best approximation and we can build $q$ by flowing horizontally as above the vertical sides of  the immersed rectangle $R(\sc)$ associated to $\sc$. 


For the second part of the statement, we consider the same construction. Consider a fixed left slanted best approximation $\sc$ in  some bundle $\Gamma_i^\ell$. We want to determine which diagonals $\ssc$ may have produced $\sc$ by horizontal flowing the vertical left side of the associated rectangle $R(\ssc)$. Let $\sc'$ be the slanted saddle connection in $\Gamma_i^\ell$  which in next to $\sc$ in the natural order given by increasing imaginary part. One can see, looking at Figure~\ref{subfig:q_from_bundle}, that all the possible such diagonals $\ssc$ are exactly  the left slanted  saddle connection $\sc'$ and all the  right slanted saddle connections $\sc_r$ which satisfy  $\Im(\sc) \leq \Im(\sc_r) \leq \Im(\sc')$ (possibly none). In particular, only $\sc'$ is the diagonal of a right slanted quadrilateral as in the second part of the lemma. The right slanted saddle connections are all possible diagonals of the set (possibly empty) of left slanted admissible quadrilaterals with sides which are best approximations and $\sc$ as bottom left side.  
\end{proof}

We are now ready to prove Theorem~\ref{thm:existence_of_quadrangulation}. Let us first remark that the statement is trivial for the torus case, since for any given lattice with neither horizontal nor vertical vector there always exists a basis which form the wedge of an admissible quadrilateral. 
 
\begin{proof}[Proof of Theorem~\ref{thm:existence_of_quadrangulation}]
Let $q$ be  an admissible quadrilateral whose sides are all best approximations, whose existence is guaranteed by  Lemma~\ref{lem:existence_of_BA} and Lemma~\ref{lem:existence_of_quadrilateral}. We denote its bottom sides by $\sc_\ell$, $\sc_r$ and its top sides by $\sc'_r$ and $\sc'_\ell$.
  Now, consider its image $s(q)$ under the hyperelliptic involution $s$. 
  It is easy to see that, since all sides of $q$ are best approximations,  either $q = s(q)$ or $q$ and $s(q)$ have disjoint interiors. 
  In both cases, for each side $\sc$ of $q$ if $\sc = s(\sc)$ we do nothing, while if $\sc \not= s(\sc)$ we cut and paste as in Lemma~\ref{lem:cut_hyp}. After this operation,  we a obtain a  surface made by one or two quadrilateral (if respectively  or $q \not= s(q)$) and at most four surfaces $X_\ell$, $X_r$, $X'_r$ and $X'_\ell$  that contain respectively $\sc_\ell$, $\sc_r$, $\sc'_r$ and $\sc'_\ell$ (with the convention that we assume that $X_z$ is empty if $\sc_z = s(\sc_z)$). 
 Moreover, each of these surfaces belongs to a hyperelliptic component with strictly smaller total angle  by Lemma~\ref{lem:cut_hyp}. On  each non empty surface among $X_\ell$, $X_r$, $X'_r$ and $X'_\ell$ let us consider  a saddle connection given by Lemma~\ref{lem:existence_of_BA}, let us complete   it to an admissible quadrilateral by  Lemma~\ref{lem:existence_of_quadrilateral} and then iterate the above construction. In finitely many steps, the construction thus produces $k$ admissible quadrilateral which provide a quadrangulation of the original surface $X$.  
\end{proof}
Let us remark that  the proof does not extend to other components of strata.
We can still use a cut and paste construction but the resulting surfaces $X_\ell$, $X_r$, $X'_\ell$ and $X'_r$ might be connected to each other.
In particular, if two of them are connected we obtain a surface in which we want to complete a set of two saddle connections into a quadrangulation.

\subsection{Existence of staircase move, proof of Theorem~\ref{thm:existence_of_staircase_move}}
In this section, we give the proof of Theorem~\ref{thm:existence_of_staircase_move}.   

\begin{proof}[Proof of Theorem~\ref{thm:existence_of_staircase_move}]
The proof proceeds by induction on the number of quadrilaterals, or in an equivalent way on the  integer $k$ such that the surface belongs to $\CCC^{hyp}(k)$. The case of the torus  ($k=1$) is trivial, since a  staircase made of one quadrilateral is always well slanted.

Let $Q = (\vpi,\vwedges)$ be an admissible quadrangulation of a surface in $\CCC^{hyp}(k)$ and denote by $\iota$ the action of the hyperelliptic involution $s$  on the quadrilaterals (i.e. $\iota(i) = j$ if and only if $q_j = s(q_i)$). Let us prove by contradiction  that there exists at least one well slanted staircase in which it is possible to make a diagonal change. If  no staircase move for $Q$ is possible,  we claim that there exists a right  staircase $S$ which contains both left and right slanted quadrilaterals. 
Indeed,  no right staircase consists of only right slanted quadrilaterals, otherwise it would be well slanted and a right move would be possible. If all right staircases consist of only left slanted quadrilaterals, all left staircase moves are possible. Thus, there exists $S$ with both left and right slanted quadrilaterals. In particular, in $S$ there exist two consecutive quadrilaterals $q_i$ and $q_{\rperm(i)}$ which are respectively left slanted and right slanted. We remark that it follows that $\iota(i) \not= \rperm(i)$, since otherwise the diagonals of $q_i$ and $q_{\rperm(i)}$ would be parallel and hence $q_i$ and $q_{\rperm(i)}$ would have the same slantedness. In particular, the common edge $\scw_{\rperm(i),\ell}$ of $q_i$ and $q_{\rperm(i)}$ does not contain a Weierstrass point and hence $\scw_{\rperm(i),\ell} \neq s(\scw_{\rperm(i),\ell})$. 

Let us cut the quadrangulation $Q$ along the edge $\scw_{\rperm(i),\ell}$ (between $q_i$ and $q_{\rperm(i)}$) and along its image under hyperelliptic involution, which is the edge $\scw_{\iota(i),\ell}$ (between $s(q_{\rperm(i)}) = q_{\iota(\pi_r(i))}$ and $s(q_{i}) = q_{\iota(i)}$). From Lemma~\ref{lem:cut_hyp}, we know that after cutting along these edges we obtain two connected components and that, after identifying on each of them the corresponding copies of $\scw_{\rperm(i),\ell}$ and $\scw_{\iota(i),\ell}$ by parallel translations,  we obtain two quadrangulations of surfaces with strictly less quadrilaterals. We denote by $X'$ the surface containing $q_i$. By inductive assumption, there exists a staircase move in $X'$. Since $q_i$ is left-slanted, the saddle connection $\scw_{i,\ell}$ does not change during the move. Hence, the move lifts to $X$ and by glueing  back the two components we can globally define a staircase move on $X$.  
\end{proof}

From Lemma~\ref{thm:existence_of_staircase_move}, it is easy to see that the Keane's condition (no vertical saddle connections) is exactly the condition needed for any diagonal changes algorithm not to stop (for the analogous of this Lemma in the case of Rauzy-Veech induction see \cite{Yoccoz}).  
\begin{lemma} \label{lem:Keane}
Let $Q$ be a quadrangulation of a surface $X$ in $\CCC^{hyp}(k)$. There exists an infinite sequence of staircase moves starting from $Q$ such that the real part of each saddle connection in the wedges of $Q$ tends to zero if and only if $X$ has no vertical saddle connection.

Moreover, if $X$ has no vertical saddle connection  then for any infinite sequence of staircase moves starting from $Q$ there are infinitely many left and right diagonal changes and the width of each wedge goes to zero.
\end{lemma}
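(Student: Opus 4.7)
My plan is to prove both implications of the biconditional together with the \emph{moreover} clause; the forward direction ($\Leftarrow$) carries the bulk of the content. Assume $X$ has no vertical saddle connection. Every diagonal $\scw_{i,d}$ in any produced quadrangulation is itself a saddle connection, so no diagonal is vertical. Theorem~\ref{thm:existence_of_staircase_move} then guarantees a well slanted staircase at every stage, and iterating produces an infinite sequence $(Q^{(n)})_{n \in \NN}$ of staircase moves starting from $Q$.

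For the \emph{moreover} part, I first establish that both left and right staircase moves occur infinitely often. If past some stage $n_0$ only right moves happened, then $\pi_r$ and every right side $\scw_{j,r}$ would be preserved, so $\epsilon := \min_j \lambda^{(n_0)}_{j,r}$ would be a strictly positive constant. Each such right move in a cycle $c$ of $\pi_r$ substitutes $\scw_{i,\ell} + \scw_{\pi_\ell(i),r}$ for $\scw_{i,\ell}$ when $i \in c$, so $|\lambda_{i,\ell}|$ drops by $\lambda_{\pi_\ell(i),r} \geq \epsilon$. The strictly positive quantity $\sum_i |\lambda^{(n)}_{i,\ell}|$ would thus decrease by at least $\epsilon$ per move, which is impossible after finitely many steps. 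The symmetric argument yields infinitely many left moves.

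To show that the real parts tend to zero I pass to the bipartite IET picture. By Lemma~\ref{lem:bipartite_IET_section} and the suspension description of staircase moves (Figure~\ref{fig:diagonal_change_suspension}), $T^{(n+1)}$ is the first-return map of $T^{(n)}$, and hence of $T^{(0)}$, to a union of sub-intervals $I^{(n+1)} \subset I^{(n)}$ each containing $0$; thus $|I^{(n)}| = \sum_i(\lambda^{(n)}_{i,r} - \lambda^{(n)}_{i,\ell})$ is non-increasing. The absence of vertical saddle connections on $X$ is Keane's condition for $T^{(0)}$, equivalent to minimality. If $|I^{(n)}|$ failed to tend to $0$, the nested intersection $I^\infty = \bigcap_n I^{(n)}$ would contain a nonempty open set, and an orbit of $T^{(0)}$ starting outside $I^\infty$ would have its returns to $I^\infty$ occur only at times tending to infinity, contradicting the bounded-return-time behavior forced by minimality. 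Consequently all $\lambda^{(n)}_{i,r}$ and $|\lambda^{(n)}_{i,\ell}|$ tend to $0$.

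For the converse ($\Rightarrow$), let $\gamma$ be a vertical saddle connection of length $L$ from $p$ to $p'$ and suppose, for contradiction, that a sequence of staircase moves has widths tending to $0$. Since saddle connections form a discrete subset of $\CC$, the wedges' sides in the bundle of $p$ cannot change infinitely often while staying in a bounded region, so the heights $\tau^{(n)}_{i,\ell}, \tau^{(n)}_{i,r}$ grow unboundedly and in particular the vertical height of the quadrilateral containing $p$ as its bottom vertex exceeds $L$ at some stage $n_0$. At that stage the endpoint $p'$ sits inside this quadrilateral at vertical distance $L$ from $p$; since the interior and open sides of an admissible quadrilateral contain no singularity and the left/right vertices are horizontally displaced from $p$, the point $p'$ must be the top vertex, i.e.\ $\gamma$ coincides with the forward diagonal of $q^{(n_0)}_{i_0}$, giving $\lambda^{(n_0)}_{i_0,d}=0$. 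This blocks both staircases through $i_0$ from being well slanted, so $i_0$ lies in no moved cycle past $n_0$ and the width $\lambda^{(n)}_{i_0,r} - \lambda^{(n)}_{i_0,\ell}$ stays constant and positive, a contradiction. The main technical obstacle is the minimality step in the preceding paragraph: turning Keane's condition on $T^{(0)}$ into the vanishing of $|I^{(n)}|$ requires care with the structure of the nested first-return sub-intervals.
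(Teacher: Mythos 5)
Your monotone-quantity argument for showing infinitely many moves of each kind occur (that a right move in a cycle $c$ decreases $\sum_i |\lambda^{(n)}_{i,\ell}|$ by $\sum_{i \in c} \lambda^{(n)}_{\pi_\ell(i),r} \geq \min_j \lambda^{(n_0)}_{j,r} > 0$, which stays fixed if no left move ever occurs) is correct and is a genuine alternative to the paper's argument, which instead uses discreteness of saddle connections, the finite-area bound, and propagation through the graph $\langle \pi_\ell,\pi_r\rangle$. The problem is the step you yourself flag as the obstacle: showing $|I^{(n)}|\to 0$. You argue that if $I^{\infty}=\bigcap_n I^{(n)}$ had nonempty interior, an orbit of $T^{(0)}$ outside $I^\infty$ ``would have its returns to $I^\infty$ occur only at times tending to infinity, contradicting the bounded-return-time behavior forced by minimality.'' This is not a contradiction: for any fixed orbit and fixed target set the successive visit times are increasing integers and tend to infinity tautologically, and this coexists happily with bounded \emph{first}-return times. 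Minimality does give you that the first-return time of $T^{(0)}$ to $I^\infty$, hence to every $I^{(n)}\supset I^\infty$, is uniformly bounded; but to get a contradiction you would then need to show that these first-return times to $I^{(n)}$ are in fact \emph{unbounded} in $n$. Establishing that unboundedness means showing the tower heights $\tau^{(n)}$ go to infinity, and this requires exactly the discreteness-of-holonomy-plus-finite-area input that the paper uses directly. As written, the minimality step is a detour that does not close.

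The paper's proof of the ``moreover'' clause stays at the level of the surface and needs no first-return or minimality argument at all. If some $q^{(n)}_i$ undergoes only finitely many left changes, then $w^{(n)}_{i,r}$ is eventually constant with $\Re(w^{(n)}_{i,r}) \neq 0$; by the finite-area bound $\Im(w^{(n)}_{i,\ell})$ is then bounded, so by discreteness $w^{(n)}_{i,\ell}$ is also eventually constant, so $q^{(n)}_i$ stabilizes; stabilization propagates through all quadrilaterals by transitivity of $\langle \pi_\ell,\pi_r\rangle$, forcing the whole quadrangulation to stabilize and contradicting the infinitude of non-identity moves. Heights then go to infinity and widths to zero by the area bound. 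For the $(\Rightarrow)$ direction your argument reaches the right conclusion but takes an unnecessary detour: once $\min(\tau^{(n_0)}_{i_0,\ell},\tau^{(n_0)}_{i_0,r}) > L$, the vertical segment of length $L$ from $p$ lies in the \emph{interior} of the wedge triangle, which is singularity-free --- that is already the contradiction, and the ``$p'$ is the top vertex, $\lambda^{(n_0)}_{i_0,d}=0$, all staircases through $i_0$ are blocked'' branch cannot in fact arise.
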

\begin{proof}
Let us first prove the second part of the Lemma. Assume that $X$ has no vertical saddle connection and let $Q^{(n)}$ be a sequence of quadrangulations obtained by staircase moves starting from $Q=Q^{(0)}$. 
Assume by contradiction that for some $1\leq i \leq k$ the quadrilateral $q_i^{(n)}$ undergos  only finitely many left changes, i.e. there exists $n_1$ so that for $n \geq n_1$ we have $w^{(n)}_{i,r} = w^{(n_1)}_{i,r}$. Then, because $\Re(w^{(n)}_{i,r}) \neq 0$ and the area of the surface is finite, the sequence $(\Im(w^{(n)}_{i,\ell}))_{n \in \NN}$ has to be bounded. Because of the discreteness of the set of saddle connections, this implies that there exists $n_2 \geq n_1$ such that for $n \geq n_2$, also $w^{(n)}_{i,\ell} = w^{(n_1)}_{i,\ell}$ and hence $q^{(n)}_i = q^{(n+1)}_i$ for any $n \geq n_2$. Since the top sides of $q^{(n)}_i$ are bottom sides for $q^{(n)}_{\pi_\ell(i)}$ and $q^{(n)}_{\pi_r(i)}$ respectively, 
 this implies also that for $n \geq n_2$ the quadrilateral $q_{\pi_\ell(i)}$ undergos only right diagonal changes and the quadrilateral $q_{\pi_\ell(i)}$ undergos only left diagonal changes. In particular, repeating the same argument $\scw^{(n)}_{\pi_\ell(i),l}$ and $\scw^{(n)}_{\pi_r(i),r}$ are ultimately constant. Because of the connectedness of the surface, or equivalently because the group generated by $\pi_\ell$ and $\pi_r$ acts transitively on $\{1,\ldots,k\}$ we can repeat the argument and show that the quadrangulations $Q^{(n)}$ are ultimately constant, contradicting the assumption that the sequence is obtained by staircase moves (which are by definition not identity).   
 
 \smallskip
 Let us now prove the first part. Let $X \in \CCC^{hyp}(k)$ and let  us first assume that there is an infinite sequence of staircase moves from the quadrangulation $Q=Q^{(0)}$ of $X$ such that the associated sequence of quadrangulations $Q^{(n)} = (\vpi^{(n)},\vwedges^{(n)})$ is such that both  $\Re(w^{(n)}_{i,\ell})$ and $\Re(w^{(n)}_{i,r})$ tend to zero for any $1\leq i \leq k$. 
  Then, necessarily, since the set of saddle connections is discrete,  $\Im(w^{(n)}_{i,\ell})$ and $\Im(w^{(n)}_{i,r})$ tend to infinity. Assume by contradiction that there is a vertical saddle connection $\sc$ on $X$ and let $\Quad_j$, $1\leq j \leq k$, be the bundle which contains it. Since by definition of wedges the sides of a wedge form a triangle embedded in the surface and the imaginary parts of the  wedge $w^{(n)}_{j}$ both go to infinity, for $n$ sufficiently large $\sc$ is contained in the triangle with sides $w_{i,\ell}$ and $w_{i,r}$. This contradicts the fact that the interior of the triangle is free of singularities. Thus $X$ has no vertical saddle connections.

  Conversely, let $X \in \CCC^{hyp}(k)$ be without vertical saddle connections and $Q^{(0)}$ be a quadrangulation of $X$. Because $X$ has no vertical saddle connection then no quadrangulation on $X$ is vertical. Applying inductively Theorem~\ref{thm:existence_of_staircase_move} from $Q^{(0)}$ we obtain an infinite sequence of quadrangulations. Using the first part of the proof, the width of each wedge necessarily tends to 0.
\end{proof}

\subsection{Non hyperelliptic components} \label{subsec:nonhyp_quadrangulation}
In this section we provide examples of translation surfaces which do not belong to a hyperelliptic component of a stratum and admit quadrangulations for which there are no possible staircase moves.
Our strategy consists in finding quadrangulation with $k$ quadrilaterals for which both $\pi_\ell$ and $\pi_r$ are $k$-cycles.
This construction is possible in many component of stratum but not in $\CCC^{hyp}(k)$ if $k \geq 3$. Then, once we found this combinatorial datum we find a length datum in order that there is at least one left-slanted and one right-slanted quadrilaterals.

We first consider the stratum $\HHH(0,0,0)$, which is the smallest stratum which does not contain a hyperelliptic component.
Let $\pi_\ell = (1,2,3) = \pi_r = (1,2,3)$ and consider the wedges
\[
w_{1,\ell} = (-1.3,2), \quad
w_{1,r} = (1,1), \quad
w_{2,\ell} = w_{3,\ell} = (-1.3,2) \quad \text{and} \quad
w_{2,r} = w_{3,r} = (1.7,1)
\]
One can check that these length data satisfy the train-track relations for $\vpi= (\pi_\ell,\pi_r)$ and hence correspond to a quadrangulation $Q$ (see Figure~\ref{fig:no_staircase_move_H000}). 
Moreover we have
\[
w_{1,d} = (-0.3,3), \quad w_{2,d} = (0.4,3) \quad \text{and} \quad w_{3,d} = (-0.3,3).
\]
Hence, there is no well slanted staircase in $Q$. 
\begin{figure}[!ht]
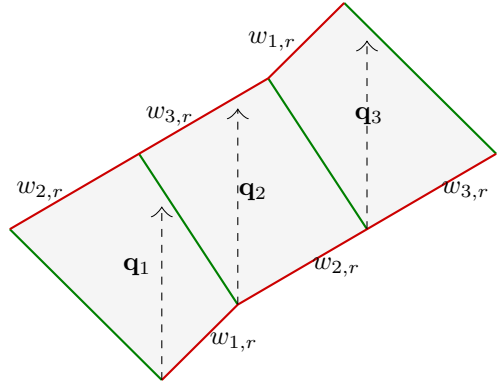

\begin{center} \picinput{no_staircase_move_H000} \end{center}
\caption{a quadrangulation of a surface in $\HHH(0,0,0)$ with no well slanted staircase}
\label{fig:no_staircase_move_H000}
\end{figure}
One can notice that the surface $X$ associated to $Q$ admits a hyperelliptic symmetry that exchanges $q_1$ and $q_3$ while fixes $q_2$. In other words, the quadrangulation is fixed by the hyperelliptic involution of $X$. The quotient of $X$ by the hyperelliptic involution belongs to $\QQQ(0,0,-1^4)$. One can also check that in that case, the graph associated to the triangulation described in~\S\ref{subsubsec:triangulations} is no more a tree.

\smallskip
Now we construct another example which belongs to $\HHH(4)$. This stratum is the smallest one which contains more than one component one of which is hyperelliptic. Let $\pi_r = (1,2,3,4,5)$ and $\pi_\ell = (2,1,3,5,4)$ and consider the wedges
\[
\begin{array}{lllll}
w_{1,r} = (2,1) & w_{2,r} = (1.5,1) & w_{3,r} = (2.5,1) & w_{4,r} = (3.5,1) & w_{5,r}=(1,1) \\
w_{1,\ell} = (-1.5,2) & w_{2,\ell} = (-2.5,2) & w_{3,\ell} = (-0.5,2) & w_{4,\ell} = (-1.5,2) & w_{5,\ell}=(-3,2).
\end{array}
\]
Then one can check that the train track relations are satisfied and hence $Q = (\vpi,\vwedges)$ is a quadrangulation (see Figure~\ref{fig:no_staircase_move_H4}). Moreover we have
\[
w_{1,d} = (-0.5,3),\quad
w_{2,d} = (1,3),\quad
w_{3,d} = (1,3),\quad
w_{4,d} = (0.5,3),\quad
w_{5,d} = (-0.5,3).
\]
This shows that there is no well slanted staircase in $Q$.
\begin{figure}[!ht]
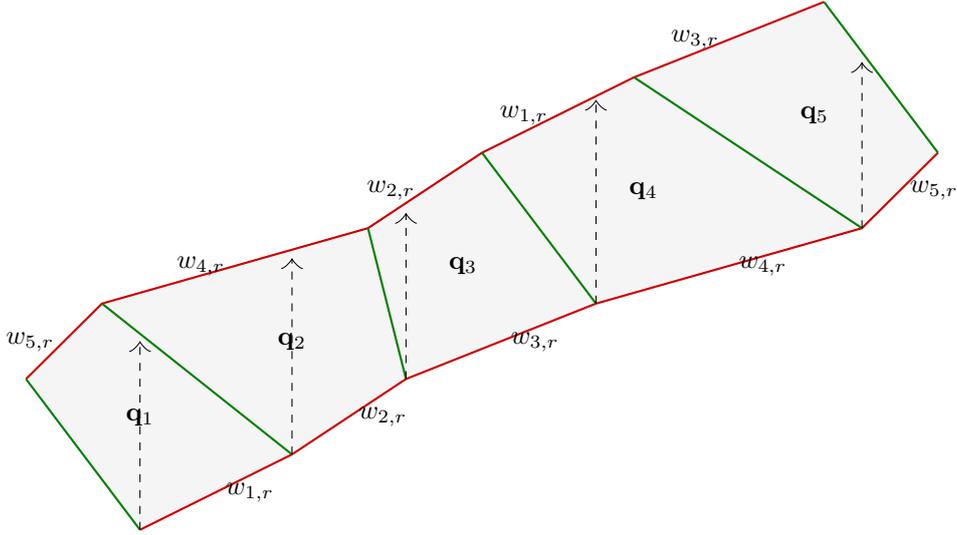

\begin{center}\picinput{no_staircase_move_H4}\end{center}
\caption{a quadrangulation of a surface in a non hyperelliptic component of $\HHH(4)$ with no well slanted staircase}
\label{fig:no_staircase_move_H4}
\end{figure}

\section{Best-approximations and bispecial words via staircase moves} \label{sec:BA_and_bisp}
In this section we prove Theorem~\ref{thm:wedges_are_best_approx} and show more generally that all best approximations in each bundle are produced by any slow diagonal changes algorithm (see Theorem~\ref{thm:wedges_are_best_approx2}). We then deduce several results. We first show,  by proving Theorem~\ref{thm:same_objects},  that the geometric objects, namely wedges and well slanted staircases, produced by any  sequence of staircase moves are the same. We then prove that the saddle connections which realize the  systoles along a Teichmueller geodesics are contained in the set of best approximations (Theorem~\ref{thm:flat_systoles}). Finally we prove that cutting sequences of bispecial words coincide with best approximations (Theorem~\ref{thm:best_approx_and_bispecials}) and explain how they can be generated recursively using diagonal changes (see Theorems~\ref{thm:seqsubstitutions}).


\subsection{Best approximations via staircase moves and applications}\label{sec:BAsystoles}
In this section we prove Theorem \ref{thm:wedges_are_best_approx}. Let us first prove the equivalent geometric characterization of best approximations as diagonals of immersed rectangles (Lemma~\ref{lem:equivalentBA}).


\begin{proof}[Proof of Lemma~\ref{lem:equivalentBA}]
In this proof, we will explicitly avoid the identification of saddle connections in a bundle $\Quad_i$ with their displacement vectors in $\CC$ and we will denote by $\hol(\gamma) \in \CC$ the displacement vector of a saddle connection $\gamma$ on $X$ and by $\hol(\Quad_i)$ the set of displacement vectors of saddle connections in $\Quad_i$. 
For each saddle connection $\gamma$ in $\rQuad[i]$ (respectively in $\lQuad[i]$)
let $\widetilde{R}(\gamma)$ be the rectangle given
by \begin{equation}\label{rectangle_def}
\widetilde{R}(\gamma) = [0, \Re \left(\hol(\gamma)\right) ] \times [0, \Im \left(\hol(\gamma)\right) ] \quad \mathrm{( resp.} \  \widetilde{R}(\gamma) = [ \Re \left(\hol(\gamma)\right), 0 ] \times [0, \Im \left(\hol(\gamma)\right) ] \mathrm{\, )}.
\end{equation} 
Using this notation, we first remark that Definition \ref{def:BA} can be rephrased as follows: 
a saddle connection $\gamma \in \rQuad[i]$ (respectiveley $\gamma \in \lQuad[i]$)
 is a \emph{(geometric) best approximation}  if and only if the rectangle $\widetilde{R}(\sc)$ does not contain any element of $\hol(\rQuad[i])$ (resp. $\hol(\lQuad[i])$) in its interior. 

Let $\sc$ be a saddle connection starting at a point $p_0 \in \Sigma$ and, assuming that $\sc$ is right slanted,  let $\rQuad[i]$ be the bundle to which $\sc$ belongs (the case of $\sc \in \lQuad[i]$ is analogous).
Suppose first that there exists an immersed rectangle $R(\sc) \subset X$  which has $\sc$ as a diagonal and does not contain singularities in its interior.  The image of  $R(\sc)$ by the developing map $\dev_{p_0}: R(\sc) \rightarrow \CC$ given by $p \mapsto \int_{p_0}^p \omega$ is exactly the rectangle $\widetilde{R}(\sc)$ in \eqref{rectangle_def}. 
If by contradiction $\sc$ is not a best approximation, by the remark at the beginning of the proof $\hol(\rQuad[i] )$ intersects the interior of $ \widetilde{R}(\sc)$. Thus  there is a saddle connection $\gamma \in \rQuad[i]$ whose holonomy $\hol(\gamma)$ belongs to the interior of $ \widetilde{R}(\sc)$. Since $\gamma$ belongs to the same bundle than $\sc$, this means that  $\gamma$ is contained in $R(\sc)$ and hence the endpoint of $\gamma$ is a singularity in the interior of $R(\sc)$, which contradicts   the initial assumption.  

Conversely,  assume that $\gamma \in\rQuad[i] $ is a best approximation (the proof for $\gamma \in  \lQuad[i]$ is analogous). Then we claim that we can immerse the rectangle $\widetilde{R}(\sc)$ given by (\ref{rectangle_def}) in $X$ so that its image $R(\sc)$ is an immersed rectangle which has $\gamma$ as diagonal and does not contain singularities in its interior. 
Define the immersion $\iota$ by sending $z = \rho e^{i\theta} \in \widetilde{R}(\sc)$ to the point $\iota(z) = \gamma_\rho^\theta(p_0)$ which has distance $\rho$ from $p_0$ and belongs to the unique linear  trajectory $(\gamma_t^\theta(p_0))_{t\geq 0}$ in direction $\theta$ which starts at $p_0$ and such that such that  $| \angle(\gamma_t^\theta(p_0), \gamma)|< \pi/2$. To see that $\iota$ is well defined, it is enough to check that these trajectories do not hit singularities.  This will show at the same time that the image $R(\gamma)$ of $\iota$ does not intersect $\Sigma$. If by contradiction  there is a singularity $p_1 \in \Sigma$ in the interior of $R(\gamma)$, since the saddle connection $\gamma'$ connecting $p_0$ to $p_1$  is inside $R(\gamma)$, it belongs to the same bundle than $\gamma$ and has holonomy in  $\widetilde{R}(\gamma)$, thus the interior  of $\widetilde{R}(\gamma)$ intersects $\hol(\rQuad[i])$, which contradicts the equivalent definition of best approximation given by the remark at the beginning of this proof. 
\end{proof}

\subsubsection{Staircase moves produce the same geometric objects}\label{sec:sameobjects}
Let us now prove the following theorem, which is a more precise formulation of Theorem~\ref{thm:wedges_are_best_approx} in the introduction.
\begin{theorem}\label{thm:wedges_are_best_approx2}
Let $X$ be a surface in $\CCC^{hyp}(k)$ with neither horizontal nor vertical saddle connections.
Let $(Q^{(n)})_{ n \in \mathbb{Z}}$ be any sequence of labeled quadrangulations $Q^{(n)}= (\vpi^{(n)}, \vwedges^{(n)})$ of $X$ where $Q^{(n+1)}$ is obtained from $Q^{(n)}$ by simultaneous staircase moves.
Then, for each $1\leq i \leq k$ the saddle connections in the sequence $(\scw_{i,\ell}^{(n)})_{ n \in \mathbb{Z}}$ (resp.  $( \scw_{i,r}^{(n)})_{ n \in \mathbb{Z}}$) are exactly all best approximations in $\lQuad[i]$ (resp. $\rQuad[i]$) ordered by increasing imaginary part. 
\end{theorem}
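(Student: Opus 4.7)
The plan is to prove Theorem~\ref{thm:wedges_are_best_approx2} by establishing two-way containment (with matching orderings) between the wedge sides produced by the algorithm and the geometric best approximations in each bundle. By symmetry, it suffices to treat the right sides $\scw_{i,r}^{(n)} \in \rQuad[i]$; the argument for the left sides is analogous, or may be reduced to the right case via Theorem~\ref{thm:Markov_self_dual} and the rotation operator $R$. The first step is to show that each $\scw_{i,r}^{(n)}$ is a right best approximation: by Lemma~\ref{lem:equivalentBA}, this amounts to producing an immersed rectangle in $X$ with $\scw_{i,r}^{(n)}$ as diagonal and no singularity in its interior. The candidate rectangle is anchored at the bottom vertex $p$ of the wedge $\scw_i^{(n)}$, with horizontal extent $\lambda_{i,r}^{(n)}$ and vertical extent $\tau_{i,r}^{(n)}$; its embedding is read off from the Rohlin tower description of $X$ as a suspension of the bipartite IET $(\vpi^{(n)}, \vlambda^{(n)})$ given by Lemma~\ref{lem:bipartite_IET_section}, and the absence of singularities in its interior follows from the admissibility of the quadrilaterals $q_i^{(n)}$ and $q_{\pi_r(i)}^{(n)}$ inside which the rectangle lies.

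The second step establishes strict monotonicity and unboundedness of the distinct values in the sequence $(\scw_{i,r}^{(n)})_{n \in \ZZ}$. A right staircase move leaves every right wedge side unchanged, while a left staircase move affecting bundle $i$ replaces $\scw_{i,r}^{(n)}$ by its forward diagonal $\scw_{i,d}^{(n)} = \scw_{i,r}^{(n)} + \scw_{\pi_r(i),\ell}^{(n)}$, see equation~\eqref{eq:def_diagonal}, whose imaginary part strictly exceeds that of $\scw_{i,r}^{(n)}$. Hence the distinct wedge sides in $\rQuad[i]$ form a strictly increasing sequence in imaginary part. Lemma~\ref{lem:Keane} then implies that forward in time these imaginary parts diverge to $+\infty$; applied to the backward orbit, which is itself a forward staircase-moves orbit on the rotated quadrangulation $RQ^{(n)}$ by the self-duality of Theorem~\ref{thm:Markov_self_dual}, it yields that the imaginary parts have $0$ as infimum. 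Combined with the infinite supply of best approximations with arbitrarily small and arbitrarily large imaginary part (Lemma~\ref{lem:existence_of_BA}), this shows that the distinct wedge sides and the best approximations share the same range of imaginary parts.

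The third step, which I expect to be the main obstacle, closes the argument by showing that no best approximation is skipped. Given a best approximation $\sc \in \rQuad[i]$, the previous step supplies a unique $n$ with $\Im(\scw_{i,r}^{(n)}) \leq \Im(\sc) < \Im(\scw_{i,r}^{(n)}) + \Im(\scw_{\pi_r(i),\ell}^{(n)})$; I need to rule out the strict inequality on the left. Assuming it, the best approximation condition on $\sc$ tested against $\scw_{i,r}^{(n)}$, whose imaginary part is smaller, forces $|\Re(\sc)| < \lambda_{i,r}^{(n)}$. The immersed rectangle $R(\sc)$ of Lemma~\ref{lem:equivalentBA}, anchored at the bottom vertex $p$ of $\scw_i^{(n)}$, has width $|\Re(\sc)|$ and height $\Im(\sc)$; by the constraints $|\Re(\sc)| < \lambda_{i,r}^{(n)}$ and $\Im(\sc) - \tau_{i,r}^{(n)} < \tau_{\pi_r(i),\ell}^{(n)}$ it fits inside the union of the two adjacent Rohlin towers associated with $\scw_{i,r}^{(n)}$ and $\scw_{\pi_r(i),\ell}^{(n)}$, lying in $q_i^{(n)} \cup q_{\pi_r(i)}^{(n)}$. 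A case analysis on where the opposite corner $p + \sc$ of $R(\sc)$ may land as a singularity of $X$ in the bundle $\rQuad[i]$, organised by the train-track relations and the admissibility of the two quadrilaterals, forces $\sc = \scw_{i,r}^{(n)}$, contradicting the assumed strict inequality and completing the proof. The delicate point is precisely to carry out this two-tower case analysis rigorously.
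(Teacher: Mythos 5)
There is a genuine gap in your argument, and it appears in the same place in both Step~1 and Step~3: the claim that the rectangle under consideration lies inside a fixed pair of quadrilaterals (or Rohlin towers). For Step~1, the rectangle $R(\scw_{i,r}^{(n)})$ anchored at the bottom vertex $p$ of $q_i^{(n)}$ with diagonal $\scw_{i,r}^{(n)}$ is split by $\scw_{i,r}^{(n)}$ into a lower-right half and an upper-left half. The lower-right half lies in $q_{\pi_\ell^{-1}(i)}^{(n)}$, not $q_{\pi_r(i)}^{(n)}$, since $\scw_{i,r}$ is the top left side of $q_{\pi_\ell^{-1}(i)}$. More seriously, the upper-left half need not lie inside $q_i^{(n)}$ at all: if $q_i^{(n)}$ is left slanted, the vertical from $p$ exits through the top left side $\scw_{\pi_\ell(i),r}^{(n)}$, and the exit height $\tau_{i,\ell} + \tfrac{|\lambda_{i,\ell}|}{\lambda_{\pi_\ell(i),r}}\,\tau_{\pi_\ell(i),r}$ can be strictly smaller than $\tau_{i,r}$ (take $\tau_{i,\ell}$ and the slope of $\scw_{\pi_\ell(i),r}$ both small). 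Then the corner $p+\sqrt{-1}\,\tau_{i,r}^{(n)}$ of the rectangle lies beyond $q_i^{(n)}$, the rectangle enters $q_{\pi_\ell(i)}^{(n)}$, and may continue to spill across further quadrilaterals. Admissibility of two quadrilaterals is therefore not sufficient. The paper handles this by passing to the whole \emph{staircase}: for a right slanted wedge side $\scw_{i,r}$ one takes the left staircase $S_c$ (with $c$ the $\pi_\ell$-cycle of $i$), whose interior edges are right slanted and whose boundary edges are all left slanted; the universal cover of $S_c$ is then a strip with left-slanted boundary that contains the whole rectangle $\widetilde{R}(\scw_{i,r})$ regardless of how many quadrilaterals it traverses. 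Because a staircase is a cylinder with no singularities in its interior, this immediately certifies that $\scw_{i,r}^{(n)}$ is a best approximation via Lemma~\ref{lem:equivalentBA}.

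Step~3 inherits the same problem and your proposed remedy (a ``two-tower case analysis'') starts from an incorrect premise, so you would not be able to carry it through. The mechanism the paper uses is different and cleaner: for two consecutive right wedge sides $\scw_{i,r}^{(n)}$ and $\scw_{i,r}^{(n+1)}=\scw_{i,d}^{(n)}$, let $j=\pi_r^{(n)}(i)$ and decompose $R=[0,\Re \scw_{i,r}^{(n)}]\times[0,\Im \scw_{i,r}^{(n+1)}]$ into the three sub-rectangles having diagonals $\scw_{i,r}^{(n)}$, $\scw_{i,d}^{(n)}$ and $\scw_{j,\ell}^{(n)}$. Each of those three saddle connections is a best approximation (by Part~1 applied to the three sides and the diagonal of the quadrilateral $q_i^{(n)}$), so each sub-rectangle is free of saddle connections in its bundle, and therefore $R$ contains no element of $\rQuad[i]$ other than the two given vertices. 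This is the ``no skipping'' step, and it crucially invokes Part~1 for the \emph{left} slanted top side $\scw_{j,\ell}^{(n)}$ as well, which your two-tower framing never brings into play. Your Step~2 (strict monotonicity of imaginary parts and divergence/convergence to infinity/zero via Lemma~\ref{lem:Keane} and self-duality) is sound as an observation, and is used implicitly in the paper, but it does not by itself close the argument without the rectangle decomposition.
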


\begin{proof}
We first prove that any saddle connections belonging to the wedges of one of the quadrangulations in $(Q^{(n)})_{ n \in \mathbb{Z}}$  is a  best approximation. Let $Q =(\vpi, \vwedges) = Q^{(n)} $ be a quadrangulation in the sequence and let $w_\ell$ be a left slanted saddle connection belonging to the wedge of some $q \in Q$.  The proof for right slanted saddle connections in the wedges is analogous.
Let $S$ be the right staircase that contains $q$, so that $\scw_\ell$ belongs to the interior of the staircase $S$.  Let
  $R(\scw_\ell) \subset X$ be the image of the rectangle $ \widetilde{R}(\scw_\ell) $ in  $\CC$  which has $\scw_\ell$ as its diagonal,  shown in Figure~\ref{fig:immersed_rectangle_staircase}. Since each saddle connection belonging to the boundary of $S$ is left slanted, $\widetilde{R}(\scw_\ell)$ is contained in the universal cover $\widetilde{S}$ of $S$ shown in Figure \ref{fig:immersed_rectangle_univ_cover} and thus  $R(\scw_\ell)$ is contained inside $S$. Since the staircase $S$ does not contain any singularity in its interior, it follows that $R(\scw_\ell)$ is an immersed rectangle which contains no point of $\rQuad[i]$ in its interior. This shows that $\scw_\ell$ is a geometric best approximation by Lemma~\ref{lem:equivalentBA}.

\begin{figure}[!ht]
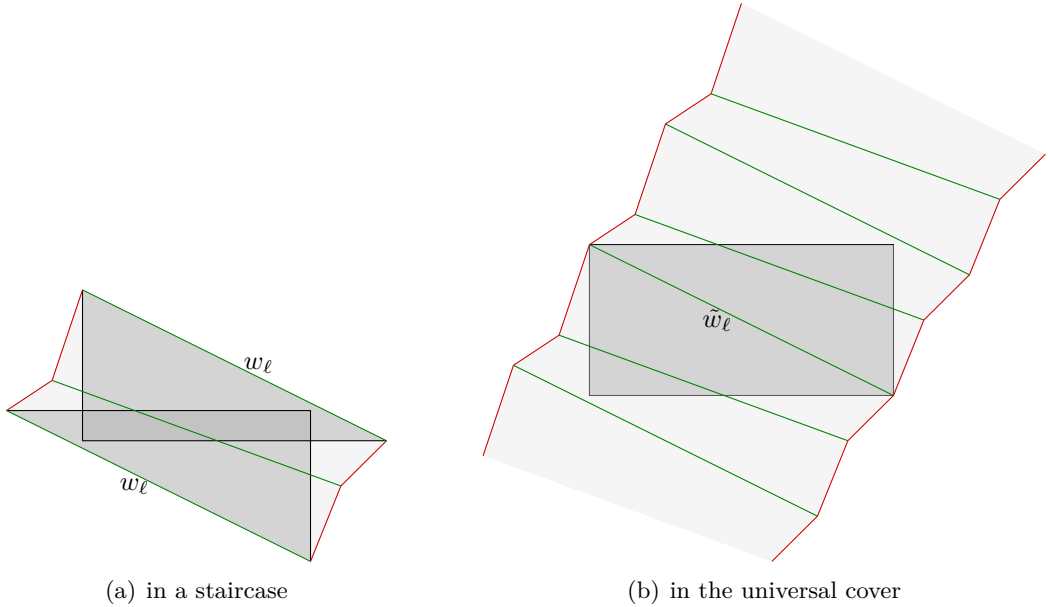
 \label{fig:immersed_rectangle}
\begin{center}
  \subfigure[in a staircase\label{fig:immersed_rectangle_staircase}]{\picinput{immersed_rectangle}}  \hspace{1cm} 
  \subfigure[in the universal cover\label{fig:immersed_rectangle_univ_cover}]{\picinput{immersed_rectangle_univ_cover}}
\end{center}
\caption{immersed rectangle around a side in a staircase which becomes embedded in its universal cover}
\end{figure}

\smallskip
Let us now prove that all geometric best approximations in $\rQuad[i]$ for any fixed $1\leq i \leq k$ appear in the sequence $(\scw_{i,r}^{(n)})_{n \in \ZZ}$ in their natural order. Since we just proved the saddle connections in $\rQuad[i]$  given by the sequence $(\scw_{i,r}^{(n)})_{n \in\mathbb{Z}}$ are geometric best approximations and by construction they are naturally ordered by increasing imaginary part, it is enough to show that if $\scw_{i,r}^{(n)}$ and $\scw_{i,r}^{(n+1)}$ are two successive saddle connections in $\rQuad[i]$ according to this natural order, there is no geometric best approximation with imaginary part strictly in between $\Im \scw_{i,r}^{(n)}$ and $\Im \scw_{i,r}^{(n+1)}$. For this, we claim that it is enough to show that if $R \subset \CC$ is the rectangle   $R=[0,\Re(\scw_{i,r}^{(n)})] \times [0,\Im(\scw_{i,r}^{(n+1)})] $ shown in  Figure~\ref{fig:BA_and_diagonal_changes}), then 
\[
\Quad_i^\ell \ \cap R = \left\{\scw_{i,r}^{(n)}, \scw_{i,r}^{(n+1)}\right\}.
\]
Indeed, this implies that there are no saddle connection $\sc \in\rQuad[i] $ with $\Im \scw_{i,r}^{(n)} < \Im \sc < \Im \scw_{i,r}^{(n+1)}$ and $0 < \Re \sc \leq \Re(\scw_{i,r}^{(n)})$. And if $\sc \in\rQuad[i]$ satisfies $\Im \scw_{i,r}^{(n)} < \Im \sc < \Im \scw_{i,r}^{(n+1)}$ and $\Re \sc > \Re(\scw_{i,r}^{(n)})$ then it is not a best approximation.

\begin{figure}[!ht]
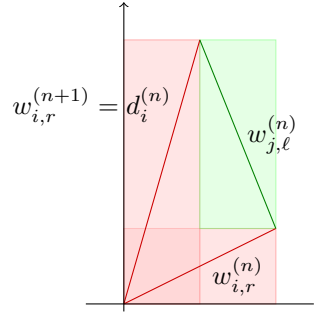
 
\begin{center}\picinput{ba_and_fz} \end{center}
\caption{diagonal change seen on the displacement vectors} 
\label{fig:BA_and_diagonal_changes}
\end{figure}
By construction, since $\scw_{i,r}^{(n)}$ and $\scw_{i,r}^{(n+1)}$ are consecutive saddle connections, $\scw_{i,r}^{(n+1)}$ is the diagonal $d_i^{(n)}$ of the quadrilateral $q_i$ in $Q^{(n)}$. Thus the top right saddle connection of $q^{(n)}_i$, that we will denote by $\scw_{j,\ell}^{(n)}$, joins the endpoint of $\scw_{i,r}^{(n)}$ and $\scw_{i,d}^{(n)}$. 
Notice that $R$ is the union of the three smaller rectangles 
$R_1, R_2, R_3$ which have as diagonals respectively the  saddle connections $\scw_{i,r}^{(n)}$, $\scw_{i,d}^{(n)}$ and $\scw_{j,\ell}^{(n)}$ (see Figure~\ref{fig:BA_and_diagonal_changes}). By the previous part of the proof and by definition of geometric best approximation, each of the rectangles $R_1$ and $ R_2$ do not contain elements of $\rQuad[i]$ in their interior.
Thus, if by contradiction there exist an element $\sc \in \rQuad[i]$ in the interior of  $R$, there is an element of $\ssc \in \rQuad[i]$ inside $R_3$. 

Thus, there is also a saddle connection  $\ssc \in \lQuad[j]$ inside the image of $R_3$ on the surface  
contradicting that, by the first part of the theorem, also $\scw_{j,\ell}^{(n)}$ is a best approximation. This concludes the proof.
\end{proof}

 Theorem~\ref{thm:wedges_are_best_approx2} has the following corollary for sequences obtained by forward moves only:
\begin{corollary}\label{cor:wedges_are_best_approx2}
  Let $X$ be a surface in $\CCC^{hyp}(k)$ with no vertical saddle connections.
  Let $(Q^{(n)})_{ n \in \mathbb{N}}= \left((\vpi^{(n)}, \vwedges^{(n)})\right)_{ n \in \mathbb{N}}$ be any sequence of labeled quadrangulations of $X$ where $Q^{(n+1)}$ is obtained from $Q^{(n)}$ by simultaneous staircase moves. Then:
  
  \begin{itemize}
  \item[(i)] For each $1 \leq i \leq k$ the saddle connections in the sequence $( \scw_{i,r}^{(n)})_{ n \in \mathbb{N}}$ (resp.  $( \scw_{i,\ell}^{(n)})_{ n \in \mathbb{N}}$) are exactly all best approximations  $\sc$ in $\rQuad[i]$ (resp. in $\lQuad[i]$) which have $\Im \sc \geq \Im \scw_{i,r}$ (resp. $\Im \sc \geq \Im \scw_{i,\ell}$), or, equivalently, $|\Re \sc| \geq |\Re \scw_{i,r}|$ (resp. $|\Re \sc | \geq |\Re \scw_{i,\ell}|$).

  \item[(i)] For each $i$, $1 \leq i \leq k$, the set of diagonals $(\scw^{(n)}_{i,d})_n$ coincide with the set of best approximations $\sc$ in $\Gamma_i$ such that $\Im(\sc) > \max(\Im(w_{i,\ell}), \Im(w_{i,r}))$; or equivalently to the set of bottom sides of the quadrilaterals $(q_i^{(n)})_n$ different from the one of $q_i^{(0)}$.
  \end{itemize}
\end{corollary}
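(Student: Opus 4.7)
The plan is to reduce both statements to Theorem~\ref{thm:wedges_are_best_approx2} via a straightforward truncation of its proof to the forward direction, together with the bookkeeping of how diagonals are promoted to bottom sides in a staircase move.

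For part (i), the key observation is that the proof of Theorem~\ref{thm:wedges_are_best_approx2} consists of two independent steps: first, that every side of every wedge $\scw^{(n)}_i$ is a geometric best approximation (via the immersed-rectangle argument contained in a staircase cylinder, Figure~\ref{fig:immersed_rectangle}); second, that between consecutive wedges no best approximation is missed (via the decomposition of the bounding rectangle into $R_1 \cup R_2 \cup R_3$, Figure~\ref{fig:BA_and_diagonal_changes}). Neither of these two arguments appeals to the absence of horizontal saddle connections: that hypothesis was only used implicitly to guarantee that the bi-infinite sequence $(Q^{(n)})_{n \in \ZZ}$ extends indefinitely in the backward direction, hence reaches best approximations of arbitrarily small imaginary part. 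In the forward direction, Lemma~\ref{lem:Keane} already guarantees, under the weaker assumption of no vertical saddle connections, that the sequence of moves is infinite and that each $\Im \scw^{(n)}_{i,\ell}$ and $\Im \scw^{(n)}_{i,r}$ tends to $+\infty$. Therefore the two arguments apply verbatim to the forward sequence, yielding that $(\scw^{(n)}_{i,r})_{n \geq 0}$ is the complete ordered list of right best approximations in $\rQuad[i]$ with imaginary part at least $\Im \scw^{(0)}_{i,r}$, and symmetrically on the left side. The equivalent formulation in terms of absolute real parts then follows from Definition~\ref{def:BA}, since within $\rQuad[i]$ (respectively $\lQuad[i]$) best approximations are totally ordered and the ordering by imaginary parts is matched by the corresponding ordering of absolute real parts.

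For part (ii), I would use how diagonals are promoted to bottom sides under a staircase move. By formulas~\eqref{eq:def_w1} and~\eqref{eq:def_w2}, if $q_i$ lies in the well-slanted staircase used at step $n$, the diagonal $\scw^{(n)}_{i,d}$ becomes either the left or right side of $q_i^{(n+1)}$, depending on whether the move is right or left. Conversely, if $q_i$ does not lie in the staircase being used at step $n$, then $\scw^{(n+1)}_{i,d} = \scw^{(n)}_{i,d}$: indeed, cycles of $\pi_r$ (respectively $\pi_\ell$) are preserved set-wise by the staircase move, so if $i \notin c$ then also $\pi_r(i) \notin c$ (respectively $\pi_\ell(i) \notin c$), and all terms appearing in the train-track sum for $\scw^{(n+1)}_{i,d}$ are unchanged. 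Thus each diagonal $\scw^{(n)}_{i,d}$ persists unchanged until the first step $m \geq n$ at which $q_i$ is affected by a staircase move, at which point it becomes a bottom side of $q_i^{(m+1)}$. By Lemma~\ref{lem:Keane}, such an $m$ exists, since $q_i$ must undergo infinitely many diagonal changes in any forward sequence. Therefore the set $\{\scw^{(n)}_{i,d} : n \geq 0\}$ is precisely the set of bottom sides of the quadrilaterals $q_i^{(n)}$, $n \geq 0$, that differ from those of $q_i^{(0)}$; by part (i), this is the set of best approximations $\sc \in \Quad_i$ with $\Im \sc > \max(\Im \scw^{(0)}_{i,\ell}, \Im \scw^{(0)}_{i,r})$, the strict inequality reflecting the exclusion of the initial bottom sides.

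The main point requiring care is the verification that every diagonal is eventually consumed as a bottom side: this depends on the stability of $\scw^{(n)}_{i,d}$ under moves that do not touch $q_i$ (a direct consequence of the cycle structure of $\pi_\ell$ and $\pi_r$) together with the infinitude of moves affecting $q_i$ guaranteed by Lemma~\ref{lem:Keane}. I do not anticipate any serious obstacle here, since both ingredients are already available from earlier sections.
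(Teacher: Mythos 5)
Your proposal is correct and takes essentially the same approach as the paper: part (i) is deduced from Theorem~\ref{thm:wedges_are_best_approx2} (the bi-infinite version), truncated to the forward direction, and part (ii) follows from part (i) together with Lemma~\ref{lem:Keane}, which guarantees that each diagonal eventually becomes a bottom side. The paper's own proof is extremely terse (roughly three sentences); your version fills in the details the paper glosses over, most valuably the observation that the proof of Theorem~\ref{thm:wedges_are_best_approx2} does not actually invoke the absence of horizontal saddle connections for the forward direction, which reconciles the hypothesis of the corollary (no vertical saddle connections only) with that of the cited theorem, and the explicit verification, via the cycle structure of $\pi_\ell$ and $\pi_r$ and equations~\eqref{eq:def_w1}--\eqref{eq:def_c_pi2}, that a diagonal $\scw^{(n)}_{i,d}$ persists unchanged across moves not touching $q_i$ and is promoted to a bottom side when $q_i$ is moved.
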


\begin{proof}
Part $(i)$ follows from Theorem~\ref{thm:wedges_are_best_approx2} since geometric best approximations are produced ordered by increasing imaginary part. Remark that if $\sc$ and $\ssc$ are left best approximations then $\Im \sc< \Im \ssc$ (resp.~$|\Re \sc|< |\Re \ssc|$ if and only if $|\Re \sc| < |\Re \ssc|$ (resp.  $|\Re \sc| < |\Re \ssc$).

Recall from Lemma~\ref{lem:Keane}, that if $X$ has no vertical saddle connection then each diagonal of $Q^{(n)}$ eventually becomes a side of a wedge. Hence Part \textit{(ii)} follows from Part \textit{(i)}.
\end{proof}
\smallskip
Combining Theorem~\ref{thm:wedges_are_best_approx}  with Lemma~\ref{lem:d_to_q} (diagonals uniquely determine their quadrilaterals) we can  now prove that any sequence of staircase moves produce not only the same sequence of saddle connections,  but also  the same sequence of  wedges and well slanted staircases:

\begin{theorem} \label{thm:same_objects}
Let $X$ be a surface in $\CCC^{hyp}(k)$ without vertical saddle connections and let $Q$ be a quadrangulation of $X$. Let $ ( Q_1^{(n)})_ { n \in \mathbb{N}}$, $ ( Q_2^{(n)})_{ n \in \mathbb{N}}$  be any two sequences of quadrangulations of the surface $X$ such that $Q^{(0)}_1= Q^{(0)}_2= Q$ and, for $i=1,2$, $Q_i^{(n+1)}$ is a new quadrangulation obtained from $Q_i^{(n)}$ by simultaneous staircase moves.
\begin{itemize}
  \item[(i)]  The collection of the wedges of the quadrangulations in the sequence $( Q_1^{(n)})_ { n \in \mathbb{N}}$ is the same as a set than the collection of the  wedges of the quadrangulations in the sequence $(Q_2^{(n)})_{ n \in \mathbb{N}}$.
  \item[(ii)] The set of well slanted staircases associated to the quadrangulations in $ ( Q_1^{(n)})_ { n \in \mathbb{N}}$ is the  the same than the set of well slanted staircases associated to the quadrangulations in $ ( Q_2^{(n)})_ { n \in \mathbb{N}}$.%
\end{itemize}
\end{theorem}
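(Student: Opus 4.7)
My plan is to handle the two parts separately, with part (i) being essentially a direct consequence of what has already been proved and part (ii) requiring an additional scheduling argument building on (i).

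For part (i), I would combine Corollary~\ref{cor:wedges_are_best_approx2}(ii) with Lemma~\ref{lem:d_to_q}. The Corollary identifies, for each bundle $\Gamma_i$, the set of forward diagonals $\{\scw_{i,d}^{(n)} : n \in \NN\}$ occurring in any sequence of simultaneous staircase moves starting at $Q$ as precisely the set of geometric best approximations in $\Gamma_i$ with imaginary part exceeding $\max(\Im \scw_{i,\ell}^{(0)}, \Im \scw_{i,r}^{(0)})$. This description depends only on $X$ and $Q$, not on the chosen sequence, so the set of diagonals produced by $(Q_1^{(n)})$ coincides with that produced by $(Q_2^{(n)})$. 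Now Lemma~\ref{lem:d_to_q} asserts that each best-approximation diagonal uniquely determines an admissible quadrilateral whose sides are best approximations, so both sequences produce the same set of non-initial admissible quadrilaterals; adding the quadrilaterals of $Q=Q_1^{(0)}=Q_2^{(0)}$ gives the full list. Since the wedge of a quadrilateral is intrinsically the pair of its bottom sides, the collections of wedges match.

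For part (ii), I would first observe that the property of being a well-slanted staircase is intrinsic to the constituent quadrilaterals as subsets of $X$: a cyclic collection $q_1,\dots,q_n$ of admissible quadrilaterals, glued so that top-right of $q_j$ equals bottom-left of $q_{j+1}$ and with all $q_j$ of the same slantedness, forms a well-slanted staircase in every quadrangulation of $X$ that contains them all simultaneously. By part~(i) the two sequences exhibit the same pool of admissible quadrilaterals, so the candidate well-slanted staircases are identical; the remaining task is to verify that whenever such an $S$ sits inside some $Q_1^{(n_1)}$, its quadrilaterals also appear together inside some $Q_2^{(n_2)}$.

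To establish this synchronization, I would exploit the deterministic per-bundle structure implicit in Theorem~\ref{thm:wedges_are_best_approx2}: for each bundle $i$, the ordered sequence of left best approximations $\scw_{i,\ell}^{(0)}, \scw_{i,\ell}^{(1)},\dots$ and right best approximations $\scw_{i,r}^{(0)}, \scw_{i,r}^{(1)},\dots$ is the same in both algorithms, with only the relative timing of advances varying. Moreover, whether the current wedge advances to the left or right best approximation next is forced by the slantedness of the current quadrilateral. The key observation is that in sequence 2, consider the first moment $m^*$ at which some bundle $b_{j^*}$ of $S$ leaves its target lattice point: this move acts on the $\pi_r$- or $\pi_\ell$-cycle of $Q_2^{(m^*-1)}$ containing $b_{j^*}$, and since $q_{j^*}\in S$ is present in $Q_2^{(m^*-1)}$, the cyclic gluing pattern defining $S$ forces that cycle to be exactly the image of $S$ in the current combinatorial data, hence all quadrilaterals of $S$ must be simultaneously present in $Q_2^{(m^*-1)}$.

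The main obstacle will be making this scheduling argument rigorous, in particular tracking how the combinatorial datum $\vpi = (\pi_\ell,\pi_r)$ evolves through a mixed sequence of simultaneous left and right staircase moves and verifying that the $\pi_r$-cycle (or $\pi_\ell$-cycle) containing a target bundle at its first departure moment must contain the full cyclic pattern of $S$; this requires careful use of the update rules~\eqref{eq:def_c_pi1} and~\eqref{eq:def_c_pi2} together with the uniqueness of the cycle through a given bundle in each $\vpi^{(m)}$.
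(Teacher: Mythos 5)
Your part (i) is correct and follows essentially the same route as the paper: Corollary~\ref{cor:wedges_are_best_approx2} gives a sequence-independent description of the set of diagonals, and Lemma~\ref{lem:d_to_q} (first part) converts each diagonal into a unique quadrilateral, hence a unique wedge.

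Part (ii) is where there is a genuine gap. You correctly identify that well-slantedness is intrinsic, and you correctly isolate the synchronization problem (whether a staircase $S$ of $Q_1^{(n_1)}$ appears at some time in the second sequence). But your proposed resolution does not hold up. The assertion that, at the first departure time $m^*$, ``the cyclic gluing pattern defining $S$ forces the $\pi_r$-cycle containing $b_{j^*}$ to be exactly the image of $S$'' is not justified and is in fact false in general: the $\pi_r$-cycle of $Q_2^{(m^*-1)}$ through $q_{j^*}$ is the set of quadrilaterals glued to $q_{j^*}$ at that moment, and the quadrilateral whose bottom left side is the top right side of $q_{j^*}$ need not be the right-slanted quadrilateral of $S$ --- by the second part of Lemma~\ref{lem:d_to_q} there can be several \emph{left-slanted} admissible quadrilaterals with best-approximation sides and that same bottom left side. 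So the $\pi_r$-cycle at time $m^*-1$ can contain quadrilaterals not in $S$, and the argument collapses.

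The paper avoids this entirely by exploiting the \emph{uniqueness} in the second part of Lemma~\ref{lem:d_to_q}: a left-slanted best approximation $\sc$ is the bottom left side of a \emph{unique} right-slanted admissible quadrilateral with best-approximation sides. So a right-slanted quadrilateral $q$ in the pool determines, by iterating this successor map on its top right side, a uniquely determined candidate well-slanted right staircase $S(q)$; no combinatorial bookkeeping of how $\vpi^{(n)}$ evolves is needed. To finish, one notes that a right-slanted $q$ in $Q_2^{(n_2)}$ cannot change via a left diagonal move (it is not left-slanted, so it sits in no well-slanted left staircase), and by the discreteness/area argument in Lemma~\ref{lem:Keane} it must eventually change; hence it undergoes a right diagonal change at some time, at which moment the right staircase through $q$ in $Q_2$ is well slanted, and by the uniqueness just described it equals $S(q) = S$. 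Compared to your scheduling argument, this shows the rigidity comes from geometry (uniqueness of the right-slanted neighbour) rather than from tracking permutation cycles, which is both simpler and actually closes the gap you flagged.
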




\begin{proof}
Let $ (Q_1^{(n)})_ { n \in \mathbb{N}}$, $ (Q_2^{(n)})_{ n \in \mathbb{N}}$  be as in the statement.
Because of Theorem~\ref{lem:Keane}, each diagonal in $Q_1^{(n)}$ will eventually become a side.
This is also true for $Q_2^{(n)}$.
By Corollary~\ref{cor:wedges_are_best_approx2}, the set of diagonals in $(Q_1^{(n)})_n$ and $(Q_2^{(n)})_n$ coincide.
Now, by Lemma~\ref{lem:d_to_q}, each diagonal uniquely determines its quadrilateral.
It follows that the set of quadrilaterals and the set of wedges in $(Q_1^{(n)})_n$ and $(Q_2^{(n)})_n$ are the same, thus concluding the proof of $(i)$.  

Let us now prove $(ii)$. Since we just showed that quadrilaterals for $(Q_1^{(n)})_n$ and $(Q_2^{(n)})_n$ are the same, it is enough to show that each such quadrilateral uniquely determines the well slanted staicase to which it belongs.
Let $q = q_i$ be a right slanted quadrilateral in $Q_1^{(n)}$ for some $n \in \NN$ (the case when $q$ is left slanted is similar) and let $\sc$ its right top side. We only need to prove that there is a unique quadrilateral $q'$ which is right slanted and has $\sc$ as it bottom left side, since such quadrilateral is necessarily a neighbour of $q$ in a well slanted right staircase.
From Theorem~\ref{thm:wedges_are_best_approx}, we know that $\sc$ is a geometric best approximation and from Lemma~\ref{lem:d_to_q} existence and uniqueness is guaranteed. Repeating this argument, we see that the right well slanted staircase which contains $q$ is uniquely determined.
\end{proof}

\subsubsection{Systoles and Lagrange values along Teichmueller geodesics}\label{sec:systolesproof}
Recall from~\S\ref{subsubsec:intro_Teich} that the systole on a translation surface is the length of the shortest saddle connection.
In this section we prove Theorem~\ref{thm:flat_systoles} on systoles along Teichmueller geodesics and then state and prove Theorem~\ref{thm:lagrange} which shows that diagonal changes can be used to compute the quantity $a(X)$ along closed Teichm\"uller geodesics.

The following general Lemma holds for any translation surface (not necessarily in a hyperelliptic component).
\begin{lemma}\label{lem:systoles_are_ba}
Let $X$ be a translation surface and let $\sc$ be a saddle connection on $X$ which realizes the systole for some time $t$ along the Teichmueller geodesics $(g_t X)_{t \in \RR}$. Then $\sc$ is a geometric best approximation.
\end{lemma}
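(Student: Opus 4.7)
The plan is an elementary argument by contrapositive, using only the explicit formula for the Teichm\"uller geodesic flow acting on holonomy vectors. Recall that $g_{t_0}$ sends a holonomy $\Re(\sc) + \sqrt{-1}\, \Im(\sc)$ to $e^{t_0}\Re(\sc) + \sqrt{-1}\, e^{-t_0}\Im(\sc)$, so that
\[
|g_{t_0}\sc|^2 \;=\; e^{2 t_0} (\Re \sc)^2 + e^{-2 t_0} (\Im \sc)^2.
\]

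Now suppose that $\sc$ realizes the systole at some time $t_0$, and assume without loss of generality that $\sc \in \rQuad[i]$ (the case $\sc \in \lQuad[i]$ being entirely symmetric). Suppose for contradiction that $\sc$ is not a geometric best approximation. Negating the implication in Definition~\ref{def:BA} provides a saddle connection $\ssc \in \rQuad[i]$ (same bundle, same slant) such that $\Im \ssc < \Im \sc$ and $|\Re \ssc| \leq |\Re \sc|$. Since all imaginary parts are positive under the natural orientation, squaring gives $(\Im \ssc)^2 < (\Im \sc)^2$ and $(\Re \ssc)^2 \leq (\Re \sc)^2$.

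Multiplying these two inequalities by the strictly positive factors $e^{-2 t_0}$ and $e^{2 t_0}$ and adding preserves the strict inequality, so $|g_{t_0}\ssc|^2 < |g_{t_0}\sc|^2$. But this contradicts $|g_{t_0}\sc| = \sys(g_{t_0} X) \leq |g_{t_0}\ssc|$, concluding the proof. I do not foresee any real obstacle: the argument is a one-line computation, and the only subtlety worth flagging is that the strict inequality on the imaginary coordinate alone is enough to force a strict inequality between the lengths $|g_{t_0}\sc|$ and $|g_{t_0}\ssc|$, even if the real parts are only weakly comparable, precisely because $e^{-2 t_0} > 0$.
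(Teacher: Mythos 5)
Your proof is correct and is essentially the same argument as in the paper, just presented algebraically rather than geometrically. The paper first normalizes to $t=0$ (using that the class of best approximations is $g_t$-invariant), then observes that the open rectangle with $\sc$ as diagonal sits inside the open disk of radius $\sys(X) = |\sc|$, which is empty of $\Quad_i$-holonomies; you work directly at time $t_0$ and note that the inequalities $(\Im\ssc)^2 < (\Im\sc)^2$, $(\Re\ssc)^2 \le (\Re\sc)^2$ force $|g_{t_0}\ssc| < |g_{t_0}\sc|$. Both arguments are one-line manifestations of the same monotonicity of $v \mapsto |g_t v|^2$ in $|\Re v|$ and $|\Im v|$, so there is no meaningful gap or divergence in method.
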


\begin{proof} Let  $\sc \in \Quad_i$. 
Let us prove the first part. Assume that $\sc$ on $X$ realize the systole for some time $t>0$. Since the property of being a best approximations is  invariant under the geodesic flow $(g_t)_{t \in \mathbb{R}}$ (since immersed rectangles with horizontal and vertical sides are mapped to immersed  rectangles of the same form), we can replace $X$ by $g_{-t}X$ and assume that $t=0$. Thus, for any saddle connection $\ssc$ in $X$, the flat lenght $|\ssc|$ of $\ssc$ is greater or equal than $|\sc|$. In particular, the semicircle in $\CC$ centered in the origin and of radius $|\sc|$ does not contain any point of $\Quad_i$ in its interior. This implies in particular that the rectangle in $\CC$ which has $\sc$ as diagonal and vertical and horizontal sides does not contain any point of $\Quad_i$ in its interior and hence that $\sc$ is a geometric best approximation.
\end{proof}

\begin{proof}[Proof of Theorem \ref{thm:flat_systoles}]
The Theorem now follows immediately as a corollary of Theorem~\ref{thm:wedges_are_best_approx2} and Lemma~\ref{lem:systoles_are_ba} above: 
let $X$ and $Q$ be as in the assumptions. Assume that $\sc$ realizes the systole for some time $t_0$. Then by the first part of Lemma~\ref{lem:systoles_are_ba}, $\sc$ is a best approximation. Thus, by Theorem \ref{thm:wedges_are_best_approx} it appears as one of the wedges.
\end{proof}

If one is interested only in saddle connections which realize the systoles along a \emph{Teichmueller geodesic ray} $(g_t  X)_{t\geq 0}$ starting from $X$, one needs an extra assumption to avoid missing saddle connections which might realize minima for small values of $t$. The following result can be deduced from Theorem~\ref{thm:flat_systoles}.

\begin{corollary}\label{cor:flat_systoles}
Let $X$ be a surface in $\CCC^{hyp}(k)$ with neither horizontal nor vertical saddle connections. Let $Q = (\vpi,\vwedges)$ be a quadrangulation of $X$ for which each side $\sc$ satisfies $|\Re(\sc)| > \sys(X)$.
Let $\{Q^{(n)}, n \in \mathbb{N}\}$ be any sequence of quadrangulations obtained from $Q^{(0)}=Q$ by simultaneous staircase moves. Then the saddle connections on $X$ which realize the systole along the Teichmueller geodesic ray $(g_t  X)_{t\geq 0}$ are a subset of the sides of the quadrangulations in $\{ Q^{(n)}, n \in \mathbb{N}\}$.
\end{corollary}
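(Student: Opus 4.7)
The plan is to reduce the claim to the quantitative estimate $|\Re(\sc)|\le\sys(X)$ for any saddle connection $\sc$ realizing the systole at some nonnegative time, and then to establish this estimate by a direct computation with the Teichm\"uller geodesic flow. More precisely, let $\sc$ realize $\sys(g_{t_0} X)$ for some $t_0\geq 0$, and suppose without loss of generality that $\sc \in \rQuad[i]$ for some bundle $\Quad_i$ (the left-slanted case being symmetric). By Lemma~\ref{lem:systoles_are_ba}, $\sc$ is a geometric best approximation. By Corollary~\ref{cor:wedges_are_best_approx2}, the sequence $(\scw_{i,r}^{(n)})_{n\in\NN}$ coincides with the set of geometric best approximations $\ssc\in\rQuad[i]$ satisfying $\Im(\ssc)\ge\Im(\scw_{i,r}^{(0)})$. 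Among best approximations the orderings by $\Im$ and by $|\Re|$ are reverse to each other (this follows immediately from Definition~\ref{def:BA}), so this condition is equivalent to $|\Re(\ssc)|\le|\Re(\scw_{i,r}^{(0)})|$. Combined with the standing hypothesis $|\Re(\scw_{i,r}^{(0)})| > \sys(X)$, it is therefore enough to prove $|\Re(\sc)| \le \sys(X)$.

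To establish this estimate, pick a saddle connection $\tilde\sc$ of $X$ realizing the systole, so $|\tilde\sc|=\sys(X)$. Since $g_t$ acts on $\CC$ by $(x,y)\mapsto(e^t x, e^{-t}y)$, one has $|g_t\gamma|^2 = e^{2t}\Re(\gamma)^2 + e^{-2t}\Im(\gamma)^2$ for any saddle connection $\gamma$. Because $t_0\ge 0$ we have $e^{-2t_0}\le e^{2t_0}$, hence
\[
|g_{t_0}\tilde\sc|^2 \;\le\; e^{2t_0}\bigl(\Re(\tilde\sc)^2 + \Im(\tilde\sc)^2\bigr) \;=\; e^{2t_0}\sys(X)^2.
\]
On the other hand, dropping the imaginary-part contribution gives $|g_{t_0}\sc|^2 \ge e^{2t_0}\Re(\sc)^2$. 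Using $|g_{t_0}\sc| = \sys(g_{t_0}X)\le|g_{t_0}\tilde\sc|$ and combining the two bounds yields $e^{t_0}|\Re(\sc)|\le e^{t_0}\sys(X)$, i.e.\ $|\Re(\sc)|\le\sys(X)$, as needed.

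I do not anticipate any real obstacle here: the corollary is a quantitative refinement of Theorem~\ref{thm:flat_systoles}, and the assumption on the initial quadrangulation is used precisely to rule out the possibility that a systole-realizing $\sc$ first appears in the (omitted) backward extension of the sequence of quadrangulations. The only minor point to check is the equivalence between $\Im$- and $|\Re|$-orderings on best approximations: if $\sc_1,\sc_2\in\rQuad[i]$ are best approximations with $\Im(\sc_1)<\Im(\sc_2)$, applying Definition~\ref{def:BA} to $\sc=\sc_2$ and $\ssc=\sc_1$ immediately forces $|\Re(\sc_1)|>|\Re(\sc_2)|$.
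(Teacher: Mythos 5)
Your proposal is correct and follows essentially the same route as the paper's proof: both reduce, via Lemma~\ref{lem:systoles_are_ba} and Corollary~\ref{cor:wedges_are_best_approx2} together with the hypothesis $|\Re(\sc)|>\sys(X)$ on the sides of $Q$, to showing that any saddle connection realizing the systole at time $t_0\geq 0$ satisfies $|\Re(\sc)|\le\sys(X)$, and both obtain this from the inequality $\sys(g_{t_0}X)\le e^{t_0}\sys(X)$ combined with $e^{t_0}|\Re(\sc)|=|\Re(g_{t_0}\sc)|\le|g_{t_0}\sc|=\sys(g_{t_0}X)$. The only difference is presentational: you derive $\sys(g_{t_0}X)\le e^{t_0}\sys(X)$ explicitly by estimating $|g_{t_0}\tilde\sc|$ for a systole-realizer $\tilde\sc$ at $t=0$, whereas the paper simply asserts this ``by definition of the systole.''
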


\begin{proof} Since by assumption each side $\sc$ of $Q$ satisfies $|\Re(\sc)| > \sys(X)$, 
From Part $(i)$ of Corollary~\ref{cor:wedges_are_best_approx2} we know that the set of sides of $(Q^{(n)})_n$ contains all best approximations $\sc$ that satisfy $|\Re(\sc)| \leq \sys(X)$. Hence, because of Lemma~\ref{lem:systoles_are_ba}, it is enough to show that saddle connections $\sc$ that realize the systole at a positive time satisfies $|\Re(\sc)| \leq \sys(X)$. Now, by definition of the systole we have $\sys(g_t X) \leq e^t \sys(X)$. Thus, if $\sc$ is a saddle connection which realizes a systole at time $t > 0$ we have $|\Re(g_t \sc)| \leq |g_t \sc| = \sys(g_t X) \leq e^t \sys(X)$. As $|\Re(g_t \sc)| = e^t |\Re(\sc)|$, we obtain that $|\Re(\sc)| \leq \sys(X)$.
\end{proof}

\smallskip
We now deduce from Theorem~\ref{thm:wedges_are_best_approx2} that the values of the Lagrange spectrum $\LLL(\CCC^{hyp}(k))$ of a hyperelliptic component (defined in~\S\ref{subsubsec:intro_Teich} of the Introduction) can be computed using staircase moves. Let us recall that the definition of $a(X)$ is given in \eqref{def:aX}. 

\begin{theorem}\label{thm:lagrange}
Let $X$ be a surface in $\CCC^{hyp}(k)$ with neither horizontal nor vertical saddle connections and let $(Q^{(n)})_{ n \in \NN}$ be any sequence of labeled quadrangulations $Q^{(n)}= (\vpi^{(n)}, \vwedges^{(n)})$ of $X$ where $Q^{(n+1)}$ is obtained from $Q^{(n)}$ by simultaneous staircase moves. Then
\begin{equation}
a(X) = \liminf_{n\to +\infty} a (\vwedges^{(n)}), 
\quad \text{where} \quad a (\vwedges^{(n)}):= \min_{\sc\, \text{in}\, \vwedges^{(n)}}  |\Re v ||\Im v|,
\end{equation}
where the minimum in the definition of $a(\vwedges^{(n)})$ is taken over all areas of saddle connections belonging to the wedges in $\scw^{(n)}_1, \dots, \scw^{(n)}_k$.
\end{theorem}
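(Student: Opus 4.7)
The plan is to introduce the intermediate quantity
$$
c := \liminf\bigl\{|\Re v|\,|\Im v| \;:\; v \text{ is a geometric best approximation on } X,\ |\Im v|\to\infty\bigr\}
$$
and to show it equals both $a(X)\cdot\area(X)$ and $\liminf_n a(\vwedges^{(n)})$, from which the identity of the theorem follows (with the convention $\area(X)=1$ implicit in the right-hand side of the statement).

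For the first identity $a(X)\cdot\area(X)=c$, the inequality $\leq$ is immediate since every best approximation is a saddle connection. For the reverse inequality I would use a descent argument: given any saddle connection $v$ in a bundle $\Quad_i$, if $v$ is not a best approximation then Definition~\ref{def:BA} provides $v'\in\Quad_i$ with $|\Im v'|<|\Im v|$ and $|\Re v'|\leq|\Re v|$. Iterating, and invoking the discreteness of $\xQuad(X)$ (which forces the set of $v'\in\Quad_i$ with $|\Im v'|\leq|\Im v|$ and $|\Re v'|\leq|\Re v|$ to be finite), the descent terminates at a best approximation $v^{*}\in\Quad_i$ with $|\Im v^{*}|\leq|\Im v|$, $|\Re v^{*}|\leq|\Re v|$, and hence $|\Re v^{*}|\,|\Im v^{*}|\leq|\Re v|\,|\Im v|$. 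Applied to a sequence $v_j$ of saddle connections realizing the liminf defining $a(X)$, the crucial point is to verify $|\Im v^{*}_j|\to\infty$: since $|\Re v_j|\,|\Im v_j|$ stays bounded while $|\Im v_j|\to\infty$, one has $|\Re v_j|\to 0$ and a fortiori $|\Re v^{*}_j|\to 0$; if $|\Im v^{*}_j|$ stayed bounded along a subsequence, discreteness of saddle connections would force some $v^{*}\in\Quad_i$ to be hit infinitely often while having $\Re v^{*}=0$, contradicting the absence of vertical saddle connections on $X$.

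For the second identity $c=\liminf_n a(\vwedges^{(n)})$, I would combine Corollary~\ref{cor:wedges_are_best_approx2} (which guarantees that every best approximation $v$ with $|\Im v|$ above a threshold appears as the side of a wedge of some $\vwedges^{(n)}$) with Lemma~\ref{lem:Keane} (which ensures that the imaginary parts of all sides of $\vwedges^{(n)}$ tend to $+\infty$ as $n\to\infty$). The upper bound is obtained by selecting best approximations $v_j$ with $|\Im v_j|\to\infty$ and $|\Re v_j|\,|\Im v_j|\to c$: each $v_j$ is a side of $\vwedges^{(n_j)}$ for some $n_j\to\infty$, giving $a(\vwedges^{(n_j)})\leq|\Re v_j|\,|\Im v_j|\to c$. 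The matching lower bound follows because, given $\varepsilon>0$ and a threshold $M$ beyond which every best approximation satisfies $|\Re v|\,|\Im v|>c-\varepsilon$, Lemma~\ref{lem:Keane} guarantees that for all sufficiently large $n$ every side of $\vwedges^{(n)}$ is a best approximation with $|\Im|>M$, so $a(\vwedges^{(n)})>c-\varepsilon$. The main obstacle is the descent in the first identity: concluding $|\Im v^{*}_j|\to\infty$ genuinely requires combining discreteness of $\xQuad(X)$ with the hypothesis that $X$ has no vertical saddle connection, and this is the only substantive step in the argument.
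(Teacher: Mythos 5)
Your proof is correct but takes a genuinely different route from the paper's. The paper's argument relies on the external identity $a(X)=s^{2}(X)/2$ with $s(X)=\liminf_{t\to\infty}\sys(g_t X)$ (quoted from~\cite{HubertMarcheseUlcigrai}, going back to~\cite{Vorobets}), and then analyzes local minima of $t\mapsto\sys(g_t X)$ along the Teichm\"uller geodesic: at such a minimum the systole-realizing saddle connection $v_k$ has $g_{t_k}v_k$ the diagonal of a square, giving $|\Re v_k|\,|\Im v_k|=\sys(g_{t_k}X)^2/2$, and by Corollary~\ref{cor:flat_systoles} those $v_k$ appear among the wedge sides. You instead stay at the level of the raw definition~\eqref{def:aX} and insert the intermediate quantity $c$ (the liminf over \emph{best approximations} only). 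Your key step is the descent argument, which is sound: Definition~\ref{def:BA} gives the step, discreteness of displacement vectors in a bounded box gives termination, and the absence of vertical saddle connections together with discreteness rules out a bounded-$\Im$ accumulation, so $|\Im v^{*}_j|\to\infty$. The second identity is a clean application of Corollary~\ref{cor:wedges_are_best_approx2} and Lemma~\ref{lem:Keane}; note only that Lemma~\ref{lem:Keane} literally says the widths tend to $0$, and one needs discreteness again to infer $\Im\to\infty$, but this inference is made explicitly inside the proof of that lemma. What each approach buys: the paper's is geometric and ties in naturally with Theorem~\ref{thm:flat_systoles} and the Teichm\"uller-flow picture that surrounds it; yours is more elementary and self-contained, avoiding the imported equality $a(X)=s^2(X)/2$ altogether, and it makes transparent exactly where discreteness and the no-vertical-saddle-connection hypothesis are used.
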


\begin{proof} Let us assume for simplicity that $\area(X)=1$. 
  Let us recall   that it is shown in \cite{HubertMarcheseUlcigrai} that the quantity $a(X)$ (which is defined in~\eqref{def:aX} in the introduction) is also equal to $s^2(X)/2$ where $s(X) = \liminf_{t \to \infty} \sys(g_t X) $. Set $Q = Q^{(0)}$ and let $\lambda_{min}(Q) = \min_{\sc\, \text{in}\, w^{(0)}} |\Re(\sc)|$ where as in the definition of $a(\vwedges^{(n)})$ the minimum is taken over all saddle connections that belongs to the wedges of $Q$. Consider a time $t_0 > 0$ such that $\sys(g_{t_0} X) < e^{t_0} \lambda_{min}(Q) = \lambda_{min}(g_{t_0} Q)$ (such time exists since the systole function is bounded from above on each stratum of translation surfaces of unit area). Because of Corollary~\ref{cor:flat_systoles}, the saddle connections in  the wedges $\{\vwedges^{(n)}; n \in \NN\}$ contain all saddle connections that realize the systoles at time larger than $t_0$.  
  
  Let $(t_k)_{k \in \NN}$ be the sequence of times when the systole function has a local minimum for $t\geq t_0$ and let $\sc_k$ be a saddle connection in the wedges $\vwedges^{(n_k)}$ that realizes the systole, that is such that $|g_{t_k} \sc_k| = \sys (g_{t_k} X)$. Since $t_k$ is a local minimum of $t \mapsto |g_t \sc_k|$, it follows that $g_{t_k} \sc_k$ is the diagonal of a square, so $\sys (g_{t_k} X) = \sqrt{2} \Im \sc_k =\sqrt{2} |\Re \sc_k| $ and  $a(\vwedges^{(n_k)}) =\Im \sc_k |\Re \sc_k| = (\sys (g_{t_k} X)/\sqrt{2})^2$. Thus, 
since the liminf of a sequence is invariant under reordering (more precisely if $\pi: \NN \rightarrow \NN$ is a bijection and $(u_n)_{n \in \NN}$ is a sequence of real numbers then $\liminf u_n = \liminf u_{\pi(n)}$), 
$$a(X) = \frac{ (\liminf_{t \to \infty} \sys(g_t X))^2}{2} = \frac{ (\liminf_{k \to \infty} \sys(g_{t_k} X))^2}{2} =  \liminf_{k \to \infty } a(\vwedges^{(n_k)}) \geq \liminf_{n \to \infty } a(\vwedges^{(n)}).  $$
The opposite inequality, that is $a(X)  \leq \liminf_{n \to \infty } a(\vwedges^{(n)})$,  is obvious from the definition~\eqref{def:aX} of $a(X)$ and the invariance of liminf under reordering, since saddle connections belonging ot the wedges $\vwedges^{(n)}$ are a subset of all saddle connections with positive imaginary parts.
\end{proof}

\subsection{Description of the language via staircase moves}\label{sec:language}
In this section we prove that diagonal changes allow to effectively construct the list of bispecial words in the language of cutting sequences.
We first show in~\S\ref{sec:bispecial_cuttseq} that there is a correspondence between bispecial words and geometric best approximations (see Lemma~\ref{lem:bispecial}). Theorem~\ref{thm:best_approx_and_bispecials} about bispecial words then follows from Theorem~\ref{thm:same_objects} of the preceding section. In~\S\ref{subsubsec:substitutions} we show that cutting sequences of best approximations can be constructed by recursive formulas determined by a sequence of staircase moves (see Theorem~\ref{thm:seqsubstitutions} for the precise statement).

\subsubsection{Bispecial words as cutting sequences of best approximations}\label{sec:bispecial_cuttseq}
Except in the proof of Theorem~\ref{thm:best_approx_and_bispecials}, we consider in this section general translation surfaces, i.e. we do not assume that they belong to an hyperelliptic component $\CCC^{hyp}(k)$.

Given a labeled quadrangulation $Q = (\vpi, \vwedges)$ of a translation surface $X$, recall that $\LLL_Q$ denotes the language of cutting sequences of trajectories of the vertical flow on $X$ (see~\S\ref{subsubsec:intro_language}). The alphabet of $\LLL_Q$ is $\AAA = \{1,\ldots,k\} \times \{\ell,r\}$ where $(i,\ell)$ and $(i,r)$ are respectively the labels of the saddle connections $\scw_{i,\ell}$ and $\scw_{i,r}$ of the wedge $\scw_i$ in $Q$.

\begin{lemma}\label{lem:bispecial}
Let $X$ be a translation surface with total angle $k$ and without vertical nor horizontal saddle connections. Let $Q=(\vpi, \vwedges)$ be a labeled quadrangulation of $X$. 
A word $\word= \letter_1 \dots \letter_n $ in $\LLL_Q$ is bispecial if and only if it is the cutting sequence of a geometric best approximation $\sc$ in a bundle $\Quad_{i}$ with $\Im \sc \geq \Im \scw_{i,d}$. Furthermore, if $\word$ is a not empty word in $\LLL_q$:
\begin{itemize}
\item[(i)] If $\word$ is a left special word, then its left extensions are $(i,\ell)$ and $(i,r)$ for some $i \in \{1,\ldots,k\}$. 
\item[(ii)] If $\word$ is right special, then its right extensions are $(\pi_r(j), \ell)$ and $(\pi_\ell(j), r)$ for some $j \in \{1,\ldots,k\}$.
\item[(iii)] If $\word$ is bispecial and its left and right extensions are respectively $(i,\ell)$, $(i,r)$ and $(\pi_r(j),\ell)$, $(\pi_\ell(j),r)$ then the words $(i,\ell)\, \word\, (\pi_\ell(j),r)$ and $(i,r)\, \word\, (\pi_r(j),\ell)$ are in $\LLL_Q$ and exactly one of $(i,\ell)\, \word\, (\pi_r(j),\ell)$ or $(i,r)\, \word\, (\pi_\ell(j),r)$ is in $\LLL_Q$.
\end{itemize}
\end{lemma}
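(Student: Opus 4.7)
The plan is to pass through the bipartite interval exchange $T$ associated to $Q$ by Lemma~\ref{lem:bipartite_IET_section} and combine its structure with the geometric characterization of best approximations from Lemma~\ref{lem:equivalentBA}. Statements (i) and (ii) are purely combinatorial: from Definition~\ref{def:bipartite_IET}, $T$ maps $J_{i,\ell}$ (resp.\ $J_{i,r}$) to $I_{\pi_\ell(i),r}$ (resp.\ $I_{\pi_r(i),\ell}$), so the two possible successors of a letter $(i_n,\cdot)$ are exactly $(\pi_\ell(i_n),r)$ and $(\pi_r(i_n),\ell)$, proving (ii). Dually, $T^{-1}$ sends $I_{j,\ell}$ into $I_{\pi_r^{-1}(j)}$ and $I_{j,r}$ into $I_{\pi_\ell^{-1}(j)}$, so the two possible predecessors of a letter $(i_1,\epsilon_1)$ share the same first coordinate (equal to $\pi_r^{-1}(i_1)$ or $\pi_\ell^{-1}(i_1)$ according to $\epsilon_1$), which is (i).

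The heart of the proof is the equivalence between bispeciality and being the cutting sequence of a best approximation. I would interpret the cylinder $[\word]$ geometrically as a vertical strip of trajectories in $X$, each crossing the wedges named by $\letter_1,\dots,\letter_n$ in order. Unwinding the IET definitions, $\word$ is left special exactly when $T^{-1}([\word])$ contains $0 \in I_{i_0}$ in its interior, equivalently when the strip extended one step backward straddles the bottom singularity $p$ of $q_{i_0}$; similarly right speciality is equivalent to $T^{n-1}([\word])$ containing $\lambda_{i_n,d}$, i.e.\ the forward-extended strip straddles the top singularity $p'$ of $q_{i_n}$. The $\Leftarrow$ direction is then immediate: given $\sc \in \Quad_i$ a best approximation with $\Im\sc \geq \Im\scw_{i,d}$, Lemma~\ref{lem:equivalentBA} gives a singularity-free immersed rectangle $R(\sc)$ with $\sc$ as diagonal, and all vertical trajectories through its interior share one common cutting sequence $\word$ (any other possibility would force an interior singularity to bifurcate the strip); perturbing a trajectory to either side of each corner singularity realizes both possible extensions on that side, so $\word$ is bispecial. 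For $\Rightarrow$ one uses the two singularities identified by bispeciality, together with the hypothesis of no vertical saddle connections, to obtain two distinct singular vertical trajectories in the extended strip; they bound a non-degenerate immersed rectangle whose diagonal is a saddle connection $\sc$, and the constancy of $\word$ across the strip forces the interior of this rectangle to be singularity-free, so $\sc$ is a best approximation with cutting sequence $\word$ by Lemma~\ref{lem:equivalentBA}.

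For (iii) I would unfold $R(\sc)$ with $p=0$ and $p'=\sc\in\CC$. A vertical trajectory at horizontal coordinate $x$ close to $\sc$ has $0$-th letter $(i,\ell)$ or $(i,r)$ according to the sign of $x$, and the letter immediately after $\word$ equal to $(\pi_\ell(j),r)$ or $(\pi_r(j),\ell)$ according to the sign of $x-\Re\sc$. Since $x$ is constant along a vertical trajectory, the two ``consistent'' combinations $(i,\ell)\word(\pi_\ell(j),r)$ and $(i,r)\word(\pi_r(j),\ell)$ are always realized by trajectories with $x$ sufficiently far to one or the other side of $\{0,\Re\sc\}$, while exactly one ``mixed'' combination is realizable---the one with $x$ lying strictly between $0$ and $\Re\sc$, whose existence depends only on the sign of $\Re\sc$, i.e.\ on the slantedness of $\sc$. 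The main obstacle I expect is the $\Rightarrow$ construction of $\sc$: one needs to rigorously translate the abstract IET information about $T^{-1}([\word])$ and $T^{n-1}([\word])$ into a concrete saddle connection in $X$ between $p$ and $p'$, using the piecewise isometric structure of $T$ on the pieces $J_{i,\ell}, J_{i,r}$ together with the no-vertical-saddle hypothesis to guarantee that the rectangle bounded by the two singular trajectories is both non-degenerate and singularity-free.
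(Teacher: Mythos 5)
Your proposal follows essentially the same geometric route as the paper: parts (i) and (ii) from the local structure of quadrilaterals/the bipartite IET, the $\Leftarrow$ direction from the immersed rectangle $R(\sc)$ of Lemma~\ref{lem:equivalentBA} plus horizontal perturbation, the $\Rightarrow$ direction from the beam of trajectories whose cutting sequence is $\word$, and (iii) from a three-region decomposition by horizontal coordinate in an unfolded picture. The one structural difference is that the paper first isolates and proves Lemma~\ref{lem:beam}, showing by induction on $\vert\word\vert$ that the beam $[\word]$ is an immersed polygon bounded on the left and right by vertical separatrices and on top/bottom by one or two sides of $Q$; you absorb this into the main argument by invoking the IET directly. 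Both are fine, but the paper's explicit beam lemma gives you the ``connected strip'' fact for free, which you otherwise have to argue inside the $\Rightarrow$ step.

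The gap you flag in the $\Rightarrow$ direction is real and worth making precise. Once you have the two singularities $p$ (on the bottom boundary of $[\word]$ extended backward) and $p'$ (on the top, forward), the rectangle with diagonal the saddle connection $pp'$ does \emph{not} sit inside the beam: the beam is a polygon whose top and bottom are slanted edges of $Q$, not horizontals, so the rectangle overhangs the beam near its corners. The paper handles this by cutting the beam at the two singular verticals $v_\ell,v_r$ to get a sub-polygon $P$, and then flowing the slanted top and bottom edges vertically (forward from the bottom, backward from the top) only until they first meet a horizontal; the key point is that in an admissible quadrangulation these small vertical extensions cannot hit a new singularity before reaching the horizontal through the corner, because that would contradict the admissibility/embeddedness of the boundary quadrilaterals. ``Constancy of $\word$ across the strip'' alone is not enough for this step since the extension region lies outside $[\word]$. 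You should also note, as the paper does, that the boundary case $\Im\sc=\Im\scw_{i,d}$ (hence $\sc=\scw_{i,d}$) corresponds to the empty bispecial word, which is why the threshold $\Im\sc\geq\Im\scw_{i,d}$ appears and must be verified in the $\Rightarrow$ direction: the constructed $\sc$ joins the bottom vertex of some $q_i$ to the top vertex of some quadrilateral, and the non-emptiness of $\word$ forces it to lie strictly above the diagonal. With those two points repaired, your argument is complete and matches the paper's.
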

We remark that properties \textit{(i)}, \textit{(ii)}and \textit{(iii)} of the above lemma constitutes the characterization of the language that comes from interval exchange transformations (see~\cite{BelovChernyatev} and~\cite{FerencziZamboni-language}).

In the proof of Theorem~\ref{thm:best_approx_and_bispecials}, given word in the language we want to associate to it a set of orbits of the vertical flow that have that word as a cutting sequence. The following definition is convenient to pass from combinatorics to geometry. 
\begin{definition}\label{def:beam}
  Let $Q$ be a quadrangulation of a translation surface $X$ with no vertical saddle connections. 
  Let $w \in \LLL_Q$ be a non-empty word. We define the \emph{beam} or \emph{cylinder} ${[}\word{]}$ associated to $\word$ as the set of finite orbits of the vertical flow whose coding is exactly $\word$ and are maximal with respect to that property.
\end{definition}
The following Lemma describes the geometric shape of a beam. Examples of beams are shown in Figure~\ref{fig:beams}.
\begin{figure}[!ht]
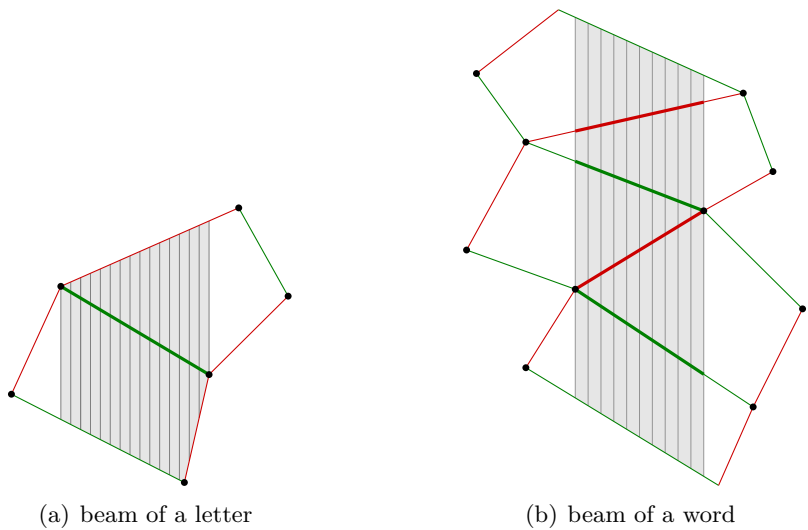

\begin{center}
  \subfigure[beam of a letter \label{subfig:beam_letter}]{\picinput{beam_edge}}
\hspace{2cm} 
\subfigure[beam of a word \label{subfig:beam_word}]{\picinput{beam}}
\caption{examples of beams of trajectories illustrating Definition~\ref{def:beam}}
\label{fig:beams}
\end{center}
\end{figure}
 
\begin{lemma} \label{lem:beam}
Let $Q$ be a quadrangulation of a translation surface $X$ with no vertical saddle connections. Let $\word$ be a non-empty word in $\LLL_Q$. Then the beam ${[}\word{]}$ is an immersed polygon delimited on the left and the right by vertical separatrices.
The bottom side is delimited either by one side of $Q$ or by a pair of sides $w_{i,\ell}$ and $w_{i,r}$ for some $1 \leq i \leq k$.
The top side is delimited by one side of $Q$ or by a pair of sides $w_{\pi_\ell(j),r}$ and $w_{\pi_r(j),\ell}$ for some $1 \leq j \leq k$.
\end{lemma}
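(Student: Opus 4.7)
The plan is to exploit the correspondence between $Q$ and the bipartite interval exchange $T = (\vpi,\vlambda)$ given by Lemma~\ref{lem:bipartite_IET_section}. Via the projection $p$ onto the section $I = \sqcup I_i$, each vertical trajectory in $X$ corresponds to a $T$-orbit, and a trajectory whose cutting sequence begins with $\word = \letter_1 \cdots \letter_n$ has its first crossing at some point $x$ of
\[
I_\word \;=\; \{x \in I_{\letter_1} : T^{j-1}(x) \in I_{\letter_j}\text{ for } 1\leq j \leq n\}.
\]
Since $T^{-1}$ is a piecewise translation which sends each $I_{i,\epsilon}$ into a single interval ($T^{-1}(I_{i,\ell}) = J_{\pi_r^{-1}(i),r}$ and $T^{-1}(I_{i,r}) = J_{\pi_\ell^{-1}(i),\ell}$), a short induction on $|\word|$ shows that $I_\word$ is a single sub-interval of $I_{\letter_1}$. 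Maximally flowing the points of $I_\word$ both forward and backward by the vertical flow while preserving the coding $\word$ realises the beam $[\word]$ as the interior of an immersed polygon in $X$. Its left and right boundaries are the vertical propagations of the two endpoints of $I_\word$; by the maximality of $I_\word$ these endpoints are points whose $T^j$-image (for some $0 \leq j \leq n$) is a discontinuity of $T$ or of $T^{-1}$, which under $p$ correspond to vertical separatrices issued from singularities. Hence the lateral boundaries are vertical separatrices as claimed.

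For the bottom boundary, I would observe that extending the trajectory of $x$ one step below $\scw_{\letter_1}$ would cross the saddle connection containing $T^{-1}(x) \in I$, so the bottom boundary is the image in $X$ of the interval $T^{-1}(I_\word)$. By the formulas above, $T^{-1}(I_\word)$ is a single sub-interval of some $I_i$ (with $i = \pi_r^{-1}(i_1)$ if $\epsilon_1 = \ell$, and $i = \pi_\ell^{-1}(i_1)$ otherwise). The crucial dichotomy is whether this sub-interval contains the partition point $0 \in I_i$, which under $p$ is precisely the singularity of the wedge $\scw_i$ of $q_i$. If it does not, then $T^{-1}(I_\word)$ is contained in one of $I_{i,\ell}$ or $I_{i,r}$, so every trajectory in $[\word]$ would extend downward across the same saddle connection $\scw_{i,\ell}$ or $\scw_{i,r}$ and the bottom of $[\word]$ is this single side of $Q$. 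If it does contain $0$, then trajectories projecting left of $0$ would extend across $\scw_{i,\ell}$ and those projecting right across $\scw_{i,r}$, meeting at the singularity of $\scw_i$; so the bottom of $[\word]$ is the pair of sides $\scw_{i,\ell}, \scw_{i,r}$ forming the wedge $\scw_i$.

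The top boundary is treated symmetrically with $T$ in place of $T^{-1}$. Extending one step above $\scw_{\letter_n}$ applies $T$ to the last crossing $T^{n-1}(x) \in I_{\letter_n}$, and the relevant partition point is now $\lambda_{i_n,d} \in I_{i_n}$, which under $p$ corresponds to the top singularity of $q_{i_n}$ (reached by the unique vertical separatrix going up from the bottom wedge of $q_{i_n}$). Setting $j = i_n$, either $T^{n-1}(I_\word)$ avoids $\lambda_{j,d}$ and lies in a single $J_{j,\ell}$ or $J_{j,r}$, in which case the top boundary is one side of $Q$; or it contains $\lambda_{j,d}$, and the top boundary consists of the pair $\scw_{\pi_\ell(j),r}, \scw_{\pi_r(j),\ell}$ meeting at the top singularity of $q_j$. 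The main technical point I expect to have to justify carefully is that the region described above is really an immersed polygon in $X$: the absence of vertical saddle connections on $X$ is used crucially here to ensure that no trajectory in $[\word]$ hits a singularity in its interior, so that unrolling $[\word]$ in the universal cover yields a genuine planar polygon whose boundary is made up exactly of the lateral separatrices and the top/bottom pieces identified above.
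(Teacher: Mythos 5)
Your proposal is correct in substance, but it reaches the conclusion by a different route than the paper. The paper argues directly on the surface by induction on the length of $\word$: it tracks the set of $n$-th crossing points as a connected subsegment $J$ of $\scw(\letter_n)$ and distinguishes three cases at each step (all trajectories from $J$ next cross $\scw_{\pi_r(j),\ell}$, all cross $\scw_{\pi_\ell(j),r}$, or one hits the top singularity of $q_j$ and the beam splits), so the immersed-polygon structure is carried through the induction constructively. You instead transfer the problem to the bipartite IET of Lemma~\ref{lem:bipartite_IET_section}: connectedness of the cylinder $I_\word$ follows from the one-step structure of $T^{-1}$ (your formulas $T^{-1}(I_{i,\ell})=J_{\pi_r^{-1}(i),r}$, $T^{-1}(I_{i,r})=J_{\pi_\ell^{-1}(i),\ell}$ are right), and the bottom and top dichotomies are read off from whether $T^{-1}(I_\word)$, respectively $T^{n-1}(I_\word)$, contains the partition point $0\in I_i$, respectively $\lambda_{j,d}\in I_j$; your identifications of $i$, $j$ and of the pairs of sides agree with the statement. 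The symbolic route makes the two dichotomies very clean and meshes well with the later substitution formulas, whereas the paper's geometric induction has the advantage that the only point you leave as a sketch, namely that the region really is an immersed polygon, is established at every step; completing your version would require developing the beam in the plane along the common sequence of quadrilaterals crossed, which is routine and at about the level of detail the paper itself uses. One correction of emphasis: the fact that no singularity lies in the interior of the beam is not where the absence of vertical saddle connections enters; it is automatic, since between consecutive crossings a trajectory stays in an open quadrilateral and the only singularities on a side of $Q$ are its endpoints, which bound the beam laterally. The hypothesis is rather what guarantees minimality of the vertical flow, so that $\LLL_Q$ is well defined and every word of the language has a nonempty open cylinder $I_\word$, hence a beam with nonempty interior.
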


\begin{proof}[Proof of Lemma~\ref{lem:beam}]
We will denote by $\scw({\letter_k})$ the saddle connection in a wedge corresponding to the label $\letter_k \in \AAA$, that is $\scw(\letter_k)=\scw_{i, \ell}$ if $\letter_k = (i, \ell)$ or $\scw(\letter_k)=\scw_{i, r}$ if $\letter_k = (i, r)$.
Let $\word = \letter_1 \ldots \letter_n$ be a non empty word in $\LLL_Q$ and let $i$ and $j$ be respectively such that $\scw(\letter_1)$ is a top side of $q_i$ and $\scw(\letter_n)$ is a bottom side of $q_j$. Let us first remark that, by definition of a quadrilateral, if $x$ is a point on $\scw(\letter_n)$ then the first saddle connection crossed by the forward orbit $(\phi_t(x))_{t > 0}$ is either $\scw_{\pi_\ell(j),r}$ or $\scw_{\pi_r(j),\ell}$. Similarly, for $x$ on $\scw(\letter_1)$ the first saddle connection crossed by the backward orbit $(\phi_t(x))_{t < 0}$ is either $\scw_{i,\ell}$ or $\scw_{i,r}$. Furthermore, the sets of points on $\scw(\letter_n)$ that first hit backward or forward a given side is a connected subsegment of $\scw(\letter_n)$. 

We now proceed by induction on the length $n$ of the word $\word$. 
For a word $\word = \letter$ of length $1$, one can see from the previous remark that the beam is a polygon such that two of its vertices are the endpoints of the associated saddle connection, as shown in Figure~\ref{subfig:beam_letter}.
\begin{figure}[!ht]
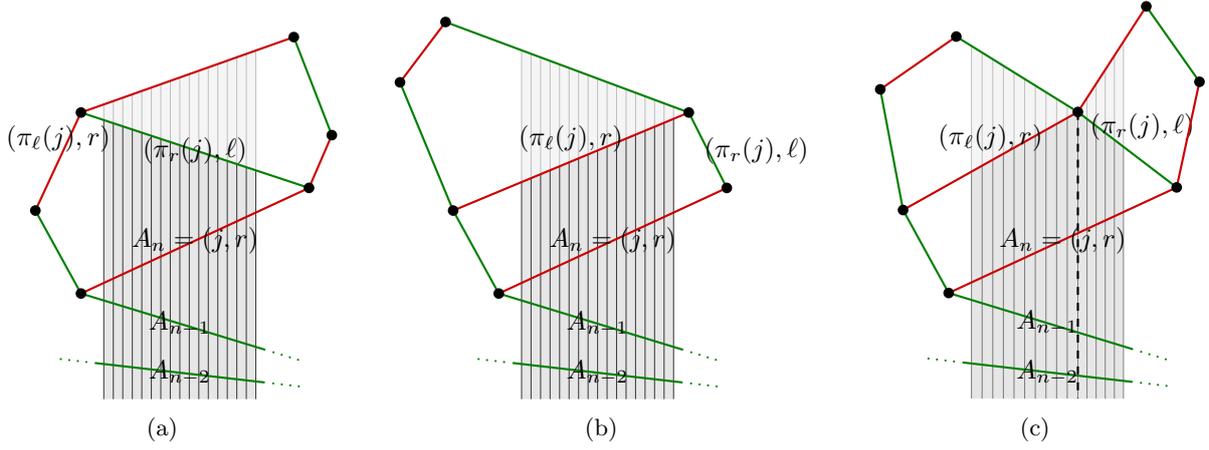

\begin{center}
\subfigure[\label{subfig:beam2}]{\picinput{beam2}}
\hspace{.5cm}
\subfigure[\label{subfig:beam3}]{\picinput{beam3}}
\hspace{.5cm} 
\subfigure[\label{subfig:beam4}]{\picinput{beam4}}
\caption{possible splitting of the beam of trajectories ${[}w{]}$ for $w = A_1 \dots A_n \in \LLL_Q$}
\label{fig:beam_splittings}
\end{center}
\end{figure} 

For the inductive step, refer to Figure~\ref{fig:beam_splittings}.
Assume that the result holds for all words of length $n$ and consider a word $A_1 \ldots A_{n+1}$ of length $n+1$.
As before, let $j$ be such that $\letter_n$ is a bottom side of $q_j$ and $j'$ be such that $\scw(\letter_{n+1})$ is a bottom side of $q_{j'}$.
For each orbit in $[\word]$ consider the intersection with the wedge $\scw(\letter_n)$ that corresponds to the $n$-th crossing of the sides of $Q$.
By induction hypothesis, this set of points is a connected segment $J$ in $\scw(\letter_n)$.  
By the initial remark, 
we know that the vertical trajectories emanating from $J$ either
\begin{enumerate}
  \item all cross the wedge $\scw(\letter_{n+1})$ for $\letter_{n+1}=(\pi_r(j), \ell)$ as in Figure~\ref{subfig:beam2},
  \item or all cross $\letter_{n+1}=(\pi_{\ell}(j), r)$) as in Figure~\ref{subfig:beam3},
  \item or one of them hit the conical singuarity which is the top vertex of the quadrilateral $q_j$ as in Figure~\ref{subfig:beam4}.
\end{enumerate}
In the two first cases, the beam ${[}\letter_1 \cdots \letter_{n}\letter_{n+1}{]}$ is obtained simply prolonging the trajectories of the beam ${[}\letter_1 \cdots \letter_{n}{]}$ until, after crossing  $\scw(\letter_{n+1})$, they hit the top side of $q_{j'}$. In the third case, the trajectories are split into two connected subsets of trajectories, accordingly to whether after $\scw(\letter_n)$ trajectories cross $\scw_{\pi_r(j),\ell}$ or $\scw_{\pi_\ell(j),r}$. In all cases, it is clear from the construction that the beam ${[}\letter_1 \cdots \letter_{n+1}{]}$ is again an immersed polygon with the same properties.
\end{proof}

\begin{proof}[Proof of Lemma~\ref{lem:bispecial}]
From Lemma~\ref{lem:beam} the possible right extensions of a non-empty word $\word = A_1 \ldots A_n$ in $\LLL_Q$ are of the form $(\pi_\ell(j),r)$ and $(\pi_r(j),\ell)$ for the integer $j$ such that $A_n \in \{j\} \times \{\ell,r\}$ (see Figure~\ref{fig:beam_splittings}). Similarly, its left extensions are of the form $(i,\ell)$ and $(i,r)$ for $i$ such that $A_1 \in \{i\} \times \{\ell,r\}$. This proves items \textit{(i)} and \textit{(ii)}.

\smallskip

\begin{figure}[!ht]
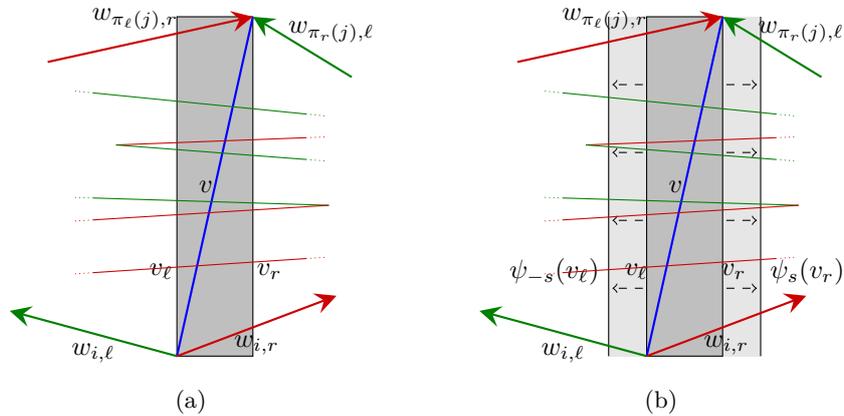

\begin{center}
\subfigure[\label{subfig:BA_to_bisp1}]{\picinput{BA_to_bisp1}} \hspace{1cm}
\subfigure[\label{subfig:BA_to_bisp2}]{\picinput{BA_to_bisp2}}
\caption{cutting sequences of geometric best approximations are bispecial}
\end{center}
\end{figure} 

We now prove that cutting sequences of best approximations with imaginary parts as in the statement of the Lemma are exactly all bispecial words. 
Let $\sc$ be a geometric best approximation in $\Quad_i$ with $\Im \sc \geq \Im \scw_{i,d}$.
If $\Im \sc = \Im \scw_{i,d}$ then $\sc = \scw_{i,d}$ and the cutting sequence of $\sc$ is the empty word, which is bispecial.
Let us hence assume that $\Im \sc > \Im \scw_{i,d}$ and let $\word = A_1 \ldots A_n$ be the cutting sequence of $\sc$ where $n \geq 1$.
Let $i$ and $j$ be so that $A_1 \in \{i\} \times \{\ell,r\}$ and $A_n \in \{j\} \times \{\ell,r\}$.
Since $\sc$ is a best approximation, by Lemma~\ref{lem:equivalentBA} there exists an immersed rectangle $R(\sc) \subset X$ with no singularity in its interior and no singularity on its sides other than the endpoints of $\sc$. Without loss of generality, we may assume that $\sc$ is right slanted.  Let $v_\ell$ and $v_r$ be respectively the left and right vertical side of $R(\sc)$, as shown in Figure~\ref{subfig:BA_to_bisp1}. 
Since $\Im \sc > \Im \scw_{i,d}> \Im \scw_{i,r}$, the beginning of $\sc$ belongs to the sector determined by the wedge $(\scw_{i,\ell}, \scw_{i,r})$ and $ \scw_{i,r}$ crosses the vertical side $v_r$, as shown in Figure~\ref{subfig:BA_to_bisp1}. We now claim that  $\scw_{\pi_r(j),\ell}$ crosses the other vertical side, that is $v_{\ell}$. Indeed, since $\scw_{\pi_r(j),\ell}$ is left slanted and 
$R(\sc)$ cannot its starting point in its interior, either $\scw_{\pi_r(j),\ell}$ crosses $v_{\ell}$ or it crosses the bottom side of $R(\sc)$. The latter  possibility cannot happen since otherwise $\scw_{\pi_r(j),\ell}$ would have to intersect $\scw_{i,r}$, which is excluded from the definition of quadrangulations. 

Let us call vertical trajectory in $R(\sc)$ any finite trajectory which is obtained intersecting a vertical trajectory with $R(\sc)$.  
It follows from what we proved  that the first side of $Q$ hit by any vertical trajectory in $R(\sc)$  is $\wedge_{i,r}$, while $\scw_{\pi_r(j),\ell}$ is the last side of $Q$ hit, see Figure~\ref{subfig:BA_to_bisp2}. We claim that in between these two hitting times the  cutting sequence of the vertical trajectory in $R(\sc)$ is the same than the cutting sequence $W$ of $\sc$. Indeed, since $R(\sc)$ does not contain singularities and sides of $Q$ cannot cross  neither $\wedge_{i,r}$ nor  $\scw_{\pi_r(j),\ell}$, they have to cross both $v_{\ell}$ and $v_r$.  
Thus, any vertical segment in $R(\sc)$ has cutting sequence $(i,r)\, \word\, (\pi_\ell(j),r)$. Now flow horizontally $\sc_\ell$ to the left and to $\sc_r$ to the right. If $\psi_t$ denotes the horizontal flow, for  any $t>0$ such $\psi_s(v_r)$ does not contain any singularity for $0< s \leq t$, the vertical trajectory $\psi_t(v_r)$ has coding $(i,r)\, \word\, (\pi_r(j),\ell)$ (see Figure~\ref{subfig:BA_to_bisp2}). Similarly,  for  any $t<0$ such $\psi_{s}(v_\ell)$ does not contain any singularity for $-t\leq s < 0$, the vertical trajectory $\psi_t(v_\ell)$ has coding $(i,\ell)\, \word\, (\pi_\ell(j),r)$. This shows that $\word$ is bispecial.

\begin{figure}[!ht]
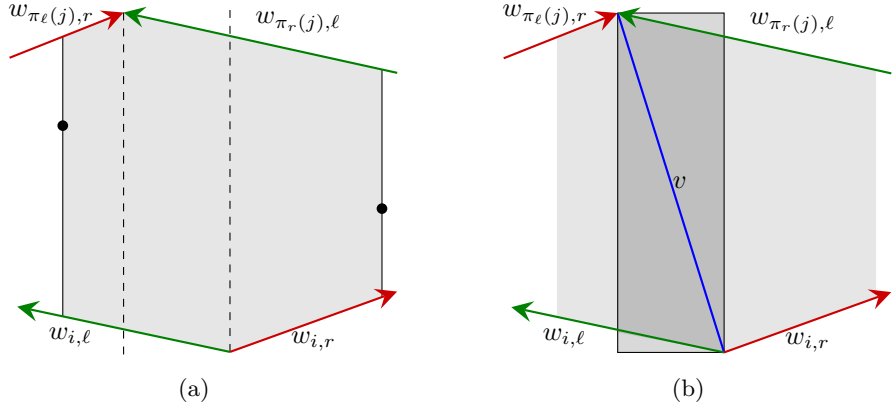

\begin{center}
\subfigure[\label{subfig:bisp_to_BA1}]{\picinput{bisp_to_BA1}}
\hspace{1cm}
\subfigure[\label{subfig:bisp_to_BA2}]{\picinput{bisp_to_BA2}}
\caption{bispecial words are cutting sequences of geometric best approximations}
\label{fig:bispecial_beams}
\end{center}
\end{figure} 

\smallskip
Let us now assume that $\word = \letter_1 \ldots \letter_n$ is a non-empty bispecial word.
We know from Lemma~\ref{lem:beam} the that letters that may be append to its left are $(i,\ell)$ and $(i,r)$ where $i$ is such that $A_1 \in \{i\} \times \{\ell,r\}$. Similarly the letters that may be append to its right are $(\pi_\ell(j),r)$ and $(\pi_r(j),\ell)$ where $A_n \in \{j\} \times \{\ell,r\}$. From the Lemma~\ref{lem:beam} the beam $[\word]$ is an immersed polygon whose sides are either vertical or part of the sides of $Q$. Because $\word$ is bispecial, both the top and bottom of $Q$ consists of two sides and in particular they contain the top singularity of $q_j$ and the bottom singularity of $q_i$ respectively. Consider the saddle connection $\sc$ which connects these two singularities and let us assume without loss of generality that it is left slanted (as in Figure~\ref{subfig:bisp_to_BA1}).  
Let us show that it is a best approximation by constructing an immersed rectangle that has $\sc$ as its diagonal. Consider the vertical trajectory $v_\ell$ in the beam that hits the top singularity of $q_j$ and the vertical trajectory $v_r$ in the beam emanating from the bottom singularity of $q_i$. 
Let us consider the quadrilateral $P$ built from the beam $[\word]$ by cutting its left and right parts up $v_\ell$ and $v_r$,  see the dark quadrilateral in Figure~\ref{subfig:bisp_to_BA1}). Then flow 
 vertically forward each point on the top sides and backward each point on the bottom sides until they first hit an horizontal trajectory. Extending $P$ by these trajectories, we obtain a rectangle $R$ which contains $P$, as shown in Figure~\ref{subfig:bisp_to_BA2}. By construction $R$ is a rectangle which has $\sc$ as a diagonal and it does not contain singularities in its interior (since $P$ is contained in the interior of the beam and when extending top and bottom sides one hits a horizontal before hitting a singularity by definition of quadrangulation).  This shows that $\sc$ is a best approximation and, arguing as in the previous part of the proof, it also follows that $\sc$ has  cutting sequence $\word$.

\end{proof}

Exploting  Lemma~\ref{lem:bispecial}, we can now deduce Theorem~\ref{thm:best_approx_and_bispecials} from Theorem~\ref{thm:wedges_are_best_approx}. 
\begin{proof}[Proof of Theorem~\ref{thm:best_approx_and_bispecials}]
Let $X$ be a surface in $\CCC^{hyp}(k)$ with no vertical saddle connections. Let $Q$ be a quadrangulation of $X$ and let $\LLL_Q$ be the associated language. Let $( Q^{(n)})_{ n \in \mathbb{N}}$ be a sequence of quadrangulations $Q^{(n)}= (\vpi^{(n)}, \vwedges^{(n)})$ obtained starting from $Q$ by simultaneous staircase moves.
Then, by Lemma~\ref{lem:bispecial}, the set of bispecial words coincide with the set of geometric best approximations $\sc$ in some $\Quad_i$ such that $\Im \sc \geq \scw_{i,d}$. By Corollary \ref{cor:wedges_are_best_approx2}, these are exactly the diagonals in $(Q^{(n)})_n$.
\end{proof}

\subsubsection{Cutting sequences  by staircase moves}\label{subsubsec:substitutions}
In this section we show how to produce all cutting sequences of best approximations from the sequence of staircase moves, see Theorem~\ref{thm:seqsubstitutions}. The key step is Lemma~\ref{lem:substitution} which describe the combinatorial operation that allows to deduce the cutting sequence of a diagonal of an admissible quadrilateral obtained by staircase moves from the cutting sequences of its sides.

\begin{theorem}\label{thm:seqsubstitutions}
Let $X \in \CCC^{hyp}(k)$ be a translation surface with no vertical saddle connections. Let $Q$ be a quadrangulation of $X$ and let $\LLL_Q$ be the associated language of cutting sequences.  
Let $\{Q^{(n)}\}_{n \in \mathbb{N}}$ be {any} sequence of labeled quadrangulations $Q^{(n)}=Q(\vpi^{(n)}, \vwedges^{(n)})$ starting from $Q^{(0)}=Q$ and such that $Q^{(n+1)}$ is obtained from $Q^{(n+1)}$  by performing a staircase move in the staircase $S_{c_n}$ for $Q^{(n)}$ given by a cycle $c_n$ of $\vpi^{(n)}$.  Set 
\begin{equation}\label{base}
D_i^{(0)} = \emptyset, \qquad {L}_i^{(0)} =(\pi_\ell^{-1}(i),\ell), \qquad {R}_i^{(0)} = (\pi_r^{-1}(i),r), \qquad \textrm{for}\ 1\leq i \leq k .
\end{equation}
Let ${L}_i^{(n)}, {R}_i^{(n)}$ and $ D_i^{(n)}$ for $n \geq 1$ be given by the following recursive formulas:
\begin{eqnarray}
{L}^{(n+1)}_i &= &
\left\{ \begin{array}{ll}
{L}^{(n)}_i {R}^{(n)}_{\pi_\ell^{(n)}(i)} & \text{if $i \in c_n$ and $c_n$ is a cycle of  $\pi^{(n)}_r$}, \\
{L}^{(n)}_i & \text{otherwise,}
\end{array} \right.\label{eq:recursiveL}
\\
{R}^{(n+1)}_i &= &
\left\{ \begin{array}{ll}
{R}^{(n)}_i {L}^{(n)}_{\pi_r^{(n)}(i)} & \text{if $i \in c_n$ and $c_n$ is a cycle of  $\pi^{(n)}_\ell$}, \\
{R}^{(n)}_i & \text{otherwise,}
\end{array} \right. \label{eq:recursiveR}
\\
{D}^{(n+1)}_i &= &
\left\{ \begin{array}{ll}
{D}^{(n)}_i {R}^{(n)}_{\pi^{(n)}_l\pi_r^{(n)}(i)} & \text{if $i \in c_n$ and $c_n$ is a cycle of  $\pi_{r}$}, \\
{D}^{(n)}_i {L}^{(n)}_{\pi^{(n)}_r\pi_\ell^{(n)}(i)} & \text{if $i \in c_n$ and $c_n$ is a cycle of  $\pi_\ell$}, \\
{D}^{(n)}_i  & \text{if $i \notin c_n$.} 
\end{array} \right. \label{eq:recursiveD}
\end{eqnarray}
 Then the bispecial words of $\LLL_Q$ are exactly all words which appear in the sequences $(D_i^{(n)})_{n\in\NN}$ for $1\leq i \leq k$. 
\end{theorem}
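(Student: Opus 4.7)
My approach is to show by induction on $n \geq 0$ that the recursively defined words $L_i^{(n)}, R_i^{(n)}, D_i^{(n)}$ are exactly the cutting sequences (with respect to $Q^{(0)}$) attached respectively to the saddle connections $\scw_{i,\ell}^{(n)}, \scw_{i,r}^{(n)}, \scw_{i,d}^{(n)}$, with the convention implicit in Lemma~\ref{lem:bispecial} (the cutting sequence of a diagonal being the bispecial-word ``middle'' of any vertical trajectory in its immersed rectangle, and the cutting sequences of wedge sides carrying the appropriate boundary letter). Once this is done, Theorem~\ref{thm:best_approx_and_bispecials} immediately finishes the argument: it identifies the set of bispecial words of $\LLL_Q$ with the set of cutting sequences of the diagonals of all $Q^{(n)}$, which by the induction is precisely the set of words $D_i^{(n)}$.

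For the base case, $\scw_{i,d}^{(0)}$ is the diagonal of a single admissible quadrilateral $q_i^{(0)} \in Q^{(0)}$ and does not cross any other side of $Q^{(0)}$, so its cutting sequence is empty, matching $D_i^{(0)} = \emptyset$. The single-letter values $L_i^{(0)} = (\pi_\ell^{-1}(i), \ell)$ and $R_i^{(0)} = (\pi_r^{-1}(i), r)$ reflect the boundary-letter convention for the wedge sides: the indices $\pi_\ell^{-1}(i)$ and $\pi_r^{-1}(i)$ identify the level-$0$ quadrilateral adjacent to $\scw_{i,r}^{(0)}$ and $\scw_{i,\ell}^{(0)}$ respectively (via the gluings that define $\vpi$), which is what the vertical trajectory in the associated rectangle actually records. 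For the inductive step, the geometric content of a staircase move gives explicit additive relations on displacement vectors; for a right staircase move in a cycle $c_n$ of $\pi_r^{(n)}$ and $i \in c_n$ we have
\[
\scw_{i,\ell}^{(n+1)} = \scw_{i,\ell}^{(n)} + \scw_{\pi_\ell^{(n)}(i), r}^{(n)}, \qquad \scw_{i,d}^{(n+1)} = \scw_{i,d}^{(n)} + \scw_{\pi_\ell^{(n)}\pi_r^{(n)}(i), r}^{(n)}, \qquad \scw_{j, r}^{(n+1)} = \scw_{j, r}^{(n)},
\]
with the symmetric statements for left moves. The key geometric input is then Lemma~\ref{lem:substitution} (the substitution lemma introduced in \S\ref{subsubsec:substitutions}): when two saddle connections sum (as displacement vectors) to a third, their cutting sequences concatenate. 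Feeding the displacement identities into this lemma produces exactly the recursions \eqref{eq:recursiveL}, \eqref{eq:recursiveR}, and \eqref{eq:recursiveD}, completing the induction.

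The hard part is Lemma~\ref{lem:substitution} and its compatibility with the boundary-letter convention chosen in the base case: one must check that when two saddle connections are concatenated at a shared singularity, no spurious letter is created or lost at the glue point, and that the resulting word genuinely equals the cutting sequence of the combined saddle connection. Verifying this requires a case-by-case analysis at the singularity using the admissibility of quadrilaterals of $Q^{(n)}$ and the permutation-based description of how top sides are glued to bottom sides via $\pi_\ell^{(n)}$ and $\pi_r^{(n)}$. Once this boundary bookkeeping is handled cleanly, the rest of the argument is a bookkeeping exercise in propagating the induction hypothesis through the explicit formulas for the staircase moves.
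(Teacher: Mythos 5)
Your skeleton matches the paper's: an induction showing that $D_i^{(n)}$ is the cutting sequence of $\scw_{i,d}^{(n)}$ and that $L_i^{(n)}, R_i^{(n)}$ carry the right boundary information, driven by the displacement identities for staircase moves and a substitution lemma, with Theorem~\ref{thm:best_approx_and_bispecials} closing the argument. But two points in your sketch are imprecise in a way that hides where the actual work lives. First, $L_i^{(n)}$ and $R_i^{(n)}$ are \emph{not} cutting sequences of the wedge sides in any direct sense (the cutting sequence of a wedge side of $Q^{(0)}$ would be empty): the paper defines them as \emph{extended} cutting sequences $L_i = ((\pi_r^{(0)})^{-1}(i), r)\,(i,\ell)\,\word_{i,\ell}$ (and the analogous for $R_i$, with a degenerate form when the side coincides with a side of $Q^{(0)}$), and the choice of precisely these two prefix letters is what makes the concatenation recursion close exactly with no leftover terms. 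Second, your characterization of Lemma~\ref{lem:substitution} as ``when two saddle connections sum, their cutting sequences concatenate'' and your claim that one must verify ``no spurious letter is created'' are both backwards: the lemma says that the cutting sequence of a diagonal is the concatenation of the cutting sequences of the two bounding sides \emph{with two specific extra letters inserted} at the joint vertex (e.g.\ $D_i = \word_{i,r}\,(j,r)\,(\pi_r(i),\ell)\,W_{\pi_r(i),\ell}$ in the generic case), and the lemma has four cases depending on whether $\scw_{i,r}$ and $\scw_{\pi_r(i),\ell}$ coincide with sides of $Q^{(0)}$. The inductive step in the paper mirrors exactly those four cases (generic plus three degenerate ones occurring only at small $n$), and verifying that the recursion \eqref{eq:recursiveR}--\eqref{eq:recursiveD} holds in each case is where the careful bookkeeping is. You should spell out the extended-cutting-sequence definition, state the substitution lemma with the inserted letters and the four cases, and then track each case in the induction; as written, your sketch could be read as claiming a cleaner concatenation law than actually holds.
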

We will prove Theorem~\ref{thm:seqsubstitutions} from Theorem~\ref{thm:best_approx_and_bispecials} by showing that for any $n \in \NN$ the word $D_i^{(n)}$ given by the recursive formulas in the statement is the cutting sequence of the diagonal $\scw_{i,d}^{(n)}$ for any $1\leq i\leq k$. We remark that an analogous Theorem in the setup of interval exchange transformations is proved by Ferenczi and Zamboni in~\cite{FerencziZamboni-struct}. In their context, the analogous of the words $L_i^{(n)}$ and $R_i^{(n)}$ that are needed to build the bispecial words $D_i^{(n)}$ can be interpreted as cutting sequences of Rohlin towers for the bipartite IETs $(\vpi^{(n)}, \vlambda^{(n)})$ (see~\S\ref{sec:correspondence_Q_IETs}).

Let us first prove a preliminary Lemma that shows how the cutting sequence of a diagonal of quadrilateral in a quadrangulation can be deduced from the cutting sequences of the sides and the combinatorial datum (see also Figure~\ref{fig:cases_lemma}).
\begin{lemma}\label{lem:substitution}
Let $Q=(\vpi, \vwedges)$ be obtained from $Q' = (\vpi',\vwedges')$ by a sequence of staircase moves. 
Let $\word_{i,\ell}, \word_{i,r}$ be respectively the cutting sequences of the saddle connections $\scw_{i,\ell}$ and $\scw_{i,r}$ with respect to the labelling of $Q'$.
Then the cutting sequence $D_i$ of a diagonal $w_{i,d}$ in $Q$ is given by
\begin{equation} \label{eq:D_from_W_right}
D_i =  \left\{
\begin{array}{lll}
  \word_{i,r}\, (j,r)\, (\pi_r(i),\ell)\, W_{\pi_r(i),\ell} &
  \text{if $\scw_{i,r} \not= \scw'_{i,r}$ and $\scw_{\pi_r(i),\ell} \not= \scw'_{\pi_r(i),\ell}$,} &
  \subref{subfig:case_nn} \\
  (\pi_r(i),\ell)\, W_{\pi_r(i),\ell} &
  \text{if $\scw_{i,r} = \scw'_{i,r}$ and $\scw_{\pi_r(i),\ell} \not= \scw'_{\pi_r(i),\ell}$,} &
  \subref{subfig:case_en} \\
  \word_{i,r}\, (j,r) &
  \text{if $\scw_{i,r} \not= \scw'_{i,r}$ and $\scw_{\pi_r(i),\ell} = \scw'_{\pi_r(i),\ell}$,} &
  \subref{subfig:case_ne} \\
  \emptyset &
  \text{if $\scw_{i,r} = \scw'_{i,r}$ and $\scw_{\pi_r(i),\ell} = \scw'_{\pi_r(i),\ell}$,} &
  \subref{subfig:case_ee}  
\end{array}\right.
\end{equation}
where $j = (\pi_r')^{-1} \pi_r (i)$.
Similarly, we have
\begin{equation} \label{eq:D_from_W_left}
D_i = \left\{
\begin{array}{ll}
  \emptyset &
  \text{if $\scw_{i,\ell} = \scw'_{i,\ell}$ and $\scw_{\pi_\ell(i),r} = \scw'_{\pi_\ell(i),r}$,} \\
  (\pi_\ell(i),r)\, W_{\pi_\ell(i),r} &
  \text{if $\scw_{i,\ell} = \scw'_{i,\ell}$ and $\scw_{\pi_\ell(i),r} \not= \scw'_{\pi_\ell(i),r}$,} \\
  \word_{i,\ell}\, (j,\ell) &
  \text{if $\scw_{i,\ell} \not= \scw'_{i,\ell}$ and $\scw_{\pi_\ell(i),r} = \scw'_{\pi_\ell(i),r}$,} \\
  \word_{i,\ell}\, (j,r)\, (\pi_\ell(i),r)\, W_{\pi_\ell(i),r} &
  \text{if $\scw_{i,\ell} \not= \scw'_{i,\ell}$ and $\scw_{\pi_\ell(i),r} \not= \scw'_{\pi_r(i),\ell}$,}
\end{array}\right.
\end{equation}
where $j = (\pi'_\ell)^{-1}\, \pi_\ell(i)$.
\end{lemma}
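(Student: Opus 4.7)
The plan is to establish the first set of formulas~\eqref{eq:D_from_W_right} by analysing the geometry of the right half of $q_i$; the second set~\eqref{eq:D_from_W_left} will follow by the symmetric argument on the left half. Denote by $B,L,T,R$ the four vertices of the quadrilateral $q_i$ in $Q$ (bottom, left, top, right), so that $q_i$ has bottom sides $w_{i,\ell}=BL$ and $w_{i,r}=BR$, top sides $w_{\pi_\ell(i),r}=LT$ and $w_{\pi_r(i),\ell}=RT$, and forward diagonal $w_{i,d}=BT$. The diagonal splits $q_i$ into two embedded triangles $\Delta^r=BRT$ and $\Delta^\ell=BLT$ whose interiors contain no singularities of $X$. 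By Lemma~\ref{lem:equivalentBA}, the diagonal is the diagonal of an immersed rectangle $R(w_{i,d})\subset X$ without singularities in its interior, and the word $D_i$ is read off from the sides of $Q'$ crossed by any vertical trajectory through $R(w_{i,d})$.

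The key geometric step is to compare a vertical trajectory $\gamma$ close to the diagonal with the concatenation of vertical trajectories close to $w_{i,r}$ and to $w_{\pi_r(i),\ell}$. Since $\Delta^r$ is an embedded triangle without singularities and $R(w_{i,d})$ overlaps $\Delta^r$ in a neighbourhood of the diagonal, one can continuously deform $\gamma$ through this region into a path that first follows $w_{i,r}$ from $B$ to a small neighbourhood of $R$, and then follows $w_{\pi_r(i),\ell}$ from this neighbourhood to $T$, without crossing any singularity. Such a deformation preserves the cutting sequence with respect to $Q'$, so the contribution to $D_i$ outside a small neighbourhood of $R$ is exactly $W_{i,r}$ followed by $W_{\pi_r(i),\ell}$, and everything else in $D_i$ comes from the local analysis near $R$.

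To identify the transition near $R$, observe that the diagonal $w_{i,d}$ passes on the side of $R$ opposite to the wedge of $q'_{\pi_r(i)}$, which has $R$ as its bottom vertex in $Q'$. The angular sector at $R$ that a vertical trajectory $\gamma$ enters just above the height of $R$ is therefore the sector opposite to that wedge; by admissibility of $Q'$, this sector is the top-right corner of a unique quadrilateral $q'_j$ of $Q'$. The identification $j=(\pi'_r)^{-1}\pi_r(i)$ comes from the gluing relation in $Q'$: the top-right edge of $q'_j$ is the bottom-left edge of $q'_{\pi'_r(j)}$, which must equal $w'_{\pi_r(i),\ell}$, the bottom-left edge of $q'_{\pi_r(i)}$. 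Hence $\gamma$ traverses $q'_j$ from its bottom-right edge (labelled $(j,r)$) to its top-right edge (labelled $(\pi_r(i),\ell)$), producing the transition factor $(j,r)(\pi_r(i),\ell)$ of case~(a).

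Cases~(b), (c), and (d) are the limiting situations where one or both of $w_{i,r}=w'_{i,r}$ and $w_{\pi_r(i),\ell}=w'_{\pi_r(i),\ell}$ hold. If $w_{i,r}=w'_{i,r}$, then $w_{i,r}$ is itself an edge of $Q'$, so the trajectory near $w_{i,r}$ crosses no interior side of $Q'$ (giving $W_{i,r}=\emptyset$), and moreover $\gamma$ enters the local neighbourhood of $R$ directly across $w'_{i,r}$, so the letter $(j,r)$ is absorbed in the left extension of the bispecial word rather than appearing in $D_i$; symmetrically for $w_{\pi_r(i),\ell}=w'_{\pi_r(i),\ell}$. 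Combining both degenerations gives case~(d), in which $\Delta^r$ is bounded by sides of $Q'$ and contains no further $Q'$-sides in its interior, so $D_i=\emptyset$. The main obstacle I anticipate is this local combinatorial identification at~$R$: one must verify rigorously that $q'_j$ occupies the sector opposite to the wedge of $q'_{\pi_r(i)}$, and that in the degenerate cases the transition letter collapses into the left/right extension of the bispecial word rather than remaining in $D_i$.
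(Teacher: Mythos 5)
Your overall strategy is essentially the one the paper uses: split $q_i$ along the forward diagonal $w_{i,d}$ into the two embedded triangles, observe that the right triangle $T_r$ with sides $w_{i,r}$, $w_{\pi_r(i),\ell}$, $w_{i,d}$ has no singularities in its interior, deduce that $D_i$ is the concatenation of $W_{i,r}$, a transition near the right vertex $R=p_{\pi_r(i)}$, and $W_{\pi_r(i),\ell}$, and then identify the transition via the quadrilateral $q'_j$ of $Q'$ having $R$ as its right vertex, with $j=(\pi'_r)^{-1}\pi_r(i)$. Two steps of your write-up, however, are stated in a way that would not hold up as written.

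First, the claim that "such a deformation preserves the cutting sequence with respect to $Q'$" is not something one can invoke directly: a homotopy rel endpoints in $X\setminus\Sigma$ does not in general preserve cutting sequences (only reduced ones), and even that needs justification here. The clean observation, which is what the paper records, is this: any side of $Q'$ that crosses $w_{i,d}$ transversally enters $T_r$; since $T_r$ has no interior singularity, its portion inside $T_r$ must either exit through the interior of $w_{i,r}$ or of $w_{\pi_r(i),\ell}$, or terminate at the vertex $R=p_{\pi_r(i)}$. Since two sides of $Q'$ inside $T_r$ cannot cross each other (they only meet at singularities), this gives a direct bijection between the crossings of $w_{i,d}$ away from $R$ and the crossings of $w_{i,r}$ and of $w_{\pi_r(i),\ell}$, in the correct order. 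No deformation is needed; the statement follows from this partition of the crossings into three groups.

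Second, your explanation of the degenerate cases is off. Saying that "the letter $(j,r)$ is absorbed in the left extension of the bispecial word rather than appearing in $D_i$" describes a phenomenon relevant to the later Theorem on substitutions, where extension letters are tracked in the words $L_i^{(n)}$, $R_i^{(n)}$, but it is not the reason the letter drops out of $D_i$. The actual reason is simple and geometric: if $w_{i,r}=w'_{i,r}$ then $j=i$ and $w'_{j,r}=w_{i,r}$ is itself a side of the triangle $T_r$ (it shares the vertex $B$ with $w_{i,d}$), so it does not cross $w_{i,d}$ transversally and contributes no letter; symmetrically, if $w_{\pi_r(i),\ell}=w'_{\pi_r(i),\ell}$ then that saddle connection is the side $RT$ of $T_r$ and does not cross $w_{i,d}$. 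This is exactly the criterion the paper states: $w'_{j,r}$ crosses $w_{i,d}$ if and only if $w_{i,r}\neq w'_{i,r}$, and $w'_{\pi_r(i),\ell}$ crosses $w_{i,d}$ if and only if $w_{\pi_r(i),\ell}\neq w'_{\pi_r(i),\ell}$. With these two corrections your argument coincides with the paper's.
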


\begin{figure}[!ht]
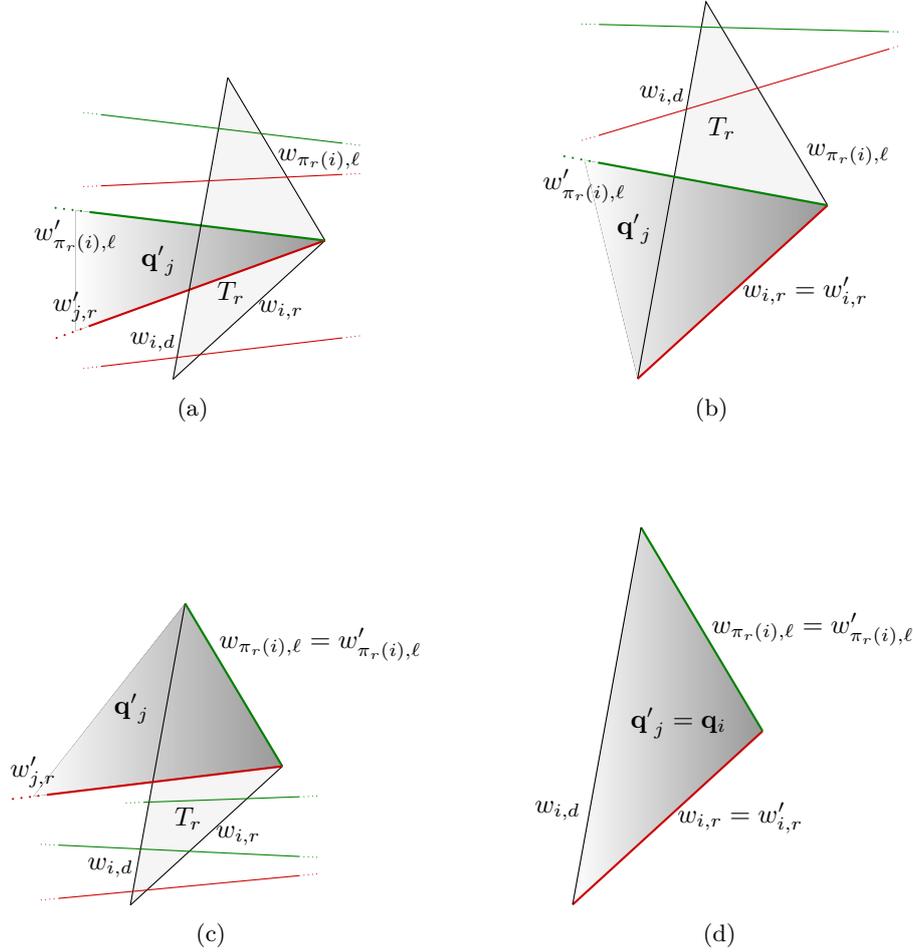

\begin{center}
\subfigure[\label{subfig:case_nn}]{\picinput{thm_substitution_nn}} \hspace{1.9cm}
\subfigure[\label{subfig:case_en}]{\picinput{thm_substitution_yn}}\\ \vspace{1cm}
\subfigure[\label{subfig:case_ne}]{\picinput{thm_substitution_ny}} \hspace{1cm}
\subfigure[\label{subfig:case_ee}]{\picinput{thm_substitution_yy}}
\end{center}
\caption{the four cases in the proof of Lemma~\ref{lem:substitution}}
\label{fig:cases_lemma}
\end{figure}

\begin{proof}
We prove only~\eqref{eq:D_from_W_right} as the case of~\eqref{eq:D_from_W_left} is the same after vertical reflection.

Let $p_i$ for $1\leq i \leq k$ be the vertex of the wedge $\scw_i$ of $Q$. Consider the quadrilateral $q_i \in Q$. The diagonal $\scw_{i,d}$ divides $q_i$ in two triangles.
Let us consider the right triangle $T_r$  which has sides $\scw_{i,r}$, $\scw_{i,d}$ and $\scw_{\pi_r(i),\ell}$. Remark that right most vertex of $T$, that is the endpoint of $\scw_{i,r}$, is $p_{\pi_r(i)}$.

Since $q_i$ and hence also $T_r$ does not contain any singularity in its interior, any saddle connection of $Q'$ which crosses the diagonal $\scw_{i,d}$ has to cross either the union of the interior of the two other sides $\scw_{i,r}$ and $\scw_{\pi_{r}(i),\ell}$ of the triangle, or has $p_{\pi_r(i)}$ as an endpoint.
Remark that there at most two saddle connections of $Q'$ which intersect $\scw_{i,d}$ and ends in $p_{\pi_r(i)}$ before leaving $T_r$, namely $\scw'_{\pi_r(i),\ell} = \scw'_{\pi'_r(j),\ell}$ and $\scw'_{j,r}$ where $j=(\pi'_r)^{-1} \pi_r(i)$. The saddle connection $\scw'_{\pi_r(i),\ell}$ crosses $\scw_{i,d}$ if and only if $\scw_{\pi_r(i),\ell} \neq \scw'_{\pi_r(i),\ell}$ (case (b) and (d) in~\eqref{eq:D_from_W_right} and Figure~\ref{fig:cases_lemma}). On the other hand, the saddle connection $\scw'_{j,r}$ crosses $\scw_{i,d}$ if and only if $\scw_{i,r} \neq \scw'_{i,r}$ (case (c) and (d)).

In the case $\scw_{\pi_r(i),\ell} \neq \scw'_{\pi_r(i),\ell}$ and $\scw'_{i,r} \not= \scw_{i,r}$ (see Figure~\ref{subfig:case_nn}) the cutting sequence of $w_{i,d}$ is obtained by concatenation of the one of $w_{i,r}$, the two letters $(j,r)$ and $(\pi_r(j),\ell)$ and then the cutting sequence of $w_{\pi_r(i),\ell}$. The other three cases, which are somewhat degenerate, are obtained similarly, referring to Figures~\ref{subfig:case_en},~\ref{subfig:case_ne} and~\ref{subfig:case_ee}.
\end{proof}

Recall that diagonal change consists in replacing one side of a wedge by the diagonal. Hence Lemma~\ref{lem:substitution} already provide a way to obtain recursively the cutting sequences of sides and diagonals. In order to simplify notations and gather all four cases Theorem~\ref{thm:seqsubstitutions}, we defined words $L_i$ and $R_i$. These words are \emph{extended} cutting sequences of sides, that is cutting sequences preceded by the labels of some of the incoming edges in the starting vertex. Keeping the same notation as in the Lemma, let us define $L_i$ and $R_i$ from the cutting sequence of the sides by
\begin{eqnarray}
\label{eq:L_def}
L_i &= & \left\{\begin{array}{ll}
((\pi'_r)^{-1}(i), r) & \text{if $\scw_{i,\ell} = \scw'_{i,\ell}$}, \\
((\pi'_r)^{-1}(i) , r) \, ( i , \ell)\, \word_{i ,\ell} & \text{if $\scw_{i,\ell} \neq \scw'_{i,\ell}$}. \\
\end{array} \right.\\ 
\label{eq:R_def}
R_i &=& \left\{ \begin{array}{ll}
((\pi'_\ell)^{-1}(i), \ell) &  \text{if $\scw_{i,r} = \scw'_{i,r}$}, \\
((\pi'_\ell)^{-1}(i), \ell)\, (i, r)\, \word_{i,r} &   \text{if $\scw_{i,r} \neq \scw'_{i,r}$}, \\
\end{array} \right.
\end{eqnarray}

\begin{proof}[Proof of Theorem \ref{thm:seqsubstitutions}]
Let us first show by induction on $n$ that the words $L_i^{(n)}$, $R_i^{(n)}$ given by the recursive formulas \eqref{eq:recursiveL} and \eqref{eq:recursiveR} in the statement are respectively the words defined from cutting sequence of sides by~\eqref{eq:L_def} and~\eqref{eq:R_def}.

For $n=0$ the definitions in~\eqref{base} also coincide with the definitions given by~\eqref{eq:L_def} and \eqref{eq:R_def}.
Let us fix $n \in \NN$ and assume that for all $1 \leq i \leq k$ the words $L_i^{(n)},R_i^{(n)}$ given by~\eqref{eq:L_def} and \eqref{eq:R_def} satisfy the recursive formulas in the statement and let us show that then the same is also true for $n+1$.
Let us assume that $Q^{(n+1)}$ is obtained from $Q^{(n)}$ by a left staircase move in $S_{c_n}$ (i.e. $c_n$ is a cycle of $\pi^{(n)}_\ell$).

By definition of a left move, $\scw_{i,\ell}^{(n+1)}= \scw_{i,\ell}^{(n)}$ (and hence $\word_{i,r}^{(n+1)}= \word_{i,r}^{(n)}$) for every $1 \leq i \leq k$ and $\scw_{i,r}^{(n+1)}= \scw_{i,r}^{(n)}$ (and hence $\word_{i,r}^{(n+1)}= \word_{i,r}^{(n)}$) unless $i \in c_n$.
Thus, from \eqref{eq:L_def} and \eqref{eq:R_def} we obtain that $L_i^{(n+1)} = L_i^{(n)}$ for all $1\leq i \leq k$ and $R_i^{(n+1)}= R_i^{(n)}$ for all $i \notin c_n$.  

Consider now $i \in c_n$. In that case $\scw_{i,r}^{(n+1)}= \scw_{i,d}^{(n)}$.
We will consider four possible cases that correspond to the four cases in Lemma~\ref{lem:substitution} and Figure~\ref{fig:cases_lemma}. Case (a) is the only case that happens for any $n$ sufficiently large. Cases (b), (c) and (d) only happen for initial steps of the induction and should be treated separately.
In Lemma~\ref{lem:substitution} we set $Q'=Q^{(0)}$ and $Q=Q^{(n)}$ and with this notation in mind one can refer to Lemma~\ref{lem:substitution} and Figure~\ref{fig:cases_lemma}. Using the same notation introduced in  Lemma~\ref{lem:substitution}, we denote $j : = (\pi_r^{(0)})^{-1} \pi_\ell(i)$.

{\bf Case (a):} $\scw^{(n)}_{i,r} \neq \scw^{(0)}_{i,r}$ and $\scw^{(n)}_{\pi_r^{(n)}(i),\ell} \not= \scw^{(0)}_{\pi_r^{(n)}(i),\ell}$.\\
We first apply Lemma~\ref{lem:substitution} to $\scw_{i,r}^{(n+1)}= \scw_{i,d}^{(n)}$ and get 
\[
\word^{(n+1)}_{i,r} =  D^{(n)}_i  =  \word_{i,r}^{(n)} (j,r) (\pi_r^{(0)}(i),\ell) W_{\pi_r(i)^{(n)},\ell} = \word_{i,r}^{(n)} L^{(n)}_{\pi^{(n)}_r(i) }
\]
Now, using~\eqref{eq:R_def} for $R_i^{(n)}$ and $R_i^{(n+1)}$ we get
$$
R^{(n+1)}_{i}
= ((\pi_r^{(0)})^{-1} (i) , \ell) \, (i , r)\, \word^{(n+1)}_{i,r}
= ((\pi_r^{(0)})^{-1} i , \ell)\, ( i , r)\, \word_{i,r}^{(n)}\, L_{\pi^{(n)}_r(i) }
= R^{(n)}_{i}\, L^{(n)}_{\pi_r^{(n)}(i) }.
$$

\smallskip
{\bf Case (b):} $\scw_{i,r}^{(n)} = \scw^{(0)}_{i,r}$ and $\scw_{\pi_r^{(n)}(i),\ell}^{(n)} \neq \scw^{(0)}_{\pi_r^{(n)}(i),\ell}$. \\
From Lemma~\ref{lem:substitution} we get that $\word^{(n+1)}_{i,r} = D^{(n)}_i = (\pi_r^{(n)}(i),r) \word_{\pi_r^{(n)}(i), \ell}^{(n)}$ and from~\eqref{eq:R_def} we obtain
\[
R^{(n+1)}_{i}
= ((\pi^{(0)}_r)^{-1}(i),\ell)\, (i,r)\, \word^{(n+1)}_{i,r}
= ((\pi^{(0)}_r)^{-1}(i),\ell)\, (i,r)\, (\pi_r^{(n)}(i),r) \word_{\pi_r^{(n)}(i),\ell}^{(n)}
= R^{(n)}_{i}\, L^{(n)}_{\pi^{(n)}_r(i)}.
\]

\smallskip
{\bf Case (c):} $\scw_{i,r}^{(n)} \neq \scw^{(0)}_{i,r}$ and $\scw_{\pi_r^{(n)}(i),\ell}^{(n)} = \scw^{(0)}_{\pi_r^{(n)}(i),\ell}$. \\
From Lemma~\ref{lem:substitution} we get that $\word^{(n+1)}_{i,r} = D^{(n)}_i = W_{i,r}^{(n)} (j,r)$ and from~\eqref{eq:R_def} we obtain
\[
R^{(n+1)}_{i}
= ((\pi^{(0)}_r)^{-1}(i),\ell)\, (i,r)\, \word^{(n+1)}_{i,r}
= ((\pi^{(0)}_r)^{-1}(i),\ell)\, (i,r)\, \word_{i,r}^{(n)} (j,r)
= R^{(n)}_i L^{(n)}_{\pi_r^{(n)}(i)}.
\]

\smallskip 
{\bf Case (d):} $\scw_{i,r}^{(n)} = \scw^{(0)}_{i,r}$ and $\scw_{\pi_r^{(n)}(i),\ell}^{(n)} = \scw_{\pi_r^{(n)}(i),\ell}^{(0)}$ \\ 
In that case, $q_i$ is a quadrilateral in both $Q^{(0)}$ and $Q^{(n)}$. Hence $L_{\pi^{(n)}_r(i)}^{(n)} = (i,r)$, $R_i^{(n)} = ((\pi_\ell^{(0)})^{-1}(i),\ell)$ and $\word^{(n+1)}_{i,r} = D^{(n)}_i = \emptyset$. We get
\[
R_i^{(n+1)}
= ((\pi^{(0)}_r)^{-1}(i),\ell)\, (i,r)\, W^{(n+1)}_{i,r}
= ((\pi^{(0)}_r)^{-1}(i),\ell)\, (i,r)
= R_i^{(n)}\, L^{(n)}_{\pi_r^{(n)}(i)}.
\]

\smallskip
Hence, in all cases the recursive relation~\eqref{eq:recursiveR} holds for $n+1$.
The case of right staircase move is symmetric in which only the $L_i$ change and proves that~\eqref{eq:recursiveL} holds in that case.
This conclude the proof that $L_i^{(n)}$ and $R_i^{(n)}$ given recursively in~\eqref{eq:recursiveL} and~\eqref{eq:recursiveR} coincide with the definition~\eqref{eq:L_def} and~\eqref{eq:R_def}.

\bigskip

Let us now verify the relations~\eqref{eq:recursiveD} about cutting sequence of diagonals. For $n=0$, The cutting sequences $D^{(0)}_i$ of the diagonals $\scw_{i,d}^{(0)}$ are clearly the empty word for all $1 \leq i \leq k$. Now assume that the relation holds for $n$. We consider as before the case where $i$ has a left diagonal change at step $n$. We apply Lemma~\ref{lem:substitution} to the quadrilaterals $q_i'$ obtained after the move.
Since its diagonal is $\scw_{i,d}^{(n+1)} \neq \scw_{i,d}^{(n)}$, we get
\[ 
D^{(n+1)}_i  =  \word_{i,r}^{(n+1)} R^{(n+1)}_{\pi_r^{(n+1)}(i)}.
\] 
Since by definition of a move $\word_{i,r}^{(n+1)} = D^{(n)}_i$ and $\pi_r^{(n+1)} = \pi_r^{(n)} \pi_\ell^{(n)}$, this shows that the inductive assumption~\eqref{eq:recursiveD} also holds for $n+1$ and conclude the proof.
\end{proof}


\end{document}